\documentclass[a4paper,10pt,
]{amsart}
\usepackage{amsmath,amssymb,amsthm}

\usepackage[alphabetic, abbrev]{amsrefs}
\usepackage{enumerate
}
\usepackage{hyperref}
\usepackage[all]{xy}
\newdir{ >}{{}*!/-5pt/@{>}} 
\SelectTips{cm}{} 

\usepackage[pagewise,switch]{lineno}

\numberwithin{equation}{section}
\newtheorem{theorem}[subsection]{Theorem}
\newtheorem{corollary}[subsection]{Corollary}
\newtheorem{lemma}[subsection]{Lemma}
\newtheorem{proposition}[subsection]{Proposition}
\theoremstyle{definition}
\newtheorem{definition}[subsection]{Definition}
\newtheorem{remark}[subsection]{Remark}
\newtheorem{example}[subsection]{Example}

\newcommand{\bC}{\mathbb{C}}

\newcommand{\bS}{\mathbb{S}}

\newcommand{\cA}{\mathcal{A}}

\newcommand{\cC}{\mathcal{C}}
\newcommand{\cD}{\mathcal{D}}
\newcommand{\cE}{\mathcal{E}}

\newcommand{\cI}{\mathcal{I}}

\newcommand{\cS}{\mathcal{S}}

\DeclareMathOperator*{\hocolim}{hocolim}
\DeclareMathOperator{\THH}{THH}

\DeclareMathOperator{\colim}{colim}

\DeclareMathOperator{\const}{const}

\newcommand{\ot}{\leftarrow}
\newcommand{\sm}{\wedge}
\newcommand{\iso}{\cong}

\newcommand{\bld}[1]{{\mathbf{#1}}}

\newcommand{\cy}{\mathrm{cy}}

\newcommand{\id}{{\mathrm{id}}}
\newcommand{\Sp}{\mathrm{Sp}}
\newcommand{\Spsym}[1]{{\mathrm{Sp}^{\Sigma}_{#1}}}

\DeclareMathOperator{\capitalGL}{GL}

\newcommand{\GLoneIof}[1]{\capitalGL^{\cI}_1\!\!{#1}}
\newcommand{\MGLoneIof}[1]{M\!\GLoneIof{#1}}

\DeclareMathOperator{\concat}{\sqcup}

\newcommand{\Top}{\mathrm{Top}}


\newcommand{\arxivlink}[1]{\href{http://arxiv.org/abs/#1}{\texttt{arXiv:#1}}}

\newcommand{\cof}{\mathrm{cof}}
\newcommand{\GL}{\mathrm{GL}}

\newcommand{\Map}{\mathrm{Map}}
\newcommand{\Tor}{\mathrm{Tor}}
\newcommand{\xr}{\xrightarrow}
\newcommand{\xl}{\xleftarrow}

\usepackage[pdftex]{graphics,color} 
\usepackage{eso-pic}
\usepackage{scrtime}
\usepackage{ifdraft}
\ifdraft{%
\AddToShipoutPicture{\put(30,30){\resizebox{!}{.4cm}%
   {{{\color[gray]{0.8}\texttt{Preliminary draft, compiled \the\year-\the\month-\the\day~at \thistime}}}}}}
}{}



\ifdraft{%
  \newcommand{\Bemerkung}[1]{{\marginpar{\hspace{0.2\marginparwidth}\rule{0.6\marginparwidth}{0.75mm}\hspace{0.2\marginparwidth}}\noindent\bfseries[#1]}}
}{
  \newcommand{\Bemerkung}[1]{}
}

\title[Generalized Thom spectra and their \texorpdfstring{$\THH$}{THH}]{Generalized Thom spectra and their \\  topological Hochschild homology}
\author{Samik Basu}
\address{Department of Mathematical and Computational Science, Indian Association for
the Cultivation of Science, Kolkata - 700032, India}
\email{samik.basu2@gmail.com}

\author{Steffen Sagave} \address{Radboud University Nijmegen, IMAPP, PO Box 9010, 6500 GL Nijmegen, \newline The Netherlands}  \email{s.sagave@math.ru.nl}

\author{Christian Schlichtkrull} \address{Department of Mathematics, University of Bergen, P.O. Box 7803, 5020 Bergen, \newline Norway} \email{christian.schlichtkrull@math.uib.no}

\date{\today}

\begin{document}
\begin{abstract}
We develop a theory of $R$-module Thom spectra for a commutative symmetric ring spectrum $R$ and we analyze their multiplicative properties. As an interesting source of examples, we show that $R$-algebra Thom spectra associated to the special unitary groups can be described in terms of quotient constructions on $R$. We apply the general theory to obtain a description of the $R$-based topological Hochschild homology associated to an $R$-algebra Thom spectrum. 
\end{abstract}
\maketitle
\ifdraft{\linenumbers}{} 

\section{Introduction}
In their most classical form, Thom spectra arise by forming Thom spaces of compatible families of vector bundles. The compatibility conditions amount to considering stable vector bundles and it is convenient to view the formation of such Thom spectra as a functor defined on the category of spaces over the classifying space $BO$ for stable vector bundles. This construction was extended by Mahowald and Lewis to spaces over $BF$, the classifying space for stable spherical fibrations. The paper by Lewis~\cite{LMS}*{IX} gives a comprehensive account of such ``classical'' Thom spectra with special emphasis on their multiplicative properties. 

In order to appreciate the relation to the generalized Thom spectra referred to in the title of the paper, one must first realize that $BF$ may be interpreted as the classifying space for the units of the sphere spectrum. It is by now well-known that every ``structured'' ring spectrum $R$ has an underlying grouplike monoid of units $\GL_1(R)$ which represents the functor that to a space $X$ associates the units 
in the ring of $R$-cohomology classes $R^0(X)$. The corresponding classifying space $B\GL_1(R)$ classifies spaces equipped with an action of $\GL_1(R)$. In the influential paper \cite{Ando-B-G-H-R_units-Thom} it was realized how to construct from a map  $f\colon X\to B\GL_1(R)$ an associated $R$-module Thom spectrum $M(f)$. Here $R$ is supposed to be a ring spectrum (or ``\mbox{$S$-algebra}'') in the symmetric monoidal category of spectra introduced in \cite{EKMM}, and $M(f)$ is an object in the associated module category. The main interest in \cite{Ando-B-G-H-R_units-Thom} is in developing the orientation theory for such \mbox{$R$-module} Thom spectra in a way that  generalizes the orientation theory introduced in
\cite{May-E-infty-ring-spaces}.

In the present paper we shall develop an analogous theory of generalized Thom spectra in the setting of diagram spectra. For definiteness, the main part of the paper is written in terms of symmetric spectra of topological spaces, but everything can be adapted to orthogonal spectra or symmetric spectra of simplicial sets, for example. (This is discussed in Remark~\ref{rem:alternative-setting}.) The overlap with the paper \cite{Ando-B-G-H-R_units-Thom} is quite small since our main interest is in the multiplicative properties of such generalized Thom spectra and in the associated topological Hochschild homology. In the following we describe the contents of the paper in more detail.
\subsection{\texorpdfstring{$\cI$}{I}-spaces and generalized Thom spectra}
 Let $R$ be a commutative symmetric ring spectrum which we assume to be semistable throughout the paper. (Semistability is a weak fibrancy condition on symmetric spectra, see 
Remark~\ref{rem:semistable}). Sometimes it is also necessary to impose a weak cofibrancy condition on $R$, but we suppress this in the introduction. Our generalized Thom spectrum functor takes values in the category of $R$-modules of symmetric spectra $\Sp^{\Sigma}_R$, equipped with the symmetric monoidal smash product $\wedge_R$. The most elegant way to express the multiplicative properties of such an $R$-module Thom spectrum functor is by realizing it as a lax symmetric monoidal functor. In order to facilitate this, we shall invoke the category of $\cI$-spaces, 
cf.\ \cites{Schlichtkrull_units,Schlichtkrull_Thom-symmetric,Blumberg-C-S_THH-Thom,Sagave-S_diagram}. Recall that $\cI$ denotes the skeleton category of finite sets and injections. We write $\Top^{\cI}$ for the category of $\cI$-spaces, that is, the category of functors from $\cI$ to the category of (compactly generated weak Hausdorff) topological spaces. A map of $\cI$-spaces $X\to Y$ is said to be an $\cI$-equivalence if the induced map of homotopy colimits $X_{h\cI}\to Y_{h\cI}$ is a weak homotopy equivalence. It is proved in \cite{Sagave-S_diagram} that the $\cI$-equivalences are the weak equivalences in a model structure on $\Top^{\cI}$ which makes it Quillen equivalent to the category of spaces $\Top$. The advantage of the category $\Top^{\cI}$ is that it has a symmetric monoidal convolution product in which every $E_{\infty}$ structure can be rectified to a strictly commutative monoid. In particular, it is shown in \cite{Schlichtkrull_units} and \cite{Sagave-S_diagram} that the units of $R$ can be conveniently modelled as a commutative $\cI$-space monoid $\GL_1^{\cI}(R)$. The latter has a classifying $\cI$-space which is again a commutative 
$\cI$-space monoid. Writing $BG$ for this classifying $\cI$-space, the fact that $BG$ is commutative implies that the over-category $\Top^{\cI}/BG$ inherits the structure of a symmetric monoidal category. The first version of our $R$-module Thom spectrum functor then takes the form of a lax symmetric monoidal functor 
\[
T^{\cI}\colon \Top^{\cI}/BG\to \Sp^{\Sigma}_R/M\GL_1^{\cI}(R)
\]
where $M\GL_1^{\cI}(R)$ denotes the $R$-module Thom spectrum associated to the terminal object in $\Top^{\cI}/BG$. We introduce this functor in Section~\ref{sec:gen-Thom-spectra} where we analyze its homotopical and multiplicative properties.  Here we also set up an appropriate Tor spectral sequence and we generalize Lewis and Mahowald's description of Thom spectra associated to suspensions. 

\subsection{Thom spectra associated to space level data}
We also want a version of our $R$-module Thom spectrum functor that takes ordinary space level data as input, and for this purpose it is convenient to use the homotopy colimit $BG_{h\cI}$ as a model of the classifying space for the units of $R$. The over-categories $\Top/BG_{h\cI}$ and $\Top^{\cI}/BG$ are related by a chain of Quillen equivalences and we define our space level Thom spectrum functor to be the composition
\[
T\colon \Top/BG_{h\cI}\xr{P_{BG}} \Top^{\cI}/BG\xr{T^{\cI}} \Sp^{\Sigma}_R/M\GL_1^{\cI}(R)
\] 
where $P_{BG}$ is an explicit lax monoidal functor that realizes the induced equivalence of homotopy categories. The functor so defined satisfies the conditions that one may require of a good point set level Thom spectrum functor: It takes weak homotopy equivalences over $BG_{h\cI}$ to stable equivalences of $R$-modules, and it preserves colimits, $h$-cofibrations, and tensors with unbased spaces; this is the content of Proposition~\ref{prop:properties-of-T}. The homotopy colimit $BG_{h\cI}$ has the structure of a topological monoid (associative but not strictly commutative) and the functor $T$ is lax monoidal with respect to the corresponding monoidal structure on $\Top/BG_{h\cI}$.
Since the units of $R$ usually cannot be realized as a strictly commutative monoid in $\Top$, we cannot make $T$ into a symmetric monoidal functor. What we have instead is a version of Lewis' results on preservation of operad actions. We show in Proposition~\ref{prop:T-operad-version} that if $\cD$ is an operad augmented over the Barratt-Eccles operad, then $T$ induces a functor on the corresponding categories of $\cD$-algebras
\[
T\colon \Top[\cD]/BG_{h\cI}\to \Sp^{\Sigma}_R[\cD]/M\GL_1^{\cI}(R).
\]
This shows in particular that if $f\colon M\to BG_{h\cI}$ is a map of topological monoids, then $T(f)$ is an $R$-algebra over 
$M\GL_1^{\cI}(R)$. In Appendix~\ref{app:loop-rectification} we set up a convenient passage from loop space data to topological monoids and in order to keep the notation simple we shall presently use the same notation $T(f)$ for the $R$-algebra Thom spectrum associated to a loop map $f$ (rather than a map of actual topological monoids). 

The analogy between the construction of $T$ and the Thom spectrum functor in~\cite{Ando-B-G-H-R_units-Thom} makes it  plausible that these two functors should be equivalent. This comparison will be addressed in the forthcoming paper~\cite{Sagave-S_Thom-comparison} where we also show that $T$ is equivalent to the $\infty$-categorical Thom spectrum functor introduced in~\cite{Ando-B-G-H-R_infinity-Thom}. 

\subsection{Quotient spectra as Thom spectra} As an interesting source of examples, we consider $R$-module Thom spectra associated to the special unitary groups $SU(n)$ in the case where $R$ is an even commutative symmetric ring spectrum (that is, the homotopy groups of $R$ are concentrated in even degrees). Such Thom spectra are analyzed in detail in 
Section~\ref{sec:Quotient-Thom} using a geometric approach. The result in the theorem below follows from the more elaborate statement in Theorem~\ref{thm:T(SU(n))-quotient}. We refer to Section~\ref{subsec:quotient-Thom} for a discussion of the $R$-module quotient spectrum $R/(u_1,\dots,u_n)$ associated to a sequence of homotopy classes $u_1,\dots,u_n$ in $\pi_*(R)$.

\begin{theorem}\label{thm:intro-Thom-quotient}
Suppose that $R$ is even and that $u_1,\dots,u_n$ is a sequence of homotopy classes with $u_i\in \pi_{2i}(R)$. Then there exists a loop map 
\[
f_{(u_1,\dots,u_n)}\colon SU(n+1)\to BG_{h\cI}
\] 
such that the homotopy type of the $R$-module underlying the associated $R$-algebra Thom spectrum $T(f_{(u_1,\dots,u_n)})$
 is determined by a stable equivalence
\[
T(f_{(u_1,\dots,u_n)})\simeq R/(u_1,\dots,u_n).
\]
\end{theorem}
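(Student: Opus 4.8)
The strategy is to build the loop map $f_{(u_1,\dots,u_n)}$ inductively, in tandem with a description of its Thom spectrum, exploiting the principal fibration sequence $S^{2n+1} \to SU(n+1) \to SU(n)$ and the corresponding loop-fibration $SU(n+1) \to SU(n) \to \Omega^{-1} S^{2n+1}$ arising from the standard cell structure / James-type filtration of the special unitary groups. First I would set up the base case $n=0$: $SU(1)$ is a point, so $f_{()}$ is the trivial map and $T(f_{()}) \simeq M\GL_1^{\cI}(R) \wedge_R \dots$; more precisely the terminal/unit case should give $T$ of a point equivalent to $R$ itself, so $R/() = R$ matches. For the case $n=1$: a homotopy class $u_1 \in \pi_2(R)$ determines, via the multiplicative structure of $R$ and the fact that $R$ is even (so that $u_1$ can be realized on $S^2 \simeq \mathbb{C}P^1 \subset SU(2)/\ast$-data), a loop map $S^2 \to B\GL_1^{\cI}(R) \simeq BG_{h\cI}$ — concretely the class of $1+u_1$ in $\pi_0$ of the appropriate mapping space, where the $+1$ is forced by basepoint-preservation. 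Extending along the suspension-type description of $SU(2)$ and invoking the Lewis–Mahowald-style formula for Thom spectra of suspensions (generalized in Section~\ref{sec:gen-Thom-spectra}), the Thom spectrum of this map is computed to be the cofiber of multiplication by $u_1$, i.e. $R/u_1$.

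For the inductive step, suppose $f_{(u_1,\dots,u_{n-1})}\colon SU(n) \to BG_{h\cI}$ has been constructed with $T(f_{(u_1,\dots,u_{n-1})}) \simeq R/(u_1,\dots,u_{n-1})$. Given $u_n \in \pi_{2n}(R)$, I would show that the class $1+u_n$ extends the data over the top cell, producing $f_{(u_1,\dots,u_n)}\colon SU(n+1) \to BG_{h\cI}$ restricting to $f_{(u_1,\dots,u_{n-1})}$ on $SU(n)$; the key input here is a homotopy-coherent/operadic lift so that $f_{(u_1,\dots,u_n)}$ is genuinely a loop map, which is where Appendix~\ref{app:loop-rectification} and the operad-preservation Proposition~\ref{prop:T-operad-version} are used — one wants the multiplicative structure on $SU(n+1)$ to be compatible with the new cell. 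Applying $T$ and using that it preserves colimits and $h$-cofibrations (Proposition~\ref{prop:properties-of-T}), the cofiber sequence of $R$-modules $SU(n) \to SU(n+1) \to S^{2n+1}$-type data maps to a cofiber sequence $T(f_{(u_1,\dots,u_{n-1})}) \to T(f_{(u_1,\dots,u_n)}) \to \Sigma^{2n}(\text{something})$, and the attaching map is identified — via the Tor spectral sequence of Section~\ref{sec:gen-Thom-spectra} or a direct geometric argument — with multiplication by $u_n$ on $R/(u_1,\dots,u_{n-1})$, yielding $T(f_{(u_1,\dots,u_n)}) \simeq R/(u_1,\dots,u_n)$ by definition of the iterated quotient.

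The main obstacle, I expect, is the construction of $f_{(u_1,\dots,u_n)}$ as an honest \emph{loop} map compatible with the previous stages, rather than merely an unstable map, and verifying that the resulting attaching map in the Thom spectrum is exactly multiplication by $u_n$ and not $u_n$ up to a unit or lower-order correction terms. The evenness hypothesis on $R$ is doing real work here: it guarantees that the relevant obstruction groups (which live in odd-degree homotopy of $R$, or in $\pi_{\mathrm{odd}}$ of mapping spaces into $BG_{h\cI}$) vanish, so that the cells of $SU(n+1)$ in odd dimensions can be filled and the extension is essentially unique. I would therefore spend most of the effort on a careful geometric model — likely the description of $SU(n+1)$ via its standard filtration with cells in dimensions $3, 5, \dots, 2n+1$ and the multiplication expressed through the comultiplication on these — and then feed this model through the general machinery of Sections~\ref{sec:gen-Thom-spectra} and the appendix; the homotopical bookkeeping afterwards should be formal given the stated properties of $T$.
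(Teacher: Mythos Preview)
Your overall strategy—construct the loop map, then identify the Thom spectrum inductively via cofiber sequences—matches the paper's, but there is a genuine gap in the geometric input and a significant difference in how the loop map is built.

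\textbf{The gap.} You propose to extract a cofiber sequence from the fibration $SU(n) \to SU(n+1) \to S^{2n+1}$ (which you have written with the fiber and base swapped, and the phrase ``$\Omega^{-1}S^{2n+1}$'' does not make sense). But this fibration is not a cofiber sequence: the cofiber of $SU(n)\hookrightarrow SU(n+1)$ is $S^{2n+1}\wedge SU(n)_+$, not $S^{2n+1}$, and in any case the quotient does not come with a map to $BG_{h\cI}$. The paper instead uses the \emph{multiplicative pushout}
\[
\xymatrix@-1pc{
\Sigma\bC P^{m-1}\times SU(m) \ar[r]\ar[d] & SU(m)\ar[d]\\
\Sigma\bC P^{m}\times SU(m) \ar[r] & SU(m+1),
}
\]
where the horizontal maps come from the embedding $\Sigma\bC P^m\hookrightarrow SU(m+1)$ and the group multiplication. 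Applying $T$ and the derived monoidal structure yields a homotopy pushout of $R$-modules; combining this with the suspension formula for $T(\Sigma\bC P^m)$ (Proposition~\ref{prop:Thom-suspension}) gives the cofiber sequence $\Sigma^{2m}T(SU(m))\xr{u_m} T(SU(m))\to T(SU(m+1))$. The identification of the attaching map as precisely $u_m$ comes out of the explicit description of $T(\Sigma\bC P^{m-1})\to T(\Sigma\bC P^m)$ in terms of the complex orientation, not from a Tor spectral sequence.

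\textbf{The difference in construction.} The paper does \emph{not} build the loop map inductively. Instead, the full sequence $u_1,\dots,u_n$ is packaged at once into the unit $u=1+u_1x+\dots+u_nx^n\in R^0(\bC P^n)^{\times}$, which is the same as a based map $\Sigma\bC P^n\to BG_{h\cI}$. One then extends the suspension of this map to $g\colon B_1SU(n+1)\to B_1BG_{h\cI}$ by a single obstruction-theory argument: since $R$ is even, $\pi_k(B_1BG_{h\cI})\cong\pi_{k-2}(\GL_1^{\cI}(R)_{h\cI})$ is concentrated in even degrees, while the pair $(B_1SU(n+1),\Sigma^2\bC P^n)$ has only even-dimensional cells, so all obstruction groups vanish. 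Looping $g$ via Appendix~\ref{app:loop-rectification} then gives the required map of topological monoids in one stroke. Your inductive scheme would require extending at the delooped level at each stage and tracking compatibilities; this could be made to work, but the paper's global approach is cleaner and sidesteps the coherence issues you flag as the ``main obstacle.''
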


This theorem can be applied in various ways. On the one hand it shows that the multiplicative structure of $SU(n+1)$ induces a multiplicative structure on the $R$-module $R/(u_1,\dots,u_{n})$. In the extreme case where all the classes $u_i$ are trivial this gives us the $R$-algebra $R\wedge SU(n+1)_+$ with multiplication inherited from $R$ and $SU(n+1)$. In the other extreme case,  when the classes $u_1,\dots,u_n$ form a regular sequence in $\pi_*(R)$, it follows that $T(f_{(u_1,\dots,u_n)})$ is a regular quotient of $R$ in the sense that there is an isomorphism
\[
\pi_*\big(T(f_{(u_1,\dots,u_n)})\big) \simeq \pi_*(R)/(u_1,\dots,u_n)
\] 
and $T(f_{(u_1,\dots,u_n)})$ is built from $R$ by iterated homotopy cofiber sequences as described in 
Section~\ref{sec:Quotient-Thom}. This should be compared to the work by Angeltveit \cite{Angeltveit-THH} who considers the special case where $u_1,\dots, u_n$ is a regular sequence and uses a different technique to show that in this case $R/(u_1,\dots,u_n)$ has an $A_{\infty}$ structure without any conditions on the degrees of the classes. In our setting we prove in Corollary~\ref{cor:T(SU(n))-quotient-2-periodic} that the above theorem holds without any restrictions on the even dimensional classes $u_i$ provided that $R$ is $2$-periodic as well as even. This leads to another application of the theorem: the verification that certain well-known spectra naturally arise as Thom spectra. 

\begin{example}\label{ex:2-periodic-Morava-Thom}
Let $E_n$ be the $2$-periodic Lubin-Tate spectrum with 
\[
\pi_*(E_n)=W(\mathbb F_{p^n})[[ u_1,\dots,u_{n-1}]][u,u^{-1}],\quad |u_i|=0,\  |u|=2.
\]
It is proved in \cite{Goerss-Hopkins_moduli} that $E_n$ has the structure of an $E_{\infty}$ ring spectrum, hence can be realized as a commutative symmetric ring spectrum. The $2$-periodic Morava \mbox{$K$-theory} spectrum $K_n$ is defined by
$
K_n=E_n/(p,u_1,\dots, u_{n-1}).
$
Thus, we have that $\pi_*(K_n)=\mathbb F_{p^n}[u,u^{-1}]$, and it follows from Corollary~\ref{cor:T(SU(n))-quotient-2-periodic} that there exists a loop map $f\colon SU(n+1)\to BG_{h\cI}$ (where $BG$ is the classifying $\cI$-space for $\GL_1^{\cI}(E_n)$), such that $K_n\simeq T(f)$.
\end{example}

A different but related construction appears in recent work by Hopkins
and Lurie ~\cite{Hopkins-L_Brauer} where the $\infty$-categorical
version of the $R$-algebra Thom spectrum functor
from~\cite{Ando-B-G-H-R_infinity-Thom} is used to study quotient spectra. More
specifically, they show that certain types of algebra spectra over
Lubin-Tate spectra arise as the Thom spectra associated with maps from
tori, and they use this to study Brauer groups of Lubin-Tate spectra.
\subsection{Topological Hochschild homology of Thom spectra} 
In Section~\ref{sec:THH-of-Thom} we use our results from the previous sections to analyse the $R$-based topological Hochschild homology $\THH^R(-)$ of $R$-algebra Thom spectra. This generalizes the analysis of topological Hochschild homology for ``classical'' Thom spectra in \cite{Blumberg-C-S_THH-Thom}. Let $f\colon X\to BG_{h\cI}$ be a loop map with delooping $Bf\colon BX\to B(BG_{h\cI})$. In this situation we shall introduce a certain map $L^{\eta}(Bf)\colon L(BX) \to BG_{h\cI}$ where $L(-)$ denotes the free loop space functor. The construction is given in Definition~\ref{def:L-eta-map} and generalizes that in \cite{Blumberg-C-S_THH-Thom}. We use the decoration $\eta$ to indicate a twist by the Hopf map arising from an incompatibility between the free loop space and the cyclic bar construction uncovered in \cite{Schlichtkrull_units}. The following theorem is derived from the statement in Theorem~\ref{thm:thh-thom} using the passage from loop space data to topological monoids detailed in Appendix~\ref{app:loop-rectification}.  
\begin{theorem}\label{thm:intro-THH-Thom}
Given a loop map $f\colon X\to BG_{h\cI}$ with associated $R$-algebra Thom spectrum $T(f)$, there is a stable equivalence  
$\THH^R(T(f))\simeq T(L^{\eta}(Bf))$. If $f$ is a $3$-fold loop map, then this simplifies to $\THH^R(T(f))\simeq T(f)\sm BX_+$.
\end{theorem}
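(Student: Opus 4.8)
The plan is to reduce to a more technical statement at the level of topological monoids (Theorem~\ref{thm:thh-thom}) and to analyse the free loop space that governs it. Recall that for a suitably cofibrant $R$-algebra $A$ the invariant $\THH^R(A)$ is the geometric realization of the cyclic bar construction $B^{\cy}_R(A)$, the cyclic $R$-module with $k$-simplices $A^{\sm_R(k+1)}$. Since $T$ is lax monoidal and, by Proposition~\ref{prop:properties-of-T}, preserves colimits and $h$-cofibrations and hence geometric realizations, applying it to the cyclic bar construction of a monoid produces the cyclic bar construction of its image. So two things must be done: replace the loop map $f$ by genuinely multiplicative data, and identify the resulting cyclic object with a free loop space.

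First I would use the rectification of Appendix~\ref{app:loop-rectification} to replace $f$ by a map of topological monoids, and pass to the $\cI$-space picture so that $f$ is modelled by a monoid map $g\colon \bld{M}\to BG$ in $\Top^{\cI}$ over the commutative $\cI$-space monoid $BG$. Because $BG$ is commutative, the cyclic bar construction $B^{\cy}(\bld{M})$ carries a canonical structure map to $BG$ via the augmentation $B^{\cy}(BG)\to BG$, giving an object $B^{\cy}_{BG}(g)$ of $\Top^{\cI}/BG$. Using that the structure maps of the lax symmetric monoidal functor $T^{\cI}$ are stable equivalences under the cofibrancy hypotheses of Section~\ref{sec:gen-Thom-spectra}, and that $T^{\cI}$ commutes with geometric realization, one obtains a chain of stable equivalences of cyclic $R$-modules, hence $\THH^R(T(f))\simeq T^{\cI}(B^{\cy}_{BG}(g))$ compatibly with the cyclic structure. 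Now on underlying spaces $B^{\cy}(\bld{M})_{h\cI}$ is the cyclic bar construction of a topological monoid model of $X\simeq\Omega BX$, and this is classically $S^1$-equivalent to the free loop space $L(BX)$; however, the comparison between $B^{\cy}(\Omega(-))$ and $L(-)$ visible to the Thom spectrum functor differs from the naive one by a twist along the Hopf map, the phenomenon isolated in \cite{Schlichtkrull_units}. Encoding this twist turns the structure map $B^{\cy}_{BG}(g)\to BG$ into the map $L^{\eta}(Bf)$ of Definition~\ref{def:L-eta-map}, so transporting back to space-level data along the Quillen equivalences of Proposition~\ref{prop:properties-of-T} yields $T^{\cI}(B^{\cy}_{BG}(g))\simeq T(L^{\eta}(Bf))$, which is the first assertion and is the content of Theorem~\ref{thm:thh-thom}.

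For the last assertion, suppose $f$ is a $3$-fold loop map, so that $BX$ is a double loop space, in particular a grouplike $H$-space. Then the evaluation fibration $\Omega BX\to L(BX)\to BX$ is trivial, giving a splitting $L(BX)\simeq X\times BX$ in which the constant-loop summand $BX$ maps trivially to $BG_{h\cI}$, while the fibre summand $X\simeq\Omega BX$ maps by $f$ up to the Hopf twist; the extra loop coordinate is exactly what is needed to trivialize that twist. Thus over $BG_{h\cI}$ the map $L^{\eta}(Bf)$ becomes equivalent to the tensor of $(X\xr{f}BG_{h\cI})$ with the unbased space $BX$, that is, to $X\times BX\to X\xr{f}BG_{h\cI}$. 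Since $T$ preserves tensors with unbased spaces (Proposition~\ref{prop:properties-of-T}) and $T(-)\otimes BX\iso T(-)\sm BX_+$, we conclude $\THH^R(T(f))\simeq T(f)\sm BX_+$.

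The main obstacle will be the bookkeeping around the Hopf twist. Producing the twisted map $L^{\eta}(Bf)$ in general requires a careful $S^1$-equivariant (or at least cyclic) comparison of $B^{\cy}$ and $L$ that keeps track of the $\GL_1$-twist built into the Thom spectrum functor, and one must then verify that this twist becomes null-homotopic once $f$ is a $3$-fold loop map. A secondary technical point is to make the commutation of $T^{\cI}$ with geometric realization compatible enough with the cyclic structure to carry out the identification of the base with the free loop space.
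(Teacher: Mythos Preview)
Your proposal is correct and follows essentially the same route as the paper: rectify the loop map to a monoid map via Appendix~\ref{app:loop-rectification}, pass to $\cI$-spaces via $P_{BG}$, use the lax symmetric monoidal structure of $T^{\cI}$ together with its compatibility with geometric realization to identify $\THH^R(T(f))$ with the Thom spectrum of the cyclic bar construction (this is exactly Proposition~\ref{prop:Bcy-vs-T}), and then invoke the comparison of $B^{\cy}$ with the free loop space from \cite{Blumberg-C-S_THH-Thom} and \cite{Schlichtkrull_units} to land on $L^{\eta}(Bf)$.

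One concrete correction in your treatment of the $3$-fold loop case: you have the location of the Hopf twist backwards. In Definition~\ref{def:L-eta-map} the splitting $L(B(BG_{h\cI}))\simeq BG_{h\cI}\times B(BG_{h\cI})$ has the fibre factor $\Omega B(BG_{h\cI})\simeq BG_{h\cI}$ first and the base (constant-loop) factor $B(BG_{h\cI})$ second, and the map is $\id\times\eta$. So the fibre summand $X\simeq\Omega BX$ maps by $f$ \emph{without} twist, while the constant-loop summand $BX$ maps by $\eta\circ Bf$. What the $3$-fold loop hypothesis buys you is that $\eta\circ Bf$ is null-homotopic (this is the step the paper isolates as part~(ii) of Theorem~\ref{thm:thh-thom}, which then feeds into~(iii)), so that $T(\eta\circ Bf)\simeq R\wedge BX_+$ and the derived smash product collapses to $T(f)\wedge BX_+$. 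Your conclusion is right, but the mechanism is that the twist on the \emph{base} factor dies, not a twist on the fibre.
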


It is possible to combine Theorems~\ref{thm:intro-Thom-quotient} and \ref{thm:intro-THH-Thom} in order to explicitly calculate the topological Hochschild homology of 
$R$-algebra quotient spectra. Such calculations are carried out in \cites{Basu_THH-of-K/p,Basu-S_THH-quotients} and provide a means for measuring the extent to which the induced multiplicative structure on the quotient spectrum $R/(u_1,\dots,u_n)$ depends on the choice of delooping of the map $f$ in Theorem~\ref{thm:intro-Thom-quotient}. As a sample calculation we offer the following example which we quote from \cite{Basu-S_THH-quotients}. Let again $E_n$ denote the Lubin-Tate spectrum and recall the structure of $\pi_*(E_n)$ described in Example~\ref{ex:2-periodic-Morava-Thom}. We write $\pi_*(E_n)/(p,u_1,\dots,u_{n-1})^{\infty}$ for the $\pi_*(E_n)$-module defined by
\[
\colim_{i,j_1,\dots,j_{n-1}}\pi_*(E_n)/(p^i, u_1^{j_1},\dots, u_{n-1}^{j-1}).
\]
\begin{example}[\cite{Basu-S_THH-quotients}] 
For each $k\geq 1$ such that $p\geq (n+1)(k+1)+1$, the \mbox{$2$-periodic} Morava $K$-theory spectrum $K_n$ admits a structure as an algebra over $E_n$ for which
\[
\pi_*\THH^{E_n}(K_n)\cong \bigoplus_{i=1}^k \pi_*(E_n)/(p,u_1,\dots,u_{n-1})^{\infty}
\]

This complements the calculations by Angeltveit \cite{Angeltveit-THH}.
\end{example}
As we will explain in detail in~\cite{Sagave-S_Thom-comparison}, the good
point set level properties of the present Thom spectrum functor also allow
one to express the $R$-based topological Andr\'e-{Q}uillen homology of 
$E_{\infty}$ $R$-algebra Thom spectra in terms of Thom spectra. This generalizes 
work by Basterra and Mandell~\cite{Bastera-M_Homology} for $\mathbb S$-algebra
Thom spectra. 
\subsection{Notation and conventions} Let $\Top$ denote the category of compactly generated weak Hausdorff topological spaces. We shall work with $\cI$-spaces and symmetric spectra in $\Top$ throughout the paper. The category of symmetric spectra is denoted by $\Sp^{\Sigma}$, and we write $\cA\Sp^{\Sigma}$ for the category of symmetric ring spectra and $\cC\Sp^{\Sigma}$ for the category of commutative symmetric ring spectra. We shall not have occasion to consider ring spectra in the weaker sense of being monoids in the stable homotopy category

\subsection{Organization} We start from scratch by reviewing basic material about \mbox{$\cI$-spaces} and their relation to symmetric spectra in Section~\ref{sec:I-spaces}. In Section~\ref{sec:gen-Thom-spectra} we set up the lax symmetric monoidal Thom spectrum functor $T^{\cI}$ taking $\cI$-space data as input, and in Section~\ref{sec:Thom-spectra-from-space-level-data} we use this to define the Thom spectrum functor $T$ taking ordinary space level data as input. The material on Thom spectra associated to the special unitary groups and the relation to quotient spectra is contained in Section~\ref{sec:Quotient-Thom}. We review the definition of topological Hochschild homology in terms of the cyclic bar construction in Section~\ref{sec:THH-of-Thom}, where we use this description in the proof of Theorem~\ref{thm:intro-THH-Thom}. In Section~\ref{sec:modules-classifying-spaces} we establish some useful facts about modules and classifying spaces for commutative $\cI$-space monoids. Appendix \ref{app:loop-rectification} is about the passage from loop space data to topological monoids.   

\section{\texorpdfstring{$\cI$}{I}-spaces and modules over \texorpdfstring{$\cI$}{I}-space monoids}\label{sec:I-spaces}
In this section we recall some basic facts about $\cI$-spaces and
symmetric spectra from~\cite{Sagave-S_diagram}*{Section~3}. We also formulate conditions on a commutative $\cI$-space monoid $G$ which ensure that the bar construction $BG$ classifies $G$-modules. 

\subsection{Review of \texorpdfstring{$\cI$}{I}-spaces}\label{subsec:I-spaces}
Let $\cI$ be the category with objects the finite sets of the form $\bld{m}=\{1,\dots,m\}$ for 
$m\geq 0$ (where $\bld 0$ denotes the empty set) and morphisms the injective maps. This is a symmetric strict monoidal category under ordered concatenation $-\concat-$ of ordered sets. Let $\Top^{\cI}$ be
the functor category of $\cI$-diagrams in $\Top$. The
monoidal structure on $\cI$ and the cartesian product of spaces induce
a convolution product $\boxtimes$ on $\Top^{\cI}$: For
$\cI$-spaces $X$ and $Y$, their product $X\boxtimes Y$ is the left Kan
extension of the $\cI\times\cI$-diagram $(\bld{k},\bld{l})\mapsto
X(\bld{k})\times Y(\bld{l})$ along $-\concat- \colon \cI\times\cI \to
\cI$. More explicitly, we have 
\[
(X \boxtimes Y)(\bld{m}) = \colim_{\bld{k}\concat\bld{l}\to\bld{m}}X(\bld{k})\times Y(\bld{l})
\]
where the colimit is taken over the comma category $(-\concat -\downarrow \cI)$.  
The terminal $\cI$-space $U^{\cI}=\cI(\bld{0},-)$ is the monoidal unit for $\boxtimes$. We use the term 
\emph{(commutative) $\cI$-space monoid} for a (commutative) monoid in the symmetric monoidal category $(\Top^{\cI},\boxtimes,U^{\cI})$. This amounts to the same thing as a lax (symmetric) monoidal functor from $\cI$ to $\Top$.

We now turn to the homotopy theory of $\cI$-spaces and write $X_{h\cI}$ (or $\hocolim_{\cI}X$)
for the Bousfield--Kan homotopy colimit of an $\cI$-space $X$ (see 
\cite{Bousfield-Kan_homotopy-limits}). 

\begin{definition}
A map of $\cI$-spaces $X\to Y$ is an \emph{$\cI$-equivalence} if $X_{h\cI}\to Y_{h\cI}$ is a weak homotopy equivalence of spaces. 
\end{definition}

The $\cI$-equivalences are the weak equivalences in several useful model structures on $\Top^{\cI}$ as discussed in 
\cite{Sagave-S_diagram}*{Section~3}. Since we shall be particularly interested in commutative $\cI$-space monoids and the associated module categories, it will be most convenient for our purposes to work with the so-called \emph{flat model structures}.  In order to describe the cofibrations in these model structures we need to review some basic equivariant homotopy theory for the symmetric groups 
$\Sigma_n$: The category of $\Sigma_n$-spaces admits a \emph{fine} (also known as the ``genuine'') model structure in which a map is a weak equivalence (or fibration) if and only if the induced map of $H$-fixed points is a weak homotopy equivalence (or fibration) for every subgroup $H$ in $\Sigma_n$. This is a cofibrantly generated model structure with generating cofibrations of the form $\Sigma_n/H\times S^{n-1}\to \Sigma_n/H\times D^n$ for $n\geq 0$ and $H$ any subgroup in $\Sigma_n$. 

Next recall that the \emph{$\bld n$th latching space} of an $\cI$-space $X$ is defined as the colimit $L_{\bld n}X=\colim_{\partial(\cI\downarrow \bld n)}X$, where  $\partial(\cI\downarrow \bld n)$ denotes the full subcategory of the comma category $(\cI\downarrow\bld n)$ with objects the non-isomorphisms. Here we view $X$ as a diagram over $\partial(\cI\downarrow \bld n)$ via the forgetful functor to $\cI$. The canonical action of $\Sigma_n$ on $\bld n$ induces a $\Sigma_n$-action on $L_{\bld n}X$. 

In the \emph{absolute flat model structure} on $\Top^{\cI}$, a map of $\cI$-spaces $X\to Y$ is 
\begin{itemize}
\item 
a weak equivalence if it is an $\cI$-equivalence,
\item
a cofibration if the induced \emph{latching map} 
$
X(\bld n)\cup_{L_{\bld n}(X)}L_{\bld n}(Y)\to Y(\bld n)
$
is a cofibration in the fine model structure on $\Sigma_n$-spaces for all $n\geq 0$, and 
\item
a fibration if it has the right lifting property with respect to cofibrations that are $\cI$-equivalences.    
\end{itemize}
The fibrations are described explicitly in \cite{Sagave-S_diagram}*{Section~6.11} and it follows from this description that if $X$ is a fibrant $\cI$-space in the absolute flat model structure, then any morphism $\bld m\to \bld n$ in $\cI$ induces a weak homotopy equivalence $X(\bld m)\to X(\bld n)$. The absolute flat model structure is a cofibrantly generated proper topological model structure that satisfies the pushout-product and the monoid axiom with respect to $\boxtimes$; 
see~\cite{Sagave-S_diagram}*{ Proposition~3.10}. We shall use the term \emph{flat $\cI$-space} for a cofibrant object in this model structure. If $X$ is a flat $\cI$-space, then the endofunctor $X\boxtimes -$ preserves $\cI$-equivalences by \cite{Sagave-S_diagram}*{Proposition~8.2}.

There is a variation of the absolute flat model structure on $\Top^{\cI}$ known as the \emph{positive flat model structure}, where the positivity condition is motivated by an insight of J. Smith, cf.\ ~\cite[\S 14]{MMSS}. The positive flat model structure again has the $\cI$-equivalences as its weak equivalences, but the conditions for a map 
$X\to Y$ to be a cofibration in the absolute flat model structure has been strengthened so that the latching map in degree zero (that is, the map $X(\bld 0)\to Y(\bld 0)$) is now supposed to be a homeomorphism.  
Consequently, the positive flat model structure has less cofibrations and more fibrations than the absolute flat model structure and in particular the condition for an $\cI$-space to be fibrant no longer implies that the initial map $\bld 0\to\bld n$ induces a weak homotopy equivalence $X(\bld 0)\to X(\bld n)$. The identity functor on $\Top^{\cI}$ is a left Quillen functor from the positive to the absolute flat model structure and defines a Quillen equivalence since these model structures have the same weak equivalences.  

\begin{remark}\label{rem:proj-remark}
Apart from the flat model structures there are also the so-called \emph{absolute} and \emph{positive projective model structures} on $\Top^{\cI}$, cf.\ \cite{Sagave-S_diagram}*{Section~3.1}. Let us temporarily write $\Top^{\cI}_{\mathrm {proj}}$  
for $\Top^{\cI}$ equipped with the (absolute or positive) projective model structure and $\Top^{\cI}_{\mathrm {flat}}$ for 
$\Top^{\cI}$ equipped with the corresponding (absolute or positive) flat model structure. Then there is a chain of Quillen equivalences 
\[
\xymatrix{
 \Top\ar@<-.5ex>[r]_-{\const_{\cI}} & \Top^{\cI}_{\textrm{proj}} \ar@<-.5ex>[l]_-{\mathrm{colim}_{\cI}} 
 \ar@<.5ex>[r]^-{\mathrm{id}}& \Top^{\cI}_{\mathrm{flat}} \ar@<.5ex>[l]^-{\mathrm{id}}
 }
\]
with respect to these model structures on $\Top^{\cI}$ and the usual Quillen model structure on $\Top$. 
Here the upper arrows indicate left Quillen functors. The first adjunction is induced by the colimit functor and the constant embedding, and the second adjunction is given by the identity functor. As a consequence, all these model structures have homotopy categories equivalent to the usual homotopy category of spaces.
\end{remark}

Now let $\cD$ be an operad in spaces as defined in \cite{May-geometry}, and let $\Top^{\cI}[\cD]$ be the category of 
$\cD$-algebras in $\Top^{\cI}$ with respect to the $\boxtimes$-product. The advantage of the positive flat model structure (as opposed to the absolute structure) is that it lifts to a positive flat model structure on $\Top^{\cI}[\cD]$ in the sense that a map of $\cD$-algebras is a weak equivalence or fibration if and only if the underlying map of $\cI$-spaces is so. This applies in particular to the commutativity operad $\cC$ (the terminal operad with $\cC_n=*$ for all $n$) and provides a positive flat model structure on the category of commutative $\cI$-space monoids $\cC\Top^{\cI} = \Top^{\cI}[\cC]$ that makes it Quillen equivalent to the category of $E_{\infty}$ spaces. 
In other words, the passage to $\cI$-spaces allows us to model $E_{\infty}$ spaces by strictly commutative monoids. In this paper we shall always use the term \emph{cofibrant commutative $\cI$-space monoid} to mean a cofibrant object in the positive flat model structure on $\cC\Top^{\cI}$. The flat model structures have the following convenient compatibility  property 
\cite{Sagave-S_diagram}*{Proposition~3.15}: If $G$ is a cofibrant commutative $\cI$-space monoid, then the underlying $\cI$-space of $G$ is flat.

The Bousfield-Kan homotopy colimit functor $(-)_{h\cI}\colon \Top^{\cI}\to \Top$ is a monoidal (but not symmetric monoidal) functor with monoidal product
\begin{equation}\label{eq:monoidal-structure-map-hI} X_{h\cI} \times Y_{h\cI} \xrightarrow{\iso} (X\times Y)_{h(\cI\times\cI)} \to (-\concat-)^*(X\boxtimes Y)_{h(\cI\times\cI)} \to (X\boxtimes Y)_{h\cI}
\end{equation}
 induced by the natural transformation $X(\bld{k})\times Y(\bld{l}) \to (X\boxtimes Y)(\bld{k}\concat\bld{l})$ resulting from the definition of $\boxtimes$ as a left Kan extension.  Hence an $\cI$-space monoid $M$ gives rise to a topological monoid $M_{h\cI}$. We say that $M$ is \emph{grouplike} if the monoid $\pi_0(M_{h\cI})$ is a group.

\subsection{Units of symmetric ring spectra} The category of $\cI$-spaces is related to the category of symmetric spectra $\Sp^{\Sigma}$ by an adjunction
\begin{equation}\label{eq:SI-OmegaI-adjunction} 
\bS^{\cI}\colon \Top^{\cI}\rightleftarrows \Spsym{} \colon \Omega^{\cI}
\end{equation} 
whose left adjoint $\bS^{\cI}$ is strong symmetric monoidal with respect to the $\boxtimes$-product on 
$\Top^{\cI}$ and the smash product on $\Sp^{\Sigma}$; see~\cite{Sagave-S_diagram}*{Section~3.17}. These functors are given explicitly at each level $n$ by $\bS^{\cI}[X]_{n} = S^n \sm X(\bld{n})_{+}$ and $\Omega^{\cI}(E)(\bld{n}) = \Omega^{n}(E_n)$.

 The \emph{absolute} and \emph{positive flat stable model structures} (also known as the  \emph{\mbox{S-model} structures}) on $\Sp^{\Sigma}$ discussed in \cite{Shipley_convenient} and \cite{Schwede_SymSp} are the analogues for symmetric spectra of the absolute and positive flat model structures on $\Top^{\cI}$ (and motivated their construction). The adjunction \eqref{eq:SI-OmegaI-adjunction} is a Quillen adjunction with respect to these model structures.
It is a pleasant fact that $\bS^{\cI}$ and $\Omega^{\cI}$ have better homotopy invariance properties than can be deduced from the general properties of a Quillen adjunction. 
\begin{lemma}\label{lem:hty-inv-of-SI-OmegaI} 
The left adjoint $\bS^{\cI}$ maps $\cI$-equivalences between arbitrary $\cI$-spaces to stable equivalences, and the right adjoint $\Omega^{\cI}$ maps $\pi_*$-isomorphisms between arbitrary symmetric spectra to $\cI$-equivalences.
\end{lemma}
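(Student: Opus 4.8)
The plan is to bootstrap both statements from the fact that $\bS^{\cI}$ and $\Omega^{\cI}$ already preserve \emph{level} equivalences, combined with explicit homotopy-colimit descriptions of the two functors. The level statement is immediate: since $S^n\sm(-)$ preserves weak equivalences of spaces, the formula $\bS^{\cI}[X]_n=S^n\sm X(\bld n)_+$ shows that $\bS^{\cI}$ sends level equivalences of $\cI$-spaces to level, hence stable, equivalences; and since every object of $\Top$ is fibrant, $\Omega^n=\Map_*(S^n,-)$ preserves weak equivalences, so $\Omega^{\cI}(E)(\bld n)=\Omega^n(E_n)$ shows that $\Omega^{\cI}$ sends level equivalences of symmetric spectra to level (hence $\cI$-) equivalences. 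All the work goes into promoting this from level equivalences to $\cI$-equivalences, respectively $\pi_*$-isomorphisms.

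For $\bS^{\cI}$, I would replace an arbitrary $\cI$-space $X$ functorially by the two-sided bar construction $\widetilde X=B(\cI(-,=),\cI,X)$, whose simplicial terms are coproducts of free $\cI$-spaces $\cI(\bld m,-)\times A$ and whose augmentation $\widetilde X\to X$ is a \emph{level} equivalence (via the standard extra degeneracy). Since $\bS^{\cI}$ preserves level equivalences, by two-out-of-three it then suffices to treat $\cI$-equivalences between such bar resolutions. For these I would use the natural isomorphism $\bS^{\cI}[\cI(\bld m,-)\times A]\iso F_m(S^m\sm A_+)$ and the standard stable equivalence $F_m(S^m\sm A_+)\to\Sigma^{\infty}_+ A$; because $\bS^{\cI}$ commutes with coproducts and geometric realizations, assembling these maps over the simplicial degrees ---using closure of stable equivalences under wedges and under realizations of proper simplicial symmetric spectra--- yields a natural stable equivalence $\bS^{\cI}[\widetilde X]\simeq\Sigma^{\infty}_+(\widetilde X_{h\cI})\simeq\Sigma^{\infty}_+(X_{h\cI})$. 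An $\cI$-equivalence $f\colon X\to Y$ makes $X_{h\cI}\to Y_{h\cI}$ a weak equivalence, hence makes $\Sigma^{\infty}_+(-)$ of it a stable equivalence, and a last two-out-of-three argument finishes this half. (Alternatively one quotes the homotopy-colimit computation of $\bS^{\cI}$ directly from \cite{Sagave-S_diagram}.)

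For $\Omega^{\cI}$, the decisive input is that for \emph{any} symmetric spectrum $E$ the homotopy colimit $\Omega^{\cI}(E)_{h\cI}$ models $\Omega^{\infty}$ of a stably fibrant replacement of $E$ ---equivalently, $\pi_k(\Omega^{\cI}(E)_{h\cI})$ is the $k$-th true homotopy group of $E$. This is the one place where a substantive result of \cite{Sagave-S_diagram} is genuinely needed: the homotopy colimit over $\cI$, because of the symmetric-group actions it incorporates, automatically repairs the failure of semistability. Granting it, a $\pi_*$-isomorphism $f\colon E\to F$ is in particular a stable equivalence, hence induces an isomorphism on true homotopy groups, so $\Omega^{\cI}(E)_{h\cI}\to\Omega^{\cI}(F)_{h\cI}$ is a weak equivalence and $\Omega^{\cI}(f)$ is an $\cI$-equivalence. (A more hands-on variant factors $f$ into a level cofibration followed by a level acyclic fibration, reduces via the first paragraph to showing $\Omega^{\cI}(C)_{h\cI}\simeq *$ for the cofiber $C$ of a level cofibration that is a $\pi_*$-isomorphism, i.e.\ a $C$ with $\pi_*(C)=0$, and then proves this by induction over the skeletal/latching filtration of the homotopy colimit over $\cI$.)

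The main obstacle in both halves is exactly the gap left by Ken Brown's lemma, which only guarantees preservation of weak equivalences between cofibrant, respectively fibrant, objects: one has to understand the \emph{underived} functors on arbitrary objects. For $\bS^{\cI}$ this is the simplicial bookkeeping that upgrades the level-by-level maps $F_m(S^m\sm A_+)\to\Sigma^{\infty}_+ A$ to the stable equivalence $\bS^{\cI}[\widetilde X]\simeq\Sigma^{\infty}_+(X_{h\cI})$, together with the point-set care required because the spaces $A=X(\bld m)$ are not assumed cofibrant. For $\Omega^{\cI}$ it is the structural fact that the $\cI$-homotopy colimit detects the \emph{true}, not the naive, homotopy groups of a symmetric spectrum ---precisely where semistability (and hence the machinery of \cite{Sagave-S_diagram}) enters.
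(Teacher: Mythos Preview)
Your treatment of $\bS^{\cI}$ is correct but far more elaborate than necessary. The paper dispatches this in one line: factor an arbitrary $\cI$-equivalence as an acyclic cofibration followed by an acyclic fibration in the absolute flat model structure; the left Quillen functor $\bS^{\cI}$ sends the former to a stable equivalence, and since acyclic fibrations in this model structure are \emph{level} equivalences, your own observation that $\bS^{\cI}$ preserves level equivalences handles the latter. No bar resolutions, no bookkeeping with free diagrams, no realization arguments are needed.

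Your main argument for $\Omega^{\cI}$, however, rests on a false premise. You assert that $\Omega^{\cI}(E)_{h\cI}$ computes the \emph{true} homotopy groups of $E$, and that the passage from $\bN$ to $\cI$ ``repairs the failure of semistability.'' It does not. The Smith cofinality argument (cited in the paper as \cite{Shipley_THH}*{Proposition~2.2.9} and \cite{Sagave-S_group-compl}*{Proposition~2.6}) says exactly the opposite: the homotopy colimit over $\cI$ agrees with the homotopy colimit over the subcategory $\bN$ of subset inclusions, and the latter is the telescope computing the \emph{naive} homotopy groups. If your claim were correct, $\Omega^{\cI}$ would take all stable equivalences to $\cI$-equivalences, contradicting Remark~\ref{rem:semistable}. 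The paper's argument is thus more direct than yours, not less: a $\pi_*$-isomorphism \emph{by definition} induces a weak equivalence on the $\bN$-homotopy colimits of $\Omega^{\cI}(-)$, and Smith's argument upgrades this to the $\cI$-homotopy colimit. Your parenthetical ``hands-on variant'' is essentially this correct argument; the detour through true homotopy groups is both unnecessary and wrong.
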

\begin{proof} 
Since $\bS^{\cI}$ is a left Quillen functor and an acyclic fibration in the absolute flat model structure on $\Top^{\cI}$ is a level equivalence, the first statement follows from the fact that $\bS^{\cI}$ sends level equivalences of $\cI$-spaces to level equivalences of symmetric spectra.
For a $\pi_*$-isomorphism of symmetric spectra $E \to E'$  it follows from the definitions that $\Omega^{\cI}(E)\to \Omega^{\cI}(E')$ induces a weak homotopy equivalence when forming the homotopy colimit over the subcategory of $\cI$ given by the subset inclusions. 
By an argument originally due to J.~Smith, it is therefore also an \mbox{$\cI$-equivalence}; see~\cite{Shipley_THH}*{Proposition~2.2.9} or~\cite{Sagave-S_group-compl}*{Proposition~2.6}.
  \end{proof}
 
\begin{remark}\label{rem:semistable}
Recall from \cite{Shipley_THH} and \cite{Schwede_SymSp} that a symmetric spectrum is said to be \emph{semistable} if it is $\pi_*$-equivalent to a symmetric $\Omega$-spectrum. Since stable equivalences and $\pi_*$-isomorphisms agree for semistable symmetric spectra, it follows that $\Omega^{\cI}$ takes stable equivalences between semistable symmetric spectra to $\cI$-equivalences. Fortunately, most of the symmetric spectra that one encounters in practice are semistable. 
\end{remark}
   
Since $\bS^{\cI}$ is strong symmetric monoidal and $\Omega^{\cI}$ is lax symmetric monoidal, we have an induced adjunction
\begin{equation}\label{eq:CSI-CSpsym-adj} 
\bS^{\cI}\colon \cC\Top^{\cI}\rightleftarrows \cC\Spsym{} \colon \Omega^{\cI}
\end{equation} 
relating the category of commutative $\cI$-space monoids $\cC\Top^{\cI}$ to the category  of commutative symmetric ring spectra $\cC\Sp^{\Sigma}$. 
If $R$ is a semistable commutative symmetric ring spectrum, then $\Omega^{\cI}(R)$ is a commutative $\cI$-space monoid
model for the corresponding multiplicative $E_{\infty}$ space of $R$. 

\begin{definition}
Let $R$ be a semistable commutative symmetric ring spectrum.  The \emph{$\cI$-space units} $\GL^{\cI}_1(R)$ of $R$  is the sub commutative $\cI$-space monoid of $\Omega^{\cI}(R)$ given by the invertible path components in the sense that
  $\GL_1^{\cI}(R)(\bld{n})$ is the union of the path components in $\Omega^n(R_n)$ that represent units in the commutative ring $\pi_0(R)=\colim_n \pi_n(R_n)$.
 \end{definition}
It follows from the definition that  $\pi_0(\GL_1^{\cI}(R)_{h\cI})$ can be identified with the units in $\pi_0(R)\cong \pi_0(\Omega^{\cI}(R)_{h\cI})$ which shows that $\GL_1^{\cI}(R)$ is grouplike. We notice that the adjoint
of the inclusion $\GL_1^{\cI}(R)\to\Omega^{\cI}(R)$ provides a canonical map of commutative symmetric ring spectra
\begin{equation}\label{eq:SI-GLoneIR-to-R}
\bS^{\cI}[\GL_1^{\cI}(R)]\to R,
\end{equation}
analogous to the algebraic situation where a commutative ring receives a canonical map from the integral group ring of its units. 

\begin{remark}
If we want to consider the units of a commutative symmetric ring spectrum that is not semistable, we may apply the above construction to a suitable fibrant replacement.
\end{remark}

\subsection{The universal fibration \texorpdfstring{$EG\to BG$}{EG -> BG}}\label{subsec:class-spaces} 
Given an $\cI$-space monoid $G$, a right $G$-module $X$, and a left $G$-module $Y$, the simplicial two-sided bar construction $B_{\bullet}(X,G,Y)$ is the simplicial $\cI$-space $[n]\mapsto X\boxtimes G^{\boxtimes n}\boxtimes Y$ with simplicial structure maps defined as in \cite[Section 10]{May-geometry}. We write $B(X,G,Y)$ for the $\cI$-space defined by geometric realization of this simplicial object. When $G$ is commutative and $H$ and $H'$ are commutative $G$-algebras given by maps of commutative \mbox{$\cI$-space} monoids $G\to H$ and $G\to H'$, the two-sided bar construction $B(H,G,H')$ inherits the structure of a commutative $\cI$-space monoid, cf.\ \cite[Lemma 10.1]{May-geometry}. The monoidal unit $U^{\cI}$ is also a terminal object in $\cC\Top^{\cI}$ and we define the bar construction $BG$ to be the commutative $\cI$-space monoid $B(U^{\cI},G,U^{\cI})$.

\begin{definition}\label{def:EG-BG}
Given a commutative $\cI$-space monoid $G$, the map $G\to U^{\cI}$ induces a map 
$B(U^{\cI},G,G) \to BG$ of commutative $\cI$-space monoids and we define 
$EG \to BG$
to be the positive fibration of commutative $\cI$-space monoids 
resulting from a (functorial) factorization 
$
\xymatrix@-1pc{B(U^{\cI},G,G)
  \ar@{ >->}[r]^-{\sim} & EG \ar@{->>}[r] & BG
}
$
in the positive flat model structure on $\cC\Top^{\cI}$. 
\end{definition}

The inclusion of the last copy of $G$ in $B(U^{\cI},G,G)$ and the acyclic cofibration $B(U^{\cI},G,G) \to EG$ from the above definition make $EG$ a commutative $G$-algebra. 
Hence we can view $EG \to BG$ as a map of commutative $G$-algebras, where the structure map 
$G \to BG$ factors through the terminal map $G \to U^{\cI}$. It follows from general properties of the two-sided bar construction that $B(U^{\cI},G,G)$ contains $U^{\cI}$ as a deformation retract 
(see \cite[Section~9]{May-geometry}) which implies that the unit $U^{\cI}\to EG$ is an $\cI$-equivalence.

\begin{remark}
If $G$ and $G'$ are cofibrant in the positive flat model structure on $\cC\Top^{\cI}$, then an $\cI$-equivalence $G\to G'$ induces an $\cI$-equivalence 
$BG \to BG'$.  Therefore $BG$ represents a well-defined homotopy type if $G$ is cofibrant in $\cC\Top^{\cI}$.
  The fact that $EG \to BG$ is a positive fibration by construction will free us from making additional fibrancy assumptions later on and it is for this reason we prefer to work with $EG$ instead of $B(U^{\cI},G,G)$. Such additional fibrancy conditions were needed in Lewis' work on the ``classical'' Thom spectrum functor \cite[Section~IX]{LMS}  since the model categorical techniques for making multiplicative fibrant replacements were not in place at the time when that paper was written. In our setting, additional fibrancy conditions will only be needed to make the passage from space level data to $\cI$-space data in Section~\ref{sec:Thom-spectra-from-space-level-data} homotopy invariant. 
\end{remark}

Let again $G$ be a commutative $\cI$-space monoid, and let $\Top^{\cI}_G$ denote the category of (right) $G$-modules with respect to the $\boxtimes$-product. The category $\Top^{\cI}_G$ inherits a
symmetric monoidal product $X\boxtimes_GY$ defined by the usual coequalizer diagram
and with $G$ as the monoidal unit. Identifying the category of $U^{\cI}$-modules with $\Top^{\cI}$, the map 
$G \to U^{\cI}$ induces a restriction of scalars functor $\mathrm{triv}_G \colon \Top^{\cI} \to \Top^{\cI}_G$ whose left adjoint
is the strong symmetric monoidal extension of scalars functor
$-\boxtimes_GU^{\cI}\colon \Top^{\cI}_G\to \Top^{\cI}$. In the following we shall use the notation $EG \to \mathrm{triv}_G BG$ when we think of the map of $G$-algebras in Definition~\ref{def:EG-BG} as a map of 
$G$-modules. 

\begin{definition}\label{def:VU-adjunction} Let $G$ be a commutative $\cI$-space monoid. We define a pair of adjoint functors $(V,U)$ by the composition 
\begin{equation}\label{eq:VU-adjunction}
V\colon \Top^{\cI}_G/EG \rightleftarrows \Top^{\cI}_G/\mathrm{triv}_G BG  \rightleftarrows \Top^{\cI}/BG : U
\end{equation}
where the first adjunction is given by composition with and base change along $EG \to \mathrm{triv}_G BG$ and the second adjunction is induced by the adjoint functors $-\boxtimes_GU^{\cI}$ and $\mathrm{triv}_G$. 
\end{definition}
More explicitly, the left adjoint $V$ sends $X \to EG$ to the composition
\[
X\boxtimes_GU^{\cI}\to  EG\boxtimes_GU^{\cI} \to (\mathrm{triv}_G BG)\boxtimes_G U^{\cI} \cong BG
\] 
and the right adjoint $U$ sends $Y \to BG$ to the pullback of the diagram 
\[
EG \to \mathrm{triv}_G BG \ot \mathrm{triv}_GY.
\]
Since $BG$ is a commutative $\cI$-space monoid, the category $\Top^{\cI}/BG$ inherits a symmetric monidal structure from $\Top^{\cI}$: Given maps of $\cI$-spaces $\alpha\colon X\to BG$ and $\beta\colon Y\to BG$, the
monoidal product $\alpha\boxtimes \beta$ is defined by the composition
\[
\alpha\boxtimes \beta\colon X\boxtimes Y\to BG\boxtimes BG\to BG.
\]
This product has the unit $\iota\colon U^{\cI}\to BG$ as its monoidal unit. Notice also that a commutative monoid in the symmetric monoidal category $(\Top_G^{\cI},\boxtimes_G,G)$ is the same thing as a commutative $G$-algebra. Hence we may view $EG$ as a commutative monoid in $\Top_G^{\cI}$ so that $\Top_G^{\cI}/EG$ inherits a symmetric monoidal structure in the same way. Here the monoidal unit is the $G$-algebra unit $\iota_G\colon G\to EG$.

\begin{lemma}\label{lem:ISG-ISBG-adj-monoidal}
The left adjoint $V$ in~\eqref{eq:VU-adjunction} is a strong symmetric monoidal functor, and its right adjoint $U$ is a lax symmetric monoidal functor.  
\end{lemma}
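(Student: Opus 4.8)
The plan is to analyze the two adjunctions in~\eqref{eq:VU-adjunction} separately and observe that in each case the left adjoint is strong symmetric monoidal (hence the right adjoint is lax symmetric monoidal, by the standard doctrinal-adjunction argument of Kelly). Composing strong symmetric monoidal functors gives a strong symmetric monoidal functor, so $V$ will be strong symmetric monoidal and $U$ will be lax symmetric monoidal.

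\textbf{The second adjunction.} First I would treat the adjunction $\Top^{\cI}_G/\mathrm{triv}_G BG \rightleftarrows \Top^{\cI}/BG$ induced by $-\boxtimes_G U^{\cI}$ and $\mathrm{triv}_G$. The extension-of-scalars functor $-\boxtimes_G U^{\cI}\colon \Top^{\cI}_G \to \Top^{\cI}$ is already asserted in the text to be strong symmetric monoidal. I would check that it induces a strong symmetric monoidal functor on the over-categories: an object of $\Top^{\cI}_G/\mathrm{triv}_G BG$ is a $G$-module map $X\to \mathrm{triv}_G BG$, and applying $-\boxtimes_G U^{\cI}$ and using the identification $(\mathrm{triv}_G BG)\boxtimes_G U^{\cI}\cong BG$ yields an object of $\Top^{\cI}/BG$. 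The monoidal structure on both over-categories is, by definition, induced from the ambient monoidal structure by composing with the multiplication of the (commutative) monoid one is slicing over; since $-\boxtimes_G U^{\cI}$ is strong symmetric monoidal on the ambient categories and sends the monoid $EG$ to $BG$ compatibly with multiplications, it induces a strong symmetric monoidal functor on slices. Concretely, for $\alpha\colon X\to\mathrm{triv}_G BG$ and $\beta\colon Y\to\mathrm{triv}_G BG$ one compares $(X\boxtimes_G Y)\boxtimes_G U^{\cI}$ with $(X\boxtimes_G U^{\cI})\boxtimes(Y\boxtimes_G U^{\cI})$ via the structure isomorphism of $-\boxtimes_G U^{\cI}$, and checks the resulting square over $BG$ commutes.

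\textbf{The first adjunction.} Next I would treat $\Top^{\cI}_G/EG \rightleftarrows \Top^{\cI}_G/\mathrm{triv}_G BG$, given by composition with, and base change along, the map $EG\to\mathrm{triv}_G BG$. Here the subtlety is that $EG\to \mathrm{triv}_G BG$ is a map of commutative monoids in $(\Top^{\cI}_G,\boxtimes_G,G)$, i.e.\ a map of commutative $G$-algebras, so that slicing over $EG$ and over $\mathrm{triv}_G BG$ both carry the induced symmetric monoidal structure. For a map $r\colon A\to B$ of commutative monoids in a symmetric monoidal category, the ``compose with $r$'' functor $\cC/A\to\cC/B$ is strong symmetric monoidal: given $X\to A$ and $Y\to A$, the product in $\cC/A$ is $X\otimes Y\to A\otimes A\to A$, and post-composing with $r$ factors through $X\otimes Y\to B\otimes B\to B$ because $r$ is a monoid map; it also visibly preserves the unit since $r$ sends the unit map of $A$ to that of $B$. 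This is exactly the left adjoint of the first adjunction, so it is strong symmetric monoidal.

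\textbf{Conclusion and the main point of care.} Putting these together, $V$ is a composite of two strong symmetric monoidal functors, hence strong symmetric monoidal, and its right adjoint $U$ is lax symmetric monoidal by doctrinal adjunction (the lax structure maps on $U$ being mated to the inverses of the strong structure isomorphisms of $V$). I expect the only real work to be bookkeeping: verifying that the monoidal structures on the four over-categories involved are indeed the ones ``induced from the ambient $\boxtimes$'' as described right before the lemma, and that the relevant structure maps ($EG$ being a commutative $G$-algebra, the isomorphism $(\mathrm{triv}_G BG)\boxtimes_G U^{\cI}\cong BG$, and $-\boxtimes_G U^{\cI}$ being strong symmetric monoidal) are compatible in the evident coherence diagrams. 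None of this is deep, but it is the step where one must be careful, so I would either spell out the two slice-level computations above or simply cite the general principle that a monoidal-functor/monoidal-monoid-map pair induces a strong symmetric monoidal functor on the corresponding slice categories, together with the fact that an adjunction whose left adjoint is strong symmetric monoidal has lax symmetric monoidal right adjoint.
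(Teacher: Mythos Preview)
Your proposal is correct and takes essentially the same approach as the paper: both argue that $V$ is strong symmetric monoidal because its two constituent functors are, and then deduce the lax structure on $U$ from the adjunction. The one noteworthy difference is that the paper records the explicit mate-formulas for the lax structure maps of $U$ (in terms of the unit and counit of the $(V,U)$-adjunction and the strong monoidal isomorphisms for $V$), remarking that these are needed later; indeed they are invoked in the proof of Lemma~\ref{lem:U-derived-monoidal}, so you may want to spell them out rather than only citing doctrinal adjunction.
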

\begin{proof}
This is standard, but we later need to know the details. The fact that the extension of scalars functor $-\boxtimes_GU^{\cI}$ is strong symmetric monoidal implies that the same holds for $V$. Using this, the lax symmetric monoidal structure maps for $U$ are given by the compositions
\[
U(\alpha)\boxtimes_G U(\beta)\to UV(U(\alpha)\boxtimes_G U(\beta))\xleftarrow{\iso} U(VU(\alpha)\boxtimes VU(\beta))
\to U(\alpha\boxtimes \beta)
\]
and 
$
\iota_G \to UV(\iota_G)\stackrel{\cong}{\ot}U(\iota),
$
induced by the adjunction unit and counit and the monoidal structure map for~$V$. (These structure maps can also be described explicitly by the universal property of the pullback.)
\end{proof}

Having arranged for $EG \to BG$ to be a fibration in the positive flat model structure, right properness of the latter model structure \cite[Proposition~11.3]{Sagave-S_diagram} has the following consequence. 
\begin{lemma}\label{lem:U-preserves-I-equiv}
The functor $U \colon \Top^{\cI}/BG \to \Top^{\cI}_G/EG$ preserves $\cI$-equivalences. \qed
\end{lemma}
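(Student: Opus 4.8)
The plan is to deduce this from right properness of the positive flat model structure on $\Top^{\cI}$ by exhibiting $U$ as a composite of two functors, each of which preserves $\cI$-equivalences. Recall from Definition~\ref{def:VU-adjunction} that $U$ factors as $\Top^{\cI}/BG \to \Top^{\cI}_G/\mathrm{triv}_G BG \to \Top^{\cI}_G/EG$, where the first functor is $\mathrm{triv}_G$ applied to the overcategory and the second is base change along $EG \to \mathrm{triv}_G BG$. Concretely, $U$ sends $Y \to BG$ to the pullback of $EG \to \mathrm{triv}_G BG \leftarrow \mathrm{triv}_G Y$, so what must be checked is that this pullback is homotopy invariant in the variable $Y$.

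First I would observe that the functor $\mathrm{triv}_G \colon \Top^{\cI} \to \Top^{\cI}_G$ preserves $\cI$-equivalences, since the underlying $\cI$-space of $\mathrm{triv}_G Y$ is just $Y$ and $\cI$-equivalences of $G$-modules are detected on underlying $\cI$-spaces. Hence the induced functor on overcategories $\Top^{\cI}/BG \to \Top^{\cI}_G/\mathrm{triv}_G BG$ preserves $\cI$-equivalences. It then remains to treat the base-change functor along $EG \to \mathrm{triv}_G BG$, i.e.\ to show that pulling back a morphism over $\mathrm{triv}_G BG$ along $EG \to \mathrm{triv}_G BG$ preserves $\cI$-equivalences between objects of $\Top^{\cI}_G/\mathrm{triv}_G BG$.

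The key input here is that $EG \to BG$ is a \emph{positive fibration} of commutative $\cI$-space monoids by construction in Definition~\ref{def:EG-BG}, so $EG \to \mathrm{triv}_G BG$ is a fibration in the positive flat model structure on $\Top^{\cI}_G$ (the model structure on $G$-modules is created from that on $\Top^{\cI}$ via the forgetful functor, and the forgetful functor detects fibrations). Combined with right properness of the positive flat model structure on $\Top^{\cI}$—cited as \cite[Proposition~11.3]{Sagave-S_diagram}, which transfers to $\Top^{\cI}_G$ since weak equivalences and fibrations are created in $\Top^{\cI}$—this says that pulling back an $\cI$-equivalence along the fibration $EG \to \mathrm{triv}_G BG$ yields an $\cI$-equivalence. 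Applying this fibrewise to a morphism $Y \to Y'$ over $\mathrm{triv}_G BG$: the map $U(Y) \to U(Y')$ of pullbacks is obtained by pulling back $\mathrm{triv}_G Y \to \mathrm{triv}_G Y'$ (an $\cI$-equivalence when $Y \to Y'$ is one) along $EG \to \mathrm{triv}_G BG$, hence is an $\cI$-equivalence.

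The only real subtlety—and the step I would be most careful about—is the passage of right properness from $\Top^{\cI}$ to the module category $\Top^{\cI}_G$. Since the flat model structure on $\Top^{\cI}_G$ has its weak equivalences, fibrations, and pullbacks all created in $\Top^{\cI}$ through the forgetful functor, right properness is inherited formally; but one should note explicitly that $EG \to \mathrm{triv}_G BG$, viewed in $\Top^{\cI}$ after forgetting the $G$-action, is still a fibration (it is the underlying map of a positive fibration of $G$-modules, equivalently of commutative $\cI$-space monoids, and these are all defined by lifting against the same generating acyclic cofibrations). Once that is in place the argument is the standard ``pullback along a fibration preserves weak equivalences in a right proper model category,'' applied levelwise in the overcategory direction, and no further computation is needed.
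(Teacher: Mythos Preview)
Your argument is correct and is essentially the paper's own: the lemma is stated with a \qed\ because the sentence preceding it already gives the proof, namely that $EG\to BG$ was arranged to be a positive fibration and the positive flat model structure is right proper by \cite[Proposition~11.3]{Sagave-S_diagram}. Your additional care about transferring right properness to $\Top^{\cI}_G$ is harmless but unnecessary, since pullbacks, fibrations, and $\cI$-equivalences of $G$-modules are all created in $\Top^{\cI}$, so one may simply argue there.
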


It follows from \cite[Proposition 3.10]{Sagave-S_diagram} and~\cite[Theorem~4.1(2)]{Schwede_S-algebras} that the absolute and positive flat model structures on 
$\Top^{\cI}$ lift to corresponding absolute and positive flat model structures on $\Top_G^{\cI}$ in the sense that a map of 
$G$-modules is a weak equivalence or fibration if and only if the underlying map of $\cI$-spaces is so. The next theorem shows  that under suitable assumptions on $G$ we may view $EG\to BG$ as a universal fibration and $BG$ as a classifying space for 
$G$-modules.  

\begin{theorem}\label{thm:Top^I_G-Quillen-equivalence}
Let $G$ be a grouplike and cofibrant commutative $\cI$-space monoid. Then the $(V,U)$-adjunction \eqref{eq:VU-adjunction} and the adjunction induced by the canonical map $EG\to U^{\cI}$ define a chain of Quillen equivalences
\[
\Top^{\cI}_G\leftrightarrows \Top^{\cI}_G/EG\rightleftarrows \Top^{\cI}/BG
\]  
with respect to the absolute and positive flat model structures on these categories.
\end{theorem}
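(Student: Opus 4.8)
The plan is to verify that each of the two adjunctions in the chain is a Quillen equivalence, treating the two flat model structures uniformly. I would begin with the adjunction $\Top^{\cI}_G \rightleftarrows \Top^{\cI}_G/EG$ induced by $EG \to U^{\cI}$. Here the left adjoint is $X \mapsto (X \to EG \to U^{\cI})$ viewed after forgetting down to $\Top^{\cI}$ along $G \to U^{\cI}$; more precisely, since $EG$ is contractible (the unit $U^{\cI} \to EG$ is an $\cI$-equivalence, as noted after Definition~\ref{def:EG-BG}), slicing over $EG$ does not change the homotopy theory. I would make this precise by the standard fact that for any model category $\cM$ and any weakly terminal cofibrant-fibrant-enough object $Z$ that is weakly equivalent to the terminal object, the slice $\cM/Z \to \cM$ is a Quillen equivalence. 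Since $EG \to BG$ is a positive fibration by construction and $EG$ is $\cI$-equivalent to $U^{\cI}$, the object $EG$ is a fibrant model of the terminal object in $\Top^{\cI}_G$ (using that fibrations of $G$-modules are detected on underlying $\cI$-spaces), and the claim for this adjunction follows. One should check the left adjoint preserves cofibrations and acyclic cofibrations, which is immediate since cofibrations in the slice are detected underneath, and that the derived unit/counit are equivalences, which reduces to $EG \simeq U^{\cI}$.

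Next I would treat the $(V,U)$-adjunction $\Top^{\cI}_G/EG \rightleftarrows \Top^{\cI}/BG$. First I would check it is a Quillen adjunction: by Definition~\ref{def:VU-adjunction} the left adjoint $V$ is the composite of base change along $EG \to \mathrm{triv}_G BG$ with extension of scalars $-\boxtimes_G U^{\cI}$; extension of scalars is left Quillen for the lifted flat model structures on $\Top^{\cI}_G$ (by \cite[Theorem~4.1]{Schwede_S-algebras} type arguments, using that $G$ is flat since it is cofibrant commutative), and composition with $EG \to \mathrm{triv}_G BG$ is obviously left Quillen on slice categories. To see it is a Quillen equivalence, I would show the right adjoint $U$ reflects weak equivalences between fibrant objects and that the derived unit is an $\cI$-equivalence on cofibrant objects. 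Lemma~\ref{lem:U-preserves-I-equiv} already gives that $U$ preserves $\cI$-equivalences, and since every object of $\Top^{\cI}/BG$ is fibrant-enough after replacing the target (or rather, $U$ detects equivalences because $\cI$-equivalences in the slice are detected underneath and $U$ is formed by a pullback along the fibration $EG \to \mathrm{triv}_G BG$), the reflection property follows from right properness of the flat model structure \cite[Proposition~11.3]{Sagave-S_diagram}. For the derived unit, given a cofibrant $X \to EG$ in $\Top^{\cI}_G/EG$, I must show $X \to U(V(X))$ is an $\cI$-equivalence; unwinding, $U(V(X))$ is the pullback of $EG \to \mathrm{triv}_G BG \leftarrow \mathrm{triv}_G(X \boxtimes_G U^{\cI})$, so the map $X \to U(V(X))$ is a comparison between $X$ and this pullback over $EG$.

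The main obstacle is precisely this last step: showing the natural map from $X$ to the pullback $EG \times_{\mathrm{triv}_G BG} \mathrm{triv}_G(X \boxtimes_G U^{\cI})$ is an $\cI$-equivalence when $X \to EG$ is a cofibrant $G$-module over $EG$. This is where the \emph{grouplike} hypothesis on $G$ is essential. The argument should run as follows. Since $G$ is grouplike and cofibrant, the bar construction realizes $EG$ as a free resolution on which $G$ acts freely (up to homotopy), so $G$-modules over $EG$ are the same, homotopically, as $\cI$-spaces over $BG = EG \boxtimes_G U^{\cI}$; concretely, for a free $G$-module $X = G \boxtimes A$ with its canonical map to $EG$ coming from a map $A \to EG$, one computes $X \boxtimes_G U^{\cI} \cong A$ and the pullback reduces to $EG \times_{BG} A$ which, because $EG \to BG$ is an $\cI$-equivalence after one quotients by the free $G$-action and grouplikeness makes the relevant homotopy orbit/fixed point comparison hold, is $\cI$-equivalent to $X$. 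I would first establish this for free $G$-modules (where it is a direct bar-construction computation together with the fact, from the review of $\cI$-spaces, that $G$-actions on flat $\cI$-spaces are homotopically well-behaved and that $- \boxtimes_G U^{\cI}$ computes the homotopy orbit $-_{hG}$ when $G$ is flat), then bootstrap to all cofibrant $G$-modules over $EG$ by an induction over cell attachments, using that both $V$ and $U$ preserve the relevant pushouts and $\cI$-equivalences ($V$ as a left adjoint, $U$ by Lemma~\ref{lem:U-preserves-I-equiv} together with right properness to handle the pullbacks). The grouplike condition enters exactly in the base case: it is what guarantees that $EG \to BG$ behaves like a principal $G$-fibration, i.e. that the canonical map $X \to EG \times_{BG}(X \boxtimes_G U^{\cI})$ is an equivalence rather than merely a map whose fiber is the group completion of $G$.
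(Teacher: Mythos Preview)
Your treatment of the first adjunction $\Top^{\cI}_G \leftrightarrows \Top^{\cI}_G/EG$ is correct and matches the paper: it is a Quillen equivalence because $U^{\cI}\to EG$ is an $\cI$-equivalence.

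For the $(V,U)$-adjunction your strategy diverges from the paper's, and the inductive step contains a real gap. You claim that a cell induction on cofibrant $X\to EG$ will show the derived unit $X\to UV(X)$ is an $\cI$-equivalence, ``using that both $V$ and $U$ preserve the relevant pushouts \dots\ $U$ by Lemma~\ref{lem:U-preserves-I-equiv} together with right properness to handle the pullbacks.'' But right properness only says that pullback along a fibration preserves weak equivalences; it says nothing about $U$ preserving (homotopy) pushouts, which is what the induction actually needs. The functor $U$ is a right adjoint, and there is no reason a priori for $UV$ applied to a cell attachment to be the cell attachment of the $UV$'s. One can rescue your induction by invoking Mather's second cube theorem (a cube with homotopy cocartesian bottom and homotopy cartesian sides has homotopy cocartesian top), applied after $(-)_{h\cI}$; but that is an additional ingredient you have not supplied, and your stated justification is incorrect as written. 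Your base case sketch is also rather vague: the sentence ``$EG \to BG$ is an $\cI$-equivalence after one quotients by the free $G$-action'' does not explain why the specific comparison map $G\boxtimes A \to EG\times_{BG}A$ is an $\cI$-equivalence.

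The paper avoids cell induction entirely. It proves (as Lemma~\ref{lem:boxtimes-G-square}) that for $G$ grouplike and cofibrant and $X,X'$ flat $G$-modules, the square
\[
\xymatrix@-1pc{
X\boxtimes_G Y \ar[r]\ar[d] & X\boxtimes_G Y' \ar[d]\\
X'\boxtimes_G Y \ar[r] & X'\boxtimes_G Y'
}
\]
is homotopy cartesian, by replacing $\boxtimes_G$ with the two-sided bar construction, passing to $(-)_{h\cI}$, and using May's theorem that $B(-,G_{h\cI},*)$ is a quasifibration when $G_{h\cI}$ is grouplike. With this lemma in hand, the Quillen equivalence is checked directly: for cofibrant $X\to EG$ and fibrant $Y\to BG$, one writes down the diagram
\[
\xymatrix@-1pc{
X\cong X\boxtimes_G G \ar[r]\ar[d] & X\boxtimes_G U^{\cI} \ar[r]^-{\varphi}\ar[d] & Y\ar[d]\\
EG\cong EG\boxtimes_G G \ar[r] & EG\boxtimes_G U^{\cI} \ar[r]^-{\sim} & BG,
}
\]
notes the left square is homotopy cartesian by the lemma, and observes that $\varphi$ is an $\cI$-equivalence iff the right square is homotopy cartesian, while its adjoint is an $\cI$-equivalence iff the outer rectangle is; a fiberwise comparison finishes. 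The grouplike hypothesis is thus isolated in a single quasifibration argument rather than spread across a cell induction.
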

\begin{proof}
The first Quillen adjunction is given by composition with and pullback along the map $EG\to U^{\cI}$ and is a Quillen equivalence because the latter is an \mbox{$\cI$-equivalence}. We postpone the argument why the $(V,U)$-adjunction defines a Quillen equivalence until Section~\ref{sec:modules-classifying-spaces} where the statement appears as part of Proposition~\ref{prop:ISG-ISBG-adj}. 
\end{proof} 



Let $\alpha\colon X\to BG$ and $\beta\colon Y\to BG$ be maps of $\cI$-spaces with $X$ or $Y$ flat, and let $U(\alpha)^{\mathrm{cof}}\to U(\alpha)$  and 
$U(\beta)^{\mathrm{cof}}\to U(\beta)$ be cofibrant replacements in the absolute flat model structure on $\Top_G^{\cI}/EG$. We define the \emph{derived monoidal multiplication for $U$} to be the composition
\begin{equation}\label{eq:U-derived-monoidal}
U(\alpha)^{\mathrm{cof}}\boxtimes_G U(\beta)^{\mathrm{cof}}\to U(\alpha)\boxtimes_G U(\beta)\to U(\alpha\boxtimes \beta).
\end{equation}
Such a map can be defined without conditions on $X$ and $Y$, but the term \emph{derived monoidal multiplication} is only justified if either $X$ or $Y$ is flat.
\begin{lemma}\label{lem:U-derived-monoidal}
Let $G$ be a grouplike and cofibrant commutative $\cI$-space monoid. Then the monoidal unit $\iota_G \to U(\iota)$ and the derived monoidal multiplication \eqref{eq:U-derived-monoidal} are $\cI$-equivalences.
\end{lemma}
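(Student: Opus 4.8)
\emph{Strategy.} The plan is to deduce both assertions formally from three facts established above: the $(V,U)$-adjunction is a Quillen equivalence (Theorem~\ref{thm:Top^I_G-Quillen-equivalence}), the right adjoint $U$ preserves $\cI$-equivalences (Lemma~\ref{lem:U-preserves-I-equiv}), and $V$ is strong symmetric monoidal (Lemma~\ref{lem:ISG-ISBG-adj-monoidal}). The main formal tool I would isolate first is the following: if $\Phi\dashv\Psi$ is a Quillen equivalence and $\Psi$ preserves \emph{all} weak equivalences, then for every cofibrant object $c$ and every object $d$ a morphism $g\colon\Phi(c)\to d$ is a weak equivalence if and only if its adjoint $c\to\Psi(d)$ is; in particular the adjunction unit $c\to\Psi\Phi(c)$ is a weak equivalence. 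This holds because, for a fibrant replacement $d\to d'$, the map $g$ is a weak equivalence iff $\Phi(c)\to d'$ is (two-out-of-three), the latter iff $c\to\Psi(d')$ is (Quillen equivalence, with $c$ cofibrant and $d'$ fibrant), and $c\to\Psi(d')$ differs from $c\to\Psi(d)$ only by the weak equivalence $\Psi(d)\to\Psi(d')$. I would apply this with $(\Phi,\Psi)=(V,U)$ throughout.

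\emph{The monoidal unit.} Since $V$ is strong symmetric monoidal it carries the monoidal unit $\iota_G\colon G\to EG$ of $\Top^{\cI}_G/EG$ to the monoidal unit $\iota\colon U^{\cI}\to BG$ of $\Top^{\cI}/BG$, so the unit map $\iota_G\to U(\iota)$ of the lax monoidal functor $U$ (see the proof of Lemma~\ref{lem:ISG-ISBG-adj-monoidal}) is the adjunction unit $\iota_G\to UV(\iota_G)$ composed with $U$ of the isomorphism $\iota\cong V(\iota_G)$. Now $\iota_G$ is cofibrant in the absolute flat model structure on $\Top^{\cI}_G/EG$: indeed $G\cong U^{\cI}\boxtimes G$ is the free $G$-module on the flat $\cI$-space $U^{\cI}$, hence a cofibrant $G$-module. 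By the formal tool the adjunction unit $\iota_G\to UV(\iota_G)$, and therefore $\iota_G\to U(\iota)$, is an $\cI$-equivalence.

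\emph{The derived monoidal multiplication.} I would assume $X$ flat (the case of $Y$ flat is symmetric). Write $p\colon U(\alpha)^{\cof}\to U(\alpha)$ and $q\colon U(\beta)^{\cof}\to U(\beta)$ for the chosen cofibrant replacements and $\tilde p\colon V(U(\alpha)^{\cof})\to\alpha$, $\tilde q\colon V(U(\beta)^{\cof})\to\beta$ for their adjoints. By the formal tool $\tilde p$ and $\tilde q$ are $\cI$-equivalences, and their domains $X'$ and $Y'$ are flat $\cI$-spaces since $V$ is left Quillen and cofibrant objects of $\Top^{\cI}/BG$ have flat underlying $\cI$-spaces. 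I would then factor $\tilde p\boxtimes\tilde q$ in $\Top^{\cI}/BG$ as $X'\boxtimes Y'\to X\boxtimes Y'\to X\boxtimes Y$: the first map is an $\cI$-equivalence because $(-)\boxtimes Y'$ preserves $\cI$-equivalences ($Y'$ flat), the second because $X\boxtimes(-)$ does ($X$ flat); hence $\tilde p\boxtimes\tilde q$, and so $U(\tilde p\boxtimes\tilde q)$, is an $\cI$-equivalence. Next, $A:=U(\alpha)^{\cof}\boxtimes_G U(\beta)^{\cof}$ is a cofibrant $G$-module because the flat model structure on $\Top^{\cI}_G$ satisfies the pushout-product axiom, so its adjunction unit $A\to UV(A)$ is an $\cI$-equivalence. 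Composing this unit with the isomorphism $UV(A)\cong U\bigl(V(U(\alpha)^{\cof})\boxtimes V(U(\beta)^{\cof})\bigr)$ coming from the strong monoidal structure of $V$ and with $U(\tilde p\boxtimes\tilde q)$ produces an $\cI$-equivalence $A\to U(\alpha\boxtimes\beta)$, and a diagram chase --- using naturality of the adjunction unit, coherence of the strong monoidal structure of $V$, and that the lax monoidal structure maps of $U$ from Lemma~\ref{lem:ISG-ISBG-adj-monoidal} are the mates of those of $V$ --- would identify this composite with the derived monoidal multiplication~\eqref{eq:U-derived-monoidal}.

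\emph{Expected difficulty.} No single step is deep; the work is in assembling the formal inputs correctly. The step I expect to be most delicate is the final identification of the abstractly produced $\cI$-equivalence $A\to U(\alpha\boxtimes\beta)$ with the explicit map~\eqref{eq:U-derived-monoidal}, which requires careful bookkeeping of the coherence isomorphisms relating the monoidal structures on $V$ and $U$. A secondary point to nail down --- since the formal tool applies only to cofibrant objects --- is that $A=U(\alpha)^{\cof}\boxtimes_G U(\beta)^{\cof}$ is genuinely cofibrant, which rests on the pushout-product property of the flat model structure on $\Top^{\cI}_G$.
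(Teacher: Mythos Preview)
Your proposal is correct and follows essentially the same approach as the paper: both arguments rest on the Quillen equivalence of the $(V,U)$-adjunction (giving derived unit/counit equivalences), the homotopy invariance of $U$, the explicit description of the lax monoidal structure maps from Lemma~\ref{lem:ISG-ISBG-adj-monoidal}, and the flatness of $X$ or $Y$. The paper's proof is a terse two-sentence sketch of precisely the argument you have unpacked in detail, including the diagram chase you flag as delicate.
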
 
\begin{proof}
The fact that the $(V,U)$-adjunction defines a Quillen equivalence implies that the derived unit and counit of the adjunction are $\cI$-equivalences (see \cite[Proposition~1.3.13]{Hovey_model}). Using the description of the monoidal structure maps given in the proof of Lemma~\ref{lem:ISG-ISBG-adj-monoidal}, the result then follows from the homotopy invariance of $U$ stated in Lemma~\ref{lem:U-preserves-I-equiv} and the assumption that $X$ or $Y$ is flat.
\end{proof} 
 
\section{Thom spectra from \texorpdfstring{$\cI$}{I}-space data}\label{sec:gen-Thom-spectra}
Our definition of the Thom spectrum functor is based in part on the two-sided bar construction, and we begin by reviewing the basic properties of the latter.

\subsection{The two-sided bar construction for symmetric spectra}
We shall work with the \emph{positive} and \emph{absolute flat stable model structures} on $\Sp^{\Sigma}$ introduced in \cite{Shipley_convenient} and \cite{Schwede_SymSp}. These model structures have the same stable equivalences as the standard stable model structure (see \cite{HSS} and \cite{MMSS}), but the flat versions have more cofibrations which makes them better suited for certain applications. Most important for our purposes is the fact that the positive flat stable model structure lifts to a (positive flat) stable model structure on the category of commutative symmetric ring spectra $\cC\Sp^{\Sigma}$, and that if a commutative symmetric ring spectrum is cofibrant in this model structure, then 
its underlying symmetric spectrum is cofibrant in the absolute flat stable model structure on $\Sp^{\Sigma}$. We shall use the term \emph{flat symmetric spectrum} to mean a cofibrant object in the absolute flat stable model structure on $\Sp^{\Sigma}$.

Given a commutative symmetric ring spectrum $R$, we write $\Sp^{\Sigma}_R$ for the symmetric monoidal category of (right) $R$-modules under the 
usual $R$-balanced smash product $\wedge_R$. The absolute flat stable model structure on $\Sp^{\Sigma}$ lifts to a flat stable model structure on $\Sp^{\Sigma}_R$ in which a map of $R$-modules is a stable equivalence or fibration if and only if the underlying map of symmetric spectra is so with respect to the absolute flat stable model structure. We shall use the term \emph{flat $R$-module} for a cofibrant object in this model structure. It is a useful fact that if $M$ is a flat $R$-module, then the smash product $M\wedge_R-$ preserves stable equivalences between general $R$-modules without further cofibrancy conditions (see e.g.\ the proof of \cite[Lemma~4.8]{Rognes-S-S_log-THH} and the analogous argument for $\cI$-spaces in Lemma~\ref{lem:boxtimes_M-inv} below).

Now let $P\to R$ be a map of commutative symmetric ring spectra and recall that for a $P$-module $M$, the two-sided bar construction $B(M,P,R)$ is the geometric realization of the simplicial symmetric spectrum $[n]\mapsto M\wedge P^{\wedge n}\wedge R$ with simplicial structure maps defined as in 
\cite[Section 10]{May-geometry}. This construction gives a lax symmetric monoidal functor
\[
B(-,P,R)\colon \Spsym{P} \to \Spsym{R}
\]
with monoidal structure maps 
\begin{equation}\label{eq:bar-monoidal}
B(M,P,R)\wedge_RB(N,P,R)\to B(M\wedge_PN,P,R)\quad \text{and} \quad R\to B(P,P,R)
\end{equation}
induced by the multiplicative structures of $P$ and $R$ and the unit of $P$, cf.\ \cite[Lemma 10.1]{May-geometry}. The two-sided bar construction is related to the actual smash product by a natural symmetric monoidal map $B(M,P,R)\to M\wedge_PR$ induced by the canonical map $M\wedge R\to M \wedge_PR$ in simplicial degree zero. Applying the argument from
\cite[Lemma~4.1.9]{Shipley_THH} to the case of a flat $P$-module, we get the following result.

\begin{lemma}\label{lem:bar-smash-equivalence}
If $M$ is a flat $P$-module, then $B(M,P,R)\to M\wedge_PR$ is a stable equivalence.\qed
\end{lemma}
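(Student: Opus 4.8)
The plan is to run the cell‑induction argument of \cite[Lemma~4.1.9]{Shipley_THH}, using that both functors involved are colimit‑preserving and that the comparison map is manifestly an equivalence on free modules. Write $\phi_M\colon B(M,P,R)\to M\wedge_P R$ for the natural comparison map. The functor $-\wedge_P R\colon \Spsym{P}\to\Spsym{R}$ preserves colimits because it is extension of scalars, and $B(-,P,R)$ preserves colimits because at each simplicial level $M\mapsto M\wedge P^{\wedge n}\wedge R$ is a left adjoint, colimits of simplicial spectra are computed levelwise, and geometric realization is a left adjoint; moreover $\phi$ is a natural transformation between them. So it is enough to check that $\phi_M$ is a stable equivalence when $M$ runs through a set of $P$-modules generating all flat $P$-modules, together with the bookkeeping that both functors send the relevant attaching maps to $h$-cofibrations.

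The base case is that of free modules $P\sm A$, $A$ an arbitrary symmetric spectrum. There is a canonical isomorphism $B(P\sm A,P,R)\cong B(P,P,R)\sm A$, natural in $A$, compatible with $\phi$ and with the isomorphism $(P\sm A)\wedge_P R\cong R\sm A$. The augmented simplicial symmetric spectrum $B_\bullet(P,P,R)\to R$ has an extra degeneracy, obtained by inserting the unit $\bS\to P$ as a new leftmost factor, so the augmentation $B(P,P,R)\to R\cong P\wedge_P R$ is a simplicial homotopy equivalence, hence a homotopy equivalence of symmetric spectra. Smashing with $A$ shows that $\phi_{P\sm A}$ is a homotopy equivalence of symmetric spectra, in particular a stable equivalence, for every $A$.

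Finally one runs the induction. A flat $P$-module is a retract of a $P$-module $M$ built by transfinite composition of cobase changes of maps $P\sm C\to P\sm D$, where $C\to D$ runs over the generating cofibrations of the absolute flat stable model structure on $\Spsym{}$; each such map is an $h$-cofibration, so each attaching map $M_\alpha\to M_{\alpha+1}$ is an $h$-cofibration. Under the identification above, $B(-,P,R)$ sends $P\sm C\to P\sm D$ to $B(P,P,R)\sm(C\to D)$, again an $h$-cofibration, and $-\wedge_P R$ sends it to $R\sm(C\to D)$; since both functors preserve pushouts, each stage $B(M_\alpha,P,R)\to B(M_{\alpha+1},P,R)$ and $M_\alpha\wedge_P R\to M_{\alpha+1}\wedge_P R$ is a cobase change of an $h$-cofibration, hence an $h$-cofibration, so all the pushout squares are homotopy pushouts and the transfinite colimits at limit stages are homotopy colimits. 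Combining the base case on $P\sm C$ and $P\sm D$ with the gluing lemma for stable equivalences and induction on $\alpha$ yields that $\phi_{M_\alpha}$ is a stable equivalence for all $\alpha$, hence so is $\phi_M$, and then $\phi$ is a stable equivalence on the retract. The only delicate point is this last bookkeeping — ensuring that everything in the cell filtration remains an $h$-cofibration after applying $B(-,P,R)$ so that the gluing lemma applies — and it is handled by the product decomposition $B(P\sm A,P,R)\cong B(P,P,R)\sm A$; this is entirely parallel to the $\cI$-space argument carried out in Lemma~\ref{lem:boxtimes_M-inv} below.
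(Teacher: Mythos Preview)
Your argument is correct and is precisely the cell-induction argument of \cite[Lemma~4.1.9]{Shipley_THH} that the paper invokes; the paper gives no further details beyond that citation, so you have simply written out what the reference contains.
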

The advantage of the two-sided bar construction compared to the extension of scalars functor $-\wedge_PR$ is that the former is homotopically well-behaved under less restrictive cofibrancy conditions as we shall see below. Let us say that a commutative symmetric ring spectrum $P$ has a \emph{flat unit} if the unit $\bS\to P$ is a cofibration in the absolute flat stable model structure on $\Sp^{\Sigma}$.

\begin{lemma}\label{lem:invariant-bar}
Let $P\to R$ be a map of commutative symmetric ring spectra and suppose that $P$ has a flat unit. 
\begin{enumerate}
\item[(i)]
If $M$ is a $P$-module such that the underlying symmetric spectrum of $M$ is flat, then $B(M,P,R)$ is a flat $R$-module.
\item[(ii)]
If the underlying symmetric spectrum of $R$ is flat, then the functor defined by $B(-,P,R)$ preserves stable equivalences. 
\end{enumerate}
\end{lemma}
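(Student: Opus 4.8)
The plan is to deduce both statements from an analysis of the simplicial symmetric spectrum $B_{\bullet}(M,P,R)\colon [n]\mapsto M\wedge P^{\wedge n}\wedge R$ together with the standard fact that geometric realization is a left Quillen functor from the Reedy model structure on simplicial objects to the flat stable model structure on the relevant module category. First note that the flat-unit hypothesis forces $P$ itself to be a flat symmetric spectrum, since $\bS$ is cofibrant in the absolute flat stable model structure and $\bS\to P$ is assumed to be a cofibration. Hence $P^{\wedge n}$ is flat, and $P^{\wedge n}\wedge R$ is flat whenever $R$ is flat, by the pushout-product axiom.

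For (i), I would show that $B_{\bullet}(M,P,R)$ is Reedy cofibrant as a simplicial object of $\Sp^{\Sigma}_R$, where the $R$-module structure is on the last smash factor. Its $n$th term $M\wedge P^{\wedge n}\wedge R$ is the extension of scalars along $\bS\to R$ of the flat symmetric spectrum $M\wedge P^{\wedge n}$, and so is a flat $R$-module because this extension of scalars is a left Quillen functor $\Sp^{\Sigma}\to \Sp^{\Sigma}_R$. For the latching maps one uses the standard combinatorial identification for two-sided bar constructions (cf.\ \cite[Section~10]{May-geometry} and the argument of \cite[Lemma~4.1.9]{Shipley_THH}): the degeneracies of $B_{\bullet}(M,P,R)$ insert the unit of $P$ into the $P^{\wedge n}$-factor, so the $n$th latching map is obtained by first smashing with $M$ and then extending scalars along $\bS\to R$, applied to the $n$-fold iterated pushout-product of the cofibration $\bS\to P$. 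By the pushout-product axiom this iterated pushout-product is a flat cofibration of symmetric spectra, and since $M$ is flat and extension of scalars is left Quillen, the resulting latching map is a cofibration of $R$-modules. Thus $B_{\bullet}(M,P,R)$ is Reedy cofibrant, whence $B(M,P,R)=|B_{\bullet}(M,P,R)|$ is a flat $R$-module.

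For (ii), suppose $R$ is flat as a symmetric spectrum. The identification $M\wedge P^{\wedge n}\wedge R\cong M\wedge_P B_n(P,P,R)$ is compatible with the simplicial structure maps, where $B_{\bullet}(P,P,R)$ is regarded as a simplicial left $P$-module via its first smash factor, and $M\wedge_P(-)$ commutes with geometric realization; hence there is a natural isomorphism $B(M,P,R)\cong M\wedge_P B(P,P,R)$. Running the argument of part~(i) inside $\Sp^{\Sigma}_P$ instead---using the first smash factor for the module structure and the fact that $P^{\wedge n}\wedge R$ is a flat symmetric spectrum---shows that $B(P,P,R)$ is a flat $P$-module. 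Since smashing over $P$ with a flat $P$-module preserves stable equivalences between arbitrary $P$-modules (the fact recalled just before Lemma~\ref{lem:bar-smash-equivalence}; see the proof of \cite[Lemma~4.8]{Rognes-S-S_log-THH}), the functor $B(-,P,R)\cong(-)\wedge_P B(P,P,R)$ preserves stable equivalences.

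The crux is the combinatorial identification of the latching objects of $B_{\bullet}(M,P,R)$ with iterated pushout-products of $\bS\to P$; once that input is in place, the rest is formal, resting only on the two left Quillen facts used above (extension of scalars and geometric realization from the Reedy model structure) together with the homotopy invariance of smashing with a flat module. An alternative route to (ii) that avoids the isomorphism $B(M,P,R)\cong M\wedge_P B(P,P,R)$ is to observe directly that for any stable equivalence $M\to M'$ of $P$-modules the map $B_{\bullet}(M,P,R)\to B_{\bullet}(M',P,R)$ is a levelwise stable equivalence---since $P^{\wedge n}\wedge R$ is flat and smashing with a flat symmetric spectrum preserves stable equivalences---between simplicial symmetric spectra whose latching maps are $h$-cofibrations, so that geometric realization carries it to a stable equivalence.
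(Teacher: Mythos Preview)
Your argument is correct and follows essentially the same route as the paper: the Reedy cofibrancy analysis via iterated pushout-products of $\bS\to P$ is exactly the skeletal filtration argument the paper sketches for~(i), and for~(ii) the paper uses the same isomorphism $B(M,P,R)\cong M\wedge_P B(P,P,R)$ together with the flatness of $B(P,P,R)$ as a $P$-module. The paper additionally remarks that one can see the latter flatness by noting $B(P,P,R)\cong B(R,P,P)$, which reduces it directly to an instance of~(i).
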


\begin{proof}
For the proof of (i) we consider the skeletal filtration of $B(M,P,R)$ inherited from the underlying simplicial object. (We refer 
to \cite[Corollary~2.4]{Lewis_when-cofibration} and its proof for a discussion of skeletal filtrations).
The assumption on $M$ implies that the \mbox{$0$-skeleton} $M\wedge R$ is a flat $R$-module. Furthermore, using the pushout-product axiom, the assumption that $P$ has a flat unit implies that the inclusion of the $(n-1)$-skeleton in the $n$-skeleton is a cofibration in the flat stable module structure on $\Spsym{R}$ for all $n$. This implies that $B(M,P,R)$ is a flat $R$-module.

As for the claim in (ii), let $M\to M'$ be a stable equivalence of $P$-modules, and notice that the induced map of $R$-modules admits a factorisation
\[
B(M,P,R)\cong M\wedge_PB(P,P,R)\to M'\wedge_PB(P,P,R)\cong B(M',P,R).
\]
Using an argument similar to that used in (i) (or that $B(P,P,R)$ is isomorphic to $B(R,P,P)$), the assumption on $R$ implies that $B(P,P,R)$ is a flat $P$-module. Hence the functor $-\wedge_PB(P,P,R)$ preserves stable equivalences and the result follows.
\end{proof}

Whereas the extension of scalars functor $-\wedge_PR$ is strong symmetric monoidal, the two-sidet bar construction only gives a lax symmetric monoidal functor. 

\begin{lemma}\label{lem:bar-monoidal-equivalence}
Let $P\to R$ be a map of commutative symmetric ring spectra and suppose that $P$ is cofibrant in the positive flat stable model structure on $\cC\Sp^{\Sigma}$. Then the monoidal structure maps in \eqref{eq:bar-monoidal} are stable equivalences provided that $M$ and $N$ are flat $P$-modules.
\end{lemma}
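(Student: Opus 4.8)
The plan is to establish the two monoidal structure maps in \eqref{eq:bar-monoidal} separately, using the comparison map to the actual smash product from Lemma~\ref{lem:bar-smash-equivalence} to reduce to known facts about $-\wedge_P-$. Recall that the natural transformation $B(-,P,R)\to -\wedge_P R$ is symmetric monoidal, so for flat $P$-modules $M$ and $N$ we obtain a commutative square relating the structure map $B(M,P,R)\wedge_R B(N,P,R)\to B(M\wedge_P N,P,R)$ to the map $(M\wedge_P R)\wedge_R (N\wedge_P R)\to (M\wedge_P N)\wedge_P R$, with vertical maps $B(M,P,R)\wedge_R B(N,P,R)\to (M\wedge_P R)\wedge_R (N\wedge_P R)$ and $B(M\wedge_P N,P,R)\to (M\wedge_P N)\wedge_P R$. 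The strategy is then: show the bottom horizontal map is an isomorphism (indeed $-\wedge_P R$ is strong symmetric monoidal), show the two vertical maps are stable equivalences, and conclude the top map is a stable equivalence by two-out-of-three.

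For the vertical maps I would argue as follows. By Lemma~\ref{lem:bar-smash-equivalence}, $B(M,P,R)\to M\wedge_P R$ and $B(N,P,R)\to N\wedge_P R$ are stable equivalences since $M$ and $N$ are flat $P$-modules, and by Lemma~\ref{lem:invariant-bar}(i) (applicable because a cofibrant commutative symmetric ring spectrum in the positive flat stable model structure has flat unit and flat underlying spectrum) both $B(M,P,R)$ and $M\wedge_P R$, as well as their $N$-counterparts, are flat $R$-modules. Hence smashing over $R$ preserves the stable equivalences in each variable without further cofibrancy assumptions, using the standard fact recalled after the definition of flat $R$-module that smashing with a flat $R$-module preserves all stable equivalences; this handles the left vertical map. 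For the right vertical map, $M\wedge_P N$ is a flat $P$-module by the pushout-product axiom in $\Spsym{P}$ (both $M$ and $N$ being flat), so Lemma~\ref{lem:bar-smash-equivalence} applies again to give that $B(M\wedge_P N,P,R)\to (M\wedge_P N)\wedge_P R$ is a stable equivalence.

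For the unit structure map $R\to B(P,P,R)$, I would factor it through the canonical isomorphism $R\cong P\wedge_P R$ and observe that $B(P,P,R)\to P\wedge_P R\cong R$ is a stable equivalence by Lemma~\ref{lem:bar-smash-equivalence} applied to $M=P$ (which is flat as a $P$-module), with the composite $R\to B(P,P,R)\to R$ being the identity; hence $R\to B(P,P,R)$ is a stable equivalence as well.

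The main obstacle is bookkeeping rather than conceptual: one has to verify that the square relating $B(-,P,R)$-products to $\wedge_P$-products genuinely commutes, which amounts to unwinding the definitions of both the bar-construction monoidal structure maps from \cite[Lemma~10.1]{May-geometry} and the strong monoidal structure of extension of scalars, and tracking them through geometric realization. One also has to be careful that all the flatness hypotheses needed to invoke the cited lemmas are actually satisfied; in particular the passage from ``$P$ cofibrant in the positive flat stable model structure on $\cC\Sp^{\Sigma}$'' to ``$P$ has a flat unit and flat underlying spectrum'' uses the compatibility facts recalled at the start of this section. Once the square commutes and the flatness conditions are checked, the two-out-of-three argument closes the proof.
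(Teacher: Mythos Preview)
Your approach is essentially the same as the paper's: both reduce to the strong symmetric monoidal structure of $-\wedge_P R$ via the natural comparison $B(-,P,R)\to -\wedge_P R$, using that $B(M,P,R)$ and $B(N,P,R)$ are flat $R$-modules so that the relevant $\wedge_R$-products are homotopically meaningful. One small point worth tightening: to invoke Lemma~\ref{lem:invariant-bar}(i) you need that $M$ and $N$ have underlying flat \emph{symmetric spectra}, not merely that they are flat $P$-modules; the paper makes this explicit by noting that since $P$ has flat underlying symmetric spectrum, every flat $P$-module does as well (see \cite{Shipley_convenient}*{Section~4}).
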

\begin{proof}
The assumption on $P$ implies that its underlying symmetric spectrum is flat which in turn implies that every flat $P$-module has underlying flat symmetric spectrum (see \cite{Shipley_convenient}*{Section~4}). It also follows that $P$ has a flat unit so that $B(M,P,R)$ and $B(N,P,R)$ are flat $R$-modules by Lemma~\ref{lem:invariant-bar}. Hence the monoidal structure maps in \eqref{eq:bar-monoidal} are stably equivalent to the monoidal structure maps for the extension of scalars functor $-\wedge_PR$ that we know to be isomorphisms.
\end{proof}

\subsection{The Thom spectrum functor on \texorpdfstring{$\Top^{\cI}/BG$}{TopI/BG}}\label{subsec:Thom-on-Top^I/BG}
Let $R$ be a semistable commutative symmetric ring spectrum and let $G\to\GL_1^{\cI}(R)$
 be a cofibrant replacement of its units in the
  positive flat model structure on $\cC\Top^{\cI}$. We consider
  the two-sided bar construction $B(-, \bS^{\cI}[G],R)$ associated
  with the canonical map $\bS^{\cI}[G]\to R$ in
  \eqref{eq:SI-GLoneIR-to-R}, and write $M\GL_1^{\cI}(R)$ for the commutative $R$-algebra spectrum $B(\bS^{\cI}[EG],
  \bS^{\cI}[G],R)$ where $EG$ is the commutative $G$-algebra introduced in Definition~\ref{def:EG-BG}.
\begin{definition}\label{def:TI}
  The $R$-module Thom spectrum functor $T^{\cI}$ is the composition
  \[
  T^{\cI}\colon \Top^{\cI}/BG \xrightarrow{U} \Top^{\cI}_G/EG \xrightarrow{\bS^{\cI}}
  \Spsym{\bS^{\cI}[G]}/\bS^{\cI}[EG]\xrightarrow{B(-, \bS^{\cI}[G],R)}
  \Spsym{R}/M\GL_1^{\cI}(R)
  \] 
  of the right adjoint $U$ from \eqref{eq:VU-adjunction} and the functors of
  over-categories induced by $\bS^{\cI}$ and $B(-,\bS^{\cI}[G],R)$.
\end{definition}

For $R = \bS$, one can show by a direct comparison that the resulting Thom spectrum functor is equivalent to that considered by Lewis and Mahowald. 
This implies in particular that $M\GL_1^{\cI}(\bS)$ is stably equivalent to the Thom spectrum usually denoted~$MF$.

\begin{remark}\label{rem:alternative-setting}
For definiteness, we have chosen the setting of symmetric spectra of topological spaces for our generalized Thom spectra, but it is also possible to translate the constructions and results in the paper to the setting of orthogonal spectra \cite{MMSS}. The main point in this translation is to replace the category of $\cI$-spaces with the corresponding diagram category of $\mathcal V$-spaces, where $\mathcal V$ denotes the topological category with objects the standard real inner product spaces $\mathbb R^n$ and morphisms the linear isometries. Lind's work on $\mathcal V$-spaces~\cite{Lind_diagram} and Stolz' flat model structure for orthogonal 
spectra~\cite{Brun-D-S_equivariant} supply many of the necessary technical foundations.  Working with orthogonal spectra has the technical advantage that stable equivalences induce isomorphisms on spectrum homotopy groups so that the semistability condition becomes superfluous.

One can also modify the constructions to obtain a generalized Thom
spectrum functor for symmetric spectra in simplicial sets. In this
case, we have to assume that $R$ is both level fibrant and semistable
for $\GL^{\cI}_1(R)$ to capture the desired homotopy type. This is
useful even in the classical case because the approach to Thom spectra
for maps to $BF$ in~\cite{Schlichtkrull_Thom-symmetric} does not seem
to have a simplicial counterpart with good monoidal properties.
\end{remark}

The homotopy invariance statement in the next proposition is the main reason why we prefer to work with the two-sided bar construction instead of the actual smash product $-\wedge_{\bS^{\cI}[G]}R$. 
We recall that if $R$ is cofibrant in the positive flat stable model structure on $\cC\Sp^{\Sigma}$, then its underlying symmetric spectrum is automatically flat.

\begin{proposition}\label{prop:hty-inv-of-TI}
Suppose that the underlying symmetric spectrum of $R$ is flat. Then $T^{\cI}$ takes $\cI$-equivalences over $BG$ to stable equivalences over $M\GL_1^{\cI}(R)$. 
\end{proposition}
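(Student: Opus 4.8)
The plan is to use the factorization of $T^{\cI}$ given in Definition~\ref{def:TI} as the composite of the three functors $U$, $\bS^{\cI}$, and $B(-,\bS^{\cI}[G],R)$, and to check that each of them preserves the relevant class of weak equivalences. Throughout, one relies on the fact that in every over-category and module category appearing here a map is a weak equivalence precisely when the underlying map of $\cI$-spaces (respectively symmetric spectra) is an $\cI$-equivalence (respectively a stable equivalence); in particular the qualifier ``over $M\GL_1^{\cI}(R)$'' in the statement adds nothing beyond the assertion about underlying objects.

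First, the functor $U\colon \Top^{\cI}/BG\to\Top^{\cI}_G/EG$ sends $\cI$-equivalences to maps whose underlying map of $\cI$-spaces is again an $\cI$-equivalence, by Lemma~\ref{lem:U-preserves-I-equiv}. Next, applying $\bS^{\cI}$ to such a map and invoking the first statement of Lemma~\ref{lem:hty-inv-of-SI-OmegaI} produces a map in $\Spsym{\bS^{\cI}[G]}/\bS^{\cI}[EG]$ whose underlying map of symmetric spectra is a stable equivalence. Finally, I would apply Lemma~\ref{lem:invariant-bar}(ii) with $P=\bS^{\cI}[G]$ to conclude that $B(-,\bS^{\cI}[G],R)$ carries this to a stable equivalence over $M\GL_1^{\cI}(R)=B(\bS^{\cI}[EG],\bS^{\cI}[G],R)$. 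Composing the three steps yields the proposition.

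The only point that requires a short argument is the verification of the hypotheses of Lemma~\ref{lem:invariant-bar}(ii): that the underlying symmetric spectrum of $R$ is flat, which is exactly the hypothesis of the proposition, and that $P=\bS^{\cI}[G]$ has a flat unit. For the latter, recall that $G$ is a cofibrant commutative $\cI$-space monoid, being a cofibrant replacement of $\GL_1^{\cI}(R)$ in the positive flat model structure on $\cC\Top^{\cI}$, and that the left adjoint $\bS^{\cI}$ in the adjunction~\eqref{eq:CSI-CSpsym-adj} is a left Quillen functor with respect to the positive flat model structures on $\cC\Top^{\cI}$ and $\cC\Sp^{\Sigma}$. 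Hence $\bS^{\cI}[G]$ is cofibrant in the positive flat stable model structure on $\cC\Sp^{\Sigma}$, and as observed in the proof of Lemma~\ref{lem:bar-monoidal-equivalence} such a ring spectrum has a flat unit (indeed a flat underlying symmetric spectrum). This is the one place where any genuine content enters; the remainder is the bookkeeping that weak equivalences in the categories in play are detected on underlying objects, so I expect no serious obstacle.
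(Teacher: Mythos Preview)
Your proof is correct and follows exactly the same three-step decomposition as the paper's own argument, invoking Lemmas~\ref{lem:U-preserves-I-equiv}, \ref{lem:hty-inv-of-SI-OmegaI}, and \ref{lem:invariant-bar}(ii) in turn. You are in fact slightly more careful than the paper, which simply cites Lemma~\ref{lem:invariant-bar} ``and the assumption on $R$'' without spelling out why $\bS^{\cI}[G]$ has a flat unit; your verification of that hypothesis via the left Quillen functor $\bS^{\cI}$ and the observation in the proof of Lemma~\ref{lem:bar-monoidal-equivalence} is exactly the intended justification.
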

\begin{proof}
The functor $U$ preserves $\cI$-equivalences by Lemma~\ref{lem:U-preserves-I-equiv}, the functor $\bS^{\cI}$ takes $\cI$-equivalences to stable equivalences by  Lemma~\ref{lem:hty-inv-of-SI-OmegaI}, and $B(-,\bS^{\cI}[G],R)$ preserves stable equivalences by Lemma~\ref{lem:invariant-bar} and the assumption on $R$. 
\end{proof}

\begin{remark}
  The above construction also leads to an $R$-module Thom spectrum functor
  for associative (not necessarily commutative) symmetric ring spectra $R$. The homotopy invariance property continues to hold, but there are no monoidal structures on $\Top^{\cI}/BG$ and $\Spsym{R}$ that can be
  preserved by such a Thom spectrum functor.
\end{remark}

Next we turn to the monoidal properties of $T^{\cI}$. Let 
$\iota \colon U^{\cI} \to BG$ denote the unit of the commutative $\cI$-space monoid $BG$.

\begin{proposition}\label{prop:TI-lax-monoidal}
The functor $T^{\cI}$ is lax symmetric monoidal with monoidal structure maps given by maps of $R$-modules
\[
  T^{\cI}(\alpha)\sm_R T^{\cI}(\beta) \to T^{\cI}(\alpha\boxtimes\beta) \quad \text{ and }\quad R \to
  T^{\cI}(\iota)
\]
satisfying the usual associativity, commutativity, and unitality conditions.
\end{proposition}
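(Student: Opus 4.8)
The plan is to realize $T^{\cI}$ as a composite of lax symmetric monoidal functors, so that Proposition~\ref{prop:TI-lax-monoidal} reduces to the observation that such a composite is again lax symmetric monoidal, with structure maps obtained by pasting those of the factors. Recall from Definition~\ref{def:TI} that $T^{\cI}$ is the composite of $U$, of the functor of over-categories induced by $\bS^{\cI}$, and of the functor of over-categories induced by $B(-,\bS^{\cI}[G],R)$. I would begin by isolating the following elementary categorical principle: if $F\colon\mathcal C\to\mathcal D$ is a lax symmetric monoidal functor between symmetric monoidal categories and $A$ is a commutative monoid in $\mathcal C$, then $F(A)$ is naturally a commutative monoid in $\mathcal D$ and $F$ induces a lax symmetric monoidal functor $\mathcal C/A\to\mathcal D/F(A)$; moreover this induced functor is strong symmetric monoidal whenever $F$ is. The structure maps of $\mathcal C/A\to\mathcal D/F(A)$ are simply the lax structure maps $\mu_{X,Y}\colon F(X)\otimes F(Y)\to F(X\otimes Y)$ and $\epsilon\colon \mathbf 1_{\mathcal D}\to F(\mathbf 1_{\mathcal C})$ of $F$, viewed as morphisms lying over $F(A)$, and compatibility with the monoidal structures on the over-categories (which are induced by the multiplications of $A$ and $F(A)$) is immediate from naturality of $\mu$ and $\epsilon$ together with the definition of the monoid structure on $F(A)$.

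With this principle in hand the proof is a matter of applying it three times. For $U$ the conclusion is exactly Lemma~\ref{lem:ISG-ISBG-adj-monoidal}, whose proof also records the relevant structure maps explicitly. For the middle functor I would note that $\bS^{\cI}$ is strong symmetric monoidal for $\boxtimes$ and $\wedge$ and, being a left adjoint, restricts to a strong symmetric monoidal functor $\Top^{\cI}_G\to\Spsym{\bS^{\cI}[G]}$ carrying $\boxtimes_G$ to $\wedge_{\bS^{\cI}[G]}$ (relative tensor products are coequalizers of absolute ones, which $\bS^{\cI}$ preserves) and carrying the commutative monoid $EG$ to the commutative monoid $\bS^{\cI}[EG]$; the principle then makes the induced functor on over-categories strong symmetric monoidal. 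For the last functor, the discussion around \eqref{eq:bar-monoidal}, following \cite[Lemma~10.1]{May-geometry}, shows that $B(-,\bS^{\cI}[G],R)$ is lax symmetric monoidal and sends the commutative $\bS^{\cI}[G]$-algebra $\bS^{\cI}[EG]$ to $M\GL_1^{\cI}(R)=B(\bS^{\cI}[EG],\bS^{\cI}[G],R)$ as a commutative $R$-algebra, so the principle applies once more.

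Composing the three lax symmetric monoidal functors yields the lax symmetric monoidal structure on $T^{\cI}$, and the associativity, commutativity, and unitality diagrams hold because they hold factor by factor. Unwinding the composite identifies the structure maps $T^{\cI}(\alpha)\wedge_R T^{\cI}(\beta)\to T^{\cI}(\alpha\boxtimes\beta)$ and $R\to T^{\cI}(\iota)$ with the pasting of the structure maps for $U$, for $\bS^{\cI}$, and for $B(-,\bS^{\cI}[G],R)$; since these maps are needed later, I would also spell them out concretely using the pullback description of $U$ and the simplicial-degree-zero description of the bar construction, as in the proof of Lemma~\ref{lem:ISG-ISBG-adj-monoidal}. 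I do not expect a genuine obstacle here: the statement is formal, and the only part requiring real care is bookkeeping — checking that the symmetric monoidal structures on the four over-categories in sight are precisely those induced by the commutative monoids $BG$, $EG$, $\bS^{\cI}[EG]$, and $M\GL_1^{\cI}(R)$, and that $\bS^{\cI}$ intertwines $\boxtimes_G$ with $\wedge_{\bS^{\cI}[G]}$ at the level of module categories.
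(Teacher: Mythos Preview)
Your proposal is correct and follows essentially the same approach as the paper: the paper's proof simply observes that $U$ is lax symmetric monoidal by Lemma~\ref{lem:ISG-ISBG-adj-monoidal}, that $\bS^{\cI}$ is strong symmetric monoidal, and that $B(-,\bS^{\cI}[G],R)$ is lax symmetric monoidal by the discussion around~\eqref{eq:bar-monoidal}, and concludes that the composite is lax symmetric monoidal. Your version is more detailed in that you make explicit the passage to over-categories via the general principle about lax symmetric monoidal functors carrying commutative monoids to commutative monoids, which the paper leaves implicit.
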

\begin{proof}
The functor $U$ is lax symmetric monoidal by Lemma~\ref{lem:ISG-ISBG-adj-monoidal}, the functor  
$\bS^{\cI}$ is even strong symmetric monoidal, and the two-sided bar construction $B(-,\bS^{\cI}[G],R)$ is lax symmetric monoidal as we noted above. Hence the composition is also lax symmetric monoidal.
\end{proof}

Let $\cD$ be an operad in spaces. Since $BG$ and $M\GL_1^{\cI}(R)$ are commutative monoids in the symmetric monoidal categories $\Top^{\cI}$ and $\Spsym{R}$, restricting along the canonical operad morphism from $\cD$ to the (terminal) commutativity operad allows us to view $BG$ and $M\GL_1^{\cI}(R)$ as $\cD$-algebras. Writing $\Top^{\cI}[\cD]$ and 
$\Spsym{R}[\cD]$ for the categories of $\cD$-algebras in $\Top^{\cI}$ and $\Spsym{R}$, 
Proposition~\ref{prop:TI-lax-monoidal} then has the following corollary.
\begin{corollary}\label{cor:TI-preserves-operad-actions}
Let $\cD$ be an operad in spaces. Then $T^{\cI}$ induces a functor 
\[
T^{\cI}\colon \Top^{\cI}[\cD]/BG \to \Spsym{R}[\cD]/M\GL_1^{\cI}(R)  
\]
on the categories of $\cD$-algebras over $BG$ and $M\GL_1^{\cI}(R)$.\qed
\end{corollary}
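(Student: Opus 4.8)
The plan is to deduce the corollary formally from the lax symmetric monoidal structure on $T^{\cI}$ established in Proposition~\ref{prop:TI-lax-monoidal}, using the standard principle that a lax symmetric monoidal functor which is compatible with the tensoring over $\Top$ carries operad algebras to operad algebras. More precisely: if $F$ is a functor between symmetric monoidal categories that are tensored over $\Top$, and $F$ is lax symmetric monoidal and equipped with coherent natural maps $K\otimes FA\to F(K\otimes A)$ for unbased spaces $K$, then $F$ sends a $\cD$-algebra $A$ with structure maps $\cD_n\otimes A^{\boxtimes n}\to A$ to the $\cD$-algebra $FA$ with structure maps
\[
\cD_n\otimes(FA)^{\boxtimes n}\to \cD_n\otimes F(A^{\boxtimes n})\to F\bigl(\cD_n\otimes A^{\boxtimes n}\bigr)\to FA,
\]
this assignment being functorial and compatible with restriction along the canonical operad morphism $\cD\to\cC$ to the commutativity operad.

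I would apply this with $F=T^{\cI}\colon \Top^{\cI}/BG\to \Spsym{R}/M\GL_1^{\cI}(R)$. The lax symmetric monoidal structure is Proposition~\ref{prop:TI-lax-monoidal}, and $T^{\cI}$ preserves tensors with unbased spaces because each of the three functors in its definition does: the left adjoint $\bS^{\cI}$ and the two-sided bar construction $B(-,\bS^{\cI}[G],R)$ preserve all colimits, while the right adjoint $U$ preserves such tensors because $\mathrm{triv}_G$ preserves products and the pullback that defines $U$ commutes with projection off a constant factor; passing to over-categories does not affect any of this. Thus $T^{\cI}$ induces a functor on $\cD$-algebras in these over-categories. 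Finally one uses the elementary identifications $\Top^{\cI}[\cD]/BG\cong(\Top^{\cI}/BG)[\cD]$ and $\Spsym{R}[\cD]/M\GL_1^{\cI}(R)\cong(\Spsym{R}/M\GL_1^{\cI}(R))[\cD]$, valid because $BG$ and $M\GL_1^{\cI}(R)$ are commutative monoids, hence $\cD$-algebras, and a $\cD$-algebra in a symmetric monoidal over-category is the same datum as a $\cD$-algebra together with a $\cD$-algebra map to the base object.

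The only step that requires genuine care — and so the main, if minor, obstacle — is checking that the $\cD$-algebra structure $T^{\cI}$ transports onto the base object of the target matches the structure used to form the target over-category. Here one observes that the terminal object $\id_{BG}$ of $(\Top^{\cI}/BG,\boxtimes)$ is a commutative monoid because $BG$ is, that $T^{\cI}(\id_{BG})=B(\bS^{\cI}[EG],\bS^{\cI}[G],R)=M\GL_1^{\cI}(R)$ as commutative $R$-algebras (since $U(\id_{BG})=EG$), and that $T^{\cI}$, being lax symmetric monoidal, is compatible with restriction along $\cD\to\cC$; together these identify the two $\cD$-algebra structures on $M\GL_1^{\cI}(R)$. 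Everything else is a routine unwinding of definitions.
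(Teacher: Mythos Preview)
Your proof is correct and is exactly the argument the paper has in mind: the corollary carries a bare \qed\ in the paper, so it is treated as an immediate consequence of Proposition~\ref{prop:TI-lax-monoidal}, and your elaboration---using that a lax symmetric monoidal functor compatible with the $\Top$-tensoring transports $\cD$-algebras, together with the identification of $\cD$-algebras in the over-categories---is the standard unwinding of that statement. Your check that $T^{\cI}$ preserves tensors with unbased spaces is precisely what the paper records a bit later as Proposition~\ref{prop:TI-preserves-tensors}, so nothing is missing.
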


We proceed to analyze the homotopical properties of the monoidal structure maps for $T^{\cI}$. Let $\alpha\colon
X \to BG$ and $\beta\colon Y\to BG$ be maps of $\cI$-spaces with
$X$ or $Y$ flat, and let $T^{\cI}(\alpha)^{\mathrm{cof}}\to T^{\cI}(\alpha)$ and
$T^{\cI}(\beta)^{\mathrm{cof}} \to T^{\cI}(\beta)$ be cofibrant
replacements in the flat stable model structure on $\Sp^{\Sigma}_R$. Composing the monoidal multiplication from 
Proposition~\ref{prop:TI-lax-monoidal} with these cofibrant replacements, we get a map of
$R$-modules 
\begin{equation}\label{eq:derived-monoidal-str-maps}
  T^{\cI}(\alpha)^{\mathrm{cof}}\sm_R T^{\cI}(\beta)^{\mathrm{cof}}
\to T^{\cI}(\alpha)\sm_R T^{\cI}(\beta)\to T^{\cI}(\alpha\boxtimes\beta)
  \end{equation}
  that we refer to as the \emph{derived
  monoidal multiplication of $T^{\cI}$}.

\begin{proposition}\label{prop:TI-derived-monoidal}
 Let $R$ be a commutative symmetric ring spectrum with underlying flat symmetric spectrum. Then the monoidal unit 
 $R\to T^{\cI}(\iota)$ and the derived monoidal multiplication \eqref{eq:derived-monoidal-str-maps} are stable equivalences.
   
\end{proposition}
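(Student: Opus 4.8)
The plan is to use that $T^{\cI}$ is the composition of the three lax symmetric monoidal functors $U$, $\bS^{\cI}$, and $B(-,\bS^{\cI}[G],R)$, and that the monoidal structure maps of the composition are the composites of those of the factors. The inputs are: the (derived) structure maps of $U$ are $\cI$-equivalences (Lemma~\ref{lem:U-derived-monoidal}); $\bS^{\cI}$ is strong symmetric monoidal and sends $\cI$-equivalences to stable equivalences (Lemma~\ref{lem:hty-inv-of-SI-OmegaI}); and the structure maps of $B(-,\bS^{\cI}[G],R)$ are stable equivalences on flat $\bS^{\cI}[G]$-modules (Lemma~\ref{lem:bar-monoidal-equivalence}), while this functor preserves stable equivalences when $R$ has flat underlying spectrum (Lemma~\ref{lem:invariant-bar}(ii)). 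Before starting I would record the cofibrancy bookkeeping: since $G$ is cofibrant in the positive flat model structure on $\cC\Top^{\cI}$, applying the left Quillen functor \eqref{eq:CSI-CSpsym-adj} shows that $\bS^{\cI}[G]$ is cofibrant in the positive flat stable model structure on $\cC\Sp^{\Sigma}$; hence $\bS^{\cI}[G]$ has flat underlying symmetric spectrum, has a flat unit, and every flat $\bS^{\cI}[G]$-module has flat underlying symmetric spectrum. Thus the hypotheses of Lemmas~\ref{lem:invariant-bar} and~\ref{lem:bar-monoidal-equivalence} hold for $P=\bS^{\cI}[G]$ and our flat $R$.

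For the unit, I would write $R\to T^{\cI}(\iota)$ as the composite
\[
R\to B(\bS^{\cI}[G],\bS^{\cI}[G],R)\to B(\bS^{\cI}[U(\iota)],\bS^{\cI}[G],R)=T^{\cI}(\iota),
\]
where the first map is the unit structure map \eqref{eq:bar-monoidal} of the bar construction and the second is $B(-,\bS^{\cI}[G],R)$ applied to $\bS^{\cI}$ of the unit map $\iota_G\to U(\iota)$ of $U$. The first map is a section of the stable equivalence $B(\bS^{\cI}[G],\bS^{\cI}[G],R)\to \bS^{\cI}[G]\wedge_{\bS^{\cI}[G]}R\cong R$ of Lemma~\ref{lem:bar-smash-equivalence} (with $\bS^{\cI}[G]$ regarded as a free, hence flat, module over itself), so it is a stable equivalence. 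The second map is a stable equivalence since $\iota_G\to U(\iota)$ is an $\cI$-equivalence by Lemma~\ref{lem:U-derived-monoidal}, $\bS^{\cI}$ sends it to a stable equivalence, and $B(-,\bS^{\cI}[G],R)$ preserves it by Lemma~\ref{lem:invariant-bar}(ii).

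For the multiplication, fix $\alpha\colon X\to BG$ and $\beta\colon Y\to BG$ with $X$ or $Y$ flat, and choose cofibrant replacements $U(\alpha)^{\cof}\to U(\alpha)$ and $U(\beta)^{\cof}\to U(\beta)$ in the absolute flat model structure on $\Top^{\cI}_G/EG$. Since $\bS^{\cI}$ lifts to a left Quillen functor between the absolute flat model structures on $G$-modules and $\bS^{\cI}[G]$-modules, the spectra $\bS^{\cI}[U(\alpha)^{\cof}]$ and $\bS^{\cI}[U(\beta)^{\cof}]$ are flat $\bS^{\cI}[G]$-modules; by Lemma~\ref{lem:invariant-bar}(i) the spectra $B(\bS^{\cI}[U(\alpha)^{\cof}],\bS^{\cI}[G],R)$ and $B(\bS^{\cI}[U(\beta)^{\cof}],\bS^{\cI}[G],R)$ are flat $R$-modules, and by Lemma~\ref{lem:invariant-bar}(ii) they are stably equivalent to $T^{\cI}(\alpha)$ and $T^{\cI}(\beta)$. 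Being flat, they may be used in place of the cofibrant replacements $T^{\cI}(\alpha)^{\cof}$ and $T^{\cI}(\beta)^{\cof}$ to compute the derived smash product, so it suffices to show that the composite
\begin{multline*}
B(\bS^{\cI}[U(\alpha)^{\cof}],\bS^{\cI}[G],R)\wedge_R B(\bS^{\cI}[U(\beta)^{\cof}],\bS^{\cI}[G],R)\\
\to B\bigl(\bS^{\cI}[U(\alpha)^{\cof}\boxtimes_G U(\beta)^{\cof}],\bS^{\cI}[G],R\bigr)\to T^{\cI}(\alpha\boxtimes\beta)
\end{multline*}
is a stable equivalence. The first map — the monoidal structure map \eqref{eq:bar-monoidal} of $B(-,\bS^{\cI}[G],R)$ followed by the canonical isomorphism from strong monoidality of $\bS^{\cI}$ — is a stable equivalence by Lemma~\ref{lem:bar-monoidal-equivalence}. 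The second map is $B(-,\bS^{\cI}[G],R)$ applied to $\bS^{\cI}$ of the derived monoidal multiplication of $U$, which is an $\cI$-equivalence by Lemma~\ref{lem:U-derived-monoidal}; once more $\bS^{\cI}$ sends it to a stable equivalence and $B(-,\bS^{\cI}[G],R)$ preserves it by Lemma~\ref{lem:invariant-bar}(ii).

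I expect the main obstacle to be the step in the last paragraph identifying the derived monoidal multiplication \eqref{eq:derived-monoidal-str-maps}, defined via cofibrant replacements in $\Spsym{R}$, with the explicit composite built from $U$-cofibrant replacements. The resolution is that $B(\bS^{\cI}[U(\alpha)^{\cof}],\bS^{\cI}[G],R)$ is already a flat $R$-module stably equivalent to $T^{\cI}(\alpha)$, hence an acceptable model for computing the derived smash product; combined with the homotopy invariance of $T^{\cI}$ from Proposition~\ref{prop:hty-inv-of-TI}, the homotopy class of the structure map is independent of these choices, so one may replace the abstract cofibrant replacements by the bar-construction models and compare. A subsidiary technical point is checking that $\bS^{\cI}$ indeed descends to a left Quillen functor between the lifted absolute flat model structures on the module categories, so that it transports the flatness needed to invoke Lemma~\ref{lem:bar-monoidal-equivalence} and Lemma~\ref{lem:invariant-bar}(i).
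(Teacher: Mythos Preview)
Your proof is correct and follows essentially the same approach as the paper: decompose the monoidal unit as $R\to B(\bS^{\cI}[G],\bS^{\cI}[G],R)\to B(\bS^{\cI}[U(\iota)],\bS^{\cI}[G],R)$, and for the multiplication use cofibrant replacements $U(\alpha)^{\cof}$, $U(\beta)^{\cof}$ at the $G$-module level, feed them through $B(\bS^{\cI}[-],\bS^{\cI}[G],R)$ to obtain flat $R$-module replacements of $T^{\cI}(\alpha)$ and $T^{\cI}(\beta)$, and then invoke Lemmas~\ref{lem:U-derived-monoidal}, \ref{lem:bar-monoidal-equivalence}, and \ref{lem:invariant-bar}. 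Your write-up is more explicit than the paper's about the cofibrancy bookkeeping (in particular the passage of flatness through $\bS^{\cI}$ to $\bS^{\cI}[G]$-modules), but the argument is the same.
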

\begin{proof}
We can write the monoidal unit as the composition
\[
R\to B(\bS^{\cI}[G],\bS^{\cI}[G], R)\to B(\bS^{\cI}[U(\iota)],\bS^{\cI}[G], R)
\]
where the first map is a stable equivalence by general properties of the two-sided bar construction, and the second map is a stable equivalence since $\bS^{\cI}[G]\to \bS^{\cI}[U(\iota)]$ is a stable equivalence by Lemmas~\ref{lem:hty-inv-of-SI-OmegaI} and \ref{lem:U-derived-monoidal}. 

In order to analyze the derived monoidal multiplication, we first choose cofibrant replacements $U(\alpha)^{\mathrm{cof}}\to U(\alpha)$ and $U(\beta)^{\mathrm{cof}}\to U(\beta)$ as in Lemma~\ref{lem:U-derived-monoidal}. Applying the functor
$B(\bS^{\cI}[-], \bS^{\cI}[G],R)$ to these cofibrant replacements gives us cofibrant replacements of $T^{\cI}(\alpha)$ and $T^{\cI}(\beta)$ by flat $R$-modules as follows from Lemma~\ref{lem:invariant-bar}. Furthermore, combining Lemmas~\ref{lem:U-derived-monoidal} and \ref{lem:bar-monoidal-equivalence}, we see that the derived monoidal multiplication is a stable equivalence for these particular choices of cofibrant replacements. That the same holds for all choices of cofibrant replacements now follows from the fact that the smash product $\wedge_R$ preserves stable equivalences between flat $R$-modules.
\end{proof}

Recall that the topological categories $\Top^{\cI}$ and $\Sp^{\Sigma}_R$ are tensored over $\Top$ with tensors defined by the levelwise cartesian products $X\times Q$  and the levelwise smash product $E\wedge Q_+$ for a space $Q$, an $\cI$-space $X$, and a symmetric spectrum $E$. Precomposing with the appropriate projections, we get induced tensor structures on the over-categories $\cS^{\cI}/BG$ and $\Spsym{R}/M\GL_1^{\cI}(R)$. The next proposition states that the Thom spectrum functor preserves these tensors.
\begin{proposition}\label{prop:TI-preserves-tensors}
For a map of $\cI$-spaces $\alpha \colon X \to BG$ and a space $Q$, there is a natural isomorphism 
$ T^{\cI}(\alpha\times Q)\iso T^{\cI}(\alpha)\wedge Q_+$. \qed
\end{proposition}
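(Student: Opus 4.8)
The plan is to show that each of the three functors whose composition defines $T^{\cI}$ in Definition~\ref{def:TI} preserves tensoring with an unbased space $Q$, in the appropriate sense for the over-categories involved, and then to paste the resulting natural isomorphisms together.

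First I would treat the right adjoint $U\colon \Top^{\cI}/BG\to\Top^{\cI}_G/EG$. Recall that $U(\alpha)$ is the pullback of $EG\to\mathrm{triv}_GBG\ot\mathrm{triv}_GX$, whereas the tensor $\alpha\times Q$ has structure map the composite $X\times Q\to X\xrightarrow{\alpha}BG$. Since $\mathrm{triv}_G$ commutes with the levelwise product by $Q$, the pasting lemma for pullbacks identifies $U(\alpha\times Q)$ with the pullback of $U(\alpha)\to\mathrm{triv}_GX\ot\mathrm{triv}_G(X)\times Q$; this last pullback is simply $U(\alpha)\times Q$, since it is taken along a projection. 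One checks that the resulting isomorphism $U(\alpha\times Q)\cong U(\alpha)\times Q$ is a map over $EG$ compatible with the tensor structure maps, so that $U$ induces a tensor-preserving functor on the relevant over-categories.

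Next, the levelwise formula $\bS^{\cI}[Z]_n=S^n\sm Z(\bld n)_+$ gives at once a natural isomorphism $\bS^{\cI}[Z\times Q]\cong\bS^{\cI}[Z]\sm Q_+$ of symmetric spectra, respecting the $\bS^{\cI}[G]$-module structures and the maps to $\bS^{\cI}[EG]$. Finally, writing $P=\bS^{\cI}[G]$, I would verify that $B(M\sm Q_+,P,R)\cong B(M,P,R)\sm Q_+$ naturally in $M$: in each simplicial degree this amounts to the interchange $(M\sm Q_+)\sm P^{\sm n}\sm R\cong (M\sm P^{\sm n}\sm R)\sm Q_+$, which follows from the compatibility of $\sm$ with tensors over $\Top$ (equivalently from $M\sm Q_+\cong M\sm\Sigma^{\infty}_+ Q$ together with the associativity and symmetry of $\sm$), and these isomorphisms commute with the simplicial structure maps; passing to geometric realization is harmless because $-\sm Q_+$ is a left adjoint and hence commutes with the realization colimit. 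The isomorphism so obtained is evidently compatible with the maps to $M\GL_1^{\cI}(R)=B(\bS^{\cI}[EG],\bS^{\cI}[G],R)$. Composing the three isomorphisms yields $T^{\cI}(\alpha\times Q)\cong T^{\cI}(\alpha)\sm Q_+$ over $M\GL_1^{\cI}(R)$, naturally in $\alpha$ and $Q$.

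I do not expect a serious obstacle here: the argument is essentially bookkeeping with the explicit descriptions of $U$, of $\bS^{\cI}$, and of the two-sided bar construction. The only mildly delicate point is the interchange of $-\sm Q_+$ with the relative smash products in the bar construction and with geometric realization, but both are instances of the functor $-\sm Q_+$ preserving colimits.
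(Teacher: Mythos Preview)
Your proposal is correct and is exactly the natural argument the paper has in mind: the statement is marked with \qed and no proof is given, because each of the three constituent functors $U$, $\bS^{\cI}$, and $B(-,\bS^{\cI}[G],R)$ visibly preserves tensors with an unbased space. Your decomposition and the individual verifications (pasting of pullbacks for $U$, the levelwise formula for $\bS^{\cI}$, and the degreewise interchange plus colimit-preservation for the bar construction) are precisely the details one would supply if asked to expand the omitted proof.
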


\subsection{Thom spectra from \texorpdfstring{$\cI$}{I}-space monoids over \texorpdfstring{$G$}{G}}
Let $R$ be a (semistable as always) commutative symmetric ring spectrum with flat unit and underlying flat symmetric spectrum, and let again $G\to \GL_1^{\cI}(R)$ be a cofibrant replacement.  Recall from \cite[Proposition~9.3]{Sagave-S_diagram} that the absolute flat model structure on $\Top^{\cI}$ lifts to an (absolute flat) model structure on the category $\cA\Top^{\cI}$ of (not necessarily commutative) $\cI$-space monoids. We shall use the term \emph{cofibrant $\cI$-space monoid} for a cofibrant object in this model structure on $\cA\Top^{\cI}$. The induced over-category model structures on $\cA\Top^{\cI}/G$ and $\cA\Top^{\cI}/\GL_1^{\cI}(R)$ are Quillen equivalent, and it will be most convenient to work directly with $\cI$-space monoids over $G$. Given a map of $\cI$-space monoids $\alpha\colon H\to G$, we write $B\alpha\colon BH\to BG$ for the induced map of bar constructions.

\begin{proposition}\label{prop:B-alpha-Thom}
Let $H$ be a grouplike and cofibrant $\cI$-space monoid, and let $\alpha\colon H\to G$ be a map of $\cI$-space monoids. Then there is a chain of natural stable equivalences 
\[
T^{\cI}(B\alpha)\simeq B(R,R\wedge \bS^{\cI}[H],R^\alpha)
\] 
where the copy of $R$ on the left in the two-sided bar construction has module structure induced by the projection $R\wedge \bS^{\cI}[H]\to R$, and $R^\alpha$ has $R$ as its underlying symmetric spectrum and module structure induced by the map  $H\stackrel{\alpha}{\to} G\to \GL_1^{\cI}(R)$ and the multiplication in $R$.
\end{proposition}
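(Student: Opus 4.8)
The plan is to unravel the three functors composing $T^{\cI}$ in Definition~\ref{def:TI} one at a time: identify $U(B\alpha)$ with an ``associated bundle'' bar construction, push this across $\bS^{\cI}$, and then collapse an inner bar construction. Throughout, I regard $G$ as a left $H$-module by restricting the left multiplication along $\alpha\colon H\to G$, and I write $B(U^{\cI},H,G)$ for the resulting right $G$-module obtained from the two-sided bar construction.

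First I would identify $U(B\alpha)$. The map $\alpha$ induces a $G$-equivariant map $B(U^{\cI},H,G)\to B(U^{\cI},G,G)\xrightarrow{\sim}EG$, and since collapsing the last $G$-coordinate gives a $G$-equivariant map $B(U^{\cI},H,G)\to B(U^{\cI},H,U^{\cI})=BH$ that is compatible over $BG$ with $B\alpha$ and the previous map, the pullback description of $U$ from Definition~\ref{def:VU-adjunction} yields a comparison map of $G$-modules over $EG$
\[
B(U^{\cI},H,G)\longrightarrow U(B\alpha).
\]
I expect showing that this is an $\cI$-equivalence to be the main obstacle. I would prove it by checking that the square formed by these two $\cI$-spaces, their maps to $BH$ and $EG$, and the bottom row $B\alpha\colon BH\to BG$, becomes homotopy cartesian after $(-)_{h\cI}$: the right-hand column $EG\to BG$ has homotopy fibre $G_{h\cI}$ because $G$ is grouplike and cofibrant (this is essentially Theorem~\ref{thm:Top^I_G-Quillen-equivalence} together with the classifying-space analysis of Section~\ref{sec:modules-classifying-spaces}), the left-hand column $B(U^{\cI},H,G)\to BH$ has homotopy fibre $G_{h\cI}$ because $H$ is grouplike, and the two fibre inclusions are compatibly identified. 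Equivalently, one can observe that $B(U^{\cI},H,G)\to EG$ is a cofibrant object of $\Top^{\cI}_G/EG$ whose image under the left adjoint $V$ of \eqref{eq:VU-adjunction} is $B\alpha$, so the claim follows from the Quillen equivalence in Theorem~\ref{thm:Top^I_G-Quillen-equivalence}.

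Granting the $\cI$-equivalence $B(U^{\cI},H,G)\xrightarrow{\sim}U(B\alpha)$, I would apply $\bS^{\cI}$ and then $B(-,\bS^{\cI}[G],R)$. By Lemma~\ref{lem:hty-inv-of-SI-OmegaI} the first takes it to a stable equivalence, and since $R$ has flat underlying symmetric spectrum (and $\bS^{\cI}[G]$ has a flat unit, as $G$ is cofibrant) Lemma~\ref{lem:invariant-bar}(ii) shows the second preserves stable equivalences; hence $T^{\cI}(B\alpha)=B(\bS^{\cI}[U(B\alpha)],\bS^{\cI}[G],R)$ is stably equivalent to $B(\bS^{\cI}[B(U^{\cI},H,G)],\bS^{\cI}[G],R)$. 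Because $\bS^{\cI}$ is strong symmetric monoidal and preserves colimits, hence geometric realizations, one has $\bS^{\cI}[B(U^{\cI},H,G)]\cong B(\bS,\bS^{\cI}[H],\bS^{\cI}[G])$, using $\bS^{\cI}[U^{\cI}]\cong\bS$. Therefore
\[
T^{\cI}(B\alpha)\ \simeq\ B\big(B(\bS,\bS^{\cI}[H],\bS^{\cI}[G]),\,\bS^{\cI}[G],\,R\big).
\]

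Finally I would collapse the inner bar construction. The skeletal filtration argument from the proof of Lemma~\ref{lem:invariant-bar}(i) (with $\bS^{\cI}[H]$ in the role of $P$, which only needs a flat unit, not commutativity) shows that $B(\bS,\bS^{\cI}[H],\bS^{\cI}[G])$ is a flat $\bS^{\cI}[G]$-module: each level $\bS^{\cI}[H]^{\wedge n}\wedge\bS^{\cI}[G]$ is a flat $\bS^{\cI}[G]$-module, and flatness of the unit of $\bS^{\cI}[H]$ makes the skeletal inclusions cofibrations. Lemma~\ref{lem:bar-smash-equivalence} then provides a stable equivalence
\[
B\big(B(\bS,\bS^{\cI}[H],\bS^{\cI}[G]),\,\bS^{\cI}[G],\,R\big)\ \xrightarrow{\ \sim\ }\ B(\bS,\bS^{\cI}[H],\bS^{\cI}[G])\wedge_{\bS^{\cI}[G]}R,
\]
and since $(-)\wedge_{\bS^{\cI}[G]}R$ commutes with realization and simplifies each level, the target is isomorphic to $B(\bS,\bS^{\cI}[H],R)$ with $R$ carrying the left $\bS^{\cI}[H]$-action induced by $\bS^{\cI}[H]\to\bS^{\cI}[G]\to R$; by \eqref{eq:SI-GLoneIR-to-R} this module is exactly $R^\alpha$. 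A direct simplicial identification, using $(R\wedge\bS^{\cI}[H])^{\wedge_R n}\cong R\wedge\bS^{\cI}[H]^{\wedge n}$ and $R\wedge_R(-)\cong(-)$, shows that in each degree both $B(\bS,\bS^{\cI}[H],R^\alpha)$ and $B(R,R\wedge\bS^{\cI}[H],R^\alpha)$ equal $\bS^{\cI}[H]^{\wedge n}\wedge R$ with matching faces and degeneracies, so these two bar constructions are isomorphic. All maps above are natural in $\alpha$, so splicing them yields the asserted chain $T^{\cI}(B\alpha)\simeq B(R,R\wedge\bS^{\cI}[H],R^\alpha)$.
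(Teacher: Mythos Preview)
Your proposal is correct and follows essentially the same route as the paper: identify $U(B\alpha)\simeq B(U^{\cI},H,G)$ by showing the relevant square is homotopy cartesian after $(-)_{h\cI}$, push through $\bS^{\cI}$ and $B(-,\bS^{\cI}[G],R)$, then collapse the inner bar via Lemma~\ref{lem:bar-smash-equivalence} using flatness of $B(\bS,\bS^{\cI}[H],\bS^{\cI}[G])$ as an $\bS^{\cI}[G]$-module. The paper argues the homotopy cartesianness more directly (the space-level square is an actual pullback with vertical quasifibrations, by \cite[Theorem~7.6]{May-classifying}) rather than by comparing fibres or invoking Theorem~\ref{thm:Top^I_G-Quillen-equivalence}; and at the last step the paper records $B(\bS,\bS^{\cI}[H],R^{\alpha})\to B(R,R\wedge\bS^{\cI}[H],R^{\alpha})$ only as a stable equivalence and invokes the flat-unit hypothesis on $R$ to make the target good, whereas your levelwise identification shows it is in fact an isomorphism of simplicial objects.
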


On the level of homotopy groups we have $\pi_*(R\wedge \bS^{\cI}[H])\cong R_*(H_{h\cI})$ with Pontryagin ring structure induced by the monoid structure of $H_{h\cI}$. Since the two-sided bar construction $B(R,R\wedge \bS^{\cI}[H],R^\alpha)$ represents the derived smash product of $R$ and $R^\alpha$ over $R\wedge \bS^{\cI}[H]$, we thus get the following $\Tor$ spectral sequence (compare e.g.\ \cite[Theorem IV.4.1]{EKMM}): 
\[
E^2_{*,*}=\Tor_*^{R_*(H_{h\cI})}(R_*,R_*^{\alpha})\Longrightarrow \pi_*(T^{\cI}(B\alpha))
\]

\begin{proof}[Proof of Proposition~\ref{prop:B-alpha-Thom}] The map $\alpha$ gives rise to a commutative diagram of $\cI$-spaces
\[
\xymatrix@-1pc{ B(U^{\cI},H,G) \ar[r] \ar[d]& B(U^{\cI},G,G)\ar[d]\\
BH \ar[r]^{B\alpha} & BG 
}
\]
which we claim to be homotopy cartesian. Thus, we must show that applying the homotopy colimit functor $(-)_{h\cI}$ we get a homotopy cartesian diagram of spaces (see \cite[Corollary~11.4]{Sagave-S_diagram}). For this it suffices to show that replacing $G$ by $G_{h\cI}$, $H$ by $H_{h\cI}$, and the $\boxtimes$-product by the cartesian product of spaces, the diagram becomes homotopy cartesian. In this situation the diagram is actually a pullback diagram and the vertical maps are quasifibrations by \cite[Theorem~7.6]{May-classifying} since $H_{h\cI}$ and $G_{h\cI}$ are grouplike. This shows that the diagram is homotopy cartesian. By definition of the universal fibration $EG\to BG$ this in turn implies that the induced map 
$
B(U^{\cI},H,G)\to U(B\alpha)
$
is an $\cI$-equivalence, and hence that applying $\bS^{\cI}$ gives a stable equivalence of $\bS[G]$-modules
\[
B(\bS,\bS^{\cI}[H],\bS^{\cI}[G])\cong \bS^{\cI}[B(U^{\cI},H,G)]\stackrel{\simeq}{\to} \bS^{\cI}[U(B\alpha)].
\]
The chain of stable equivalences
\[
\begin{gathered}
B(\bS^{\cI}[U(B\alpha)],\bS^{\cI}[G],R)\xl{\simeq} B(B(\bS,\bS^{\cI}[H],\bS^{\cI}[G]),\bS^{\cI}[G],R)\\
\xr{\simeq} B(\bS, \bS^{\cI}[H],\bS^{\cI}[G])\wedge_{\bS^{\cI}[G]}R  \xr{\simeq}B(\bS,\bS^{\cI}[H],R^{\alpha})\xr{\simeq} B(R,R\wedge\bS^{\cI}[H],R^{\alpha})
\end{gathered}
\]
then gives the statement in the proposition. For the second equivalence we argue as in the proof of Lemma~\ref{lem:invariant-bar}(i) to show that the $\bS^{\cI}[G]$-module $B(\bS, \bS^{\cI}[H],\bS^{\cI}[G])$ is flat such that Lemma~\ref{lem:bar-smash-equivalence} applies. The flatness assumption on the unit of $R$ ensures that the degeneracy maps in the simplicial spectrum underlying $B(R,R\wedge\bS^{\cI}[H],R^{\alpha})$ are cofibrations so the geometric realization is homotopically well-behaved.  
\end{proof}

\begin{remark}
In general, given a map of based $\cI$-spaces $\beta\colon X\to BG$ with $X_{h\cI}$ path connected, one can show that there exists a grouplike and cofibrant $\cI$-space monoid $M$ and a map of $\cI$-space monoids $\alpha\colon M\to G$ such that $B\alpha$ and $\beta$ are weakly equivalent as objects of $\Top^{\cI}/BG$.  It follows that the description of the Thom spectrum functor in Proposition~\ref{prop:B-alpha-Thom} can be extended to all such maps of based 
$\cI$-spaces $X\to BG$ with $X_{h\cI}$ path connected.   
\end{remark}

\subsection{Thom spectra over suspensions}\label{subsec:Thom-I-suspension}
Let $R$ and $G$ be as above and suppose that the underlying symmetric spectrum of $R$ is flat. (We don't need to assume that $R$ has a flat unit for the results in this section). 
Let $X$ be a based $\cI$-space (that is, an $\cI$-space equipped with a map $U^{\cI}\to X$), which we assume to be levelwise well-based. We write $CX$ for the reduced cone (the levelwise smash product with the unit interval $I$ based at $0$) and $\Sigma X$ for the reduced suspension (the levelwise smash product with $S^1=I/\partial I$). Given a map of based $\cI$-spaces $\alpha\colon X\to G$, we let $\Sigma\alpha$ be the composition 
\[
\Sigma\alpha\colon \Sigma X\to \Sigma G\to BG,
\] 
where the second map is the inclusion of $\Sigma G$ as the 1-skeleton of $BG$.

\begin{proposition}\label{prop:Thom-suspension}
The Thom spectrum $T^{\cI}(\Sigma\alpha)$ fits in a functorial homotopy cocartesian diagram of $R$-modules
\[
\xymatrix@-1pc{
\bS^{\cI}[X]\wedge R \ar[r]\ar[d] & \bS^{\cI}[CX]\wedge R\ar[d]\\
B(\bS^{\cI}[G],\bS^{\cI}[G],R)\ar[r] & T^{\cI}(\Sigma \alpha)
}
\]
where the upper horizontal map is induced by the inclusion of $X$ in $CX$, and the vertical map on the left is induced by $\alpha$ and the inclusion of $\bS^{\cI}[G]\wedge R$ as the $0$-skeleton in $B(\bS^{\cI}[G],\bS^{\cI}[G],R)$
\end{proposition}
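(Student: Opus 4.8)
The plan is to reduce the statement about $\Sigma\alpha$ to a statement about the cone inclusion $X \to CX$ over $G$, and then apply the description of the Thom spectrum functor from the preceding sections. First I would observe that, since $\Sigma X = CX \cup_X CX$ is a homotopy pushout of based $\cI$-spaces, there is a corresponding homotopy cocartesian square in $\Top^{\cI}/BG$ with corners $X$, $CX$, $CX$, and $\Sigma X$, all mapping to $BG$ via (the relevant restrictions of) $\Sigma\alpha$. Concretely, $\Sigma\alpha$ restricted to one copy of $CX$ is null (it factors through the cone point $U^{\cI}\to G\to BG$), so this copy contributes the unit $\iota$ of $\Top^{\cI}/BG$ when we pass to Thom spectra, while the other copy records the data of $\alpha$ via the cone on $X$.

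The key steps, in order, are as follows. Step one: set up the homotopy cocartesian square of $\cI$-spaces over $BG$ described above, being careful about levelwise well-basedness so that the cone and suspension constructions are homotopy invariant (this is why we assumed $X$ levelwise well-based). Step two: apply the functor $U\colon \Top^{\cI}/BG \to \Top^{\cI}_G/EG$; by Lemma~\ref{lem:U-preserves-I-equiv} together with standard gluing arguments, $U$ carries this homotopy cocartesian square to a homotopy cocartesian square of $G$-modules over $EG$, and one identifies the value of $U$ on the trivial (cone-point) summand with the $G$-module underlying $EG$ itself, or rather with a model whose image under $\bS^{\cI}$ and the bar construction is $B(\bS^{\cI}[G],\bS^{\cI}[G],R)$. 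Step three: apply $\bS^{\cI}$, which is strong symmetric monoidal and, by Lemma~\ref{lem:hty-inv-of-SI-OmegaI}, sends $\cI$-equivalences to stable equivalences and (being a left adjoint) preserves homotopy pushouts; here one uses that $\bS^{\cI}[CX] \cong \bS^{\cI}[X]\wedge I_+$-type identifications give the reduced cone on $\bS^{\cI}[X]$, so the top edge of the square becomes the cone inclusion $\bS^{\cI}[X]\wedge R \to \bS^{\cI}[CX]\wedge R$ after the final step. Step four: apply $B(-,\bS^{\cI}[G],R)$; by Lemma~\ref{lem:invariant-bar}(ii) this preserves stable equivalences, and since it is built from a levelwise construction followed by geometric realization it preserves homotopy pushouts of sufficiently cofibrant diagrams, so the homotopy cocartesian square is preserved. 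Assembling, the bottom-left corner becomes $B(\bS^{\cI}[G],\bS^{\cI}[G],R)$, the bottom-right corner is $T^{\cI}(\Sigma\alpha)$ by definition, and the top edge is the map induced by $X \hookrightarrow CX$ smashed with $R$ (using Lemma~\ref{lem:bar-smash-equivalence} to replace $B(\bS^{\cI}[X]\wedge\bS^{\cI}[G]/\!\!\cdots)$ by the honest smash product $\bS^{\cI}[X]\wedge R$).

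The functoriality claim is then automatic since every construction used—cone, suspension, $U$, $\bS^{\cI}$, the bar construction, and functorial cofibrant replacement where needed—is functorial in $\alpha$.

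The step I expect to be the main obstacle is Step two, the precise identification of $U$ applied to the null-homotopic copy of $CX$ over $BG$: one must check that the pullback defining $U$ of (the cone point map) $U^{\cI}\to BG$—and, compatibly, of the whole cone $CX \to BG$ when restricted appropriately—is $\cI$-equivalent to $EG$ (equivalently to $B(U^{\cI},G,G)$), so that after $\bS^{\cI}$ and the bar construction one really lands on $B(\bS^{\cI}[G],\bS^{\cI}[G],R)$ with the stated $0$-skeleton map, rather than some other model. This uses the description of $U$ as a pullback along $EG \to \mathrm{triv}_G BG$ from Definition~\ref{def:VU-adjunction}, the fact that $U^{\cI}\to EG$ is an $\cI$-equivalence, right properness, and that $X$ or $CX$ is flat enough for these homotopy pullbacks to be computed correctly; the compatibility with the gluing along $X$ requires a small diagram chase. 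Everything else is a routine application of the homotopy-invariance and monoidality lemmas already established, combined with the standard fact that left-derived functors and geometric realizations preserve homotopy pushouts.
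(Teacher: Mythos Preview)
Your outline has the right overall shape, but there is a genuine gap at Step two. You claim that $U$ carries the homotopy cocartesian square with corners $X$, $CX$, $CX$, $\Sigma X$ (over $BG$) to a homotopy cocartesian square of $G$-modules, citing ``Lemma~\ref{lem:U-preserves-I-equiv} together with standard gluing arguments.'' But $U$ is a \emph{right} adjoint defined by pullback along $EG\to BG$; it preserves $\cI$-equivalences and homotopy pullbacks, not homotopy pushouts. There is no standard gluing argument that makes a pullback functor preserve cocartesian squares, and Lemma~\ref{lem:U-preserves-I-equiv} says nothing about colimits. This is exactly the non-formal content of the proposition, and your own diagnosis of the obstacle in Step two (matching up the corner values) misses it.

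The paper addresses this via Lemma~\ref{lem:suspension-diagram}: one writes down the candidate square of $G$-modules directly---with corners $X\boxtimes G$, $CX\boxtimes G$, $G$, and $U(\Sigma\alpha)$---using the $1$-skeleton description of $B(U^{\cI},G,G)\to BG$, and then proves it is homotopy cocartesian by showing, equivalently, that the associated outer square over $BG$ is homotopy \emph{cartesian}. That cartesian statement is established by passing to homotopy colimits over $\cI$ and applying May's quasifibration theorem, which requires that $G_{h\cI}$ be grouplike. You never invoke the grouplike hypothesis, and without it the step fails. Once Lemma~\ref{lem:suspension-diagram} is in place, your Steps three and four (applying $\bS^{\cI}$ and $B(-,\bS^{\cI}[G],R)$, then identifying the top row via the stable equivalence $-\wedge R\xrightarrow{\sim} B(-\wedge\bS^{\cI}[G],\bS^{\cI}[G],R)$) go through essentially as you describe.
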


Notice that the composition of the vertical map on the left with the stable equivalence $B(\bS^{\cI}[G],\bS^{\cI}[G],R)\xr{\simeq}R$ can be described as multiplication by $\bS^{\cI}[\alpha]$ via the map $G\to \GL_1^{\cI}(R)$. Hence the diagram in the proposition is stably equivalent to the following diagram in the stable homotopy category
\[
\xymatrix@-1pc{ R\wedge (X_{h\cI})_+ \ar[r]\ar[d]  & R\ar[d]\\
R \ar[r] & T^{\cI}(\Sigma \alpha)
}
\]
where on the left we compose $\alpha_{h\cI}$ with the map $G_{h\cI}\to \GL_1^{\cI}(R)_{h\cI}$ and use the multiplication in $R$. Consequently we get a homotopy cofiber sequence of $R$-modules
\[
R\wedge X_{h\cI}\to R\to T^{\cI}(\Sigma\alpha).
\]
We prefer the description of $T^{\cI}(\Sigma\alpha)$ in the proposition since it has the advantage of being strictly functorial.
The proof of the proposition is based on the next lemma.

\begin{lemma}\label{lem:suspension-diagram}
There is a functorial homotopy cocartesian diagram of $G$-modules
\[
\xymatrix@-1pc{
X\boxtimes G \ar[r]\ar[d] & CX\boxtimes G\ar[d]\\
G \ar[r] & U(\Sigma\alpha)
}
\]
where the upper horizontal map is induced by the inclusion of $X$ in $CX$, and the vertical map on the left is induced by $\alpha$ and the multiplication in $G$.
\end{lemma}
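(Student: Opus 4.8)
The plan is to construct the homotopy cocartesian square of $G$-modules directly from the pushout defining the suspension $\Sigma X$, using that $U$ commutes with the relevant colimits and that $EG \to BG$ is a universal fibration. First I would recall the pushout square of based $\cI$-spaces
\[
\xymatrix@-1pc{
X \ar[r]\ar[d] & CX \ar[d]\\
U^{\cI} \ar[r] & \Sigma X
}
\]
and observe that since $X$ is levelwise well-based, this is also homotopy cocartesian. Pushing forward along $\Sigma\alpha \colon \Sigma X \to BG$, one obtains the corresponding square in $\Top^{\cI}/BG$: the maps $X\to BG$ and $CX \to BG$ and $U^{\cI}\to BG$ here are the composites of the structure maps to $\Sigma X$ with $\Sigma\alpha$. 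The map $U^{\cI}\to BG$ is $\iota$, so $U(\iota)$ is (up to $\cI$-equivalence) $EG$, and since $U^{\cI}\to EG$ is an $\cI$-equivalence we may as well track $G$ itself via $\mathrm{triv}_G$; concretely the composite $U^{\cI} \to \Sigma X \xrightarrow{\Sigma\alpha} BG$ factors through $\Sigma G \to BG$, but on the bottom-left corner it is just $\iota$, whose image under $U$ is $\cI$-equivalent to $EG$, which after applying the extension-of-scalars bookkeeping contributes the copy of $G$ in the lemma's square.

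Next I would apply the functor $U\colon \Top^{\cI}/BG \to \Top^{\cI}_G/EG$ to this square. The key input is that $U$ preserves the relevant homotopy pushout: $U$ is a right adjoint, so it does not commute with colimits in general, but because $EG \to BG$ is a fibration and the flat model structure is right proper (Lemma~\ref{lem:U-preserves-I-equiv} and the right properness cited before it), pullback along $EG \to BG$ preserves homotopy cocartesian squares. Equivalently, $U(\beta)$ is the homotopy pullback $EG \times^h_{BG} X$, and homotopy pullback along a fixed map preserves homotopy pushout squares of spaces (a Mather-type cube argument, valid here after passing to $(-)_{h\cI}$ using \cite[Corollary~11.4]{Sagave-S_diagram}). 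Thus $U$ applied to the suspension pushout yields a homotopy cocartesian square of $G$-modules whose corners are $U$ of the four maps above. It then remains to identify these corners: $U$ of the map $X \to \Sigma X \xrightarrow{\Sigma\alpha} BG$ is $\cI$-equivalent to $X \boxtimes G$ with $G$-action by multiplication (since over a point of $BG$ coming from the basepoint, the fiber of $EG$ becomes $G$ via the deformation retraction $U^{\cI}\hookrightarrow EG$, and the twist is precisely $\alpha$ composed with the canonical map); similarly $U(CX \to BG) \simeq CX \boxtimes G$ and $U(\iota) \simeq G$. The upper horizontal and left vertical maps are then induced by $X \hookrightarrow CX$ and by $\alpha$ together with the multiplication $G\boxtimes G \to G$, exactly as claimed.

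The main obstacle I expect is making the identification of the corners of the square precise and strictly functorial rather than merely up to $\cI$-equivalence — in particular showing that $U$ of the constant map $X \xrightarrow{\Sigma\alpha\circ(X\hookrightarrow\Sigma X)} BG$ is naturally isomorphic (not just equivalent) to $X\boxtimes G \to EG$ with the asserted module structure. For this I would use the explicit pullback description of $U$ from Definition~\ref{def:VU-adjunction}: $U(\gamma)$ is the pullback of $EG \to \mathrm{triv}_G BG \leftarrow \mathrm{triv}_G Y$. When $Y = X$ and $\gamma$ factors as $X \to \Sigma G \to BG$ through the $1$-skeleton (the $\Sigma G$ there coming from the cone coordinate being pinched), one can compute this pullback using the explicit simplicial description of $B(U^{\cI},G,G)$ in low degrees, and the acyclic cofibration $B(U^{\cI},G,G)\to EG$; the cone coordinate trivializes the $BG$-component, leaving $X \boxtimes G$ with $G$ acting on the right factor. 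A second, more minor technical point is the well-basedness hypothesis needed to ensure the suspension pushout is a homotopy pushout levelwise and that the resulting square of $G$-modules is genuinely homotopy cocartesian in the flat model structure; this is handled by the levelwise well-based assumption on $X$ and the fact that $X\boxtimes G$, $CX\boxtimes G$ then have the right cofibrancy for the pushout to be a homotopy pushout. With these identifications in hand, the functoriality of the construction is immediate since every step — the suspension pushout, the functor $U$, and the corner identifications — is natural in $\alpha$.
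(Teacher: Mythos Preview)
Your approach differs substantially from the paper's, and it has a genuine gap.

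The paper does not apply $U$ to the suspension pushout and then identify corners. Instead it constructs the square directly: the $1$-skeleton of $B(U^{\cI},G,G)$ is the pushout of $G \leftarrow G\boxtimes G \to CG\boxtimes G$ (with the left map being multiplication), and pushing this back along $\alpha\colon X\to G$ gives a map
\[
G\cup_{X\boxtimes G}CX\boxtimes G \longrightarrow B(U^{\cI},G,G)
\]
covering $\Sigma\alpha\colon \Sigma X\to BG$. By the pullback definition of $U$, the lemma is then equivalent to this square over $BG$ being homotopy \emph{cartesian}. The paper checks that after applying $(-)_{h\cI}$ (which detects homotopy cartesian squares), the diagram is a strict pullback and the projection to $\Sigma(X_{h\cI})$ is a quasifibration by the same Dold--Lashof style argument that shows $B(*,G_{h\cI},G_{h\cI})\to B(G_{h\cI})$ is one, using that $G_{h\cI}$ is grouplike.

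Your argument breaks at the step ``homotopy pullback along a fixed map preserves homotopy pushout squares (a Mather-type cube argument)''. Mather's cube theorems do not say this: the first cube theorem says that if the bottom face is a homotopy pushout and the side faces are homotopy pullbacks, then the top face is a homotopy \emph{pullback}, not a pushout. There is no general principle that base change along a fibration preserves homotopy pushouts; this is precisely the nontrivial content that the paper's quasifibration argument supplies, and it genuinely uses that $G$ is grouplike.

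There is also a problem with your identification of the left vertical map. In the suspension pushout $U^{\cI}\leftarrow X \to CX$, both $X$ and $U^{\cI}$ sit over the basepoint of $BG$; the map $X\to U^{\cI}$ knows nothing about $\alpha$. So even granting that $U$ preserved the pushout and that the corners were $X\boxtimes G$ and $G$, the induced left vertical map would be the projection $X\boxtimes G \to U^{\cI}\boxtimes G \cong G$, not the $\alpha$-twisted multiplication the lemma asserts. In the paper's construction the $\alpha$-twist arises for free from the bar construction: the face map $d_1\colon G\boxtimes G\to G$ in $B_\bullet(U^{\cI},G,G)$ is multiplication, and restricting along $\alpha\boxtimes\mathrm{id}$ gives exactly the stated map $X\boxtimes G\to G$.
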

\begin{proof}
We first observe that the 1-skeleton of $B(U^{\cI},G,G)$ can be identified with the pushout of the diagram
$
G\ot G\boxtimes G\to CG\boxtimes G
$,
where the maps are given by the multiplication in $G$ and the inclusion of $G$ in $CG$. Hence $\Sigma \alpha$ fits as the composition in the bottom line of the commutative diagram
\[
\xymatrix@-.5pc{
G\cup_{X\boxtimes G}CX\boxtimes G \ar[r]\ar[d] & G\cup_{G\boxtimes G}CG\boxtimes G \ar[r]\ar[d] & B(U^{\cI},G,G)\ar[d]\\
\Sigma X \ar[r] & \Sigma G \ar[r] & BG 
}
\]
which in turn gives rise to the commutative diagram in the lemma. Furthermore, by the definition of the universal fibration $EG\to BG$, the statement in the lemma is equivalent to the outer diagram being homotopy cartesian. Now recall from \cite[Corollary~11.4]{Sagave-S_diagram}) that a commutative diagram of $\cI$-spaces is homotopy cartesian if and only if it becomes a homotopy cartesian diagram of spaces when passing to homotopy colimits. Thus, it suffices to show that replacing $X$ by $X_{h\cI}$, $G$ by $G_{h\cI}$, and the $\boxtimes$-product by the cartesian diagram of spaces, the outer diagram becomes homotopy cartesian. In this situation one can check that the outer diagram is in fact a pullback diagram. Indeed, it is the geometric realization of a pullback diagram of simplicial spaces, where the simplicial cone is the smash product with the standard simplicial one-simplex $\Delta[1]$, and the simplicial suspension is the smash product with the simplicial circle $\Delta[1]/\partial\Delta[1]$.   
The standard arguments which show that $B(*,G_{h\cI},G_{h\cI})\to BG_{h\cI}$ is a quasifibration also show that the projection 
\[
G_{h\cI}\cup_{X_{h\cI}\times G_{h\cI}}(C(X_{h\cI})\times G_{h\cI})\to \Sigma(X_{h\cI})
\] 
is a quasifibration since $G_{h\cI}$ is grouplike. This gives the result. 
\end{proof}

\begin{proof}[Proof of Proposition~\ref{prop:Thom-suspension}]
Applying the functor $B(\bS^{\cI}[-],\bS^{\cI}[G],R)$ to the homotopy cocartesian diagram in Lemma~\ref{lem:suspension-diagram}, we get a commutative diagram of $R$-modules
\[
\xymatrix@-0.5pc{
B(\bS^{\cI}[X]\wedge\bS^{\cI}[G],\bS^{\cI}[G],R) \ar[r] \ar[d]& B(\bS^{\cI}[CX]\wedge\bS^{\cI}[G],\bS^{\cI}[G],R)\ar[d]\\
B(\bS^{\cI}[G],\bS^{\cI}[G],R)\ar[r] & B(\bS^{\cI}[U(\Sigma\alpha)],\bS^{\cI}[G],R)
}
\]
which is homotopy cocartesian by Lemmas~\ref{lem:hty-inv-of-SI-OmegaI} and \ref{lem:invariant-bar}. Now the assumption that the underlying symmetric spectrum of $R$ be flat implies that there is natural stable equivalence
\[
-\wedge R\to B(-\wedge \bS^{\cI}[G],\bS^{\cI}[G],R)
\]
and applying this to the upper horizontal map in the diagram above, we get the homotopy cocartesian square in the proposition. 
\end{proof}

\begin{remark}
Using that the canonical map $G\to \Omega(BG)$ is an $\cI$-equivalence (see~\cite[Section~4]{Sagave-S_group-compl}), one may extend the result in Proposition~\ref{prop:Thom-suspension} to general based maps $\Sigma X\to BG$. We shall prove a space level version of this result in Section~\ref{sec:Thom-space-suspension}. 
\end{remark}

\section{Generalized Thom spectra from space level data}\label{sec:Thom-spectra-from-space-level-data}
In this section, we explain how the Thom spectrum functor $T^{\cI}$
from the previous section gives rise to an $R$-module Thom
spectrum functor taking space level data as input. For this purpose we first review the \emph{$\cI$-spacification functor} introduced in~\cite[Section 4.2]{Schlichtkrull_Thom-symmetric}.
\subsection{\texorpdfstring{$\cI$}{I}-spacification of space level data}\label{sec:I-spacification}
The $\cI$-spacification procedure works in general for a commutative $\cI$-space monoid $M$ and gives a multiplicative homotopy inverse of the homotopy colimit functor $(-)_{h\cI}\colon \Top^{\cI}/M\to \Top/M_{h\cI}$. Furthermore, if $\cD$ is an operad in $\Top$ that is augmented over the Barratt-Ecles operad $\cE$ (that is, equipped with a map $\cD\to \cE$), then the canonical $\cE$-action on $M_{h\cI}$ pulls back to a $\cD$-action, and it is proved in \cite[Corollary~6.9]{Schlichtkrull_Thom-symmetric} that 
$(-)_{h\cI}$ induces a functor $\Top^{\cI}[\cD]/M\to \Top[\cD]/M_{h\cI}$ relating the categories of $\cD$-algebras over $M$ and $M_{h\cI}$. The $\cI$-spacification functor is compatible with these actions and provides a homotopy inverse also in the algebra setting.

The $\cI$-spacification procedure is based on the bar resolution $\overline M$ of $M$ defined by 
\[ 
\overline{M}(\bld{n}) = \textstyle\hocolim_{(\cI\downarrow \bld{n})}
M\circ \pi_{\bld{n}}
\] 
in which $\pi_{\bld{n}}\colon (\cI\downarrow \bld{n})\to \cI$ denotes the forgetful functor.
Since each of the categories $(\cI\downarrow\bld{n})$ has a terminal object, the map from the
homotopy colimit to the colimit induces a levelwise equivalence
$t\colon \overline{M} \to M$. There is a canonical isomorphism $\colim_{\cI}\overline M\cong M_{h\cI}$ and we write $\pi\colon \overline M\to \const_{\cI}M_{h\cI}$ for the adjoint map of \mbox{$\cI$-spaces}. Thus, we have a diagram of 
$\cI$-equivalences
\[
\const_{\cI}M_{h\cI} \xl{\pi}\overline M\xr{t} M
\]
and it follows from the proof of~\cite[Lemma~6.7]{Schlichtkrull_Thom-symmetric} that this is a diagram of algebras over the Barratt-Eccles operad $\cE$. Consider the composite functor
\begin{equation}\label{eq:pi*-t-functors}
\Top/M_{h\cI}\xr{\pi^*} \Top^{\cI}/\overline M\xr{t} \Top^{\cI}/M
\end{equation}
where $\pi^*$ takes a space over $M_{h\cI}$, viewed as a constant $\cI$-space, to the pullback along $\pi$, and the functor $t$ is given by post-composition with the map $t$.

\begin{proposition}
The functors $\pi^*$ and $t$ define a chain of Quillen equivalences relating $\Top/M_{h\cI}$ equipped with the standard over-category model structure and $\Top^{\cI}/M$ equipped with the (absolute flat) over-category model structure.   
\end{proposition}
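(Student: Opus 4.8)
The plan is to break the composite in~\eqref{eq:pi*-t-functors} into three adjunctions, each of which I will identify as (a slice of) a Quillen equivalence. Write $t_{!}$ for post-composition with $t$, left adjoint to the pullback functor $t^{*}$, and factor $\pi^{*}$ as
\[
\Top/M_{h\cI}\xrightarrow{\ \const_{\cI}\ }\Top^{\cI}/\const_{\cI}M_{h\cI}\xrightarrow{\ \pi^{*}\ }\Top^{\cI}/\overline{M},
\]
where the first functor sends $A\to M_{h\cI}$ to $\const_{\cI}A\to\const_{\cI}M_{h\cI}$ and the second is pullback along $\pi$; its left adjoint is induced by $\colim_{\cI}$ together with the canonical isomorphism $\colim_{\cI}\overline{M}\cong M_{h\cI}$. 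It thus suffices to show that the slice of $\const_{\cI}$ over $M_{h\cI}$, the pullback $\pi^{*}$, and the adjunction $(t_{!},t^{*})$ are Quillen equivalences for the absolute flat over-category model structures.

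First I would handle $(t_{!},t^{*})$ and $\pi^{*}$ together, since each is the slice of the absolute flat model structure on $\Top^{\cI}$ along an $\cI$-equivalence --- namely $t\colon\overline{M}\to M$ and $\pi\colon\overline{M}\to\const_{\cI}M_{h\cI}$, both of which are $\cI$-equivalences as recorded just above~\eqref{eq:pi*-t-functors}. Post-composition with a map is always left Quillen for over-category model structures, since cofibrations and acyclic cofibrations are created in $\Top^{\cI}$, and slicing over a weak equivalence is a Quillen equivalence whenever the ambient model category is right proper: for cofibrant $Z$ over the source the derived unit is obtained by pulling the $\cI$-equivalence back along a fibrant replacement, hence is again an $\cI$-equivalence, while post-composition visibly reflects $\cI$-equivalences. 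The absolute flat model structure on $\Top^{\cI}$ is left and right proper by \cite[Propositions~3.10 and~11.3]{Sagave-S_diagram}, so both steps go through.

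Next I would treat $\const_{\cI}\colon\Top/M_{h\cI}\to\Top^{\cI}/\const_{\cI}M_{h\cI}$, the slice over the object $M_{h\cI}\in\Top$ of the adjunction $(\colim_{\cI},\const_{\cI})\colon\Top^{\cI}\rightleftarrows\Top$ from Remark~\ref{rem:proj-remark}. This adjunction is a Quillen equivalence for the absolute flat model structure on $\Top^{\cI}$ as well: $\const_{\cI}$ sends fibrations of spaces to absolute flat fibrations --- a constant diagram satisfies the homotopy-pullback conditions over the morphisms of $\cI$ in the description of the fibrations in \cite[Section~6.11]{Sagave-S_diagram} for trivial reasons --- and it sends acyclic fibrations to absolute flat acyclic fibrations, these being exactly the level acyclic fibrations as noted in the proof of Lemma~\ref{lem:hty-inv-of-SI-OmegaI}; hence $\const_{\cI}$ is right Quillen with the same derived unit and counit as for the projective structure, and is still a Quillen equivalence. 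Slicing a Quillen equivalence over a fibrant object of its target is again a Quillen equivalence, and every space is fibrant, so composing the three Quillen equivalences yields the proposition.

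The main obstacle is essentially bookkeeping: one must ensure that each application of the slicing principle along a (weak) equivalence is made in a situation where the ambient model category is proper, which is exactly what \cite[Propositions~3.10 and~11.3]{Sagave-S_diagram} supply and what lets the derived units and counits of the sliced adjunctions be computed by honest pullbacks. If one prefers to sidestep the slicing principles, the alternative is to check directly that $\pi^{*}\circ\const_{\cI}$ preserves and reflects $\cI$-equivalences after fibrant replacement in $\Top/M_{h\cI}$: one uses that $(-)_{h\cI}$ takes the levelwise pullback $\overline{M}\times_{\const_{\cI}M_{h\cI}}\const_{\cI}A$ to the homotopy pullback $\overline{M}_{h\cI}\times_{M_{h\cI}}A\simeq A$, since $\cI$ has a contractible classifying space, and then verifies the derived unit $Z\to\pi^{*}(\colim_{\cI}Z)$ for flat $Z$ by two-out-of-three over $M_{h\cI}$ together with $\colim_{\cI}Z\simeq Z_{h\cI}$.
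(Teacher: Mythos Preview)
Your argument is correct, but it differs from the paper's in how the chain is decomposed. The paper inserts the \emph{projective} model structure on $\Top^{\cI}/\overline{M}$ as an intermediate stage and uses only the two adjunctions
\[
\xymatrix{
 \Top/M_{h\cI} \ar@<-.5ex>[r]_-{\pi^*} & (\Top^{\cI}/\overline{M})_{\textrm{proj}} \ar@<-.5ex>[l]_-{\mathrm{colim}}
 \ar@<.5ex>[r]^-t & \Top^{\cI}/M \ar@<.5ex>[l]
 }
\]
so that the relevant Quillen adjunctions are the ones already recorded in Remark~\ref{rem:proj-remark}: $\colim_{\cI}\dashv\const_{\cI}$ for the projective structure, together with the identity being left Quillen from projective to flat. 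Your approach instead stays entirely in the flat model structure, at the cost of having to verify that $\const_{\cI}$ is right Quillen for the absolute \emph{flat} structure as well---a true statement, but one not stated in the paper, and your justification for it is a bit telegraphic (you should also note that $\const_{\cI}(f)$ is a level fibration in the fine equivariant sense, not just that it satisfies the homotopy-pullback conditions; and the proof of Lemma~\ref{lem:hty-inv-of-SI-OmegaI} only asserts that flat acyclic fibrations are level equivalences, not that they coincide with level acyclic fibrations, though the latter is indeed what the generating cofibrations give). What your route buys is that no change of model structure is needed; what the paper's route buys is that every step is an immediate consequence of Remark~\ref{rem:proj-remark} plus the fact that $\pi$ and $t$ are $\cI$-equivalences.
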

\begin{proof}
Recall from Remark~\ref{rem:proj-remark} that there is an (absolute) projective model structure on $\Top^{\cI}$ with the property that the colimit functor is a left Quillen functor. Let us write $(\Top^{\cI}/\overline M)_{\mathrm{proj}}$ for the category $\Top^{\cI}/\overline M$ equipped with the projective over-category model structure. Then we have a chain of Quillen adjunctions
\[
\xymatrix{
 \Top/M_{h\cI} \ar@<-.5ex>[r]_-{\pi^*} & (\Top^{\cI}/\overline{M})_{\textrm{proj}} \ar@<-.5ex>[l]_-{\mathrm{colim}} 
 \ar@<.5ex>[r]^-t & \Top^{\cI}/M \ar@<.5ex>[l]
 }
\]
which we claim to be Quillen equivalences. For the first adjunction we know that $\colim\colon \Top^{\cI}\to \Top$ is a Quillen equivalence and the claim then follows from the fact that $\pi$ is an $\cI$-equivalence. For the second adjunction we know that the identity functor defines a left Quillen functor from the projective to the flat model structure on $\Top^{\cI}/\overline M$ and that this is a Quillen equivalence. The result then follows since $t$ is an $\cI$-equivalence. 
\end{proof}

\begin{remark}
If $\cD$ is an operad augmented over the Barratt-Eccles operad, an analogous argument gives a chain of Quillen equivalences relating the categories $\Top[\cD]/M_{h\cI}$ and $\Top^{\cI}[\cD]/M$ equipped with the appropriate model structures.
\end{remark}

The functor $\pi^*$ in \eqref{eq:pi*-t-functors} is only homotopy invariant on fibrant objects and in order to remedy this we shall pre-compose with the standard Hurewicz fibrant replacement functor $\Gamma$ on $\Top/M_{h\cI}$. In detail, for a map $f\colon K\to M_{h\cI}$, let $\Gamma_f(K)$ be the space of pairs $(x,\omega)$ given by a point $x\in K$ and a path $\omega\colon I\to M_{h\cI}$ such that $\omega(0)=f(x)$. The functor $\Gamma$ then takes $f$ to the Hurewicz fibration $\Gamma(f)\colon\Gamma_f(K)\to M_{h\cI}$ mapping $(x,\omega)$ to $\omega(1)$. Putting these constructions together, we define the $\cI$-spacification functor 
\begin{equation}\label{eq:P_M}
P_M\colon \Top/M_{h\cI} \to \Top^{\cI}/M,\; (f\colon K \to M_{h\cI}) \mapsto (P_M(f)\colon P_{f}(K) \to M)
\end{equation} 
by letting $P_f(K)$ be the pullback in the diagram 
\[
\xymatrix@-1pc{
\const_{\cI}\Gamma_{f}(K) \ar@{->>}[d]^{\const_{\cI}\Gamma(f)} &&  P_{f}(K) \ar[ll] \ar@{->>}[d] \ar[drr]^-{P_M(f)} && \\
\const_{\cI}M_{h\cI} && \overline{M} \ar[ll]_-{\pi} \ar[rr]^(.4){t} && M. }
\]
Since the absolute flat model structure on $\Top^{\cI}$ is right proper, it is clear that 
$P_f(K)$ is $\cI$-equivalent to $\const_{\cI}K$. The effect of applying first $(-)_{h\cI}$ and then $P_M$ is dealt with in the next lemma.

\begin{lemma}\label{lem:P_M-after-hI}
Let $\alpha \colon X \to M$ be a map of $\cI$-spaces. Then $\alpha$ is $\cI$-equivalent to $P_M(\alpha_{h\cI})$ as objects in $\Top^{\cI}/M$.
\end{lemma}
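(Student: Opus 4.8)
The plan is to build a natural zig-zag of $\cI$-equivalences over $M$ connecting $\alpha\colon X\to M$ with $P_M(\alpha_{h\cI})$ by unwinding the definition of $P_M$ applied to the particular map $\alpha_{h\cI}\colon X_{h\cI}\to M_{h\cI}$. First I would observe that the bar resolution construction is functorial, so the map $\alpha$ induces a map $\overline{\alpha}\colon \overline{X}\to \overline{M}$ of $\cI$-spaces compatible with the structure maps $t$ and $\pi$; in particular we get a commutative square relating $\pi_X\colon \overline X\to \const_\cI X_{h\cI}$ and $\pi_M\colon \overline M\to \const_\cI M_{h\cI}$, and likewise for the levelwise equivalences $t$. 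This already produces, after pulling back along $\pi_M$, a canonical map $\overline X\to P_{\alpha_{h\cI}}(X_{h\cI})$ over $\overline M$, using that $\overline X\to\const_\cI X_{h\cI}\to \const_\cI\Gamma_{\alpha_{h\cI}}(X_{h\cI})$ (the latter via the standard section of the Hurewicz fibrant replacement) together with $\overline\alpha$ factor through the defining pullback.

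Next I would check that this map $\overline X\to P_{\alpha_{h\cI}}(X_{h\cI})$ is an $\cI$-equivalence. Here the key input is right properness of the absolute flat model structure on $\Top^\cI$ (used already to identify $P_f(K)$ up to $\cI$-equivalence with $\const_\cI K$), combined with the fact that each of the relevant corner maps is an $\cI$-equivalence: $\pi_X$, $\pi_M$, $t$ for both $X$ and $M$, and the section $\const_\cI K\to\const_\cI\Gamma_f(K)$ into the path-space construction. One compares the two pullback squares — the one defining $P_{\alpha_{h\cI}}(X_{h\cI})$ over $\overline M$ and $\const_\cI M_{h\cI}$, and the analogous square with $X$ in place of the space-level data — and concludes by a standard gluing/pasting argument for homotopy cartesian squares of $\cI$-spaces. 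One should be slightly careful that $\overline{M}\to M$ being only a \emph{levelwise} equivalence still makes the square $\overline X\to\overline M$, $X\to M$ homotopy cartesian after applying $(-)_{h\cI}$, invoking \cite[Corollary~11.4]{Sagave-S_diagram}.

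With the $\cI$-equivalence $\overline X\xr{\simeq} P_{\alpha_{h\cI}}(X_{h\cI})$ in hand over $\overline M$, I would then push forward along $t\colon\overline M\to M$: post-composition with $t$ sends this to a map $t_*\overline X\xr{\simeq} t_* P_{\alpha_{h\cI}}(X_{h\cI})=P_M(\alpha_{h\cI})$ in $\Top^\cI/M$, and on the other hand $t_*\overline X$ receives the zig-zag $t_*\overline X \xl{} t\circ\overline{\mathrm{id}} \cdots$; more directly, the diagram of $\cI$-equivalences $\const_\cI X_{h\cI}\xl{\pi_X}\overline X\xr{t} X$ together with the compatibility with $\alpha$ exhibits $\overline X$, viewed over $M$ via $\alpha\circ t$, as $\cI$-equivalent to $\alpha$ itself. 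Chaining these gives the desired zig-zag of $\cI$-equivalences between $\alpha$ and $P_M(\alpha_{h\cI})$ in $\Top^\cI/M$.

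The main obstacle I anticipate is purely bookkeeping: organizing the several nested pullbacks (the path-space $\Gamma$-construction, the pullback along $\pi$, and the base change along $t$) so that the functoriality in $\alpha$ is visibly coherent and so that the gluing lemma for homotopy cartesian squares applies cleanly. The homotopical content — that every corner map in sight is an $\cI$-equivalence and that right properness lets one transport this through pullbacks — is routine once the diagram is set up, but writing the comparison map $\overline X\to P_{\alpha_{h\cI}}(X_{h\cI})$ explicitly and verifying it is compatible with all projections requires care.
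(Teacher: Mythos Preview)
Your approach is correct and is essentially the same as the paper's: both arguments use the bar resolution $\overline X$ of $X$ to produce an intermediary that maps $\cI$-equivalently both to $P_{\alpha_{h\cI}}(X_{h\cI})$ (by comparing the homotopy cartesian square $\overline X \to \overline M \leftarrow \const_\cI M_{h\cI}$, $\overline X \to \const_\cI X_{h\cI}$ with the defining homotopy pullback for $P_{\alpha_{h\cI}}(X_{h\cI})$) and to $X$ (via the levelwise equivalence $t$). The paper is simply terser about the bookkeeping you rightly flag as the main chore.
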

\begin{proof}
Writing $\overline X$ for the bar resolution of $X$, the homotopy cartesian square of $\cI$-spaces 
\[\xymatrix@-1pc{
\const_{\cI}X_{h\cI} \ar[d] & \overline{X} \ar[l] \ar[d] \\
\const_{\cI}M_{h\cI} & \overline{M} \ar[l]
}\]
is $\cI$-equivalent to the homotopy pullback square 
defining $P_{\alpha_{h\cI}}(X_{h\cI})$. Using that $t\colon \overline{X} \to X$ is an $\cI$-equivalence, this proves the claim. 
\end{proof}
The category $\Top/M_{h\cI}$ inherits the structure of a monoidal category from the topological monoid structure of $M_{h\cI}$. Given objects $f\colon K\to M_{h\cI}$ and $g\colon L\to M_{h\cI}$, we write $f\times g$ for the monoidal product defined as the composition
\[
f\times g\colon K\times L\xr{f\times g} M_{h\cI}\times M_{h\cI}\to M_{h\cI}
\] 
in which the last map is the multiplication in $M_{h\cI}$. The monoidal unit is given by the unit $\iota\colon *\to M_{h\cI}$. 
Since $\pi$ and $t$ are maps of $\cI$-space monoids, $P_M$ canonically has the structure of a lax monoidal functor. Choosing cofibrant replacements of $P_f(K)$ and $P_g(L)$, we define the derived monoidal multiplication to be the composition
\begin{equation}\label{eq:P_M-derived-monoidal}
P_f(K)^{\mathrm{cof}}\boxtimes P_g(L)^{\mathrm{cof}}\to P_f(K)\boxtimes P_g(L)\to P_{f\times g}(K\times L)
\end{equation}
where the second map is the monoidal multiplication of $P_M$. 
\begin{lemma}\label{lem:P_M-monoidal}
The $\cI$-spacification functor $P_M$ is lax monoidal, and the monoidal unit $U^{\cI}\to P_{\iota}(*)$ and the derived monoidal multiplication  
\eqref{eq:P_M-derived-monoidal} are $\cI$-equivalences.
\end{lemma}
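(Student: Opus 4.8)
The lax monoidal structure is the one indicated just before the lemma: $P_M$ is the composite $t\circ\pi^*\circ\Gamma$ of the Hurewicz fibrant replacement $\Gamma$ on $\Top/M_{h\cI}$, the functor $\pi^*$ from \eqref{eq:pi*-t-functors} (pullback along $\pi\colon\overline{M}\to\const_{\cI}M_{h\cI}$ of the constant $\cI$-space on a space over $M_{h\cI}$), and post-composition with $t\colon\overline{M}\to M$. Here $\Gamma$ is lax monoidal for the monoidal structure on $\Top/M_{h\cI}$ coming from the topological monoid $M_{h\cI}$, with structure map $\Gamma_f(K)\times\Gamma_g(L)\to\Gamma_{f\times g}(K\times L)$ sending $((x,\omega),(y,\sigma))$ to $((x,y),s\mapsto\omega(s)\sigma(s))$ and unit the constant path at the identity of $M_{h\cI}$; the constant embedding is strong monoidal since $\const_{\cI}A\boxtimes\const_{\cI}B\cong\const_{\cI}(A\times B)$; and since $\pi$ and $t$ are maps of $\cI$-space monoids, pullback along $\pi$ is lax monoidal (being the right adjoint of the strong monoidal functor given by post-composition with $\pi$) and post-composition with $t$ is strong monoidal. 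The composite is therefore lax monoidal, with the associativity and unitality constraints inherited from its factors.

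For the two $\cI$-equivalence claims, recall that a map of $\cI$-spaces is an $\cI$-equivalence precisely when it becomes a weak equivalence after applying $(-)_{h\cI}$. The unit map $U^{\cI}\to P_{\iota}(*)$ is immediate: as remarked right after the construction of $P_M$, the object $P_{\iota}(*)$ is $\cI$-equivalent to the constant $\cI$-space $\const_{\cI}(*)=U^{\cI}$, so both $U^{\cI}_{h\cI}=B\cI$ and $P_{\iota}(*)_{h\cI}$ are weakly contractible (here $B\cI\simeq *$ because $\cI$ has the initial object $\bld 0$), and any map between weakly contractible spaces is a weak equivalence.

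For the derived monoidal multiplication \eqref{eq:P_M-derived-monoidal} the plan is to apply $(-)_{h\cI}$ and then precompose with the monoidal structure map \eqref{eq:monoidal-structure-map-hI} of $(-)_{h\cI}$ for the $\cI$-spaces $P_f(K)^{\mathrm{cof}}$ and $P_g(L)^{\mathrm{cof}}$; since $P_f(K)^{\mathrm{cof}}$ and $P_g(L)^{\mathrm{cof}}$ are flat, this structure map is a weak equivalence (the homotopy colimit functor is weakly monoidal on flat $\cI$-spaces). Using the natural weak equivalences $P_f(K)_{h\cI}\simeq K$ — coming from the pullback projection $P_f(K)\to\const_{\cI}\Gamma_f(K)$, the projection $(\const_{\cI}\Gamma_f(K))_{h\cI}\to\Gamma_f(K)$, and the retraction $\Gamma_f(K)\to K$ — the resulting composite
\[
P_f(K)^{\mathrm{cof}}_{h\cI}\times P_g(L)^{\mathrm{cof}}_{h\cI}\longrightarrow (P_f(K)^{\mathrm{cof}}\boxtimes P_g(L)^{\mathrm{cof}})_{h\cI}\longrightarrow P_{f\times g}(K\times L)_{h\cI}
\]
is identified, under these equivalences on source and target, with the identity of $K\times L$; hence it is a weak equivalence, and a two-out-of-three argument shows that the derived monoidal multiplication becomes a weak equivalence after $(-)_{h\cI}$, that is, is an $\cI$-equivalence.

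The crux, and the main obstacle, is this last identification. It amounts to checking that the natural weak equivalence $(-)_{h\cI}\circ P_M\xr{\simeq}\mathrm{id}_{\Top/M_{h\cI}}$ is monoidal, i.e.\ that the coherence square relating the $\boxtimes$-structure maps of $P_M$, the maps \eqref{eq:monoidal-structure-map-hI} for $(-)_{h\cI}$, and the lax structure maps of $\Gamma$ commutes up to homotopy. This is a routine but somewhat lengthy diagram chase: the $\Gamma$-part reduces to the observation that multiplying two paths pointwise and then evaluating at the endpoint agrees with first evaluating each path at its endpoint and then multiplying in $M_{h\cI}$, while the steps $\pi^*$ and post-composition with $t$ contribute only formal verifications, via the universal property of pullbacks and the fact that $t$ is a monoid map. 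One should also pin down — either by direct inspection or by citing the weak monoidality of the homotopy colimit functor — that the maps \eqref{eq:monoidal-structure-map-hI} are indeed weak equivalences on the flat $\cI$-spaces appearing above.
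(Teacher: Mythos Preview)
Your approach is essentially the same as the paper's: both reduce the question to $(-)_{h\cI}$, invoke the weak monoidality of $(-)_{h\cI}$ on flat (cofibrant) $\cI$-spaces, and use the natural equivalence $P_f(K)_{h\cI}\simeq K$. The paper's execution of the final step is somewhat cleaner, however. Instead of chasing the monoidal coherence of the equivalence $(-)_{h\cI}\circ P_M\simeq\mathrm{id}$ directly, the paper writes down the two-row diagram
\[
\xymatrix@-1pc{
\const_{\cI}\Gamma_f(K) \boxtimes \const_{\cI}\Gamma_g(L) \ar[r] \ar[d]& \const_{\cI}M_{h\cI}\boxtimes \const_{\cI}M_{h\cI}\ar[d] & \overline{M}\boxtimes \overline{M} \ar[l] \ar[d] \\
\const_{\cI}\Gamma_{f\times g}(K\times L) \ar[r] & \const_{\cI}M_{h\cI} & \overline{M}\ar[l]
}
\]
that induces the monoidal structure map, observes that the left vertical and the two right-hand horizontal maps become weak equivalences after $(-)_{h\cI}$ (citing \cite[Lemma~8.9]{Blumberg-C-S_THH-Thom}), and then uses that $(-)_{h\cI}$ preserves and detects homotopy cartesian squares together with the weak monoidality on cofibrant objects to conclude that the derived monoidal multiplication is $\cI$-equivalent to the induced map of horizontal homotopy pullbacks, which is visibly an $\cI$-equivalence. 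This replaces your deferred ``routine but somewhat lengthy diagram chase'' with a single structural observation about homotopy pullbacks, and you may find it worth adopting.
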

\begin{proof}
For $f$ and $g$ as above, the monoidal structure map is induced by the commutative diagram
\[
\xymatrix@-1pc{
\const_{\cI}\Gamma_f(K) \boxtimes \const_{\cI}\Gamma_g(L) \ar[r] \ar[d]& \const_{\cI}M_{h\cI}\boxtimes \const_{\cI}M_{h\cI}\ar[d] & \overline{M}\boxtimes \overline{M} \ar[l] \ar[d] \\
\const_{\cI}\Gamma_{f\times g}(K\times L) \ar[r] & \const_{\cI}M_{h\cI} & \overline{M}.\ar[l]
}
\] 
In order to show that the derived monoidal multiplication is an $\cI$-equivalence, 
we note that the left hand vertical map and the horizontal maps on the right hand side of the diagram induce weak homotopy equivalences after applying $(-)_{h\cI}$ (see \cite[Lemma~8.9]{Blumberg-C-S_THH-Thom}). We now use that $(-)_{h\cI}$ preserves and detects homotopy cartesian squares by~\cite[Corollary~11.4]{Sagave-S_diagram}, and that by~\cite[Lemma~2.25]{Sagave-S_group-compl}, the monoidal structure map of $(-)_{h\cI}$ is a weak homotopy equivalence when evaluated on cofibrant objects. This implies that the map in \eqref{eq:P_M-derived-monoidal} is $\cI$-equivalent to the map of horizontal homotopy pullbacks in the above diagram and is therefore an $\cI$-equivalence. 
\end{proof}

\subsection{The \texorpdfstring{$R$}{R}-module Thom spectrum functor on \texorpdfstring{$\Top/BG_{h\cI}$}{Top/BGhI}}\label{subsec:space-level-T}
Let $R$ be a (semistable as always) commutative symmetric ring spectrum, and let us assume for the rest of this section that the underlying symmetric spectrum of $R$ is flat. As usual, we let $G\to \GL_1^{\cI}(R)$ be a cofibrant replacement.

\begin{definition}
The $R$-module Thom spectrum functor $T$ on $\Top/BG_{h\cI}$ is the composition 
\begin{equation}\label{eq:space-level-Thom-sp-functor}
\Top/BG_{h\cI} \xrightarrow{P_{BG}} \Top^{\cI}/BG \xrightarrow{T^{\cI}} \Spsym{R}/\MGLoneIof{R}
\end{equation}
of the $\cI$-spacification functor for the commutative $\cI$-space monoid $BG$ and the 
  $R$-module Thom spectrum functor $T^{\cI}$ from Definition~\ref{def:TI}.
\end{definition}

The relation between the Thom spectrum functors $T$ and $T^{\cI}$ is recorded in the next proposition which is an immediate consequence of Lemma~\ref{lem:P_M-after-hI}.
\begin{proposition}\label{prop:TI-alpha-vs-TalphahI}
Let $\alpha \colon X \to BG$ be a map of $\cI$-spaces. Then $T^{\cI}(\alpha)$ and $T(\alpha_{h\cI})$ are naturally stably equivalent in $\Spsym{R}/\MGLoneIof{R}$.\qed
\end{proposition}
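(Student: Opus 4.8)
The plan is to reduce the statement to Lemma~\ref{lem:P_M-after-hI} combined with the homotopy invariance of $T^{\cI}$ recorded in Proposition~\ref{prop:hty-inv-of-TI}. First I would unwind the definitions: by construction $T = T^{\cI}\circ P_{BG}$, so $T(\alpha_{h\cI}) = T^{\cI}\big(P_{BG}(\alpha_{h\cI})\big)$, and it therefore suffices to compare $T^{\cI}(\alpha)$ with $T^{\cI}\big(P_{BG}(\alpha_{h\cI})\big)$ in $\Spsym{R}/\MGLoneIof{R}$.

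Next I would invoke Lemma~\ref{lem:P_M-after-hI} with $M = BG$: it provides a zig-zag of $\cI$-equivalences in $\Top^{\cI}/BG$ connecting $\alpha$ and $P_{BG}(\alpha_{h\cI})$, running through the bar resolution $\overline{X}$ and the homotopy pullback square defining $P_{\alpha_{h\cI}}(X_{h\cI})$. Each stage of this zig-zag is natural in the map $\alpha$, so we obtain a natural chain of $\cI$-equivalences over $BG$.

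Finally I would apply the functor $T^{\cI}$ to this zig-zag. Since we have assumed throughout this subsection that the underlying symmetric spectrum of $R$ is flat, Proposition~\ref{prop:hty-inv-of-TI} applies and guarantees that $T^{\cI}$ sends $\cI$-equivalences over $BG$ to stable equivalences over $\MGLoneIof{R}$. Hence the image of the zig-zag is a natural chain of stable equivalences in $\Spsym{R}/\MGLoneIof{R}$ connecting $T^{\cI}(\alpha)$ and $T^{\cI}\big(P_{BG}(\alpha_{h\cI})\big) = T(\alpha_{h\cI})$, which is exactly the claim.

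The argument is essentially formal and I do not expect a genuine obstacle; the only point deserving care is bookkeeping, namely checking that the zig-zag supplied by Lemma~\ref{lem:P_M-after-hI} is natural in $\alpha$ and consists of morphisms \emph{over} $BG$, so that after applying $T^{\cI}$ the resulting stable equivalences live in the over-category $\Spsym{R}/\MGLoneIof{R}$ and assemble into the asserted natural equivalence.
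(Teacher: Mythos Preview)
Your proposal is correct and matches the paper's approach exactly: the paper states the proposition as an immediate consequence of Lemma~\ref{lem:P_M-after-hI} (hence the \qed), and your argument simply spells out the two ingredients implicit in that remark, namely the natural zig-zag of $\cI$-equivalences from Lemma~\ref{lem:P_M-after-hI} together with the homotopy invariance of $T^{\cI}$ from Proposition~\ref{prop:hty-inv-of-TI}.
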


The basic properties of the Thom spectrum functor $T$ are summarized in the following proposition. We refer the reader to \cite{MMSS} and \cite[Section~7]{Sagave-S_diagram} for the notion of an \mbox{$h$-cofibration}. 

\begin{proposition}\label{prop:properties-of-T} 
The Thom spectrum functor $T$ has the following properties:
\begin{enumerate}[(i)]
\item It takes weak homotopy equivalences over $BG_{h\cI}$ to stable equivalences. 
\item It preserves colimits.
\item It preserves the tensor with an unbased space $Q$ in the sense that there is a natural isomorphism $T(f\times Q)\cong  T(f)\wedge Q_+$.
\item It takes maps over $BG_{h\cI}$ that are $h$-cofibrations in $\Top$ to $h$-cofibrations of $R$-modules.
\end{enumerate}
\end{proposition}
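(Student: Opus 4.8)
The plan is to read off each of (i)--(iv) from the factorization $T = T^{\cI}\circ P_{BG}$ and, inside $T^{\cI}$, from the factorization $T^{\cI} = B(-,\bS^{\cI}[G],R)\circ \bS^{\cI}\circ U$ of Definition~\ref{def:TI}. Properties (i), (ii), (iii) are established constituent by constituent, and (iv) is then a formal consequence of (ii) and (iii).

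For (i), I would first argue that $P_{BG}$ takes weak homotopy equivalences over $BG_{h\cI}$ to $\cI$-equivalences over $BG$. The defining square \eqref{eq:P_M} of $P_f(K)$ is homotopy cartesian, since $\const_{\cI}\Gamma(f)$ is a levelwise Hurewicz fibration (one may also check this after applying $(-)_{h\cI}$ using \cite[Corollary~11.4]{Sagave-S_diagram}); given a weak equivalence $K\to K'$ over $BG_{h\cI}$, the induced map $\Gamma_f(K)\to\Gamma_{f'}(K')$ is a weak equivalence by right properness of $\Top$, so we obtain weak equivalences on three corners and hence an $\cI$-equivalence $P_f(K)\to P_{f'}(K')$. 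Feeding this into $T^{\cI}$, which carries $\cI$-equivalences to stable equivalences by Proposition~\ref{prop:hty-inv-of-TI} and the standing flatness hypothesis on $R$, gives (i). For (iii), the key observation is the natural identification $\Gamma_{f\times Q}(K\times Q)\cong \Gamma_f(K)\times Q$ together with the fact that forming $P_f(-)$ commutes with $-\times Q$ because $Q$ does not interact with the pullback over $\const_{\cI}M_{h\cI}$; thus $P_{BG}(f\times Q)\cong P_{BG}(f)\times\const_{\cI}Q$, and composing with $T^{\cI}$ and invoking Proposition~\ref{prop:TI-preserves-tensors} yields the natural isomorphism $T(f\times Q)\cong T(f)\wedge Q_+$.

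The heart of the matter is (ii). Here $\bS^{\cI}$ is a left adjoint; $B(-,\bS^{\cI}[G],R)$ preserves colimits because it is the geometric realization of a simplicial object whose value on a $\bS^{\cI}[G]$-module $M$ in each degree is a smash product with $M$, and smash products, degreewise colimits of simplicial objects, and geometric realization all commute with colimits; and in $P_{BG}$, post-composition with $t\colon\ovl M\to M$ is a left adjoint. The remaining functors in the two factorizations are defined by pullback --- $U$ is pullback along $EG\to\mathrm{triv}_G BG$, $\pi^{*}$ is pullback along $\pi\colon\ovl M\to\const_{\cI}M_{h\cI}$, and $\Gamma$ contains a mapping path space construction --- and I expect this to be the main obstacle: although $U$ is a right adjoint (of $V$), one must nonetheless show these pullback functors preserve colimits, which at each object of $\cI$ reduces to base change along a Hurewicz fibration of spaces preserving colimits. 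This is exactly where the fibrancy of $EG\to BG$ arranged in Definition~\ref{def:EG-BG} is used, and the corresponding point-set facts in the classical case are available in \cite{Schlichtkrull_Thom-symmetric} and \cite{Blumberg-C-S_THH-Thom}; since all the colimits in the over-categories and module categories in play are created in the underlying level- and degreewise categories of spaces and spectra, assembling these statements gives that $T$ preserves colimits.

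Finally, (iv) follows formally: a map $i\colon A\to B$ in $\Top$ (over $BG_{h\cI}$) is an $h$-cofibration precisely when $B\cup_A(A\times I)\to B\times I$ admits a retraction in $\Top$, and applying $T$ --- which preserves this pushout by (ii) and sends $-\times I$ to $-\wedge I_+$ by (iii) --- carries the retraction to a retraction of $T(B)\cup_{T(A)}(T(A)\wedge I_+)\to T(B)\wedge I_+$, exhibiting $T(i)$ as an $h$-cofibration of $R$-modules. As indicated, the one genuinely delicate step is the colimit preservation for the pullback functors occurring in (ii).
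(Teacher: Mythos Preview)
Your treatment of (i) and (iii) is essentially the paper's, and is fine. The difficulties lie in (ii) and (iv).

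For (ii), the fibrancy of $EG\to BG$ produced in Definition~\ref{def:EG-BG} is a \emph{positive flat model-categorical} fibration, not a levelwise Hurewicz fibration, so it does not directly give that base change along it preserves colimits. Moreover, even for genuine Hurewicz fibrations, pullback does not preserve arbitrary colimits in $\Top$ without further hypotheses on the base. The paper does not try to show that each of $\Gamma$, $\pi^*$, and $U$ preserves colimits separately. Instead it rewrites the composite $U\circ P_{BG}$: using the path-space symmetry $\Gamma_f(K)\times_{BG_{h\cI}}t^*EG \cong K\times_{BG_{h\cI}}\Gamma(t^*EG)$, one identifies $U(P_f(K))$ with the pullback of $f$ along a single levelwise Hurewicz fibration to $\const_{\cI}BG_{h\cI}$, followed by the projection to $EG$. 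The colimit preservation then comes from \cite[Propositions~IX.1.1--1.2]{LMS}, which require the base $BG_{h\cI}$ to be \emph{locally equiconnected} (the diagonal is an $h$-cofibration); this is verified using Lewis' results in \cite{Lewis_when-cofibration}. Your sketch misses both the need to consolidate into one Hurewicz fibration and the locally equiconnected hypothesis.

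For (iv), your formal argument would be correct if the input were a \emph{fiberwise} $h$-cofibration in $\Top/BG_{h\cI}$, but the proposition only assumes an $h$-cofibration in $\Top$. The retraction $B\times I\to B\cup_A(A\times I)$ witnessing the latter need not be a map over $BG_{h\cI}$, so you cannot apply $T$ to it. The paper instead uses that the Hurewicz fibrant replacement built into $P_{BG}$ upgrades (not necessarily fiberwise) $h$-cofibrations to fiberwise ones, citing \cite[Proposition~IX.1.11]{LMS}; once the $h$-cofibration is fiberwise over $BG$, your retraction argument (or the analogous one for $T^{\cI}$) goes through. So (iv) is not a purely formal consequence of (ii) and (iii) in the generality stated.
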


\begin{proof}
 It is clear from the construction that $P_{BG}$ takes weak equivalence over $BG_{h\cI}$ to level equivalences over
 $BG$, hence part (i) is a consequence of Proposition~\ref{prop:hty-inv-of-TI}. For part (ii) we observe that the functor $B(\bS^{\cI}[-],\bS^{\cI}[G],R)$ on $\Top^{\cI}_G$ preserves colimits, so that it remains to show that the same holds for the composite functor $U\circ P_{BG}$. Let us write $t^*EG\to \overline{BG}$ for the pullback of the universal fibration $EG\to BG$ along the levelwise equivalence $t\colon \overline{BG}\to BG$. Given a map $f\colon K\to BG_{h\cI}$, we can then identify $U(P_f(K))$ with the pullback of $f$ along the levelwise Hurewicz fibrant replacement of the composition 
 $t^*EG\to \overline{BG}\to BG_{h\cI}$, followed by the projection onto $EG$. Furthermore, the arguments in \cite{Lewis_when-cofibration} show that the space $BG_{h\cI}$ is locally equiconnected in the sense that the diagonal inclusion in $BG_{h\cI}\times BG_{h\cI}$ is an $h$-cofibration. Using this, the result follows from Propositions~1.1 and 1.2 in \cite[Section~IX]{LMS} which taken together state that pullback along a Hurewicz fibration with locally equiconnected codomain preserves colimits.  The claim in (iii) follows from the fact that both of the functors $P_{BG}$ and $T^{\cI}$ preserve tensors with unbased spaces. Finally, the proof of (iv) is analogous to the proof of the corresponding statement in 
\cite[Proposition~4.16]{Schlichtkrull_Thom-symmetric}: The functor $P_{BG}$ takes (not necessarily fiberwise) 
$h$-cofibrations over $BG_{h\cI}$ to fiberwise $h$-cofibrations over $BG$ by \cite[Proposition~IX1.11]{LMS}, and $T^{\cI}$ takes fiberwise $h$-cofibrations over $BG$ to $h$-cofibrations of $R$-modules. 
\end{proof}

Notice in particular, that by the above proposition $T$ takes homotopy cocartesian squares of spaces over 
$BG_{h\cI}$ to homotopy cocartesian squares of $R$-modules. 

Since $T^{\cI}$ is lax symmetric monoidal by Proposition~\ref{prop:TI-lax-monoidal} and  
$P_{BG}$ is lax monoidal by Lemma~\ref{lem:P_M-monoidal}, the composite functor $T$ is also lax monoidal. Given a pair of maps $f\colon K\to BG_{h\cI}$ and $g\colon L\to BG_{h\cI}$, we choose cofibrant replacements of the $R$-modules $T(f)$ and $T(g)$, and define the derived monoidal multiplication of $T$ to be the composite map
\begin{equation}\label{eq:T-derived-monoidal}
T(f)^{\mathrm{cof}}\wedge_R T(g)^{\mathrm{cof}}\to T(f)\wedge_R T(g)\to T(f\times g).
\end{equation}
Combining Proposition~\ref{prop:TI-derived-monoidal} and Lemma~\ref{lem:P_M-monoidal} we get the following result.
\begin{proposition}\label{prop:T-derived-monoidal}
The functor $T$ is lax monoidal, and the monoidal unit $R\to T(\iota)$ and the derived monoidal multiplication~\eqref{eq:T-derived-monoidal} are stable equivalences. \qed
\end{proposition}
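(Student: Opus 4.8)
The plan is to deduce everything from the factorization $T = T^{\cI}\circ P_{BG}$ together with the monoidal properties of the two factors already established. Lax monoidality of $T$ will be immediate: a composite of lax monoidal functors is lax monoidal, and since $P_{BG}$ is only lax monoidal (not symmetric, as $(-)_{h\cI}$ and hence $P_{BG}$ fail to be symmetric monoidal) while $T^{\cI}$ is lax symmetric monoidal by Proposition~\ref{prop:TI-lax-monoidal}, the composite $T$ is lax monoidal but in general not symmetric, as claimed. Unravelling the composite, the unit of $T$ is $T^{\cI}(\eta^{P_{BG}})\circ\eta^{T^{\cI}}$, and since $T(f)=T^{\cI}(P_{BG}(f))$ the lax structure map of $T$ at a pair $(f,g)$ is $T^{\cI}(\mu^{P_{BG}}_{f,g})\circ\mu^{T^{\cI}}_{P_{BG}(f),P_{BG}(g)}$. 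For the unit statement I would then argue that $\eta^{T^{\cI}}\colon R\to T^{\cI}(\iota)$ is a stable equivalence by Proposition~\ref{prop:TI-derived-monoidal}, while the unit $\eta^{P_{BG}}\colon U^{\cI}\to P_{\iota}(*)$ of $P_{BG}$ is an $\cI$-equivalence over $BG$ by Lemma~\ref{lem:P_M-monoidal}, so that $T^{\cI}(\eta^{P_{BG}})$ is a stable equivalence by the homotopy invariance in Proposition~\ref{prop:hty-inv-of-TI} (using the standing hypothesis that the underlying symmetric spectrum of $R$ is flat). Composing, $R\to T(\iota)$ is a stable equivalence.

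For the derived monoidal multiplication the strategy is the same, but one must reconcile three systems of cofibrant replacements. Given $f\colon K\to BG_{h\cI}$ and $g\colon L\to BG_{h\cI}$, I would first choose cofibrant replacements $\tilde P_f\to P_f(K)$ and $\tilde P_g\to P_g(L)$ in the absolute flat model structure on $\Top^{\cI}/BG$; their underlying $\cI$-spaces are then flat, and $T^{\cI}(\tilde P_f)\to T(f)$, $T^{\cI}(\tilde P_g)\to T(g)$ are stable equivalences by Proposition~\ref{prop:hty-inv-of-TI}. The composite
\[
T^{\cI}(\tilde P_f)^{\mathrm{cof}}\sm_R T^{\cI}(\tilde P_g)^{\mathrm{cof}}\to T^{\cI}(\tilde P_f)\sm_R T^{\cI}(\tilde P_g)\xr{\mu^{T^{\cI}}} T^{\cI}(\tilde P_f\boxtimes\tilde P_g)\xr{T^{\cI}(m)} T^{\cI}\bigl(P_{f\times g}(K\times L)\bigr),
\]
where the $R$-module cofibrant replacements are taken in the flat stable model structure on $\Sp^{\Sigma}_R$ and $m$ is the derived monoidal multiplication~\eqref{eq:P_M-derived-monoidal} of $P_{BG}$ for these choices, is then a model for the derived monoidal multiplication of $T$. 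Each piece is a stable equivalence: the composite of the first two maps is precisely the derived monoidal multiplication~\eqref{eq:derived-monoidal-str-maps} of $T^{\cI}$ at the pair of flat $\cI$-spaces $(\tilde P_f,\tilde P_g)$, hence a stable equivalence by Proposition~\ref{prop:TI-derived-monoidal}; and $m$ is an $\cI$-equivalence over $BG$ by Lemma~\ref{lem:P_M-monoidal}, so $T^{\cI}(m)$ is a stable equivalence by Proposition~\ref{prop:hty-inv-of-TI}.

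I expect the main obstacle to be precisely this reconciliation of cofibrant replacements. The derived monoidal multiplication~\eqref{eq:T-derived-monoidal} of $T$ is defined via cofibrant replacements of $T(f)$ and $T(g)$ as $R$-modules, not via $T^{\cI}(\tilde P_f)^{\mathrm{cof}}$ and $T^{\cI}(\tilde P_g)^{\mathrm{cof}}$; and invoking Proposition~\ref{prop:TI-derived-monoidal} and Lemma~\ref{lem:P_M-monoidal} forces the intermediate passage to cofibrant replacements over $BG$. To bridge these I would use that $T^{\cI}(\tilde P_f)^{\mathrm{cof}}$ and $T^{\cI}(\tilde P_g)^{\mathrm{cof}}$ are flat $R$-modules stably equivalent to $T(f)^{\mathrm{cof}}$ and $T(g)^{\mathrm{cof}}$, together with the facts that $\sm_R$ preserves stable equivalences between flat $R$-modules and that $\mu^{T^{\cI}}$ is natural; this identifies \eqref{eq:T-derived-monoidal} with the displayed composite up to a stable equivalence over $T(f\times g)$, and the conclusion follows. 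This bookkeeping step, and the verification that the displayed composite does compute the structure map $T^{\cI}(\mu^{P_{BG}}_{f,g})\circ\mu^{T^{\cI}}$ after precomposition with the replacements, is the only place where the flatness hypothesis on $R$ is genuinely used.
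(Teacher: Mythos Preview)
Your proposal is correct and follows exactly the approach the paper indicates: the proposition is stated with a \qed and preceded only by the sentence ``Combining Proposition~\ref{prop:TI-derived-monoidal} and Lemma~\ref{lem:P_M-monoidal} we get the following result.'' You have simply unpacked what this combination entails, including the cofibrant-replacement bookkeeping that the paper leaves implicit.

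One small correction to your closing remark: the flatness hypothesis on $R$ is not used \emph{only} in the bookkeeping step. It is already needed to invoke Proposition~\ref{prop:hty-inv-of-TI} (for $T^{\cI}(\eta^{P_{BG}})$ and $T^{\cI}(m)$ to be stable equivalences) and is part of the standing hypothesis under which Proposition~\ref{prop:TI-derived-monoidal} is proved. So flatness is used throughout, not just at the end.
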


Now let $\cD$ be an operad augmented over the Barratt-Eccles operad $\cE$, and let us view $BG_{h\cI}$ as a 
$\cD$-algebra by pulling back the canonical $\cE$-action along the augmentation. 

\begin{proposition}\label{prop:T-operad-version}
Let $\cD$ be an operad augmented over the Barratt-Eccles operad. Then the $R$-module Thom spectrum functor induces a functor 
\[
T\colon \Top[\cD]/BG_{h\cI}\to \Sp^{\Sigma}_{R}[\cD]/M\GL_1^{\cI}(R)
\] 
on the categories of $\cD$-algebras over $BG_{h\cI}$ and $M\GL_1^{\cI}(R)$. 
\end{proposition}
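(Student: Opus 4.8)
The plan is to exploit the factorization $T = T^{\cI}\circ P_{BG}$ and reduce the statement to facts that are already available. First, by Corollary~\ref{cor:TI-preserves-operad-actions} the functor $T^{\cI}$ induces a functor $\Top^{\cI}[\cD]/BG\to\Sp^\Sigma_R[\cD]/M\GL_1^{\cI}(R)$, so it is enough to show that the $\cI$-spacification functor $P_{BG}$ induces a functor $\Top[\cD]/BG_{h\cI}\to\Top^{\cI}[\cD]/BG$. One point to settle here is which $\cD$-algebra structure is placed on $BG$: the one inherited through $P_{BG}$ will be the restriction along the augmentation $\cD\to\cE$ of the $\cE$-algebra structure on $BG$ coming from its commutative $\cI$-space monoid structure, while Corollary~\ref{cor:TI-preserves-operad-actions} uses the restriction along $\cD\to\cC$; since $\cE\to\cC$ is the terminal morphism of operads these two $\cD$-structures on $BG$ coincide, so the composite is defined.

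Next I would unwind $P_{BG}$ into its constituents, namely the Hurewicz fibrant replacement $\Gamma$ on $\Top/BG_{h\cI}$, the constant-diagram functor $\const_{\cI}$, pullback along $\pi\colon\overline{BG}\to\const_{\cI}BG_{h\cI}$, and post-composition with $t\colon\overline{BG}\to BG$, and check that each lifts to $\cD$-algebras. For $\Gamma$ the key observation is that $\Gamma_f(K)$ is naturally the pullback $K\times_{BG_{h\cI}}\Map(I,BG_{h\cI})$ of $f$ and evaluation at $0$, with $\Gamma(f)$ given by evaluation at $1$ on the second factor; since $\Map(I,BG_{h\cI})$ is a $\cD$-algebra under the pointwise action, the two evaluation maps to $BG_{h\cI}$ are maps of $\cD$-algebras, and the forgetful functor from $\cD$-algebras creates limits, it follows that $\Gamma_f(K)$ is a $\cD$-algebra over $BG_{h\cI}$ whenever $K$ is one, functorially in maps of $\cD$-algebras. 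For the other three functors I would invoke Section~\ref{sec:I-spacification}: the zig-zag $\const_{\cI}BG_{h\cI}\xleftarrow{\pi}\overline{BG}\xrightarrow{t}BG$ is a diagram of $\cE$-algebras by the proof of \cite[Lemma~6.7]{Schlichtkrull_Thom-symmetric}, hence of $\cD$-algebras after restriction along the augmentation; $\const_{\cI}$ is lax monoidal (it carries the point to the $\boxtimes$-unit $U^{\cI}$ and is equipped with the comparison map $\const_{\cI}A\boxtimes\const_{\cI}B\to\const_{\cI}(A\times B)$), hence takes $\cD$-algebras in $\Top$ to $\cD$-algebras in $\Top^{\cI}$; and $\cD$-algebras in $(\Top^{\cI},\boxtimes)$ are closed under the limits used to form $P_f(K)$, again because the forgetful functor creates them. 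Assembling these, $P_{BG}(f)\colon P_f(K)\to BG$ is canonically a map of $\cD$-algebras, functorially in $f$, and composing with $T^{\cI}$ yields the desired functor $T$. This runs in parallel with the Remark following the preceding proposition, which records the analogous statement for $\pi^*$ and $t$ alone.

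I expect the only genuine content beyond routine bookkeeping to be the identification of $\Gamma$ as a pullback of $\cD$-algebras along evaluation maps, so that it lifts to the operadic setting at all, and the small check that the two $\cD$-algebra structures on $BG$ coincide; everything else then follows formally from results established earlier in the paper together with the cited work of Schlichtkrull.
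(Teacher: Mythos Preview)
Your proof is correct and follows the same approach as the paper: factor $T=T^{\cI}\circ P_{BG}$, cite Corollary~\ref{cor:TI-preserves-operad-actions} for $T^{\cI}$, and then verify that $P_{BG}$ preserves $\cD$-algebras. The paper's proof simply defers the $P_{BG}$ step to the argument of \cite[Proposition~6.8]{Schlichtkrull_Thom-symmetric}, whereas you spell out that argument by decomposing $P_{BG}$ into $\Gamma$, $\const_{\cI}$, pullback along $\pi$, and post-composition with $t$, and checking each piece; your extra observation about the two $\cD$-structures on $BG$ agreeing is a point the paper leaves tacit.
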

\begin{proof}
We know that the functor $T^{\cI}$ induces a functor on the corresponding categories of $\cD$-algebras by 
Corollary~\ref{cor:TI-preserves-operad-actions}. Arguing as in the proof of the corresponding statement in 
\cite[Proposition~6.8]{Schlichtkrull_Thom-symmetric}, one shows that the same holds for $P_{BG}$.
\end{proof}

In the case where $\cD$ is the associativity operad (the operad with $n$th space equal to $\Sigma_n$) the proposition says that the Thom spectrum functor takes topological monoids over $BG_{h\cI}$ to $R$-algebras over $M\GL_1^{\cI}(R)$.

\subsection{Thom spectra over space level suspensions}\label{sec:Thom-space-suspension}
Now we turn to the space level version of Proposition~\ref{prop:Thom-suspension} which we shall formulate for general maps of the form $f\colon \Sigma K\to BG_{h\cI}$. Given a map of based spaces $g\colon L\to G_{h\cI}$, we use the $\cI$-spacification functor from Section~\ref{sec:I-spacification} (with $M=G$) to pass to a map of based 
$\cI$-spaces
\[
P_G(g)\colon P_g(L)\to G.
\]
Proceeding as in Section~\ref{subsec:Thom-I-suspension}, this in turns extends to a map
\[
\Sigma P_G(g)\colon \Sigma P_g(L)\to BG.
\]

\begin{proposition}\label{prop:space-Thom-suspension}
To a well-based space $K$ and a based map $f\colon \Sigma K\to BG_{h\cI}$, there is functorially associated a diagram of well-based spaces of the form 
\[
K\xl{\simeq} \widehat K \xr{\hat f} G_{h\cI}
\]
such that $f$ and $\Sigma\hat f$ are weakly equivalent as objects in $\Top/BG_{h\cI}$, and there is a chain of natural stable equivalences
$
T(f)\simeq T^{\cI}(\Sigma P_G(\hat f)).
$
\end{proposition}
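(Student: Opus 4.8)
The plan is to reduce the space level statement to the $\cI$-space statement in Proposition~\ref{prop:Thom-suspension} via the $\cI$-spacification machinery. First I would construct the diagram $K\xl{\simeq}\widehat K\xr{\hat f} G_{h\cI}$. The map $f\colon \Sigma K\to BG_{h\cI}$ has adjoint a based map $K\to \Omega(BG_{h\cI})$, and since $BG_{h\cI}$ is the realization of the topological monoid $BG_{h\cI}$ with $BG_{h\cI}\simeq B(BG_{h\cI})$... more precisely, using that $G_{h\cI}\to \Omega(BG_{h\cI})$ is a weak equivalence (the topological monoid $G_{h\cI}$ is grouplike, so the canonical map to loops on its classifying space is an equivalence, compare~\cite[Section~4]{Sagave-S_group-compl} for the $\cI$-space statement), I can replace $K$ by the homotopy pullback $\widehat K$ of $K\to \Omega(BG_{h\cI})\xl{\simeq} G_{h\cI}$, obtaining a based map $\hat f\colon \widehat K\to G_{h\cI}$ together with a based weak equivalence $\widehat K\xr{\simeq} K$. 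By construction the adjoint $K\to \Omega(BG_{h\cI})$ factors up to homotopy through $\widehat K\to G_{h\cI}\to \Omega(BG_{h\cI})$, which—after taking adjoints and unwinding the definition of the $1$-skeleton inclusion $\Sigma G_{h\cI}\to BG_{h\cI}$—shows that $\Sigma\hat f\colon \Sigma\widehat K\to BG_{h\cI}$ is weakly equivalent to $f$ as an object of $\Top/BG_{h\cI}$.

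Next I would identify $T(f)$ with $T^{\cI}(\Sigma P_G(\hat f))$. By Proposition~\ref{prop:properties-of-T}(i), the weak equivalence $f\simeq \Sigma\hat f$ over $BG_{h\cI}$ gives a stable equivalence $T(f)\simeq T(\Sigma\hat f)$. So it remains to compare $T(\Sigma\hat f)$ with $T^{\cI}(\Sigma P_G(\hat f))$. Here the point is that the operation ``suspend and include into $BG$'' essentially commutes with $\cI$-spacification. One way to see this: the map $\Sigma P_G(\hat f)\colon \Sigma P_{\hat f}(\widehat K)\to BG$, when we apply $(-)_{h\cI}$, becomes a map weakly equivalent over $BG_{h\cI}$ to $\Sigma\hat f\colon \Sigma\widehat K\to BG_{h\cI}$, because $P_{\hat f}(\widehat K)_{h\cI}$ is weakly equivalent to $\widehat K$ over $G_{h\cI}$ (this is the defining property of $P_G$, together with the fact that $(-)_{h\cI}$ commutes with the reduced suspension and cone functors used to build $\Sigma(-)$ out of $\boxtimes$ with $I$ and $S^1$, and preserves the relevant pushouts by~\cite[Corollary~11.4]{Sagave-S_diagram}). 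Then Proposition~\ref{prop:TI-alpha-vs-TalphahI} applied to the $\cI$-space map $\Sigma P_G(\hat f)$ gives a natural stable equivalence $T^{\cI}(\Sigma P_G(\hat f))\simeq T\bigl((\Sigma P_G(\hat f))_{h\cI}\bigr)$, and combining this with the weak equivalence $(\Sigma P_G(\hat f))_{h\cI}\simeq \Sigma\hat f$ over $BG_{h\cI}$ and invariance of $T$ (Proposition~\ref{prop:properties-of-T}(i) again) yields the desired chain $T(f)\simeq T(\Sigma\hat f)\simeq T^{\cI}(\Sigma P_G(\hat f))$.

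The main obstacle I expect is the bookkeeping required to show that $(-)_{h\cI}$ applied to $\Sigma P_G(\hat f)$ really does produce something weakly equivalent to $\Sigma\hat f$ over $BG_{h\cI}$, \emph{compatibly} with the $1$-skeleton inclusions into $BG$ and $BG_{h\cI}$. The reduced suspension $\Sigma(-)$ in $\Top^{\cI}$ is a levelwise smash with $S^1$, and the inclusion $\Sigma G\to BG$ of the $1$-skeleton is built from the two-sided bar construction; one has to check that passing to homotopy colimits turns this into the corresponding space level $1$-skeleton inclusion $\Sigma G_{h\cI}\to BG_{h\cI}$, using that $(-)_{h\cI}$ is monoidal (but not symmetric monoidal) and commutes with geometric realization and with the pushouts describing skeleta. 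A clean way to organize this is to run the argument at the level of the explicit pushout description of $\Sigma(-)$ used in the proof of Lemma~\ref{lem:suspension-diagram}, applying $(-)_{h\cI}$ there and using~\cite[Lemma~2.25]{Sagave-S_group-compl} (the monoidal structure map of $(-)_{h\cI}$ is a weak equivalence on cofibrant objects) together with the fact that $P_G$ is lax monoidal with $\cI$-equivalence structure maps by Lemma~\ref{lem:P_M-monoidal}. Once this compatibility is in hand, the rest is a formal concatenation of the invariance and comparison statements already established, so I would present the proof by first recording the construction of $\widehat K$ and $\hat f$, then the identification of $(\Sigma P_G(\hat f))_{h\cI}$ with $\Sigma\hat f$ over $BG_{h\cI}$, and finally the chain of stable equivalences built from Propositions~\ref{prop:TI-alpha-vs-TalphahI} and~\ref{prop:properties-of-T}.
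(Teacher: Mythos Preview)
Your overall architecture matches the paper's: construct $\widehat K$ as a homotopy pullback along the equivalence $G_{h\cI}\to\Omega(BG_{h\cI})$, deduce $f\simeq\Sigma\hat f$ over $BG_{h\cI}$ (hence $T(f)\simeq T(\Sigma\hat f)$), and then compare $T(\Sigma\hat f)$ with $T^{\cI}(\Sigma P_G(\hat f))$.

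For the last comparison, however, you go the opposite direction from the paper. You propose to apply $(-)_{h\cI}$ to $\Sigma P_G(\hat f)$ and identify the result with $\Sigma\hat f$ over $BG_{h\cI}$, then invoke Proposition~\ref{prop:TI-alpha-vs-TalphahI}. The paper instead stays on the $\cI$-space side: Lemma~\ref{lem:Rect-Sigma} produces a chain of $\cI$-equivalences between $P_{BG}(\Sigma g)$ and $\Sigma P_G(g)$ in $\Top^{\cI}/BG$, and then homotopy invariance of $T^{\cI}$ finishes. The point is that the technical content you flag as the ``main obstacle'' is exactly Lemma~\ref{lem:Rect-Sigma}, and its proof is not the bookkeeping you suggest. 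It hinges on the explicit map $\sigma\colon\Sigma G_{h\cI}\to BG_{h\cI}$ of~\eqref{eq:sigma-embedding}, needed because $(BG)_{h\cI}$ is not $B(G_{h\cI})$ and the $1$-skeleton of $(BG)_{h\cI}$ is not literally $\Sigma G_{h\cI}$; one then lifts $\sigma$ to bar resolutions and uses a canonical homotopy $H$ to connect $t\circ\sigma$ with $\Sigma t$. Your suggested tools (the pushout from the proof of Lemma~\ref{lem:suspension-diagram}, the monoidal structure of $P_G$ from Lemma~\ref{lem:P_M-monoidal}) do not address this base-point subtlety; the monoidal properties of $P_M$ concern products, not suspensions, and Lemma~\ref{lem:suspension-diagram} is about a $G$-module diagram, not about compatibility of $\Sigma$ with $(-)_{h\cI}$.

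In short: your proposal is correct in outline and your $\widehat K$ construction agrees with the paper's, but the step you identify as the obstacle really is the crux, and the paper isolates it as Lemma~\ref{lem:Rect-Sigma} with a proof that uses the explicit embedding $\sigma$ rather than the monoidal machinery you point to.
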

Combined with the description of $T^{\cI}(\Sigma P_G(\hat f))$ in Proposition~\ref{prop:Thom-suspension}, this gives a functorial description of $T(f)$ as an $R$-module which in turn leads to a cofiber sequence in the stable homotopy category of the form
\[
R\wedge K\to R\to T(f).
\]

The definition of the map $\Sigma\hat f$ figuring in  Proposition~\ref{prop:space-Thom-suspension} requires some explanation. Since we work with the homotopy colimit $BG_{h\cI}$ (i.e., $(BG)_{h\cI}$) as opposed to the weakly equivalent $B(G_{h\cI})$, we cannot identify the $1$-skeleton with the reduced suspension $\Sigma G_{h\cI}$ directly. Instead, we may view $BG_{h\cI}$ as the geometric realization of the simplicial space $[k]\mapsto (B_kG)_{h\cI}$ so that the 1-skeleton can be identified with the pushout of the diagram
\[
U^{\cI}_{h\cI} \ot U^{\cI}_{h\cI}\times \Delta^1\cup G_{h\cI}\times \partial \Delta^1 \to G_{h\cI}\times \Delta^1
\]
where $U^{\cI}_{h\cI}=B\cI$. Using that $\cI$ has an initial object, we can nonetheless define a natural embedding of $\Sigma G_{h\cI}$ in $BG_{h\cI}$: Let $h\colon U^{\cI}_{h\cI}\times I\to U^{\cI}_{h\cI}$ be the canonical null-homotopy with 
$h(-,0)$ the constant map and $h(-,1)$ the identity on $U^{\cI}_{h\cI}$, and let $p\colon G_{h\cI}\to U^{\cI}_{h\cI}$ be the projection. Representing $\Sigma G_{h\cI}$ as a quotient of $G_{h\cI}\times I$ in the usual way, we define 
\begin{equation}\label{eq:sigma-embedding}
\sigma\colon \Sigma G_{h\cI}\to BG_{h\cI},\quad
\sigma(x,t)=
\begin{cases}
h(p(x),3t), & \text{for $0\leq t\leq 1/3$}\\
(x,(3t-1,2-3t)), &\text{for $1/3\leq t\leq 2/3$}\\
h(p(x),3-3t), &\text{for $2/3\leq t\leq 1$.}
\end{cases}
\end{equation} 
Given a map of based spaces $g\colon L\to G_{h\cI}$, we write $\Sigma g$ both for the induced map of suspensions $\Sigma L\to \Sigma G_{h\cI}$ and for the composition
\[
\Sigma g\colon \Sigma L\to \Sigma G_{h\cI} \xr{\sigma} BG_{h\cI}.
\]
The context will make the meaning clear. In the next lemma we compare the effect of suspending before or after passing to $\cI$-spaces.

\begin{lemma}\label{lem:Rect-Sigma}
Given a well-based space $L$ and a based map $g\colon L\to G_{h\cI}$, there is a chain of natural $\cI$-equivalences relating $P_{BG}(\Sigma g)$ and $\Sigma P_G(g)$ as objects in $\Top^{\cI}/BG$. 
\end{lemma}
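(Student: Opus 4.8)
The plan is to reduce the statement to a computation of the homotopy colimit of $\Sigma P_G(g)$ and then invoke the fact that $P_{BG}$ is a homotopy inverse of $(-)_{h\cI}$. Write $\Sigma P_G(g)$ as the composite of $\cI$-spaces $\Sigma P_g(L)\to \Sigma G\hookrightarrow BG$, where the last map is the inclusion of $\Sigma G$ as the $1$-skeleton of $BG$. Since this is a map of $\cI$-spaces with target $BG$, Lemma~\ref{lem:P_M-after-hI} provides a natural chain of $\cI$-equivalences $\Sigma P_G(g)\simeq P_{BG}\big((\Sigma P_G(g))_{h\cI}\big)$ in $\Top^{\cI}/BG$. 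Hence it is enough to produce a natural chain of weak homotopy equivalences in $\Top/BG_{h\cI}$ between $(\Sigma P_G(g))_{h\cI}$ and the map $\Sigma g\colon\Sigma L\to BG_{h\cI}$ built from~\eqref{eq:sigma-embedding}, and then apply $P_{BG}$, which takes weak equivalences over $BG_{h\cI}$ to $\cI$-equivalences over $BG$.

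For the computation I would use two inputs. First, for a levelwise well-based $\cI$-space $X$ the functor $(-)_{h\cI}$ commutes with reduced suspension up to a natural chain of weak equivalences $(\Sigma X)_{h\cI}\simeq\Sigma(X_{h\cI})$; this follows by passing to reduced homotopy colimits and collapsing the contractible cofibration $(U^{\cI})_{h\cI}=B\cI$, and is part of the standard homotopy theory of Bousfield--Kan homotopy colimits used in \cite[Section~8]{Blumberg-C-S_THH-Thom} and \cite{Schlichtkrull_units}. Applied to $P_g(L)$ and to $G$ (both levelwise well-based, $g$ being based and $G$ a cofibrant commutative $\cI$-space monoid) together with the homotopy-inverse property of the $\cI$-spacification functor, which identifies $(P_g(L))_{h\cI}$ with $L$ over $G_{h\cI}$ and $(P_G(g))_{h\cI}$ with $g$, this gives $(\Sigma P_g(L))_{h\cI}\simeq\Sigma L$ over $(\Sigma G)_{h\cI}\simeq\Sigma(G_{h\cI})$. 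Second, one identifies the map $(\Sigma G)_{h\cI}\to (BG)_{h\cI}=BG_{h\cI}$ induced on homotopy colimits by the $1$-skeleton inclusion $\Sigma G\hookrightarrow BG$ with the embedding $\sigma\colon\Sigma(G_{h\cI})\to BG_{h\cI}$ of~\eqref{eq:sigma-embedding}, up to homotopy over $BG_{h\cI}$. Combining the two inputs, $(\Sigma P_G(g))_{h\cI}$ is naturally weakly equivalent over $BG_{h\cI}$ to $\sigma\circ\Sigma g=\Sigma g$, which is what we need.

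I expect the second input to be the main obstacle. The subtlety is that $BG_{h\cI}$ is the geometric realization of $[k]\mapsto(G^{\boxtimes k})_{h\cI}$, whose space of $0$-simplices is $(U^{\cI})_{h\cI}=B\cI$ and not a point, so the $1$-skeleton of $BG_{h\cI}$ is not literally $\Sigma(G_{h\cI})$; the map $\sigma$ in~\eqref{eq:sigma-embedding} was defined precisely to bridge this gap, using the canonical null-homotopy of $B\cI$ coming from the initial object of $\cI$. Both $\sigma$ and the homotopy colimit of the $\cI$-level $1$-skeleton inclusion are obtained by collapsing these contractible pieces, and since any two such null-homotopies are homotopic the two maps agree up to a homotopy over $BG_{h\cI}$; one verifies this by a cell-by-cell comparison on the simplicial level, parallel to the check that $\sigma$ is well defined. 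All equivalences in the argument can be chosen natural in $(L,g)$ — the suspension comparison by construction, and the homotopy-inverse equivalences because they come from the defining pullback squares in~\eqref{eq:P_M} — so assembling the chains and applying $P_{BG}$ yields the natural chain of $\cI$-equivalences asserted in Lemma~\ref{lem:Rect-Sigma}.
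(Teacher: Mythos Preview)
Your strategy is sound and the outline is correct, but it differs from the paper's argument. The paper does not pass through Lemma~\ref{lem:P_M-after-hI}; instead it builds the chain directly at the $\cI$-space level. The key observation there is that the embedding $\sigma$ of~\eqref{eq:sigma-embedding} lifts canonically to a map of bar resolutions $\sigma\colon \Sigma\overline G\to \overline{BG}$, giving a strictly commutative square over $\const_{\cI}BG_{h\cI}$ and a square over $BG$ that commutes only up to a canonical homotopy $H\colon \Sigma\overline G\times I\to BG$. From these two squares the paper writes down an explicit zigzag
\[
P_{\Sigma g}(\Sigma L)\xr{\simeq} P_{\sigma\circ\Sigma\Gamma(g)}(\Sigma\Gamma_g(L))\xl{\simeq} \Sigma P_g(L)
\]
in $\Top^{\cI}/\overline{BG}$, and then uses $H$ (via the cylinder $\Sigma P_g(L)\times I$) to move the structure map on the right-hand term from $t\circ\sigma$ to the $1$-skeleton inclusion $\Sigma t$. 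Your ``second input'' is precisely this homotopy $H$, so the technical heart of the two arguments coincides; what differs is that the paper stays with the bar resolutions $\overline G$ and $\overline{BG}$ throughout rather than passing to $(-)_{h\cI}$ and back.

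Your route is slightly more conceptual and reuses Lemma~\ref{lem:P_M-after-hI} efficiently, but it trades explicitness for a few points you should tighten. The assertion that $P_g(L)$ is levelwise well-based does not follow merely from $g$ being based; you need an argument (for instance via the NDR description of $\Gamma_g(L)$ and of $\overline G$, using that $L$ is well-based and $G_{h\cI}$ is well-based by \cite[Proposition~12.7]{Sagave-S_diagram}) before invoking the suspension--hocolim comparison. Also, the identification of $(\Sigma G\hookrightarrow BG)_{h\cI}$ with $\sigma$ should be made with a specific homotopy rather than ``any two null-homotopies are homotopic'', since you need naturality of the resulting chain; the paper's lift of $\sigma$ to $\overline{BG}$ and the canonical homotopy $H$ provide exactly that. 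With those two points addressed, your proof goes through.
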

\begin{proof}
We first observe that the map $\sigma$ in \eqref{eq:sigma-embedding} admits a canonical lift to a map of bar resolutions $\sigma\colon \Sigma\overline G\to \overline{BG}$, such that the left hand square in the diagram 
\[
\xymatrix@-.5pc{
\const_{\cI}\Sigma G_{h\cI} \ar[d]_{\sigma}& \Sigma \overline G \ar[l]_-{\Sigma\pi} \ar[r]^-{\Sigma t} \ar[d]_{\sigma} & \Sigma G \ar[d]\\
\const_{\cI}BG_{h\cI} & \overline{BG} \ar[l]_-{\pi} \ar[r]^-t & BG
}
\]
is strictly commutative and the right hand square 
is homotopy commutative by a canonical homotopy $H\colon \Sigma\overline G \times I\to BG$ starting at $t\circ\sigma$ and ending at $\Sigma t$.
Putting all this together, we get the commutative diagram
\[
\xymatrix@C-1pc{
\const_{\cI}\Sigma L \ar[r]^-{\simeq} \ar[ddr]_-{\Sigma g}& \const_{\cI}\Sigma\Gamma_g(L) \ar[d]^{\Sigma\Gamma(g)} & 
\Sigma P_g(L) \ar[l]_-{\simeq} \ar[r]^-{i_0} \ar[d] & \Sigma P_g(L)\times I \ar[d]
& \Sigma P_g(L) \ar[l]_-{i_1} \ar[ddl]^{\Sigma P_G(g)}\\
& \const_{\cI}\Sigma G_{h\cI} \ar[d]^{\sigma} & \Sigma \overline G \ar[l]_-{\Sigma\pi} \ar[r]^-{i_0} \ar[d]^{\sigma} & 
\Sigma \overline G\times I \ar[d]_{H}& \\
& \const_{\cI}BG_{h\cI} & \ar[r] \overline{BG} \ar[l]_-{\pi}\ar[r]^-t & BG. &
}
\]
Here the left part of the diagram gives a chain of $\cI$-equivalences 
\[
P_{\Sigma g}(\Sigma L)\xr{\simeq} P_{\sigma\circ\Sigma\Gamma(g)}(\Sigma\Gamma_g(L)) 
\xl{\simeq} \Sigma P_g(L)
\]
in $\Top^{\cI}/BG$, but where $\Sigma P_g(K)$ maps to $BG$ via the composition indicated in the middle of the diagram. Composing with the maps in the right part of the diagram, we get the chain of $\cI$-equivalences in the lemma.
\end{proof}

\begin{proof}[Proof of Proposition~\ref{prop:space-Thom-suspension}]
Let $\check f\colon K\to \Omega(BG_{h\cI})$ be the adjoint of $f$, and let $\hat f$ be defined as the pullback indicated in the diagram
\[
\xymatrix@-.5pc{
& \widehat K\ar[r]^{\hat f} \ar[d]_{\simeq}& G_{h\cI} \ar[d]^{\check\sigma}_{\simeq}\\
K \ar@<.5ex>[r]^-{\simeq} & \ar@<.5ex>[l] \Gamma_{\check f}(K) \ar[r]^-{\Gamma(\check f)} & \Omega(BG_{h\cI}).
}
\]
Then we have a chain of weak equivalences relating $f$ and $\Sigma \hat f\colon \Sigma \widehat K\to BG_{h\cI}$ as objects in $\Top/BG_{h\cI}$ which gives a chain of stable equivalences $T(f)\simeq T(\Sigma \hat f)$. Now the conclusion follows from Lemma~\ref{lem:Rect-Sigma} and the homotopy invariance of the Thom spectrum functor $T^{\cI}$.
\end{proof}

\section{Quotient spectra as Thom spectra associated to \texorpdfstring{$SU(n)$}{SU(n)}}\label{sec:Quotient-Thom}
In this section we specialize to the case where $R$ is a commutative symmetric ring spectrum that is \emph{even} in the sense that $\pi_*(R)$ is concentrated in even degrees. As usual $R$ is supposed to be semistable and in this section we also assume that the underlying symmetric spectrum of $R$ is flat. We again write $G\to \GL_1^{\cI}(R)$ for a cofibrant replacement. 

Let $SU(n)$ denote the special unitary group. Our main concern is to analyze Thom spectra associated to loop maps of the form $SU(n)\to BG_{h\cI}$. Since we are mainly interested in Thom spectra that are strict symmetric ring spectra (as opposed to a more relaxed notion of $A_{\infty}$ ring spectra), we shall model such loop maps by maps of actual topological monoids. For this reason it will be convenient to work with the model $B_1(-)$ of the classifying space functor for topological monoids introduced by Fiedorowicz \cite{Fiedorowicz_classifying}. We review the relevant details of this construction in Appendix~\ref{app:loop-rectification}. Writing 
$\cA\Top/BG_{h\cI}$ for the category of topological monoids over $BG_{h\cI}$, we shall use the ``loop functor'' from 
Section~\ref{subsec:rectified-loop}, 
\[
\Omega'\colon \Top_*/B_1BG_{h\cI}\to \cA\Top/BG_{h\cI},\quad (K\xr{g} B_1BG_{h\cI})\mapsto (\Omega'_g(K)\xr{\Omega'(g)}BG_{h\cI}),
\]  
which models the looping of a based map $g\colon K\to B_1 BG_{h\cI}$ by a map of actual topological monoids $\Omega'(g)\colon \Omega'_g(K)\to BG_{h\cI}$. 

Now suppose we are given a based map $g\colon B_1SU(n)\to B_1BG_{h\cI}$. Then we introduce the notation $SU'(n)$ for 
$\Omega_g'(B_1(SU(n)))$, so that the associated loop map is realized by the map of topological monoids $\Omega'(g)\colon SU'(n)\to BG_{h\cI}$. For each $m=1,\dots,n$,  we write $SU'(m)=\Omega'_{g_m}(B_1SU(m))$, where $g_m\colon B_1SU(m)\to BG_{h\cI}$ is the ``restriction'' of $g$ obtained by precomposing with the map of classifying spaces induced by the standard inclusion of $SU(m)$ in $SU(n)$. The notation is justified by the fact that there is a canonical chain of weak equivalences of topological monoids relating $SU'(m)$ and $SU(m)$, and it follows from Proposition~\ref{prop:Omega'-compatible} that these equivalences are compatible when $m$ varies. 
Until further notice we fix a based map $g$ as above and simplify the notation by writing 
\[
T(SU(m))=T(\Omega'(g_m))
\] 
for $1\leq m\leq n$. Thus, the $R$-algebra Thom spectrum $T(SU(m))$ depends on the map $g$ even though this is not visible in the notation.

\subsection{The structure of \texorpdfstring{$T(SU(n))$}{T(SU(n))}}\label{sec:T(SU(n))-structure}
Our assumption that $R$ be even implies that $R$ is complex orientable (see \cite{Adams_Stable}), so we may choose an element $x\in \tilde R^2(\bC P^{n-1})$ such that the map of $\pi_*(R)$-algebras $\pi_*(R)[x]/x^{n}\to R^*(\bC P^{n-1})$ is an isomorphism. Next recall (see e.g.\ \S10 of Chapter IV in \cite{Whitehead-Elements}) that the reduced suspension  $\Sigma \bC P^{n-1}$  admits a canonical embedding in $SU(n)$. Composing with $\Omega'(g)$ and the chain of equivalences relating 
$SU(n)$ and $SU'(n)$, we get a well-defined homotopy class
\begin{equation}\label{eq:SigmaCP-class}
\Sigma\bC P^{n-1}\to SU(n)\simeq SU'(n) \xr{\Omega'(g)}BG_{h\cI}
\end{equation}
whose adjoint determines a cohomology class
\[
u\in [\bC P^{n-1}, \Omega(BG_{h\cI})]_*\cong  [\bC P^{n-1},G_{h\cI}]_*\subset [\bC P^{n-1},G_{h\cI}]\cong R^0(\bC P^{n-1})^\times.
\]
For the last isomorphism we use that the topological monoid $G_{h\cI}$ models the units of $R$, hence represents the functor taking a space $X$ to the units $R^0(X)^{\times}$. It follows that we can write $u$ uniquely in the form
\begin{equation}\label{eq:u-representation}
u=1+u_1x+u_2x^2+\dots+u_{n-1}x^{n-1},\qquad u_i\in \pi_{2i}(R).
\end{equation}
For $c$ a natural number, let $F^{\bS}_c\colon \Top_*\to\Sp^{\Sigma}$ be the level $c$ free symmetric spectrum functor, see \cites{HSS, MMSS}. 
Since $R$ is assumed to be semistable, we can represent $u_i$ by an actual map of symmetric spectra $u_i\colon F^{\bS}_c(S^{2i+c})\to R$ for a suitable constant $c$. In the proposition below we use the notation $\Sigma^{2m}(-)$ for the smash product with $F^{\bS}_c(S^{2m+c})$.

\begin{proposition}\label{prop:T(SU)-cofiber}
There are homotopy cofiber sequences of $R$-modules
\[
\Sigma^{2m}T(SU(m))\xr{u_m} T(SU(m))\xr{} T(SU(m+1)).
\]
for $1\leq m<n$.
\end{proposition}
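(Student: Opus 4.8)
The plan is to realize the inclusion $SU(m)\hookrightarrow SU(m+1)$ at the level of Thom spectra and identify the relative term as the Thom spectrum of a suspension, which is then governed by Proposition~\ref{prop:Thom-suspension} (in its space level incarnation, Proposition~\ref{prop:space-Thom-suspension}). First I would recall the standard cell structure: $SU(m+1)$ is obtained from $SU(m)$ by attaching cells via the projection $SU(m+1)\to S^{2m+1}$, and more precisely that the quotient $SU(m+1)/SU(m)\cong S^{2m+1}$ together with the fact that the attaching of the ``next'' family of cells is governed by $\Sigma\bC P^{m}$. Concretely, using the canonical embedding $\Sigma\bC P^{m-1}\hookrightarrow SU(m)$ from \S\ref{sec:T(SU(n))-structure}, one has a homotopy pushout of topological monoids (after applying $B_1$ and looping back via $\Omega'$, using Appendix~\ref{app:loop-rectification} to keep everything strictly monoidal)
\[
\xymatrix@-1pc{
SU(m) \ar[r]\ar[d] & * \ar[d]\\
SU(m)*\text{(something)} &
}
\]
— more usefully, the relevant input is that the map $SU'(m)\to SU'(m+1)$ is, as a map over $BG_{h\cI}$, weakly equivalent to the inclusion of $SU'(m)$ into a monoid built from $SU'(m)$ by attaching a free monoid cell along $\Sigma\bC P^{m}$. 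The homotopy type of the cofiber of $T(SU(m))\to T(SU(m+1))$ should then be computed from the homotopy cocartesian squares that $T$ preserves by Proposition~\ref{prop:properties-of-T}(ii) and the remark following it.

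The key steps, in order, would be: (1) identify the relevant map $\Sigma\bC P^{m}\to B_1SU(m+1)\to B_1BG_{h\cI}$ whose looping realizes the passage from $SU'(m)$ to $SU'(m+1)$ as a monoid map over $BG_{h\cI}$; (2) translate this into a homotopy cocartesian square of spaces over $BG_{h\cI}$ whose ``initial'' corner involves $\bC P^{m}_+$ or a suspension thereof and whose pushout corner is the relevant delooping, so that applying $T$ and using Proposition~\ref{prop:properties-of-T} yields a homotopy cocartesian square of $R$-modules; (3) use Proposition~\ref{prop:space-Thom-suspension} together with Proposition~\ref{prop:Thom-suspension} to identify the resulting square with the defining cofiber sequence of $T(SU(m+1))$ relative to $T(SU(m))$, expressing it as a homotopy cofiber sequence
\[
T(SU(m))\wedge_R\big(\text{Thom object of }\bC P^{m-1}\big)\longrightarrow T(SU(m))\longrightarrow T(SU(m+1));
\]
(4) identify the Thom object of $\bC P^{m-1}$ over $G_{h\cI}$, via the class $u$ and its expansion \eqref{eq:u-representation}, with the top cell contributing $\Sigma^{2m}T(SU(m))$ and the attaching map with multiplication by $u_m$. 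The point is that the complex orientation $x\in\tilde R^2(\bC P^{n-1})$ trivializes the Thom object of $\bC P^{m-1}\to BGL_1(R)$ up to a shift, and the \emph{next} attaching data — the part not captured at the level of $SU(m)$ — is exactly detected by the coefficient $u_m$ of $x^m$ in $u$.

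I expect the main obstacle to be step (1)–(2): making precise, in a strictly functorial and multiplicatively compatible way, the homotopy cocartesian square of spaces over $BG_{h\cI}$ that exhibits $SU'(m+1)$ as built from $SU'(m)$ by attaching the relevant suspension cell, while keeping track of the loop-map/monoid structure through the rectification of Appendix~\ref{app:loop-rectification}. The subtlety is that $SU(m+1)$ is not literally a homotopy pushout of $SU(m)$ along a suspension in the category of spaces — one must instead work with the classifying spaces, where $B_1SU(m+1)$ is obtained from $B_1SU(m)$ by attaching a cell along $\Sigma\bC P^{m}\to B_1SU(m)$ in an appropriate sense, and then loop back. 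Once this square is in hand, the remaining steps are essentially formal consequences of the already-established homotopy-invariance and colimit-preservation properties of $T$ (Proposition~\ref{prop:properties-of-T}) and the suspension formula (Propositions~\ref{prop:Thom-suspension} and~\ref{prop:space-Thom-suspension}), together with the bookkeeping of the class $u$ in \eqref{eq:u-representation} to pin down that the attaching map is multiplication by $u_m$ and the shift is $\Sigma^{2m}$.
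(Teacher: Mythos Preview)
Your instincts about the suspension formula and the expansion of $u$ in \eqref{eq:u-representation} are right, but you have misidentified the key geometric input, and the obstacle you expect in steps (1)--(2) is one the paper avoids entirely. You write that ``$SU(m+1)$ is not literally a homotopy pushout of $SU(m)$ along a suspension in the category of spaces'' and propose to remedy this by passing to classifying spaces and looping back. In fact the paper works purely at the space level: the classical cell decomposition of $SU(m+1)$ (see \cite{Whitehead-Elements}, Chapter~IV, \S10, especially the statements on p.~345) gives a genuine pushout square of \emph{spaces}
\[
\xymatrix@-1pc{
\Sigma\bC P^{m-1}\times SU(m) \ar[r] \ar[d] & SU(m)\ar[d]\\
\Sigma\bC P^{m}\times SU(m) \ar[r] & SU(m+1)
}
\]
where the horizontal maps use the embedding $\Sigma\bC P^{k}\hookrightarrow SU(k+1)$ followed by the group multiplication. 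No monoid structure on the square is needed; one only needs this to be a homotopy cocartesian square over $BG_{h\cI}$, and for that it suffices that the maps to $SU'(m+1)\to BG_{h\cI}$ are compatible, which they are because the horizontal maps factor through the multiplication of $SU(m+1)$. Applying $T$ and the derived monoidal structure (Proposition~\ref{prop:T-derived-monoidal}) then gives a homotopy cocartesian square of $R$-modules with corners $T(\Sigma\bC P^{m-1})\wedge_R T(SU(m))^{\cof}$, $T(\Sigma\bC P^{m})\wedge_R T(SU(m))^{\cof}$, $T(SU(m))$, and $T(SU(m+1))$.

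The second ingredient is a cofiber sequence $\Sigma^{2m}R\xr{u_m}T(\Sigma\bC P^{m-1})\to T(\Sigma\bC P^{m})$, established by the suspension formula you cite together with the splitting \eqref{eq:RCP-equivalence} of $R\wedge\bC P^m_+$; this is where the coefficient $u_m$ of $x^m$ in $u$ enters, exactly as you say in step (4). Smashing this cofiber sequence with $T(SU(m))^{\cof}$ and pasting with the cocartesian square above gives the result. So your steps (3)--(4) are in the right spirit, but the argument is organized as two independent lemmas rather than a single direct cofiber description of $T(SU(m))\to T(SU(m+1))$, and there is no need to touch $B_1SU(m+1)$ or the monoid rectification machinery at all.
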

Here the first map is given by $u_m$ and the $R$-module structure on $T(SU(m))$. The precise meaning of the term ``homotopy cofiber sequence'' is as follows: There is a chain of stable equivalences in the category of $R$-modules under $T(SU(m))$ relating $T(SU(m+1))$ to the mapping cone of the first map. 

The proof of Proposition~\ref{prop:T(SU)-cofiber} is based on two lemmas. In the first lemma we consider the analogues of the homotopy classes \eqref{eq:SigmaCP-class} for $1\leq m\leq n$. We arrange to have a set of representatives $\Sigma\bC P^{m-1}\to SU'(m)$ that are compatible when $m$ varies, and we write $T(\Sigma \bC P^{m-1})$ for the associated Thom spectra. Let  $T(SU(m))^{\mathrm{cof}}$ be a cofibrant replacement of the $R$-module $T(SU(m))$.

\begin{lemma}\label{lem:T(SU)-square}
For $1\leq m<n$, the commutative square of $R$-modules
\[
\xymatrix@-1pc{
T(\Sigma\bC P^{m-1})\wedge_RT(SU(m))^{\mathrm{cof}} \ar[r] \ar[d]&T(SU(m))\ar[d] \\
T(\Sigma\bC P^{m})\wedge_RT(SU(m))^{\mathrm{cof}} \ar[r] &T(SU(m+1))
}
\]
is homotopy cocartesian.
\end{lemma}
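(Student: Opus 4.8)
The plan is to rewrite the square of $R$-modules in the statement, up to a natural chain of stable equivalences, as the image under the Thom spectrum functor $T$ of a homotopy cocartesian square of spaces over $BG_{h\cI}$ assembled from the special unitary groups, and then to produce that space-level square by a hands-on bundle argument.

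\textbf{Reduction to a space-level pushout.} Write $f = \Omega'(g_m)\colon SU'(m)\to BG_{h\cI}$ and let $g\colon \Sigma\bC P^{m-1}\to BG_{h\cI}$ be the class \eqref{eq:SigmaCP-class}, so that $T(f) = T(SU(m))$ and $T(g) = T(\Sigma\bC P^{m-1})$ in the notation of the lemma. Since $f$ is a map of topological monoids and $g$ factors as $\Sigma\bC P^{m-1}\xr{\iota} SU'(m)\xr{f} BG_{h\cI}$, the product map $g\times f\colon \Sigma\bC P^{m-1}\times SU'(m)\to BG_{h\cI}$ agrees with $f$ composed with $\mu'\colon \Sigma\bC P^{m-1}\times SU'(m)\to SU'(m)$, $(v,A)\mapsto \iota(v)\cdot A$. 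Hence the lax monoidal structure map of $T$ followed by $T(\mu')$ yields the top horizontal map $T(g)\wedge_R T(f)^{\mathrm{cof}}\to T(SU(m))$ of the lemma; the bottom map arises the same way from the analogous multiplication $\mu''\colon \Sigma\bC P^{m}\times SU'(m)\to SU'(m+1)$, and the vertical maps are induced by $\bC P^{m-1}\hookrightarrow \bC P^m$ and $SU'(m)\hookrightarrow SU'(m+1)$. By Proposition~\ref{prop:T-derived-monoidal} the derived monoidal multiplication of $T$ is a stable equivalence, so these identifications present the square of the lemma as one stably equivalent to the image under $T$ of
\[
\xymatrix@-1pc{
\Sigma\bC P^{m-1}\times SU'(m) \ar[r]^-{\mu'} \ar[d] & SU'(m) \ar[d]\\
\Sigma\bC P^{m}\times SU'(m) \ar[r]^-{\mu''} & SU'(m+1).
}
\]
As $T$ carries homotopy cocartesian squares of spaces over $BG_{h\cI}$ to homotopy cocartesian squares of $R$-modules (Proposition~\ref{prop:properties-of-T}), it suffices to show this square of spaces is homotopy cocartesian. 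Finally, homotopy cocartesian squares are invariant under weak equivalence, and by Proposition~\ref{prop:Omega'-compatible} the zig-zag of monoid equivalences relating $SU'(\bullet)$ to $SU(\bullet)$ is compatible as $m$ varies and identifies $\iota$ with the standard embedding; so we are reduced to the corresponding square for the Lie groups $SU(m)\subset SU(m+1)$.

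\textbf{The geometric pushout.} Consider the principal $SU(m)$-bundle $q\colon SU(m+1)\to SU(m+1)/SU(m) = S^{2m+1}$. From the construction of the canonical embedding $\Sigma\bC P^m\hookrightarrow SU(m+1)$ (see \cite{Whitehead-Elements}), its composite with $q$ is the quotient map $p\colon \Sigma\bC P^m\to \Sigma\bC P^m/\Sigma\bC P^{m-1} = S^{2m+1}$; in particular $\Sigma\bC P^{m-1}$ maps into $SU(m) = q^{-1}(\ast)$, compatibly with the embedding $\Sigma\bC P^{m-1}\hookrightarrow SU(m)$. Pull $q$ back along $p$ to a bundle $E\to \Sigma\bC P^m$ with fibre $SU(m)$. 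The map $\Sigma\bC P^m\times SU(m)\to SU(m+1)$, $(v,A)\mapsto \iota(v)\cdot A$, lifts to a map $\tilde\mu\colon \Sigma\bC P^m\times SU(m)\to E$ of bundles over $\Sigma\bC P^m$ which on the fibre over $v$ is the homeomorphism $A\mapsto\iota(v)A$ from $SU(m)$ onto the coset $\iota(v)SU(m) = q^{-1}(p(v))$; hence $\tilde\mu$ is a homeomorphism taking $\Sigma\bC P^{m-1}\times SU(m)$ onto $E|_{\Sigma\bC P^{m-1}}$. On the other hand, the pullback projection $E\to SU(m+1)$ is a closed surjection which is injective over $SU(m+1)\setminus SU(m)$ and restricts over $SU(m)$ to the projection $E|_{\Sigma\bC P^{m-1}}\cong \Sigma\bC P^{m-1}\times SU(m)\to SU(m)$; consequently $SU(m+1)$ is the pushout of $SU(m)\xl{\mathrm{pr}_2} \Sigma\bC P^{m-1}\times SU(m)\hookrightarrow E$. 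Transporting along $\tilde\mu$ — under which $\mathrm{pr}_2$ becomes $\mu'$, using $\iota_{m+1}|_{\Sigma\bC P^{m-1}} = \iota_m$ — exhibits $SU(m+1)$ as the strict pushout of $SU(m)\xl{\mu'}\Sigma\bC P^{m-1}\times SU(m)\hookrightarrow \Sigma\bC P^m\times SU(m)$, with $SU(m)\hookrightarrow SU(m+1)$ and $\mu''$ the structure maps. Since $\Sigma\bC P^{m-1}\times SU(m)\hookrightarrow\Sigma\bC P^m\times SU(m)$ is a cofibration, this pushout is also a homotopy pushout; this is the square we needed. (The case $m=1$ is trivial, as $SU(1) = \ast$ and $SU(2) = S^3 = \Sigma\bC P^1$.)

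\textbf{Expected main obstacle.} The crux is the geometric fact that $\Sigma\bC P^m$ meets the bottom cells of $SU(m+1)$, precisely that $q$ composed with $\Sigma\bC P^m\hookrightarrow SU(m+1)$ is the collapse map onto $S^{2m+1}$; I would extract this from the explicit construction in \cite{Whitehead-Elements}. If one knows this only up to homotopy, one replaces the strict-pushout argument by checking that the map from the homotopy pushout to $SU(m+1)$ is a homology isomorphism (the Serre spectral sequence of $SU(m)\to SU(m+1)\to S^{2m+1}$ collapses) together with simple-connectivity for $m\geq 2$. The remaining work in the first step — matching the lax monoidal structure maps, the cofibrant replacements, and the compatible families of embeddings and equivalences — is routine bookkeeping.
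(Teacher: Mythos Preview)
Your proof is correct and follows essentially the same route as the paper's: reduce via Proposition~\ref{prop:T-derived-monoidal} and the homotopy invariance of $T$ to the space-level square for $SU(m)\subset SU(m+1)$, then invoke the geometric pushout $\Sigma\bC P^m\times SU(m)\cup_{\Sigma\bC P^{m-1}\times SU(m)} SU(m)\cong SU(m+1)$. The paper simply cites \cite[p.~345, (i) and (ii)]{Whitehead-Elements} for this last fact, whereas you supply a direct principal-bundle argument that reproves it; your version is more self-contained but otherwise identical in structure.
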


\begin{proof}
Using the standard inclusion of $SU(m)$ in $SU(m+1)$ and the multiplicative structures of these groups, we get the commutative diagram
\begin{equation}\label{eq:SU-pushout} 
\xymatrix@-1pc{
\Sigma\bC P^{m-1}\times SU(m) \ar[r] \ar[d] & SU(m)\times SU(m) \ar[r]\ar[d]  & SU(m)\ar[d]\\
\Sigma\bC P^{m}\times SU(m) \ar[r] & SU(m+1)\times SU(m) \ar[r] & SU(m+1)
}
\end{equation}
in which the outer diagram is a pushout square as follows from the statements listed as (i) and (ii) on page 345 in 
\cite{Whitehead-Elements}. Since the chains of weak equivalences $SU(m)\simeq SU'(m)$ are multiplicative and compatible when $m$ varies, it follows that the analogous square with $SU'(m)$ and $SU'(m+1)$ is homotopy cocartesian. The latter may be viewed as a diagram in $\Top/BG_{h\cI}$, so applying the Thom spectrum functor gives a homotopy cocartesian square of $R$-modules. Hence the result follows from Proposition~\ref{prop:T-derived-monoidal} (it suffices to cofibrantly replace one of the factors).
\end{proof}

In the second lemma we analyze the maps $T(\Sigma \bC P^{m-1})\to T(\Sigma \bC P^{m})$. Recall from \cite{Adams_Stable} that the $R$-homology of $\bC P^{n-1}$ is given by 
\[
R_*(\bC P^{n-1})=\pi_*(R)\{\beta_0,\dots,\beta_{n-1} \} 
\]
where $\beta_i\in R_{2i}(\bC P^{n-1})$ is dual to $x^i$. The classes $\beta_i$ may be realized as maps of symmetric spectra 
$\beta_i\colon F_c^{\bS}(S^{2i+c})\to R\wedge \bC P^{n-1}_+$, for a suitable constant $c$, and give rise to a stable equivalence of $R$-modules
\begin{equation}\label{eq:RCP-equivalence}
\textstyle\bigvee_{i=0}^{n-1} \Sigma^{2i}R= \bigvee_{i=0}^{n-1} 
F_c^{\bS}(S^{2i+c})\wedge R \xr{\vee \beta_i} R\wedge \bC P^{n-1}_+.
\end{equation}

\begin{lemma}\label{lem:T-Sigma-CP-sequence}
For $1\leq m<n$ there are homotopy cofiber sequences of $R$-modules
\[
\Sigma^{2m}R\xr{u_m} T(\Sigma\bC P^{m-1}) \xr{} T(\Sigma \bC P^m).
\]
\end{lemma}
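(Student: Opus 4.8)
The plan is to combine the description of $T(\Sigma\bC P^{m-1})$ supplied by Proposition~\ref{prop:space-Thom-suspension} with the $R$-homology splitting~\eqref{eq:RCP-equivalence}, and then to split off the top cell of $\bC P^m$. First I would note that the standard inclusion $\bC P^{m-1}\to\bC P^m$ of well-based spaces, together with the suspension maps to $BG_{h\cI}$ coming from the compatible representatives $\Sigma\bC P^{m-1}\to SU'(m)\hookrightarrow SU'(m+1)$, forms a map in $\Top_*/BG_{h\cI}$. Applying the functorial constructions of Proposition~\ref{prop:space-Thom-suspension} and then the strictly functorial homotopy cocartesian square of Proposition~\ref{prop:Thom-suspension} produces a map of homotopy cofiber sequences of $R$-modules
\[
\xymatrix@-1pc{
R\sm\bC P^{m-1}\ar[r]^-{\phi_{m-1}}\ar[d] & R\ar[r]\ar@{=}[d] & T(\Sigma\bC P^{m-1})\ar[d]\\
R\sm\bC P^{m}\ar[r]^-{\phi_{m}} & R\ar[r] & T(\Sigma\bC P^{m})
}
\]
in which the smash products are reduced, the middle vertical map is the identity, and the left-hand vertical map is $R$ smashed with the inclusion $\bC P^{m-1}\to\bC P^m$.

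The second step is to identify the maps $\phi_{m-1}$ and $\phi_m$. By the discussion following Proposition~\ref{prop:Thom-suspension}, $\phi_{m-1}$ is the reduced part of multiplication by $\bS^{\cI}[\alpha]$, where $\alpha$ is the $\cI$-spacification of the adjoint $\bC P^{m-1}\to\Omega(BG_{h\cI})\simeq G_{h\cI}$ of the chosen suspension map; by the construction in~\eqref{eq:SigmaCP-class}--\eqref{eq:u-representation} this adjoint classifies $u=1+u_1x+\dots+u_{m-1}x^{m-1}\in R^0(\bC P^{m-1})^{\times}$. Restricting~\eqref{eq:RCP-equivalence} to the reduced smash product gives a stable equivalence $\bigvee_{i=1}^{m-1}\Sigma^{2i}R\simeq R\sm\bC P^{m-1}$, and the Kronecker pairing $\langle u,\beta_i\rangle=u_i$ identifies the restriction of $\phi_{m-1}$ to the $i$th wedge summand with multiplication by $u_i\in\pi_{2i}(R)$. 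Since the chosen data are compatible as $m$ varies, so are the splittings: under $R\sm\bC P^{m}\simeq(R\sm\bC P^{m-1})\vee\Sigma^{2m}R$, with new summand generated by $\beta_m$, the map $\phi_m$ decomposes as $(\phi_{m-1},u_m)$.

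For the final step I would invoke the elementary identity $\mathrm{cofib}\bigl((f,g)\colon X\vee Y\to Z\bigr)\simeq\mathrm{cofib}\bigl(Y\xr{g}Z\to\mathrm{cofib}(f)\bigr)$ in the homotopy category of $R$-modules --- equivalently the octahedral axiom applied to the square above, whose middle column is an identity --- with $X=R\sm\bC P^{m-1}$, $Y=\Sigma^{2m}R$, $Z=R$, $f=\phi_{m-1}$ and $g=u_m$. This yields a stable equivalence, compatible with the maps out of $T(\Sigma\bC P^{m-1})$, between $T(\Sigma\bC P^m)$ and the mapping cone of the composite $\Sigma^{2m}R\xr{u_m}R\to T(\Sigma\bC P^{m-1})$ of multiplication by $u_m$ with the canonical structure map, which is exactly the asserted homotopy cofiber sequence $\Sigma^{2m}R\xr{u_m}T(\Sigma\bC P^{m-1})\to T(\Sigma\bC P^m)$.

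The step I expect to be the main obstacle is the second one: pinning down $\phi_{m-1}$ precisely enough to recognise it, after the splitting, as the diagonal map with entries $u_1,\dots,u_{m-1}$, and verifying that all the choices involved --- the representatives $\Sigma\bC P^{m-1}\to SU'(m)$, the adjoints to $G_{h\cI}$, the complex orientation $x$, and the dual classes $\beta_i$ --- can be made coherently in $m$, so that the comparison square genuinely realises the top-cell inclusion at the level of $R$-modules and not merely up to unspecified equivalence.
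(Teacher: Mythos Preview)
Your proposal is correct and follows essentially the same route as the paper: both apply Proposition~\ref{prop:space-Thom-suspension} to obtain compatible models $\widehat{\bC P^m}\to G_{h\cI}$, invoke Proposition~\ref{prop:Thom-suspension} for the resulting homotopy cocartesian description of $T(\Sigma\bC P^m)$, split the $R$-homology of $\bC P^m$ via the classes $\beta_i$ to identify the attaching maps with the $u_i$, and then compare the cases $m-1$ and $m$. The only cosmetic difference is that the paper works with the unreduced splitting $\bigvee_{i=0}^m\Sigma^{2i}R$ inside the explicit square of Proposition~\ref{prop:Thom-suspension} and reads off the cofiber sequence directly from the compatible squares, whereas you pass to the reduced cofiber sequence $R\wedge\bC P^{m-1}\to R\to T(\Sigma\bC P^{m-1})$ and finish with the octahedral identity; your closing remark about coherence of choices is exactly the point the paper handles by the phrase ``proceeding inductively, we may assume these equivalences to be compatible''.
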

Here we again write $\Sigma^{2m}(-)$ for the smash product with $F_c^{\bS}(S^{2m+c})$ and the first map is defined by the composition
\[
F_c^{\bS}(S^{2m+c})\wedge R \xr{u_m\wedge R} R\wedge R\xr{} R \xr{} T(\iota) \xr{} T(\Sigma \bC P^{m-1}),
\]
where the maps are given respectively by $u_m$, the multiplication in $R$, the monoidal unit of $T$, and the inclusion of the base point in $\Sigma \bC P^{n-1}$. The meaning of the term ``homotopy cofiber sequence'' is as in Proposition~\ref{prop:T(SU)-cofiber}.
\begin{proof}[Proof of Lemma~\ref{lem:T-Sigma-CP-sequence}]
Let us write $f_m\colon \Sigma\bC P^m\to BG_{h\cI}$ for the based maps giving rise to the Thom spectra $T(\Sigma\bC P^m)$. 
Applying Proposition~\ref{prop:space-Thom-suspension} to the maps $f_m$ (with $K_m=\bC P^m$) we get a sequence of maps $\hat f_m\colon \widehat \bC P^m\to G_{h\cI}$ such that $\widehat \bC P^m\simeq \bC P^m$ and there are stable equivalences $T(\Sigma\bC P^m)\simeq T^{\cI}(\Sigma P_G(\hat f_m))$. Furthermore, there are induced maps $\widehat \bC P^m\to \widehat \bC P^{m+1}$ such that these equivalences are compatible with the inclusion of $\bC P^m$ in $\bC P^{m+1}$. The symmetric spectrum $\bS^{\cI}[P_{\hat f_m}(\widehat \bC P^m)]$ is a (semistable) model of $\Sigma^{\infty}(\bC P^m_+)$, so we get a stable equivalence of $R$-modules
\[
\vee \beta_i\colon\textstyle\bigvee_{i=0}^{m} 
 F_c^{\bS}(S^{2i+c})\wedge R \xr{\simeq} \bS^{\cI}[P_{\hat f_m}(\widehat \bC P^m)]\wedge R
\]
as in \eqref{eq:RCP-equivalence}. Proceeding inductively, we may assume these equivalences to be compatible with the maps $\widehat \bC P^m\to \widehat \bC P^{m+1}$. Hence it follows from the description in Proposition~\ref{prop:Thom-suspension} that there are homotopy cocartesian squares of the form
\[
\xymatrix@-.5pc{
\bigvee_{i=0}^{m} F_c^{\bS}(S^{2i+c})\wedge R \ar[r] \ar[d]_{\vee u_i}& \bigvee_{i=0}^{m}  F_c^{\bS}(CS^{2i+c})\wedge R\ar[d]\\
B(\bS^{\cI}[G],\bS^{\cI}[G],R) \ar[r] & T^{\cI}(\Sigma P_G(\hat f_m))
}
\]
where the notation indicates that the maps $u_i$ represent the corresponding homotopy classes in \eqref{eq:u-representation} under the canonical stable equivalence $R\simeq B(\bS^{\cI}[G],\bS^{\cI}[G],R)$. Furthermore, we may arrange for these squares to be compatible when $m$ varies. This gives the statement in the lemma.  
\end{proof}

\begin{proof}[Proof of Proposition~\ref{prop:T(SU)-cofiber}]
Smashing the homotopy cofiber sequence in Lemma~\ref{lem:T-Sigma-CP-sequence} with the cofibrant $R$-module 
$T(SU(m))^{\mathrm{cof}}$, we get the homotopy cofiber sequence in the left column of the diagram
\[
\xymatrix@-1pc{
\Sigma^{2m}R\wedge_R T(SU(m))^{\mathrm{cof}} \ar[r]^-{\simeq} \ar[d]& \Sigma^{2m}T(SU(m))\ar[d]\\
T(\Sigma\bC P^{m-1})\wedge_RT(SU(m))^{\mathrm{cof}} \ar[r] \ar[d]&  T(SU(m))  \ar[d] \\
T(\Sigma\bC P^{m})\wedge_RT(SU(m))^{\mathrm{cof}}  \ar[r]  & T(SU(m+1)).
}
\]
The result follows since the bottom square is homotopy cocartesian by Lemma~\ref{lem:T(SU)-square}.
\end{proof}

\subsection{Quotient spectra as generalized Thom spectra}\label{subsec:quotient-Thom}
We recall some facts about quotient constructions for symmetric spectra. Let as usual $R$ be a semistable commutative symmetric ring spectra and let $x\in \pi_d(R)$ be a homotopy class represented by a map $f\colon S^{d_2}\to R_{d_1}$ with $d_2-d_1=d$. The map $f$ extends to a map of symmetric spectra $F_{d_1}^{\bS}(S^{d_2})\to R$ and using the multiplication in $R$ we get a map of $R$-modules $f\colon F^{\bS}_{d_1}(S^{d_2})\wedge R\to R\wedge R\to R$ that represents multiplication by $x$ on homotopy groups. We define $R/f$ to be the mapping cone of this map so that we have a pushout diagram
\[
\xymatrix@-.5pc
{
 F_{d_1}^{\bS}(S^{d_2})\wedge R \ar[r] \ar[d]_f&  F_{d_1}^{\bS}(CS^{d_2})\wedge R\ar[d]\\
 R \ar[r] & R/f
}
\]
where $CS^d$ denotes the reduced cone on $S^d$. Notice that, with this definition, $R/f$ is a flat $R$-module. It is clear that if $g\colon S^{d_2}\to R_{d_1}$ represents the same class in $\pi_{d_2}(R_{d_1})$, then $M/f$ and $M/g$ are stably equivalent $R$-modules. Furthermore, if $\sigma(f)\colon S^{d_2+1}\to R_{d_1+1}$ denotes the ``suspension'' of $f$, then  we have  a commutative diagram 
\[
\xymatrix@-1pc{
F_{d_1+1}^{\bS}(S^{d_2+1}) \ar[rr]^-{\simeq} \ar[dr]_{\sigma(f)} & & F_{d_1}^{\bS}(S^{d_2}) \ar[dl]^f \\
& R & 
}
\] 
where the horizontal arrow is the canonical stable equivalence. From this we get a stable equivalence $R/\sigma(f)\xr{\sim} R/f$ which shows that the homotopy type of the $R$-module $R/f$ only depends on the class $x\in \pi_d(R)$ represented by $f$. We use the notation $R/x$ for this homotopy type. Given a sequence of classes $x_1,\dots,x_n$ in $\pi_*(R)$, we chose representatives $f_1,\dots,f_n$ as above and define $R/(x_1,\dots,x_n)$ to be the homotopy type of $R$-modules represented by the smash product \[
R/(f_1,\dots,f_n)=(R/f_1)\wedge_R\dots\wedge_R(R/f_n). 
\]
The homotopy type is well-defined since the $R$-modules $R/f_i$ are flat and it follows from the definition that there are homotopy cofiber sequences
\[
\Sigma^{|x_i|}R/(x_1,\dots,x_{i-1})\xr{x_i} R/(x_1,\dots,x_{i-1}) \to R/(x_1,\dots,x_{i}).
\]
If $x_1,\dots,x_n$ is a regular sequence in $\pi_*(R)$, then these homotopy cofiber sequences split up into short exact sequences of homotopy groups from which we deduce an isomorphism
\[
\pi_*(R)/(x_1,\dots,x_n)\xr{\simeq} \pi_*\big(R/(x_1,\dots,x_n)\big).
\]
The corresponding homomorphism may well fail to be an isomorphism if the sequence is not regular.

Now we return to the setting from the beginning of this section and consider $R$-algebra Thom spectra $T(SU(n))$ associated to loop maps as defined there. Our main result is stated in the next theorem and shows that the process described in the previous subsection can be reversed so that instead of starting with a loop map we start with a sequence of homotopy classes  in 
$\pi_*(R)$.

\begin{theorem}\label{thm:T(SU(n))-quotient}
Given homotopy classes $u_i\in \pi_{2i}(R)$ for $i=1,\dots,n-1$, there exists a based map $B_1SU(n)\to B_1BG_{h\cI}$ such that the homotopy type of the $R$-module underlying the $R$-algebra Thom spectrum $T(SU(n))$ of the associated loop map is described by
\[
T(SU(n))\simeq R/(u_1,\dots,u_{n-1}).
\]
\end{theorem}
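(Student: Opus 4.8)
The plan is to split the statement into two tasks: constructing a based map $g\colon B_1SU(n)\to B_1BG_{h\cI}$ whose associated unit \eqref{eq:u-representation} equals $1+u_1x+\dots+u_{n-1}x^{n-1}$, and then identifying the resulting Thom spectra with the quotient spectra. The second task is almost formal. Suppose $g$ has been produced and let $g_m$ denote its restrictions as above. Since restriction along $SU(m)\hookrightarrow SU(n)$ corresponds to restriction along $\bC P^{m-1}\hookrightarrow\bC P^{n-1}$, and the induced map of $R^0(-)^\times$ is truncation, the unit associated to $g_m$ is $1+u_1x+\dots+u_{m-1}x^{m-1}$, so Proposition~\ref{prop:T(SU)-cofiber} provides homotopy cofiber sequences $\Sigma^{2m}T(SU(m))\xrightarrow{u_m}T(SU(m))\to T(SU(m+1))$ with these very classes. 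I compare these with the defining cofiber sequences $\Sigma^{2m}R/(u_1,\dots,u_{m-1})\xrightarrow{u_m}R/(u_1,\dots,u_{m-1})\to R/(u_1,\dots,u_m)$ of Section~\ref{subsec:quotient-Thom}, inducting on $m$. The base case is $T(SU(1))\simeq T(\iota)\simeq R$ by Proposition~\ref{prop:T-derived-monoidal} ($SU(1)$ being trivial), matching $R$ for the empty sequence. For the inductive step, note that in both cofiber sequences the map labelled $u_m$ is multiplication by $u_m$ through the $R$-module structure, which is natural in the $R$-module variable and hence commutes with any $R$-module equivalence; thus an equivalence $T(SU(m))\simeq R/(u_1,\dots,u_{m-1})$ together with its $2m$-fold suspension forms a morphism of distinguished triangles in the homotopy category of $R$-modules which is an isomorphism on two of the three objects, and therefore on the third. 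Hence $T(SU(m+1))\simeq R/(u_1,\dots,u_m)$, and $m=n-1$ gives the theorem.

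To construct $g$, I first record that, unravelling \eqref{eq:SigmaCP-class}--\eqref{eq:u-representation} and the definition of $\Omega'$, the unit $u$ associated to a based map $g\colon B_1SU(n)\to B_1BG_{h\cI}$ is the class in $[\Sigma^2\bC P^{n-1},B_1BG_{h\cI}]_*\cong[\bC P^{n-1},G_{h\cI}]_*\subset R^0(\bC P^{n-1})^\times$ obtained by precomposing $g$ with the canonical map $j\colon\Sigma^2\bC P^{n-1}\to B_1SU(n)$, the twofold adjoint of the standard embedding $\Sigma\bC P^{n-1}\hookrightarrow SU(n)\simeq\Omega B_1SU(n)$. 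The key point is that $B_1BG_{h\cI}$ has homotopy concentrated in even degrees, since $\pi_k(B_1BG_{h\cI})\cong\pi_{k-2}(R)$ for $k\ge 3$ and $\pi_2\cong(\pi_0R)^\times$, and these vanish for odd $k$ because $R$ is even. As $B_1SU(n)\simeq BSU(n)$ carries a CW structure with cells only in even dimensions --- one for each monomial in $c_2,\dots,c_n$ --- every obstruction to extending a based map $BSU(n)\to B_1BG_{h\cI}$ over a cell lies in an odd homotopy group of the target and hence vanishes, so such a map exists and may be built skeleton by skeleton. The remaining freedom realizes the prescribed classes: $j^*$ carries each $c_{i+1}$ to a generator of $H^{2i+2}(\Sigma^2\bC P^{n-1})$ and annihilates all decomposables, while the set of extensions of a given map over the primitive $(2i+2)$-cell (the one dual to $c_{i+1}$) is a torsor over $\pi_{2i+2}(B_1BG_{h\cI})\cong\pi_{2i}(R)$; varying the extension over this cell varies the coefficient $u_i$ of the associated unit by the corresponding element of $\pi_{2i}(R)$. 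Proceeding through the cells in order of increasing dimension and choosing, for each $i=1,\dots,n-1$, the extension over the cell dual to $c_{i+1}$ so as to hit the prescribed $u_i$ (extending arbitrarily over all other cells) yields the desired $g$.

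The part I expect to require genuine care, rather than the routine obstruction theory above, is the homotopy-coherent bookkeeping: verifying that the unit of the loop map $\Omega'(g)$ really is computed by precomposition with $j$ as claimed --- which involves the compatibility of the embeddings $\Sigma\bC P^{m-1}\hookrightarrow SU(m)$, the chains of weak equivalences $SU(m)\simeq SU'(m)$, and the passage from loop space data to topological monoids in Appendix~\ref{app:loop-rectification} --- together with the compatibility of the cellular extension torsor with the standard identification $\pi_{2i}(G_{h\cI})\cong\pi_{2i}(R)$ and with the way the coefficients of a class in $R^0(\bC P^{n-1})^\times$ are detected by the skeletal filtration of $\bC P^{n-1}$. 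Once these compatibilities are in place, the construction and the inductive identification combine to give the statement.
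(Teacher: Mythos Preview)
Your proof is correct and, for the identification step, essentially identical to the paper's: both compare the cofiber sequences of Proposition~\ref{prop:T(SU)-cofiber} with the defining cofiber sequences of the quotient spectra and induct on $m$, with the same base case $T(SU(1))\simeq R$.

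The construction of $g$ is where the two arguments differ, though both rest on the same parity observation. The paper proceeds more directly: it first writes down the based map $u\colon\Sigma\bC P^{n-1}\to BG_{h\cI}$ encoding the prescribed classes $u_i$, suspends to get $\Sigma^2\bC P^{n-1}\to B_1BG_{h\cI}$, and then solves the \emph{relative} extension problem along $\Sigma^2\bC P^{n-1}\to B_1SU(n)$; the obstructions live in odd-degree cohomology of the pair with coefficients in even-degree homotopy groups, and vanish. Your approach instead builds $g$ from scratch on the even-cell structure of $BSU(n)$ and then uses the torsor of extensions over each primitive cell to \emph{adjust} the coefficients $u_i$ to their prescribed values. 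This is valid, but it trades one clean relative obstruction argument for an additional verification: that the torsor action on extensions over the $c_{i+1}$-cell translates, via $j^*$, into translation of $u_i$ by the corresponding element of $\pi_{2i}(R)$ (which comes down to $j^*c_{i+1}$ being a generator). The paper's route avoids this step entirely because the desired restriction to $\Sigma^2\bC P^{n-1}$ is built in from the start; your route has the minor advantage of making explicit that any such $g$ exists before worrying about which one, but at the cost of the extra bookkeeping you flag in your final paragraph.
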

\begin{proof}
Working in the homotopy category, the classes $u_i$ (and the chosen orientation class $x$) determine a based map $u\colon \Sigma\bC P^{n-1}\to BG_{h\cI}$ as explained in Section~\ref{sec:T(SU(n))-structure}. After suspending once we consider the extension problem of filling in the map $g$ in the diagram
\[
\xymatrix@-1pc{
\Sigma(\Sigma\bC P^{n-1}) \ar[r]^-{\Sigma u} \ar[d]& \Sigma BG_{h\cI} \ar[r] & B_1BG_{h\cI}\\
\Sigma SU(n) \ar[d]& &\\
B_1SU(n). \ar@{-->}[uurr]_g& & 
}
\]
The obstructions to the extension problem lie in the cohomology groups 
\[
\operatorname{H}^{k+1}\big((B_1SU(n),\Sigma^2 \bC P^{n-1}),\pi_k(B_1BG_{h\cI}) \big)
\]
which are trivial since the groups $\pi_k(B_1BG_{h\cI})\cong \pi_{k-2}(\GL_1^{\cI}(R)_{h\cI})$ are concentrated in even degrees. If $g$ is such an extension, then $u$ can be recovered as the composition
\[
u\colon \Sigma\bC P^{n-1} \to SU(n)\simeq SU'(n) \xr{\Omega'(g)}BG_{h\cI}.
\]
We claim that the $R$-algebra Thom spectrum $T(SU(n))$ associated to $\Omega'(g)$ has the homotopy type described in the theorem. Proceeding as in Section~\ref{sec:T(SU(n))-structure}, we represent the classes $u_i$ by actual maps of symmetric spectra 
$u_i\colon F_c^{\bS}(S^{2i+c})\to R$ so as to get homotopy cofiber sequences of $R$-module spectra
\[
F_c^{\bS}(S^{2m+c})\wedge R/(u_1,\dots,u_{m-1})\xr{u_m} R/(u_1,\dots,u_{m-1}) \to R/(u_1,\dots,u_{m}).
\]
Comparing these homotopy cofiber sequences with those of Proposition~\ref{prop:T(SU)-cofiber}, we conclude by induction that the $R$-module $T(SU(m))$ is equivalent to $R/(u_1,\dots,u_{m-1})$ for all $m=1,\dots,n$. 
\end{proof}

Recall that a commutative symmetric ring spectrum $R$ is said to be $2$-periodic if there exists an element $w\in \pi_2(R)$ which is a unit in the graded ring $\pi_*(R)$. The existence of such a unit allows us to strengthen the statement in the above theorem.

\begin{corollary}\label{cor:T(SU(n))-quotient-2-periodic}
If the commutative symmetric ring spectrum $R$ in Theorem~\ref{thm:T(SU(n))-quotient} is $2$-periodic (as well as even), then the statement of the theorem holds for any family $u_1,\dots, u_{n-1}$ of even dimensional homotopy classes. (That is, the requirement that $u_i$ is an element in $\pi_{2i}(R)$ is no longer needed). 
\end{corollary}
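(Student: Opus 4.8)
The plan is to deduce the corollary directly from Theorem~\ref{thm:T(SU(n))-quotient} by using $2$-periodicity to normalize the degrees of the given classes. Fix a unit $w\in\pi_2(R)$ of the graded ring $\pi_*(R)$, and suppose we are given even-dimensional classes $u_i\in\pi_{2k_i}(R)$ for $i=1,\dots,n-1$. First I would record the following general principle: for a homotopy class $x\in\pi_d(R)$, the homotopy type of the $R$-module $R/x$ is unchanged if $x$ is multiplied by a unit of $\pi_*(R)$. Concretely, if $w'\in\pi_e(R)$ is such a unit, then multiplication by $w'$ is a stable equivalence of $R$-modules $\Sigma^{d+e}R\xrightarrow{\simeq}\Sigma^{d}R$, and the $R$-module map modelling multiplication by $w'x$ (built from a point-set representative as in Section~\ref{subsec:quotient-Thom}) is the map modelling multiplication by $x$ precomposed with this equivalence; passing to mapping cones then gives a stable equivalence $R/(w'x)\simeq R/x$. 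This refines the suspension-invariance $R/\sigma(f)\simeq R/f$ and the representative-independence of $R/f$ already noted in Section~\ref{subsec:quotient-Thom}, which are the special cases $w'=1$.

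Granting this, I would set $\tilde u_i=w^{\,i-k_i}u_i\in\pi_{2i}(R)$, where negative powers of $w$ are available because $w$ is invertible, so that $R/u_i\simeq R/\tilde u_i$ as $R$-modules. Since each $R/u_i$ and each $R/\tilde u_i$ is a flat $R$-module (by the very definition in Section~\ref{subsec:quotient-Thom}), these equivalences can be smashed together over $R$ to give a stable equivalence $R/(\tilde u_1,\dots,\tilde u_{n-1})\simeq R/(u_1,\dots,u_{n-1})$. The normalized sequence $\tilde u_1,\dots,\tilde u_{n-1}$ now satisfies the degree hypothesis $\tilde u_i\in\pi_{2i}(R)$ of Theorem~\ref{thm:T(SU(n))-quotient}, so that theorem yields a based map $B_1SU(n)\to B_1BG_{h\cI}$ whose associated $R$-algebra Thom spectrum satisfies $T(SU(n))\simeq R/(\tilde u_1,\dots,\tilde u_{n-1})$. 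Combining with the previous equivalence gives $T(SU(n))\simeq R/(u_1,\dots,u_{n-1})$, which is exactly the assertion of the corollary.

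The only point demanding care is the first step: one must check at the level of the free-module models $F_c^{\bS}(S^{d})\wedge R$ used in Section~\ref{subsec:quotient-Thom} that multiplication by the graded unit $w'$ really does induce an equivalence of mapping cones compatible with the suspension identifications defining $R/x$, and then that the resulting equivalences $R/u_i\simeq R/\tilde u_i$ are coherent enough to assemble into an equivalence of the iterated smash products. Both are routine given the homotopy-invariance and flatness facts already available in Section~\ref{sec:gen-Thom-spectra} and Section~\ref{subsec:quotient-Thom}, so I do not anticipate a genuine obstacle; the entire content of the corollary is the observation that the functor $R/(-)$ is insensitive to rescaling by graded units, which is precisely what $2$-periodicity supplies in order to drop the constraint on the degrees of the $u_i$.
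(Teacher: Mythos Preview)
Your proposal is correct and follows essentially the same approach as the paper: normalize the degrees by setting $\tilde u_i=w^{\,i-k_i}u_i$ (the paper writes $v_i=u_i\cdot w^{i-|u_i|/2}$), observe that $R/u_i\simeq R/\tilde u_i$ because multiplication by a graded unit is an equivalence, and then invoke Theorem~\ref{thm:T(SU(n))-quotient}. The paper makes your ``care point'' explicit by writing down the commutative diagram of mapping-cone data that yields $R/(f_i\cdot g_i)\xrightarrow{\sim}R/f_i$, but this is precisely the verification you flagged as routine.
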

\begin{proof}
Let $w\in \pi_2(R)$ be a unit in the graded ring $\pi_*(R)$. Given a sequence of homotopy classes $u_1,\dots,u_{n-1}$ in $\pi_*(R)$, we set $v_i=u_i\cdot w^{i-|u_i|/2}$ so that the classes $v_1,\dots,v_{n-1}$ satisfy the degree requirements in 
Theorem~\ref{thm:T(SU(n))-quotient}. Let $f_i\colon S^{d_{i2}}\to R_{d_{i1}}$ be a representative for $u_i$ and let 
$g_i\colon S^{e_{i2}}\to R_{e_{i1}}$ be a representative for $w^{i-|u_i|/2}$. Then the map $f_i\cdot g_i$ defined by
\[
f_i\cdot g_i\colon S^{d_{i2}}\wedge S^{e_{i2}} \to R_{d_{i1}}\wedge R_{e_{i1}} \to R_{d_{i1}+e_{i1}}
\]
is a representative for $v_i$ (see \cite[Section~4]{Sagave-S_diagram} and the discussion in \cite{Schwede_SymSp}). Consider the commutative diagram
\[
\xymatrix@-.5pc{
R \ar@{=}[d] & \ar[l]_-{f_i\cdot g_i}  F^{\bS}_{d_{i1}}(S^{d_{i2}})\wedge F^{\bS}_{e_{i1}}(S^{e_{i2}})\wedge R \ar[r] \ar[d]^{\id\wedge g_i}  & 
 F^{\bS}_{d_{i1}}(CS^{d_{i2}})\wedge F^{\bS}_{e_{i1}}(S^{e_{i2}})\wedge R \ar[d]^{\id\wedge g_i}  \\
R & \ar[l]_-{f_i}  F^{\bS}_{d_{i1}}(S^{d_{i2}})\wedge R \ar[r] & 
 F^{\bS}_{d_{i1}}(CS^{d_{i2}})\wedge R 
}
\]
where the maps are induced by $f_i$, $g_i$, and $f_i\cdot g_i$ as explained in the beginning of this subsection. Here we use the canonical identification of $F^{\bS}_{d_{i1}}(S^{d_{i2}})\wedge F^{\bS}_{e_{i1}}(S^{e_{i2}})$ with $F^{\bS}_{d_{i1}+e_{i1}}(S^{d_{i2}}\wedge S^{e_{i2}})$. The vertical maps induced by $g_i$ are stable equivalences since $g_i$ represents a unit in $\pi_*(R)$. Evaluating the horizontal pushouts, the diagram thus gives us a stable equivalence $R/(f_i\cdot g_i)\xr{\sim} R/f_i$. Combining this with Theorem~\ref{thm:T(SU(n))-quotient} we get the stable equivalences
\[
T(SU(n))\simeq  (R/f_1\cdot g_1)\wedge_R\dots\wedge_R(R/f_{n-1}\cdot g_{n-1})
\simeq (R/f_1)\wedge_R\dots\wedge_R(R/f_{n-1}) 
\]
where the last term represents the homotopy type $R/(u_1,\dots,u_{n-1})$.
\end{proof}


\section{Topological Hochschild homology of Thom spectra}\label{sec:THH-of-Thom}
Consider in general a monoid $A$ in a symmetric monoidal category $(\cA,\Box,1_{\cA})$.
Then the \emph{cyclic bar construction} $B^{\cy}_{\bullet}(A)$ is the simplicial object
$[k] \longmapsto A^{\Box(k+1)}$ with simplicial structure maps defined as for the standard Hochschild complex of an algebra (see e.g.~\cite[Section~1]{Blumberg-C-S_THH-Thom} for more details). We shall use the notation  $B^{\cy}(A)$ for the geometric realization of $B^{\cy}_{\bullet}(A)$ when this makes sense in the category $\cA$. In the case of a commutative symmetric ring spectrum $R$ and the symmetric monoidal category of modules $\Sp^{\Sigma}_R$, a monoid $A$ is the same thing as an $R$-algebra and the cyclic bar construction takes the form
\begin{equation}\label{eq:R-algebra-cyclic-bar}
B^{\cy}_{\bullet}(A)\colon [k]\mapsto A\wedge_R\dots \wedge_R A \quad \text{(with $k+1$ smash factors)}. 
\end{equation}
It is well-known that the topological Hochschild homology of an $R$-algebra can be modelled by the cyclic bar construction under suitable cofibrancy conditions. The conditions we impose ensure that our definition is equivalent to the standard definition of topological Hochschild homology as a derived smash product, see \cite[Chapter~IX]{EKMM} and \cite[Section~4]{Shipley_THH}. 

\begin{definition}
Let $R$ be a commutative symmetric ring spectrum and let $A$ be an $R$-algebra such that the unit $R\to A$ is an $h$-cofibration and the underlying $R$-module of $A$ is flat. Then the topological Hochschild homology $\THH^R(A)$ of $A$ is the geometric realization $B^{\cy}(A)$ of the cyclic bar construction in \eqref{eq:R-algebra-cyclic-bar}.
\end{definition}

This definition is homotopy invariant: If $A$ and $B$ are $R$-algebras that satisfy the conditions in the definition and $A\to B$ is a stable equivalence, then the induced map $\THH^R(A)\to \THH^R(B)$ is also a stable equivalence. Notice that the conditions on $A$ hold if $A$ is cofibrant in the category of $R$-algebras $R/\cA\Sp^{\Sigma}$ equipped with the absolute or positive flat model structures. (Recall that $\cA\Sp^{\Sigma}$ denotes the category of symmetric ring spectra.) They also hold if $A$ is commutative and cofibrant in the corresponding positive flat model structure on 
$R/\cC\Sp^{\Sigma}$.

\begin{remark}\label{rem:THH-cof-replacement}
When we want to consider the topological Hochschild homology of an $R$-algebra $A$ that does not satisfy the conditions in the definition, we should first find a suitable replacement in the form of a stable equivalence $A^c\xr{\sim} A$ where $A^c$ is an 
$R$-algebra that does satisfy the conditions. We may then take $B^{\cy}(A^c)$ as our definition of $\THH^R(A)$.
\end{remark} 

Returning to the discussion of Thom spectra, we assume for the rest of this section that the commutative symmetric ring spectrum $R$ is semistable and that the underlying symmetric spectrum of $R$ is flat. Recall the $R$-module Thom spectrum functor $T^{\cI}$ on $\Top^{\cI}/BG$ introduced in Section~\ref{subsec:Thom-on-Top^I/BG}.

\begin{lemma}\label{lem:Thom-realization}
The Thom spectrum functor $T^{\cI}$ preserves geometric realization of simplicial objects.
\end{lemma}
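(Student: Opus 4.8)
The plan is to use the factorization of $T^{\cI}$ from Definition~\ref{def:TI} as the composite of (the over-category versions of) the functors $U$, $\bS^{\cI}$, and $B(-,\bS^{\cI}[G],R)$, and to verify the assertion for each factor in turn. Geometric realization of a simplicial object in any of the categories $\Top^{\cI}$, $\Top^{\cI}_G$, $\Spsym{\bS^{\cI}[G]}$, $\Spsym{R}$ is formed levelwise from geometric realization of simplicial objects in the underlying category of (pointed or unpointed) spaces, and the forgetful functor from an over-category $\cC/c$ to $\cC$ preserves colimits, so it suffices to treat the underlying functors.

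The functors induced by $\bS^{\cI}$ and by $B(-,\bS^{\cI}[G],R)$ preserve all colimits, hence in particular geometric realization. For $\bS^{\cI}$ this holds because it is a left adjoint by~\eqref{eq:SI-OmegaI-adjunction} and because colimits of modules over a commutative monoid are created by the forgetful functor to the underlying category, so that the induced functor $\Top^{\cI}_G \to \Spsym{\bS^{\cI}[G]}$ is again cocontinuous. For $B(-,\bS^{\cI}[G],R)$, set $P=\bS^{\cI}[G]$: in each simplicial degree the functor $M\mapsto M\wedge P^{\wedge n}\wedge R$ is the cocontinuous forgetful functor $\Spsym{P}\to\Spsym{}$ followed by smashing with the fixed spectrum $P^{\wedge n}\wedge R$, hence is cocontinuous; since geometric realization is a colimit and colimits commute with colimits, $B(-,P,R)=|B_\bullet(-,P,R)|$ is cocontinuous as a functor to $\Spsym{}$, and therefore also as a functor to $\Spsym{R}$.

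It remains to show that $U$ preserves geometric realization, and this is the only point that is not formal, since $U$ is a right adjoint and does not preserve colimits in general. What makes it work is that $U(Y)$ is the pullback of $Y\to BG\leftarrow EG$ in which both $BG$ and $EG$ are \emph{constant} in the simplicial direction. For a simplicial object $Y_\bullet$ of $\Top^{\cI}/BG$ the assertion thus reduces, levelwise over $\cI$, to the following statement in $\mathbf{CGWH}$: for fixed spaces $E,B$, a map $E\to B$, and a simplicial space $Y_\bullet$ over $B$, the natural map $|[k]\mapsto E\times_B Y_k|\to E\times_B|Y_\bullet|$ is a homeomorphism. I would prove this by combining two standard facts about geometric realization of simplicial objects in $\mathbf{CGWH}$: that it carries finite products to products, and that it carries a levelwise equalizer of a pair of maps of simplicial spaces whose common target is a \emph{constant} simplicial space to the equalizer of the realized maps. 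The second fact holds because any point of a realization $|A_\bullet|$ is represented by a triple $[n,a,t]$, and the two resulting points of the realization of the constant simplicial space $B$, under the canonical identification with $B$, are simply the images of $a$ under the two level-$n$ maps $A_n\rightrightarrows B$, so that membership in the equalizer is detected on any representative. Writing $E\times_B Y_\bullet$ as the levelwise equalizer of the two evident maps $E\times Y_\bullet\rightrightarrows B$ (with $E$ and $B$ regarded as constant simplicial spaces) and using the first fact for the identification $|E\times Y_\bullet|\cong E\times|Y_\bullet|$ then gives the claim.

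The step I expect to be the main obstacle is this last one: one must check that these elementary set-level manipulations really produce homeomorphisms rather than merely continuous bijections — for instance that the levelwise equalizer, which is a levelwise closed subspace, realizes to a subspace of $|A_\bullet|$ — and it is precisely here that the properties of $\mathbf{CGWH}$ are used. I would settle this by appealing to the standard treatment of geometric realization in a convenient category of spaces (for instance the discussion in~\cite{May-geometry}) together with the compatibility of realization with products. Once $U$, $\bS^{\cI}$, and $B(-,\bS^{\cI}[G],R)$ have each been handled, composing them shows that $T^{\cI}$ preserves geometric realization.
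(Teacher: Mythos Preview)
Your argument is correct and follows the same decomposition as the paper: factor $T^{\cI}$ through $U$, $\bS^{\cI}$, and $B(-,\bS^{\cI}[G],R)$, observe that the last two preserve realization for formal reasons, and handle $U$ by the compatibility of geometric realization with pullbacks in compactly generated spaces. The paper's proof is terser---it simply invokes that realization commutes with pullbacks for $U$ and that the other two functors ``preserve colimits and tensors with spaces''; the latter phrasing is slightly more accurate than your ``preserve all colimits, hence in particular geometric realization,'' since realization is a coend involving tensors with the simplices $\Delta^n$ rather than a bare colimit, but your underlying reasoning (left adjoints, smashing with a fixed object) covers tensors as well, so there is no actual gap.
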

\begin{proof}
By definition, a simplicial object in  $\Top^{\cI}/BG$ amounts to a map of simplicial $\cI$-spaces $\alpha_{\bullet}\colon X_{\bullet}\to BG$ where we view $BG$ as a constant simplicial $\cI$-space. Writing $\alpha\colon X\to BG$ for the geometric realization of $\alpha_{\bullet}$, the statement in the lemma says that there is a natural isomorphism of $R$-modules $|T^{\cI}(\alpha_{\bullet})|\cong T^{\cI}(\alpha)$ over $M\GL_1^{\cI}(R)$. Consider the three functors in Definition~\ref{def:TI}. The first functor $U$ preserves geometric realization since it is given by a pullback construction and geometric realization of simplicial spaces commutes with pullbacks. The remaining two functors preserve geometric realization since they preserve colimits and tensors with spaces. This gives the result. 
\end{proof}

Given a map of $\cI$-space monoids $\alpha\colon M\to BG$, we use the notation $B^{\cy}(\alpha)$ for the composite map 
\[
B^{\cy}(\alpha)\colon B^{\cy}(M)\to B^{\cy}(BG)\to BG
\]
where the last map is induced by the multiplication in the commutative $\cI$-space monoid $BG$. With this definition, $B^{\cy}(\alpha)$ is the cyclic bar construction internal to the symmetric monoidal category $\Top^{\cI}/BG$. In the following proposition we also consider the cyclic bar construction internal to $\Sp^{\Sigma}_R$ and apply it to a cofibrant replacement $T^{\cI}(\alpha)^{\cof}$ of $T^{\cI}(\alpha)$ so that $B^{\cy}(T^{\cI}(\alpha)^{\cof})$ is a model of $\THH^R(T^{\cI}(\alpha))$, cf.\ Remark~\ref{rem:THH-cof-replacement}. We say that an $\cI$-space monoid $M$ is \emph{well-based} if the inclusion of the unit $U^{\cI}\to M$ is an $h$-cofibration (in the sense of~\cite[Section~7]{Sagave-S_diagram}). 

\begin{proposition}\label{prop:Bcy-vs-T}
Let $\alpha\colon M\to BG$ be a map of $\cI$-space monoids and suppose that $M$ is well-based and that the underlying $\cI$-space of $M$ is flat. 
\begin{enumerate}[(i)]
\item
If $T^{\cI}(\alpha)^{\cof}\to T^{\cI}(\alpha)$ is a cofibrant replacement in the category of $R$-algebras, then there is a stable equivalence of $R$-modules $B^{\cy}(T^{\cI}(\alpha)^{\cof})\xr{\sim}T^{\cI}(B^{\cy}(\alpha))$.

\item
If $M$ is commutative and $T^{\cI}(\alpha)^{\cof}\to T^{\cI}(\alpha)$ is a cofibrant replacement in the category of commutative $R$-algebras, then there is a stable equivalence of commutative $R$-algebras $B^{\cy}(T^{\cI}(\alpha)^{\cof})\xr{\sim}T^{\cI}(B^{\cy}(\alpha))$.
\end{enumerate}
\end{proposition}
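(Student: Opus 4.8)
The plan is to build a natural transformation of (cyclic, hence simplicial) $R$-modules $B^{\cy}_{\bullet}(T^{\cI}(\alpha)^{\cof})\to T^{\cI}(B^{\cy}_{\bullet}(\alpha))$, show it is a stable equivalence in each simplicial degree, and pass to geometric realizations. Here $B^{\cy}_{\bullet}(\alpha)$ is the cyclic bar construction internal to the symmetric monoidal category $\Top^{\cI}/BG$, so in degree $k$ it is the map $\alpha^{\boxtimes(k+1)}\colon M^{\boxtimes(k+1)}\to BG^{\boxtimes(k+1)}\to BG$. Since $T^{\cI}$ is lax symmetric monoidal (Proposition~\ref{prop:TI-lax-monoidal}), iterating its monoidal structure maps and composing with the cofibrant replacement produces, in degree $k$, a map $(T^{\cI}(\alpha)^{\cof})^{\wedge_R(k+1)}\to T^{\cI}(\alpha)^{\wedge_R(k+1)}\to T^{\cI}(\alpha^{\boxtimes(k+1)})$; lax symmetry guarantees compatibility with all face, degeneracy and cyclic operators, so these assemble into the desired map of simplicial objects. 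Applying geometric realization and Lemma~\ref{lem:Thom-realization} (which says $T^{\cI}$ commutes with realization) identifies the target of the realized map with $T^{\cI}(B^{\cy}(\alpha))$.

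That the degreewise map is a stable equivalence I would deduce by iterating the derived monoidal multiplication of $T^{\cI}$ from Proposition~\ref{prop:TI-derived-monoidal}. Because $M$ is flat, each $M^{\boxtimes j}$ is flat (the absolute flat model structure satisfies the pushout--product axiom for $\boxtimes$), and because $T^{\cI}(\alpha)^{\cof}$ is cofibrant as an $R$-algebra its underlying $R$-module is flat; hence $T^{\cI}(\alpha)^{\cof}$ is a legitimate flat replacement and Proposition~\ref{prop:TI-derived-monoidal} gives that $T^{\cI}(\alpha)^{\cof}\wedge_R T^{\cI}(\alpha)^{\cof}\to T^{\cI}(\alpha^{\boxtimes 2})$ is a stable equivalence. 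Since a smash of flat $R$-modules is flat, this in turn exhibits $(T^{\cI}(\alpha)^{\cof})^{\wedge_R 2}$ as a flat replacement of $T^{\cI}(\alpha^{\boxtimes 2})$; feeding it back into Proposition~\ref{prop:TI-derived-monoidal} with the flat $\cI$-space $M^{\boxtimes 2}$ in the other variable and iterating gives that $(T^{\cI}(\alpha)^{\cof})^{\wedge_R(k+1)}\to T^{\cI}(\alpha^{\boxtimes(k+1)})$ is a stable equivalence for every $k$.

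It remains to ensure that both simplicial $R$-modules are proper, so that the degreewise stable equivalence realizes to a stable equivalence. The source is proper because $T^{\cI}(\alpha)^{\cof}$ is a cofibrant $R$-algebra: its unit $R\to T^{\cI}(\alpha)^{\cof}$ is an $h$-cofibration, making the degeneracies of the cyclic bar construction $h$-cofibrations. For the target, the hypothesis that $M$ is well-based makes $B^{\cy}_{\bullet}(M)$ a proper simplicial $\cI$-space whose degeneracies are fiberwise $h$-cofibrations over $BG$, and $T^{\cI}$ takes fiberwise $h$-cofibrations over $BG$ to $h$-cofibrations of $R$-modules — the preservation property established in the proof of Proposition~\ref{prop:properties-of-T}(iv), which holds since $U$ is a pullback along the fibration $EG\to BG$, $\bS^{\cI}$ is a strong monoidal left adjoint, and $B(-,\bS^{\cI}[G],R)$ is assembled from smash products. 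I expect this last point — the only one that is genuinely about point-set cofibrancy rather than formal monoidal bookkeeping — to be the main obstacle. Granting it, realization of the degreewise stable equivalence, together with Lemma~\ref{lem:Thom-realization}, yields the stable equivalence of $R$-modules $B^{\cy}(T^{\cI}(\alpha)^{\cof})\xr{\sim} T^{\cI}(B^{\cy}(\alpha))$ of part~(i).

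Part~(ii) runs along the same lines with ``commutative'' inserted throughout. When $M$ is commutative, $B^{\cy}_{\bullet}(M)$ is a simplicial commutative $\cI$-space monoid and $B^{\cy}(\alpha)$ is a map of commutative $\cI$-space monoids over $BG$, so $T^{\cI}(B^{\cy}(\alpha))$ is a commutative $R$-algebra; since $T^{\cI}$ is lax \emph{symmetric} monoidal, the degreewise maps above are maps of commutative $R$-algebras and assemble into a map of simplicial commutative $R$-algebras. Taking $T^{\cI}(\alpha)^{\cof}$ to be a cofibrant \emph{commutative} $R$-algebra, its unit is still an $h$-cofibration and its underlying $R$-module still flat, so $B^{\cy}_{\bullet}(T^{\cI}(\alpha)^{\cof})$ is a proper simplicial commutative $R$-algebra computing $\THH^R(T^{\cI}(\alpha))$. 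The degreewise stable equivalence and Lemma~\ref{lem:Thom-realization} then give the stable equivalence of commutative $R$-algebras $B^{\cy}(T^{\cI}(\alpha)^{\cof})\xr{\sim} T^{\cI}(B^{\cy}(\alpha))$.
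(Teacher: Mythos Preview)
Your overall strategy matches the paper's: set up the simplicial map via the lax symmetric monoidal structure, use Proposition~\ref{prop:TI-derived-monoidal} to get degreewise stable equivalences, and then pass to realizations. You also correctly flag the one real obstacle, namely goodness of the target simplicial spectrum $T^{\cI}(B^{\cy}_{\bullet}(\alpha))$.

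Unfortunately your proposed fix for that obstacle does not go through. The degeneracies of $B^{\cy}_{\bullet}(\alpha)$ are $h$-cofibrations in $\Top^{\cI}$ (because $M$ is well-based and flat), but there is no reason for them to be \emph{fiberwise} $h$-cofibrations over $BG$: the homotopy extension data witnessing that $U^{\cI}\to M$ is an $h$-cofibration need not be compatible with the map $\alpha$ to $BG$. The preservation statement you quote from the proof of Proposition~\ref{prop:properties-of-T}(iv) only applies to fiberwise $h$-cofibrations, and the reason it is available there is that $P_{BG}$ manufactures the fiberwise condition via Hurewicz fibrant replacement. So you cannot conclude that $T^{\cI}(B^{\cy}_{\bullet}(\alpha))$ is good, and the paper explicitly says as much: ``the simplicial spectra in question are not necessarily `good'.''

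The paper's workaround is to insert an auxiliary $R$-algebra $T^{\cI}(\alpha)^c=B(\bS^{\cI}[U(\alpha)^{\cof}],\bS^{\cI}[G],R)$, obtained by first taking a cofibrant replacement $U(\alpha)^{\cof}\to U(\alpha)$ in the category of $G$-algebras over $EG$. Because $U(\alpha)^{\cof}$ is a cofibrant (hence well-based) $G$-algebra with flat underlying $G$-module, the simplicial objects $B^{\cy}_{\bullet}(T^{\cI}(\alpha)^c)$ and $B(\bS^{\cI}[B^{\cy}_{\bullet}(U(\alpha)^{\cof})],\bS^{\cI}[G],R)$ \emph{are} good, and one can compare them directly. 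The remaining step, identifying $B(\bS^{\cI}[B^{\cy}(U(\alpha)^{\cof})],\bS^{\cI}[G],R)$ with $T^{\cI}(B^{\cy}(\alpha))$, is handled not by goodness but by the $(V,U)$ Quillen equivalence of Theorem~\ref{thm:Top^I_G-Quillen-equivalence}: one checks that the adjoint map $V(B^{\cy}(U(\alpha)^{\cof}))\cong B^{\cy}(VU(\alpha)^{\cof})\to B^{\cy}(\alpha)$ is an $\cI$-equivalence via the derived counit. Finally one relates $B^{\cy}(T^{\cI}(\alpha)^c)$ to $B^{\cy}(T^{\cI}(\alpha)^{\cof})$ by a zig-zag through a common cofibrant replacement. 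In short, the missing idea is to move the cofibrant replacement one step earlier in the composite defining $T^{\cI}$, at the level of $G$-algebras rather than $R$-algebras, so that goodness becomes available where you need it.
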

Notice, that in (i) the conditions on $M$ hold if $M$ is cofibrant as an object in $\cA\Top^{\cI}$, and in (ii) the conditions on $M$ hold if $M$ is cofibrant as an object in $\cC\Top^{\cI}$. 

\begin{proof}
Consider first case (i) of the proposition where we have the maps of simplicial $R$-modules
\begin{equation}\label{eq:BcyTcof-equivalence}
B^{\cy}_{\bullet}(T^{\cI}(\alpha)^{\cof})\to B^{\cy}_{\bullet}(T^{\cI}(\alpha))\to T^{\cI}(B^{\cy}_{\bullet}(\alpha))
\end{equation}
induced by the cofibrant replacement and the lax symmetric monoidal structure of $T^{\cI}$. Since the underlying $R$-module of $T^{\cI}(\alpha)^{\cof}$ is flat, it follows from Proposition~\ref{prop:TI-derived-monoidal} that the composite map is a stable equivalence in each simplicial degree. We shall argue below that the geometric realization is also a stable equivalence. Composing with the isomorphism $|T^{\cI}(B^{\cy}_{\bullet}(\alpha))|\cong T^{\cI}(B^{\cy}(\alpha))$ from Lemma~\ref{lem:Thom-realization}, we then get the stable equivalence in the proposition. 

The reason why an extra argument is needed to ensure that the geometric realization is a stable equivalence is that the simplicial spectra in question are not necessarily ``good'', that is, the degeneracy maps may fail to be $h$-cofibrations. For this reason we introduce an auxiliary $R$-algebra $T^{\cI}(\alpha)^c$ by choosing a cofibrant replacement $U(\alpha)^{\cof}\to U(\alpha)$ of the $G$-algebra $U(\alpha)$ (see Proposition~\ref{prop:lifted-model-str-on-SID}) and setting 
\[
T^{\cI}(\alpha)^c=B(\bS^{\cI}[U(\alpha)^{\cof}],\bS^{\cI}[G],R).
\]
Then we have a stable equivalence $T^{\cI}(\alpha)^c\to T^{\cI}(\alpha)$ and we claim that the composite map 
\begin{equation}\label{eq:BcyT^c-equivalence} 
B^{\cy}(T^{\cI}(\alpha)^c)\to B^{\cy}(T^{\cI}(\alpha))\to T^{\cI}(B^{\cy}(\alpha))
\end{equation}
is a stable equivalence. This composition admits a factorization
\[
B^{\cy}(T^{\cI}(\alpha)^c) \to B(\bS^{\cI}[B^{\cy}(U(\alpha)^{\cof})],\bS^{\cI}[G],R)\to 
T^{\cI}(B^{\cy}(\alpha))
\]  
and we shall prove that both of these maps are stable equivalences. 

The first map is the geometric realization of a map of simplicial spectra induced by the lax symmetric monoidal structure of the functor $B(\bS^{\cI}[-],\bS^{\cI}[G],R)$. Since the underlying $G$-module of $U(\alpha)^{\cof}$ is flat by Lemma~\ref{lem:gen-M-algebras-underlying-flat}, it follows from 
Lemma~\ref{lem:bar-monoidal-equivalence} that the underlying simplicial map is a levelwise stable equivalence. Furthermore, having replaced $U(\alpha)$ by the cofibrant (hence well-based) $\cI$-space monoid $U(\alpha)^{\cof}$ we ensure that the degeneracy maps of these simplicial spectra are $h$-cofibrations. Hence the geometric realization is also a stable equivalence.
The second map is induced by the composition 
\begin{equation}\label{eq:BcyU-UBcy-equivalence}
B^{\cy}(U(\alpha)^{\cof})\to B^{\cy}(U(\alpha)) \to U(B^{\cy}(\alpha))
\end{equation}
given by the cofibrant replacement $U(\alpha)^{\cof}\to U(\alpha)$ and the lax symmetric monoidal structure of $U$ from Lemma~\ref{lem:ISG-ISBG-adj-monoidal}. By the homotopy invariance of the functor $B(\bS^{\cI}[-],\bS^{\cI}[G],R)$ it suffices to show that the composition in \eqref{eq:BcyU-UBcy-equivalence} is indeed an $\cI$-equivalence. Using again that the underlying $G$-module of $U(\alpha)^{\cof}$ is flat, an argument based on the cellular filtration shows that also the underlying $G$-module of $B^{\cy}(U(\alpha)^{\cof})$ is flat. Thus, we are in a position to use that the $(V,U)$-adjunction in Theorem~\ref{thm:Top^I_G-Quillen-equivalence} is a Quillen equivalence so that showing the map in \eqref{eq:BcyU-UBcy-equivalence} to be an $\cI$-equivalence is equivalent to showing that the adjoint map
\[
V(B^{\cy}(U(\alpha)^{\cof}))\cong B^{\cy}(V(U(\alpha)^{\cof})) \to B^{\cy}(\alpha)
\]
is an $\cI$-equivalence in $\Top^{\cI}/BG$. The latter map is induced by the derived counit of the adjunction  $V(U(\alpha)^{\cof})\to \alpha$ and is therefore an $\cI$-equivalence, cf.\ \cite[Proposition~1.3.13]{Hovey_model}. This concludes the argument why the map in \eqref{eq:BcyT^c-equivalence} is a stable equivalence.

Now we can finally prove that the geometric realization of \eqref{eq:BcyTcof-equivalence} is a stable equivalence. We may assume without loss of generality that the cofibrant replacement $T^{\cI}(\alpha)^{\cof}\to T^{\cI}(\alpha)$ is an acyclic fibration. Let $(T^{\cI}(\alpha)^c)^{\cof}\to T^{\cI}(\alpha)^c$ be a cofibrant replacement of the $R$-algebra $T^{\cI}(\alpha)^c$. Then we can lift the composition  
$(T^{\cI}(\alpha)^c)^{\cof}\to T^{\cI}(\alpha)^c\to T^{\cI}(\alpha)$ to a stable equivalence $(T^{\cI}(\alpha)^c)^{\cof}\to T^{\cI}(\alpha)^{\cof}$. Passing to the cyclic bar constructions, we get a diagram of stable equivalences 
\[
B^{\cy}(T^{\cI}(\alpha)^c) \xl{\sim} B^{\cy}((T^{\cI}(\alpha)^c)^{\cof}) \xr{\sim} B^{\cy}(T^{\cI}(\alpha)^{\cof})
\]
over $B^{\cy}(T^{\cI}(\alpha))$. Here we use that the underlying $R$-module of $T^{\cI}(\alpha)^c$ is flat by Lemma~\ref{lem:invariant-bar}. Having proved that the map in \eqref{eq:BcyT^c-equivalence} is a stable equivalence, it follows that the geometric realization of \eqref{eq:BcyTcof-equivalence} is a stable equivalence which gives the statement in (i). 

The proof of case (ii) proceeds as above except that we now choose $U(\alpha)^{\cof}$ to be a cofibrant replacement in the model structure on commutative $G$-algebras over $EG$ (see Proposition~\ref{prop:lifted-model-str-on-SID}). 
\end{proof}

Now we come to our main result in this section which we shall formulate in terms of Thom spectra associated to space level data over $BG_{h\cI}$. Since the commutative $\cI$-space monoid $BG$ is grouplike ($BG_{h\cI}$ is path connected), it follows that $BG_{h\cI}$ inherits the structure of an infinite loop space. Let us write $B^n(BG_{h\cI})$ for the deloopings as defined in  \cite[Section~5.2]{Schlichtkrull_units}. The first delooping $B^1(BG_{h\cI})$ is canonically equivalent to the usual bar construction $B(BG_{h\cI})$ by \cite[Proposition~5.3]{Schlichtkrull_units}.
We say that a map of based spaces $f\colon K\to BG_{h\cI}$ is an $n$-fold loop map if there exists an $n$-fold delooping $B^nK$ of $K$ and a based map $B^nf\colon B^nK\to B^n(BG_{h\cI})$ such that the diagram
\[
\xymatrix@-1pc{
\Omega^n(B^nK) \ar[rr]^-{\Omega^n(B^n f)} && \Omega^n(B^n(BG_{h\cI}))\\
K \ar[rr]^-f \ar[u]^{\simeq}  && BG_{h\cI} \ar[u]_{\simeq}.
}
\]
is homotopy commutative. 
Using the machinery detailed in Appendix~\ref{app:loop-rectification}, every one-fold loop map can be realized up to weak homotopy equivalence as a map of grouplike topological monoids. Since we are mainly interested in strictly associative $R$-algebra spectra, it will be most convenient to state our results for Thom spectra associated to grouplike monoids over $BG_{h\cI}$. 

As preparation we recall a construction
from~\cite[Section~1]{Blumberg-C-S_THH-Thom}. Consider the unstable Hopf map  $\eta \colon S^3 \to S^2$ and the homotopy class of maps defined by the composition
\[
\eta \colon B(BG_{h\cI}) \simeq \mathrm{Map}_*(S^2, B^3 (BG_{h\cI})) \xrightarrow{\eta^*} \mathrm{Map}_*(S^3,B^3(BG_{h\cI})) \simeq BG_{h\cI}.
\]
Let $L(-)$ denote the free loop space functor. Applied to the infinite loop space $B(BG_{h\cI})$, the canonical splitting of the fibration sequence
\[
\Omega(B(BG_{h\cI}))\to L(B(BG_{h\cI})) \to B(BG_{h\cI})
\]
and the equivalence $BG_{h\cI}\xr{\sim} \Omega(B(BG_{h\cI}))$ gives a canonical product decomposition $L(B(BG_{h\cI}))\simeq BG_{h\cI}\times B(BG_{h\cI})$.
\begin{definition}\label{def:L-eta-map}
Given a well-based and grouplike topological monoid $M$ and a map of topological monoids $f\colon M \to BG_{h\cI}$, we use the notation $L^{\eta}(Bf)$ for a representative of the homotopy class defined by the composition
\[
L(BM) \xrightarrow{L(B(f))} L(B(BG_{h\cI})) \simeq BG_{h\cI} \times B(BG_{h\cI}) \xrightarrow{\id \times \eta} BG_{h\cI} \times BG_{h\cI} \to BG_{h\cI}
\]
where the last map is the multiplication of $BG_{h\cI}$.
\end{definition}
 
\begin{theorem}\label{thm:thh-thom} Consider the $R$-algebra Thom spectrum $T(f)$ associated to a map of topological monoids $f\colon M\to BG_{h\cI}$ where $M$ is well-based and grouplike. 
\begin{enumerate}[(i)]
\item
The $R$-module $\THH^R(T(f))$ is stably equivalent to  $T(L^{\eta}(Bf))$.
\item 
If $f$ is a $2$-fold loop map, then the $R$-module $\THH^R(T(f))$ is stably equivalent to $T(f)^{\cof}\sm_RT(\eta \circ Bf)$, where  $T(f)^{\cof}$ is a cofibrant replacement of $T(f)$ as an $R$-module and $\eta \circ Bf\colon BM\to B(BG_{h\cI})\to BG_{h\cI}$ is defined as above.
\item 
If $f$ is a $3$-fold loop map, then the $R$-module $\THH^R(T(f))$ is stably equivalent to $T(f)\sm BM_+$.  
\end{enumerate}
\end{theorem}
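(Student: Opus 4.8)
The plan is to derive (iii) from (ii). Since an $n$-fold loop map is in particular an $m$-fold loop map for every $m\le n$, the hypothesis of (ii) is satisfied, and we obtain a stable equivalence of $R$-modules $\THH^R(T(f))\simeq T(f)^{\cof}\wedge_R T(\eta\circ Bf)$ with $\eta\circ Bf\colon BM\to B(BG_{h\cI})\to BG_{h\cI}$. It therefore suffices to show that the right-hand side is stably equivalent to $T(f)\wedge BM_+$. As $T(f)^{\cof}$ is a cofibrant, hence flat, $R$-module, the functor $T(f)^{\cof}\wedge_R(-)$ preserves stable equivalences of $R$-modules, so the real content is to understand $T(\eta\circ Bf)$ together with its interaction with $T(f)$ through the monoidal structure of $T$.

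The crucial point is that $\eta\circ Bf$ factors up to homotopy through $f$. Using the rectification machinery of Appendix~\ref{app:loop-rectification} together with standard operadic rectification, we may assume that the $3$-fold loop map $f$ is modelled by a map of triple loop spaces $M\to BG_{h\cI}$ which is compatible with the given monoid structures. Precomposition with the Hopf map $S^3\to S^2$ determines, for each triple loop space $Z$ with triple delooping $B^3 Z$, a natural map $\eta_Z\colon BZ\simeq\Map_*(S^2,B^3 Z)\to\Map_*(S^3,B^3 Z)\simeq Z$; evaluated on $BG_{h\cI}$ this is the map $\eta$ of Definition~\ref{def:L-eta-map} (whose defining $B^3(BG_{h\cI})$ is the triple delooping underlying the infinite loop structure), and evaluated on $M$ it produces a based map $\eta_M\colon BM\to M$ whose image lies in the identity component of $M$ because $BM$ is path connected. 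Naturality of this construction applied to $f$ then yields a homotopy $\eta\circ Bf\simeq f\circ\eta_M$.

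It remains to reassemble. Proposition~\ref{prop:T-derived-monoidal} shows that the derived monoidal multiplication of $T$ is a stable equivalence, so $T(f)^{\cof}\wedge_R T(\eta\circ Bf)$ is stably equivalent to $T(\psi)$, where $\psi\colon M\times BM\to BG_{h\cI}$ is the monoidal product of $f$ and $\eta\circ Bf\simeq f\circ\eta_M$ in $\Top/BG_{h\cI}$, namely $\psi(m,b)=f(m)\cdot f(\eta_M(b))$. Since $f$ is a map of topological monoids, $\psi(m,b)=f\bigl(m\cdot\eta_M(b)\bigr)$, so $\psi=f\circ\nu$ with $\nu\colon M\times BM\to M$, $\nu(m,b)=m\cdot\eta_M(b)$. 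Because $M$ is grouplike, right translation in $M$ by an element of the identity component is a homotopy equivalence; hence $(m,b)\mapsto(m\cdot\eta_M(b),b)$ defines a weak equivalence $M\times BM\to M\times BM$ over $M$ carrying the projection $\mathrm{pr}_M$ to $\nu$. Consequently $\psi$ and $f\circ\mathrm{pr}_M$ are weakly equivalent as objects of $\Top/BG_{h\cI}$, and Proposition~\ref{prop:properties-of-T}(i) and (iii) give $T(\psi)\simeq T(f\circ\mathrm{pr}_M)\cong T(f)\wedge BM_+$. Chaining the stable equivalences establishes (iii).

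The step I expect to be the main obstacle is the factorization $\eta\circ Bf\simeq f\circ\eta_M$: one must genuinely rectify the abstract $3$-fold loop structure on $f$ to an honest map of triple loop spaces (or of algebras over a suitable operad), verify that the bar construction $BM$ agrees, compatibly, with the first delooping underlying that structure, and check that the Hopf-map construction used to define $\eta$ in Definition~\ref{def:L-eta-map} is natural with respect to such maps. Once this naturality is in place, everything else is a formal consequence of the monoidal and homotopy-invariance properties of $T$ recorded above; this also makes transparent why merely a $2$-fold loop structure on $f$, as in (ii), does not suffice, since there $\eta\circ Bf$ need not factor through $f$ and the twist $T(\eta\circ Bf)$ cannot be absorbed.
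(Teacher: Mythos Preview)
Your derivation of (iii) from (ii) is correct and follows essentially the same route as the paper, which defers to \cite{Blumberg-C-S_THH-Thom}*{Section~3.3}: there, as in your argument, the $3$-fold loop hypothesis is used precisely to obtain the naturality $\eta\circ Bf\simeq f\circ\eta_M$, after which the shear map $(m,b)\mapsto(m\cdot\eta_M(b),b)$ and the monoidal properties of the Thom spectrum functor finish the job. Your identification of the rectification step as the main technical point is also accurate and is exactly what is addressed in that reference.
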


\begin{proof}
Let $T(f)^{\cof}\to T(f)$ be a cofibrant replacement of the $R$-algebra $T(f)$ and let us take $B^{\cy}(T(f)^{\cof}))$ as our model of $\THH^R(T(f))$, cf.\ Remark~\ref{rem:THH-cof-replacement}. By definition, $T(f)=T^{\cI}(P_{BG}(f))$ where $P_{BG}(f)$ is the monoid in $\Top^{\cI}/BG$ obtained by applying the $\cI$-spacification functor to $f$. Let $P_{BG}(f)^{\cof}\to P_{BG}(f)$ be a cofibrant replacement as an $\cI$-space monoid over $BG$. Then the underlying $\cI$-space of $P_{BG}(f)^{\cof}$ is flat so that we have a chain of stable equivalences
\[
B^{\cy}(T^{\cI}(P_{BG}(f))^{\cof}) \xl{\sim} B^{\cy}(T^{\cI}(P_{BG}(f)^{\cof})^{\cof})\xr{\sim} T^{\cI}(B^{\cy}(P_{BG}(f)^{\cof}))
\]
where the first equivalence is induced by the cofibrant replacement and the second is given by Proposition~\ref{prop:Bcy-vs-T}.
Furthermore, it follows from Proposition~\ref{prop:TI-alpha-vs-TalphahI} that the last term is stably equivalent to 
$T(B^{\cy}(P_{BG}(f)^{\cof})_{h\cI})$. The rest of the proof follows the outline in \cite{Blumberg-C-S_THH-Thom}: For (i) we first use the argument from \cite[Proposition~4.8]{Blumberg-C-S_THH-Thom} to show that the domain of $B^{\cy}(P_{BG}(f)^{\cof})_{h\cI}$ is weakly equivalent to $L(BM)$. The proof of \cite[Theorem 2.2]{Blumberg-C-S_THH-Thom} then shows that under this equivalence, $B^{\cy}(P_{BG}(f)^{\cof})_{h\cI}$ represents the homotopy class of $L^{\eta}(Bf)$.
(As explained in~\cite[Section 8.1]{Blumberg-C-S_THH-Thom}, the functor $U$ in the proof of that theorem is $(-)_{h\cI}$ if one works in $\cI$-spaces.) Since our Thom spectrum functor sends products to derived smash products (Proposition~\ref{prop:T-derived-monoidal}) and preserves tensors with spaces (Proposition~\ref{prop:properties-of-T}),  the proofs of ~\cite[Theorem 2 and 3]{Blumberg-C-S_THH-Thom} in~\cite[Section 3.3]{Blumberg-C-S_THH-Thom} apply almost verbatim to give (ii) and~(iii).  
\end{proof}

\section{Modules and classifying spaces for commutative \texorpdfstring{$\cI$}{I}-space monoids} \label{sec:modules-classifying-spaces}
In this section $G$ denotes a commutative $\cI$-space monoid and we shall continue the analysis of the module category $\Top^{\cI}_G$ initiated in Section~\ref{subsec:class-spaces}. The primary aim is to finish the proof of 
Theorem~\ref{thm:Top^I_G-Quillen-equivalence} stating  that $\Top^{\cI}_G$ is Quillen equivalent to $\Top^{\cI}/BG$ provided that $G$ is grouplike and cofibrant. Here we recall that both the absolute and the positive flat model structure on $\Top^{\cI}$ lift to corresponding absolute and positive flat model structures on $\Top^{\cI}_G$. We shall use the term \emph{flat $G$-module} for a cofibrant object in the absolute flat model structure on $\Top^{\cI}_G$.

\begin{lemma}\label{lem:boxtimes_M-inv} 
If $X$ is a flat $G$-module, then the endofunctor $X\boxtimes_G(-)$ on $\Top^{\cI}_G$ preserves $\cI$-equivalences. 
\end{lemma}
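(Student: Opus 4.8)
The plan is to argue by a cellular induction on the flat $G$-module $X$, reducing the statement to the already-established homotopy invariance of $C\boxtimes(-)$ for a flat $\cI$-space $C$, that is, to \cite{Sagave-S_diagram}*{Proposition~8.2}. Let $\mathcal W$ denote the class of $G$-modules $X$ for which $X\boxtimes_G(-)$ preserves $\cI$-equivalences. Since an $\cI$-equivalence is by definition detected by applying $(-)_{h\cI}$, and $(-)_{h\cI}$ commutes with retracts, coproducts, filtered colimits and tensors with unbased spaces, the class $\mathcal W$ is closed under retracts and coproducts, and the initial object $\emptyset$ lies in $\mathcal W$ (as $\emptyset\boxtimes_G(-)\cong\emptyset$). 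For an $\cI$-space $C$ the free $G$-module on $C$ is $C\boxtimes G$, and $(C\boxtimes G)\boxtimes_G Y\cong C\boxtimes Y$ naturally in the $G$-module $Y$, using $G\boxtimes_G Y\cong Y$; hence $C\boxtimes G\in\mathcal W$ whenever $C$ is flat.

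The core step is to show that $\mathcal W$ is closed under the cell attachments used to build cofibrant objects in $\Top^{\cI}_G$. By \cite{Sagave-S_diagram}*{Proposition~3.10} and \cite{Schwede_S-algebras}*{Theorem~4.1(2)} the absolute flat model structure on $\Top^{\cI}_G$ is cofibrantly generated, with generating cofibrations $i\boxtimes G$ where $i$ ranges over the generating cofibrations of the absolute flat model structure on $\Top^{\cI}$; the domains and codomains of these maps are of the form $C\boxtimes G$ with $C$ a flat $\cI$-space, and so already lie in $\mathcal W$. Moreover each $i$ is a flat cofibration, hence an $h$-cofibration of $\cI$-spaces, and $\boxtimes$ with an arbitrary $\cI$-space preserves $h$-cofibrations (cf.\ \cite{Sagave-S_diagram}*{Section~7}), so $(i\boxtimes G)\boxtimes_G Y\cong i\boxtimes Y$ is an $h$-cofibration of $\cI$-spaces. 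Now, given an $\cI$-equivalence $f\colon Y\to Y'$ and a pushout $X_{\alpha+1}=X_\alpha\cup_A B$ along a coproduct $A\to B$ of generating cofibrations with $X_\alpha\in\mathcal W$, applying $-\boxtimes_G Y$ and $-\boxtimes_G Y'$ to this pushout square produces two pushout squares together with a map between them whose restrictions to the three corners other than the pushout corner are $A\boxtimes_G f$, $B\boxtimes_G f$, $X_\alpha\boxtimes_G f$, each an $\cI$-equivalence, and in which $A\boxtimes_G Y\to B\boxtimes_G Y$ is an $h$-cofibration. Since $(-)_{h\cI}$ preserves colimits and sends $h$-cofibrations of $\cI$-spaces to Hurewicz cofibrations of spaces, the two squares become homotopy pushouts after $(-)_{h\cI}$, so the gluing lemma for spaces shows that $X_{\alpha+1}\boxtimes_G f$ is an $\cI$-equivalence. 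The analogous bookkeeping for transfinite compositions along $h$-cofibrations — where $(-)_{h\cI}$ converts the colimit into a homotopy colimit and invariance of homotopy colimits under objectwise equivalence applies — shows $\mathcal W$ is closed under these too. Since every flat $G$-module is a retract of one built from $\emptyset$ by such cell attachments, it lies in $\mathcal W$, which is the assertion.

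The step I expect to require the most care is this inductive verification itself: one must check that applying $-\boxtimes_G(-)$ keeps the cell attachments $h$-cofibrations of $\cI$-spaces, so that the ordinary pushouts and sequential colimits model homotopy pushouts and homotopy colimits after passing to $(-)_{h\cI}$, and that the gluing lemma for $\cI$-equivalences is then legitimately applicable at each stage. No single one of these points is deep; the content lies in correctly combining the colimit and tensor compatibilities of $(-)_{h\cI}$ with the pushout-product behaviour of $\boxtimes$ for $h$-cofibrations recorded in \cite{Sagave-S_diagram}*{Section~7}, together with Proposition~8.2 there.
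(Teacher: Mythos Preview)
Your proposal is correct and follows essentially the same cellular induction as the paper's proof. The only cosmetic difference is that the paper invokes the gluing lemma and the transfinite-composition lemma for $h$-cofibrations and $\cI$-equivalences directly (citing \cite{Sagave-S_diagram}*{Proposition~7.1(iv),(v)}) rather than passing explicitly through $(-)_{h\cI}$ to spaces, but these are the same arguments unpacked.
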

\begin{proof}
  For the proof we may assume without loss of generality that $X$ is
  the colimit of a $\lambda$-sequence of $G$-modules
  $\{X_{\alpha}\colon\alpha<\lambda\}$ (for some ordinal $\lambda$)
  such that $X_0$ is the initial $G$-module and the map $X_{\alpha}\to
  X_{\alpha+1}$ is obtained by cobase change from a map of the form
  $K_{\alpha}\boxtimes G\to L_{\alpha}\boxtimes G$, where
  $K_{\alpha}\to L_{\alpha}$ is a generating cofibration for the absolute flat
  model structure on $\Top^{\cI}$. Given a $G$-module $Y$,
  $X\boxtimes_GY$ is then the colimit of the $\lambda$-sequence
  $\{X_{\alpha}\boxtimes_G Y\colon \alpha<\lambda\}$, where
  $X_{\alpha}\boxtimes_GY\to X_{\alpha+1}\boxtimes_GY$ is the cobase
  change of $K_{\alpha}\boxtimes Y\to L_{\alpha}\boxtimes Y$.
  By~\cite[Proposition 7.1(ii) and (vi)]{Sagave-S_diagram} this is a
  $\lambda$-sequence of $h$-cofibrations in the sense
  of~\cite[Section~7]{Sagave-S_diagram}.  Now let $Y\to Y'$ be an
  $\cI$-equivalence between $G$-modules. Using the gluing lemma
  for $h$-cofibrations and $\cI$-equivalences, \cite[Proposition
  7.1(iv)]{Sagave-S_diagram}, we argue by induction to see that
  $X_{\alpha}\boxtimes_GY\to X_{\alpha}\boxtimes_GY'$ is an
  $\cI$-equivalence for all $\alpha$. By
  \cite[Proposition~7.1(v)]{Sagave-S_diagram} the map of colimits is
  therefore also an $\cI$-equivalence.
\end{proof}

\begin{lemma}\label{lem:bar-substitution}
If~$X$ is a flat $G$-module, then the map $B(X,G,Y)\to X\boxtimes_G Y$ is an $\cI$-equivalence. 
\end{lemma}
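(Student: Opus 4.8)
The plan is to realize $B(X,G,Y)\to X\boxtimes_GY$ as a base change along $-\boxtimes_GY$ of the standard fact that the two-sided bar construction which is ``free'' in the rightmost variable is a resolution of the corresponding one-variable quotient. First I would identify $B(X,G,Y)$ with $B(X,G,G)\boxtimes_GY$: in simplicial degree $n$ there is a natural isomorphism $(X\boxtimes G^{\boxtimes n}\boxtimes G)\boxtimes_GY\cong X\boxtimes G^{\boxtimes n}\boxtimes Y$, and the simplicial operators match up — the only operator of $B_\bullet(X,G,G)$ involving the last copy of $G$ uses the multiplication $G\boxtimes G\to G$, which under $-\boxtimes_GY$ turns into the action $G\boxtimes Y\to Y$. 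Since $-\boxtimes_GY$ is a left adjoint it commutes with geometric realization, so this yields a natural isomorphism $B(X,G,Y)\cong B(X,G,G)\boxtimes_GY$ compatible with the augmentations to $(X\boxtimes_GG)\boxtimes_GY=X\boxtimes_GY$.

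Next I would recall the contracting extra degeneracy of the augmented simplicial object $B_\bullet(X,G,G)\to X$: in degree $n$ it is the map $X\boxtimes G^{\boxtimes n}\boxtimes G\to X\boxtimes G^{\boxtimes(n+1)}\boxtimes G$ inserting the unit $U^{\cI}\to G$ in the copy of $G$ immediately to the right of $X$, together with the section $X\cong X\boxtimes U^{\cI}\to X\boxtimes G$ in degree $0$; this is the usual resolution argument, cf.\ \cite[Section~9]{May-geometry}. The point I want to extract is that this extra degeneracy, and hence the resulting simplicial contracting homotopy, consists of maps of \emph{right} $G$-modules, where the right $G$-action on $B_n(X,G,G)=X\boxtimes G^{\boxtimes n}\boxtimes G$ is by multiplication on the \emph{last} copy of $G$: the homotopy only alters the part of the bar construction to the left of that last copy, so it commutes with that residual action. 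Realizing, $B(X,G,G)\to X$ becomes a (strong) deformation retraction in the category of right $G$-modules.

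Finally I would apply $-\boxtimes_GY$. Because tensoring with the unit interval commutes with $-\boxtimes_GY$ (both are colimits), this functor sends right $G$-module homotopies to $\cI$-space homotopies, hence sends the deformation retraction $B(X,G,G)\to X$ to a deformation retraction $B(X,G,Y)\to X\boxtimes_GY$; in particular the latter is an $\cI$-equivalence. The one place to be careful is the bookkeeping in the middle step — keeping the copy of $G$ that drives the simplicial structure separate from the one carrying the residual right-module structure, and checking that the contracting homotopy respects the latter; once that is pinned down the rest is formal. (I would remark that flatness of $X$ is not actually used by this argument; an alternative proof that does use it, paralleling Shipley's treatment of the ring-spectrum analogue behind Lemma~\ref{lem:bar-smash-equivalence}, is to show by a skeletal-filtration argument that $B(X,G,G)$ is itself a flat $G$-module and then to apply the homotopy invariance of $-\boxtimes_GY$, in the spirit of Lemma~\ref{lem:boxtimes_M-inv}, to the $\cI$-equivalence $B(X,G,G)\to X$ of flat $G$-modules.)
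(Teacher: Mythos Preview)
Your main argument has a genuine gap. The map you propose---inserting the unit immediately to the right of $X$---is indeed a right $G$-module map, but it is \emph{not} an extra degeneracy for $B_\bullet(X,G,G)\to X$: the identity $d_1s_{-1}=s_{-1}d_0$ fails, since $d_1s_{-1}(x,g_1,\dots,g_n,g)=(x,g_1,\dots,g_n,g)$ while $s_{-1}d_0(x,g_1,\dots,g_n,g)=(xg_1,1,g_2,\dots,g_n,g)$. The extra degeneracy that \emph{does} contract $B_\bullet(X,G,G)$ inserts the unit at the far right, and that one visibly fails to commute with the right $G$-action on the last copy of $G$. In fact no $G$-equivariant contracting homotopy can exist in general: take $X=U^{\cI}$ with trivial action, so $B(U^{\cI},G,G)$ is the usual $EG$; applying $-\boxtimes_GU^{\cI}$ to a hypothetical $G$-equivariant deformation retraction $B(U^{\cI},G,G)\to U^{\cI}$ would yield an $\cI$-equivalence $BG\to U^{\cI}$, which is false for nontrivial grouplike $G$. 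So flatness of $X$ is genuinely needed, contrary to your remark.

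The paper's proof uses the other factorisation $B(X,G,Y)\cong X\boxtimes_G B(G,G,Y)$, notes that $B(G,G,Y)\to Y$ is a level equivalence (via the left extra degeneracy, which need not be and is not $G$-equivariant), and then invokes Lemma~\ref{lem:boxtimes_M-inv}: since $X$ is flat, $X\boxtimes_G(-)$ preserves $\cI$-equivalences between arbitrary $G$-modules. Your parenthetical alternative---showing $B(X,G,G)$ is a flat $G$-module via the skeletal filtration and then using that an $\cI$-equivalence between flat $G$-modules remains one after $-\boxtimes_GY$---is a correct mirror image of this and does use flatness, but it is longer than the paper's two-line argument.
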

\begin{proof}
  Using the canonical isomorphism $B(X,G,Y)\cong
  X\boxtimes_G B(G,G,Y)$, it suffices by Lemma~\ref{lem:boxtimes_M-inv} to show that the canonical map
  $B(G,G,Y)\to Y$ is an $\cI$-equivalence. However, this map is even a
  level equivalence since $Y$ is a simplicial deformation retract of
  the domain before passing to the geometric realization.
\end{proof}

Recall that we use the term \emph{cofibrant commutative $\cI$-space monoid} to mean a cofibrant object in the positive flat model structure on $\cC\Top^{\cI}$.

\begin{lemma}\label{lem:boxtimes-G-square}
   Let $G$ be a grouplike and cofibrant commutative $\cI$-space monoid, let $X\to X'$
and $Y\to Y'$ be maps of $G$-modules, and suppose that the $G$-modules $X$ and $X'$ are flat. Then the
  commutative square
  \[
  \xymatrix@-1pc{
    X\boxtimes_G Y \ar[r]\ar[d] & X\boxtimes_G Y'\ar[d]\\
    X'\boxtimes_G Y\ar[r] & X'\boxtimes_G Y' }
  \]
is homotopy cartesian. 
\end{lemma}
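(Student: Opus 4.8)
The plan is to reduce the statement to a classical fact about two-sided bar constructions over the grouplike topological monoid $\Gamma=G_{h\cI}$. Throughout I use that, by \cite[Corollary~11.4]{Sagave-S_diagram}, a commutative square of $\cI$-spaces is homotopy cartesian if and only if it becomes so after applying $(-)_{h\cI}$, and that $(-)_{h\cI}$ sends $\cI$-equivalences to weak equivalences; in particular it is enough to treat any square $\cI$-equivalent to the given one. (Since the flat model structure on $\Top^{\cI}_G$ is lifted from $\Top^{\cI}$, it makes no difference whether we regard the square as living in $\Top^{\cI}_G$ or in $\Top^{\cI}$.)

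First I would arrange that $Y$ and $Y'$ are also flat $G$-modules. Picking functorial cofibrant replacements $Y^{\cof}\to Y$ and ${Y'}^{\cof}\to Y'$ in the absolute flat model structure on $\Top^{\cI}_G$ together with a compatible map $Y^{\cof}\to {Y'}^{\cof}$, Lemma~\ref{lem:boxtimes_M-inv} applied to the flat $G$-modules $X$ and $X'$ shows that the induced map from the square assembled from $Y^{\cof},{Y'}^{\cof}$ to the original square is an $\cI$-equivalence at every corner. So I may assume $X,X',Y,Y'$ are all flat. Then Lemma~\ref{lem:bar-substitution} (again using that $X$ and $X'$ are flat) identifies the square, up to $\cI$-equivalence at each corner, with the square with corners $B(X,G,Y)$, $B(X,G,Y')$, $B(X',G,Y)$, $B(X',G,Y')$, so it suffices to prove this bar-construction square is homotopy cartesian.

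Now I would apply $(-)_{h\cI}$. This functor preserves colimits, hence geometric realization; moreover, since $X,X',Y,Y',G$ are all flat, the lax monoidal structure maps of $(-)_{h\cI}$ are weak equivalences on the $\boxtimes$-products appearing in each simplicial degree (\cite[Lemma~2.25]{Sagave-S_group-compl}) and the resulting simplicial spaces are proper. Hence there are natural weak equivalences $B(X,G,Y)_{h\cI}\simeq B(X_{h\cI},\Gamma,Y_{h\cI})$, and likewise for the other three corners, where $B(-,\Gamma,-)$ is the two-sided bar construction of spaces. It therefore remains to show that
\[
\xymatrix@-1pc{
B(X_{h\cI},\Gamma,Y_{h\cI}) \ar[r]\ar[d] & B(X_{h\cI},\Gamma,Y'_{h\cI})\ar[d]\\
B(X'_{h\cI},\Gamma,Y_{h\cI})\ar[r] & B(X'_{h\cI},\Gamma,Y'_{h\cI})
}
\]
is homotopy cartesian. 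This is where grouplikeness is used: for the grouplike monoid $\Gamma$, the standard quasifibration results for two-sided bar constructions (\cite[Theorem~7.6]{May-classifying} and \cite{May-geometry}, exactly as invoked in the proofs of Proposition~\ref{prop:B-alpha-Thom} and Lemma~\ref{lem:suspension-diagram}) assemble the canonical maps into a weak equivalence $B(A,\Gamma,B)\xr{\simeq}B(A,\Gamma,*)\times^h_{B\Gamma}B(*,\Gamma,B)$, with $B\Gamma=B(*,\Gamma,*)$ and $\times^h$ the homotopy pullback. Under this identification the square above becomes one of the form
\[
\xymatrix@-1pc{
P\times^h_{B\Gamma}Q \ar[r]\ar[d] & P\times^h_{B\Gamma}Q'\ar[d]\\
P'\times^h_{B\Gamma}Q\ar[r] & P'\times^h_{B\Gamma}Q'
}
\]
for maps $P\to P'$ and $Q\to Q'$ over $B\Gamma$, where $P=B(X_{h\cI},\Gamma,*)$ and $Q=B(*,\Gamma,Y_{h\cI})$. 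Such a square is always homotopy cartesian: the homotopy fiber of either vertical map over a point depends only on that point's $P'$-coordinate and is the corresponding homotopy fiber of $P\to P'$, so the horizontal maps induce equivalences on the vertical homotopy fibers.

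The step I expect to be the main obstacle is the middle one: verifying that the flatness of $X,X',Y,Y'$ and the cofibrancy of $G$ are exactly enough to make $(-)_{h\cI}$ commute with the bar constructions up to natural weak equivalence and to make the simplicial spaces proper, so that geometric realization is homotopy invariant. The grouplike-monoid input in the last step is classical, but one should be careful to extract it in the precise homotopy-pullback form used above rather than only as a statement about individual quasifibrations.
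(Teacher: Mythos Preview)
Your proposal is correct and follows essentially the same strategy as the paper: reduce to two-sided bar constructions via Lemma~\ref{lem:bar-substitution}, pass to $(-)_{h\cI}$ to obtain space-level bar constructions over the grouplike monoid $G_{h\cI}$, and conclude using May's quasifibration theorem. The only cosmetic differences are that the paper skips your cofibrant replacement of $Y,Y'$ (one flat factor in each $\boxtimes$ already suffices for the monoidal structure map of $(-)_{h\cI}$ to be an equivalence) and organizes the final step as a pasting argument with an auxiliary third column $B(X_{h\cI},G_{h\cI},*)\to B(X'_{h\cI},G_{h\cI},*)$ rather than your homotopy-pullback-product decomposition.
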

\begin{proof}
Using Lemma~\ref{lem:bar-substitution} and that the homotopy colimit functor $(-)_{h\cI}$ detects homotopy cartesian squares 
\cite[Corollary 11.4]{Sagave-S_diagram}, it suffices to show that the diagram of spaces
\[
\xymatrix@-1pc{
B(X,G,Y)_{h\cI} \ar[r] \ar[d]& B(X,G,Y')_{h\cI} \ar[d]\\
B(X',G,Y)_{h\cI} \ar[r] & B(X',G,Y')_{h\cI}
}
\]
is homotopy cartesian. It follows from \cite[Proposition 3.15(ii)]{Sagave-S_diagram} that the underlying $\cI$-space of $G$ is flat and inspecting the generating  cofibrations of the absolute flat model structure on $\Top^{\cI}_G$, we see that the underlying $\cI$-spaces of $X$ and $X'$ are also flat. The topological monoid $G_{h\cI}$ is well-based by \cite[Proposition 12.7]{Sagave-S_diagram}. Since the monoidal structure map of $(-)_{h\cI}$ in \eqref{eq:monoidal-structure-map-hI} is a weak equivalence if one of the factors is flat  \cite[Lemma 2.25]{Sagave-S_group-compl} and the relevant simplicial objects are good in the usual sense, this implies that the above diagram is weakly equivalent to the left hand square in the diagram 
\[
  \xymatrix@-1pc{ B(X_{h\cI},G_{h\cI},Y_{h\cI}) \ar[r] \ar[d]&
    B(X_{h\cI},G_{h\cI},Y'_{h\cI})
    \ar[r]\ar[d] & B(X_{h\cI}, G_{h\cI}, *)\ar[d]\\
    B(X'_{h\cI},G_{h\cI},Y_{h\cI}) \ar[r] &
    B(X'_{h\cI},G_{h\cI},Y'_{h\cI}) \ar[r] & B(X'_{h\cI},G_{h\cI},*).  }
  \]
Here the horizontal maps in the right hand square are induced by the projection $Y'_{h\cI}\to *$. By
  \cite[Theorem~7.6]{May-classifying}, the assumption that $G_{h\cI}$ be grouplike
  implies that the horizontal maps in the right hand square and the
  outer square are quasifibrations. Since these are actual pullback
  diagrams this translates into the statement that these squares are
  homotopy cartesian. The left hand square is therefore also homotopy
  cartesian as claimed.
\end{proof}

\begin{proposition}\label{prop:lifted-model-str-on-SID}
Let $G$ be a commutative $\cI$-space monoid and let $\cD$ be an operad in $\Top$. Then the category $\Top^{\cI}_G[\cD]$ of $\cD$-algebras in $\Top^{\cI}_G$ admits a positive flat model structure where a map is a weak equivalence or fibration if the underlying map in $\Top^{\cI}_G$ (or $\Top^{\cI}$) is so with respect to the positive flat model structure. 
\end{proposition}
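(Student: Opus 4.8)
The plan is to obtain the model structure on $\Top^{\cI}_G[\cD]$ by transfer along the free--forgetful adjunction $\mathbb{D}\dashv U$, where $U\colon\Top^{\cI}_G[\cD]\to\Top^{\cI}_G$ forgets the $\cD$-algebra structure and $\mathbb{D}(X)=\coprod_{n\geq 0}\cD(n)\times_{\Sigma_n}X^{\boxtimes_G n}$ is the free $\cD$-algebra functor, formed using the tensoring of $\Top^{\cI}_G$ over $\Top$ and the $\boxtimes_G$-powers of a $G$-module. This is precisely the method used to lift the positive flat model structure from $\Top^{\cI}$ to $\Top^{\cI}[\cD]$, reviewed in Section~\ref{sec:I-spaces}, and from $\Top^{\cI}$ to $\cA\Top^{\cI}$ in \cite{Sagave-S_diagram}*{Proposition~9.3}. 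By the standard transfer theorem for cofibrantly generated model categories it suffices to check that (a) $U$ preserves filtered colimits and the domains of $\mathbb{D}I$ and $\mathbb{D}J$ are small relative to the relevant cell complexes, where $I$ and $J$ are generating cofibrations and generating acyclic cofibrations for the positive flat model structure on $\Top^{\cI}_G$, and (b) every transfinite composite of cobase changes of maps in $\mathbb{D}J$ has underlying $\cI$-equivalence. Condition (a) is routine: $U$ creates filtered colimits since a $\cD$-algebra structure is built from finitary operations and filtered colimits commute with finite products in $\Top$, and the smallness is inherited from $\Top^{\cI}$.

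For condition (b) one needs $\Top^{\cI}_G$ to carry the structure used in the $\Top^{\cI}[\cD]$ case. Recall that the positive flat model structure on $\Top^{\cI}$ lifts to $\Top^{\cI}_G$ by \cite{Sagave-S_diagram}*{Proposition~3.10} and \cite{Schwede_S-algebras}*{Theorem~4.1(2)}, and is again a cofibrantly generated topological model structure; one may take $I$ and $J$ to consist of the maps $K\boxtimes G\to L\boxtimes G$ with $K\to L$ ranging over generating (acyclic) cofibrations of the positive flat model structure on $\Top^{\cI}$. Since $(K\boxtimes G)\boxtimes_G(K'\boxtimes G)\cong(K\boxtimes K')\boxtimes G$ and $(K\boxtimes G)\boxtimes_G Z\cong K\boxtimes Z$ (the latter with $Z$ regarded as an $\cI$-space), the pushout--product and monoid axioms for $\boxtimes_G$ reduce to the corresponding statements for $\boxtimes$ on $\Top^{\cI}$ from \cite{Sagave-S_diagram}*{Proposition~3.10}, using that $\cI$-equivalences of $G$-modules are detected on underlying $\cI$-spaces. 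Moreover the extension-of-scalars functor $-\boxtimes G\colon\Top^{\cI}\to\Top^{\cI}_G$ is a strong symmetric monoidal left Quillen functor, so the positivity input that makes non-$\Sigma_n$-free operads behave well over $\Top^{\cI}$---that for a map between positively flat objects the $n$-fold iterated pushout--product, with its $\Sigma_n$-action, is an acyclic cofibration in the appropriate fine $\Sigma_n$-equivariant flat model structure---transports to $\Top^{\cI}_G$.

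Given these ingredients, the standard (Harper-type) filtration of a pushout of $\mathbb{D}(j)$ along a map $j\in J$ applies: the underlying map of $G$-modules is a $\lambda$-sequence whose successive stages are cobase changes of $\cD(n)\times_{\Sigma_n}(-)$ applied to pushout--product maps built from the iterated $\boxtimes_G$-pushout--product of $j$. Using the transported positivity together with the pushout--product and monoid axioms, each such stage is an $h$-cofibration in the sense of \cite{Sagave-S_diagram}*{Section~7} which is an $\cI$-equivalence. The gluing lemma together with the stability of $h$-cofibrations and $\cI$-equivalences under transfinite composition, \cite{Sagave-S_diagram}*{Proposition~7.1}, then yields (b), and the transfer theorem produces the positive flat model structure on $\Top^{\cI}_G[\cD]$ with generating (acyclic) cofibrations $\mathbb{D}I$ and $\mathbb{D}J$ and with weak equivalences and fibrations detected by $U$.

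I expect the main obstacle to be the verification in (b), and specifically the transported positivity statement: that the $\Sigma_n$-equivariant iterated pushout--product of an acyclic cofibration between positively flat $G$-modules is again acyclic in the relevant equivariant flat model structure, so that applying $\cD(n)\times_{\Sigma_n}(-)$ to it yields an $\cI$-equivalence even when $\cD(n)$ fails to be $\Sigma_n$-free. This is exactly where the use of the positive rather than the absolute flat model structure is essential, in the same way as for the commutativity operad over $\Top^{\cI}$; once it is in place, the passage from $\Top^{\cI}$ to $\Top^{\cI}_G$ is harmless, since $\boxtimes_G$, the pushout--product and monoid axioms, the positivity input, and the theory of $h$-cofibrations are all inherited from $\Top^{\cI}$ through the strong monoidal functor $-\boxtimes G$, and the argument for $\Top^{\cI}[\cD]$ reviewed in Section~\ref{sec:I-spaces} applies essentially verbatim.
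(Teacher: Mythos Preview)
Your proposal is correct and follows essentially the same approach as the paper: the paper's proof is a two-line pointer stating that the argument is ``completely analogous to the proof of~\cite[Proposition 9.3]{Sagave-S_diagram}'' once one generalizes the double cell filtration of~\cite[Proposition 10.1]{Sagave-S_diagram} to $G$-modules, and you have unpacked exactly that argument---the transfer along $\mathbb{D}\dashv U$, the cell filtration of $\cD$-algebra pushouts (what you call the Harper-type filtration is the double cell filtration of~\cite[Proposition 10.1]{Sagave-S_diagram}), and the reduction of the relevant pushout--product and positivity inputs for $\boxtimes_G$ to those for $\boxtimes$ via the strong monoidal left Quillen functor $-\boxtimes G$. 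Your final paragraph correctly isolates the one nontrivial point, namely the $\Sigma_n$-equivariant behaviour of iterated pushout--products in the positive flat structure, which is precisely what the reference to~\cite[Proposition 10.1]{Sagave-S_diagram} is meant to supply.
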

\begin{proof}
Using an obvious generalization of the double cell filtration for $\cD$-algebra cell attachments in \cite[Proposition 10.1]{Sagave-S_diagram} to $G$-modules, this is completely analogous to the proof of~\cite[Proposition 9.3]{Sagave-S_diagram}.
\end{proof}

In the next lemma we refer to the \emph{fine model structure} on $\Sigma_k$-spaces discussed in Section~\ref{subsec:I-spaces}.

\begin{lemma}\label{lem:gen-M-algebras-underlying-flat}
Let $G$ be a commutative $\cI$-space monoid and let $\cD$ be an operad such that each space $\cD(k)$ is cofibrant as an object in the category of $\Sigma_k$-spaces equipped with the fine model structure. If $A$ is an object in $\Top^{\cI}_{G}[\cD]$ that is cofibrant in the positive flat model structure, then the underlying $G$-module of $A$ is flat.
\end{lemma}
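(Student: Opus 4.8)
The plan is to run the standard argument showing that over a $\Sigma$-cofibrant operad the underlying object of a cofibrant algebra is cofibrant; note that this does \emph{not} follow formally, since the forgetful functor $\Top^{\cI}_G[\cD]\to\Top^{\cI}_G$ is right Quillen (it creates fibrations and weak equivalences), so the cofibrancy hypothesis on the $\cD(k)$ is genuinely used. Since cofibrant objects are closed under retracts and every cofibrant object of $\Top^{\cI}_G[\cD]$ in the positive flat model structure is a retract of a cell $\cD$-algebra, it suffices to treat cell $\cD$-algebras. Let $\mathbb D$ denote the free $\cD$-algebra functor on $\Top^{\cI}_G$. A cell $\cD$-algebra is a transfinite composite $A=\colim_{\beta<\lambda}A_\beta$ with $A_0=\mathbb D(\emptyset)$ the free $\cD$-algebra on the initial $G$-module and $A_{\beta+1}=A_\beta\amalg_{\mathbb D(K_\beta)}\mathbb D(L_\beta)$ for generating cofibrations $i_\beta\colon K_\beta\to L_\beta$ of the positive flat model structure on $\Top^{\cI}_G$. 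I would prove by transfinite induction the sharper statement that the underlying $G$-module of each $A_\beta$ is flat \emph{and} each $A_\beta\to A_{\beta+1}$ is a cofibration in the absolute flat model structure on $\Top^{\cI}_G$; limit stages are then immediate, because the forgetful functor preserves the relevant filtered colimits and transfinite composites of cofibrations are cofibrations. For the base case one identifies $\mathbb D(\emptyset)\cong(\cD(0)\otimes U^{\cI})\boxtimes G$, which is a flat $G$-module: $U^{\cI}$ is a flat $\cI$-space, $\cD(0)$ is a cofibrant space (the case $k=0$ of the hypothesis), hence $\cD(0)\otimes U^{\cI}$ is flat by the topological model category axioms, and the free $G$-module functor $(-)\boxtimes G$ is left Quillen and so sends flat $\cI$-spaces to flat $G$-modules.

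For the inductive step I would invoke the generalization to $G$-modules of the double cell filtration for $\cD$-algebra cell attachments of \cite[Proposition~10.1]{Sagave-S_diagram}, exactly as is done for commutative $\cI$-space monoids in the proof of Proposition~\ref{prop:lifted-model-str-on-SID}. This presents $A_{\beta+1}$ as the colimit of a sequence $A_\beta=P_0\to P_1\to\cdots$ of $G$-modules in which $P_n$ is obtained from $P_{n-1}$ by a pushout along a map of the form
\[
\cD_{A_\beta}[n]\otimes_{\Sigma_n}Q^n_{n-1}(i_\beta)\longrightarrow \cD_{A_\beta}[n]\otimes_{\Sigma_n}L_\beta^{\boxtimes_G n},
\]
where $\cD_{A_\beta}[n]=\coprod_{k\geq 0}\cD(n+k)\otimes_{\Sigma_k}A_\beta^{\boxtimes_G k}$ is the $n$th term of the enveloping operad of $A_\beta$ (a $\Sigma_n$-object in $\Top^{\cI}_G$) and $Q^n_{n-1}(i_\beta)$ is the $(n-1)$st stage of the $\Sigma_n$-equivariant pushout-product filtration of $i_\beta$. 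It then suffices to show that each $P_{n-1}\to P_n$ is an absolute flat cofibration of $G$-modules; by induction on $n$ (with $P_0=A_\beta$ flat by the outer induction) this makes each $P_n$ flat, and passing to the colimit over $n$ shows that $A_{\beta+1}$ is flat and that $A_\beta\to A_{\beta+1}$ is an absolute flat cofibration.

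The heart of the matter, and the step I expect to be the main obstacle, is the equivariant bookkeeping needed to control the last pushout. One needs: (i) that $\cD(n+k)$, being cofibrant in the fine model structure on $\Sigma_{n+k}$-spaces, restricts along $\Sigma_n\times\Sigma_k\hookrightarrow\Sigma_{n+k}$ to a cofibrant object in the fine model structure on $(\Sigma_n\times\Sigma_k)$-spaces (the orbit cells $\Sigma_{n+k}/H$ restrict to finite disjoint unions of $(\Sigma_n\times\Sigma_k)$-orbits by the double coset formula); (ii) that flatness of $A_\beta$ forces the $\boxtimes_G$-power $A_\beta^{\boxtimes_G k}$, with its $\Sigma_k$-action, to be cofibrant in the fine $\Sigma_k$-equivariant flat model structure on $\Sigma_k$-objects in $\Top^{\cI}_G$ — this "flat powers are equivariantly flat" property is precisely the feature of the flat model structures that makes Proposition~\ref{prop:lifted-model-str-on-SID} hold at all; combining (i) and (ii) and taking the coproduct over $k$ gives that $\cD_{A_\beta}[n]$ is cofibrant in the fine $\Sigma_n$-equivariant flat model structure; (iii) that iterating the pushout-product axiom for $(\Top^{\cI}_G,\boxtimes_G)$ together with the equivariant refinement built into the definition of flatness via the fine model structure on $\Sigma_m$-spaces, the map $Q^n_{n-1}(i_\beta)\to L_\beta^{\boxtimes_G n}$ is a cofibration in the fine $\Sigma_n$-equivariant flat model structure (here the hypothesis that $i_\beta$ is a flat cofibration enters); and (iv) that the functor $(-)\otimes_{\Sigma_n}(-)$ sends a pair consisting of a cofibration in the fine model structure on $\Sigma_n$-spaces and a cofibration in the fine $\Sigma_n$-equivariant flat model structure on $\Top^{\cI}_G$ to a cofibration in the absolute flat model structure on $\Top^{\cI}_G$. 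Applying (iv) to $\emptyset\to\cD_{A_\beta}[n]$ and $Q^n_{n-1}(i_\beta)\to L_\beta^{\boxtimes_G n}$ then yields that $P_{n-1}\to P_n$ is an absolute flat cofibration of $G$-modules, completing the induction. None of (i)--(iv) is conceptually new; they are the $G$-module versions of the equivariant pushout-product analysis of \cite{Sagave-S_diagram}*{\S\S3,\,9,\,10}, and setting up the fine $\Sigma_n$-equivariant flat model structures on $\Sigma_n$-objects in $\Top^{\cI}_G$ and verifying these statements is where the real work lies, while the reduction to cell algebras, the base case, and the assembly via the double cell filtration are formal.
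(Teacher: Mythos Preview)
Your proposal is correct and follows the same approach the paper has in mind: the paper's proof is a one-line reference stating that the argument is analogous to \cite[Proposition~12.5]{Sagave-S_diagram} (the case $G=U^{\cI}$), and you have correctly unpacked what that argument looks like once transported to $G$-modules, including the reduction to cell algebras, the $G$-module version of the double cell filtration from \cite[Proposition~10.1]{Sagave-S_diagram}, and the equivariant pushout-product analysis. One small caveat: your explicit formula $\cD_{A_\beta}[n]=\coprod_{k\geq 0}\cD(n+k)\otimes_{\Sigma_k}A_\beta^{\boxtimes_G k}$ for the enveloping functor is only correct when $A_\beta$ is free; for a general cell algebra these objects (denoted $U^{\mathbb D}_k(A)$ in \cite{Sagave-S_diagram}) are themselves built inductively along the cell structure, and one proves their $\Sigma_n$-equivariant flatness by a parallel induction rather than by the closed formula you wrote down---but this does not affect the structure of your argument.
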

\begin{proof}
  This is analogous to~\cite[Proposition 12.5]{Sagave-S_diagram}
  (which provides the statement for $G=U^{\cI}$).
\end{proof}

In the case of the commutativity operad $\cC$, the category $\Top^{\cI}_G[\cC]$ can be identified with the category 
$G/\cC \Top^{\cI}$ of commutative $G$-algebras. Under this identification, the lifted model structure on $\Top^{\cI}_G[\cC]$ resulting from Proposition~\ref{prop:lifted-model-str-on-SID} becomes the standard under-category model structure on  $G/\cC \Top^{\cI}$ inherited from the positive flat model structure on $\cC \Top^{\cI}$. With this in mind, the following result may be viewed as a strengthening of Lemma~\ref{lem:gen-M-algebras-underlying-flat} in the case of the commutativity operad. 

\begin{lemma}\label{lem:com-M-algebras-underlying-flat}
Let $G$ be a  commutative $\cI$-space monoid and let $A \to A'$ be a cofibration in the positive flat model structure on $G/\cC \Top^{\cI}$. If the underlying $G$-module of $A$ is flat, then the underlying $G$-module of $A'$ is also flat. 
\end{lemma}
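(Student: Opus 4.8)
The statement to prove is Lemma~\ref{lem:com-M-algebras-underlying-flat}: a cofibration $A\to A'$ in the positive flat model structure on $G/\cC\Top^{\cI}$ has the property that flatness of the underlying $G$-module of $A$ passes to $A'$. Let me think about how to approach this.

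First, I should recall what "flat $G$-module" means: cofibrant in the absolute flat model structure on $\Top^{\cI}_G$. And the generating cofibrations there are, by the lifting result, of the form $f\boxtimes G$ where $f$ ranges over generating cofibrations of the absolute flat model structure on $\Top^{\cI}$ — these are the latching-map conditions built from $\Sigma_n/H\times S^{n-1}\to\Sigma_n/H\times D^n$.

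The strategy should be a cell-induction argument on the cofibration $A\to A'$ in $G/\cC\Top^{\cI}$, reducing to a single cell attachment. By the standard description of cofibrations in the positive flat model structure on commutative monoids (via the filtration of a pushout $B\to B'$ along a generating cofibration $K\to L$ of the positive flat model structure on $\Top^{\cI}$), one builds $A'$ from $A$ by a transfinite composition of pushouts of "free commutative algebra" attachments. The crucial point is May's/Schwede's filtration of $A\otimes_{\text{pushout}}$: a pushout $A\to A\amalg_{\mathbb{P}(K)}\mathbb{P}(L)$ in commutative $G$-algebras (where $\mathbb{P}$ is the free commutative $G$-algebra functor and $K\to L$ is a positive flat cofibration of $\cI$-spaces, or more precisely one gets a $\Sigma_n$-equivariant version) is filtered so that each filtration quotient is of the form $A\boxtimes_G (Q_n(K\to L))$ where $Q_n$ is the $n$th "quotient" term built from the $\Sigma_n$-equivariant latching object of $L/K$, with the $\Sigma_n$-action. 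Since $A$ is a flat $G$-module and $Q_n$ is a flat $\Sigma_n$-$\cI$-space relative to the trivial one (positivity ensures freeness of the $\Sigma_n$-action in the relevant range), $A\boxtimes_G Q_n$ is a flat $G$-module, and the filtration maps are $h$-cofibrations of $G$-modules by the pushout-product axiom; hence the colimit $A'$ is flat. This is precisely the shape of the argument for \cite[Proposition~12.5]{Sagave-S_diagram} referred to in Lemma~\ref{lem:gen-M-algebras-underlying-flat}, adapted to work relative to a flat base $A$ rather than the unit $U^{\cI}$.

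So concretely the plan is: (1) reduce by a retract and transfinite-composition argument to the case of a single pushout along a generating cofibration of $G/\cC\Top^{\cI}$, which has the form $\mathbb{P}_G(K)\to\mathbb{P}_G(L)$ for $K\to L$ a generating cofibration in the positive flat model structure on $\Top^{\cI}$; here $\mathbb{P}_G$ denotes the free commutative $G$-algebra; (2) use the standard $\Sigma_n$-equivariant filtration of the resulting pushout $A\to A'$ by $G$-modules $A=F_0A'\to F_1A'\to\cdots$ with $F_nA'/F_{n-1}A' \cong A\boxtimes_G (L^{\boxtimes n}/\text{latching})_{\Sigma_n}$-type terms; (3) observe that the positivity hypothesis guarantees the relevant $\Sigma_n$-$\cI$-spaces are "flat enough" (free $\Sigma_n$-action on the latching quotient) so that smashing over $G$ with the flat $G$-module $A$ preserves flatness of $G$-modules — this uses the pushout-product axiom for $\boxtimes$ together with the lifted flat model structure on $\Top^{\cI}_G$ and its compatibility with the $\Sigma_n$-equivariant structure; (4) note each $F_{n-1}A'\to F_nA'$ is an $h$-cofibration of $G$-modules (again pushout-product), so $A=F_0A'\to A'=\colim_n F_nA'$ is an $h$-cofibration of $G$-modules with flat successive quotients, whence $A'$ is flat.

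The main obstacle will be step (3): making precise the interaction between the $\Sigma_n$-equivariant structure on the filtration quotients, the positivity condition, and the module structure over $G$. One has to check that the latching object $Q_n$ (with its $\Sigma_n$-action) is cofibrant in the fine $\Sigma_n$-model structure on the relevant level, that the positivity of $K\to L$ forces the $\Sigma_n$-action on the quotient to be free in positive levels, and that consequently $(-)_{\Sigma_n}$ applied after $\boxtimes_G A$ lands among flat $G$-modules. The cleanest route is to cite the double/$n$-fold cell filtration for commutative algebra cell attachments from \cite[Proposition~10.1]{Sagave-S_diagram} (as already done in the proof of Proposition~\ref{prop:lifted-model-str-on-SID}) and the analysis in \cite[Section~12]{Sagave-S_diagram}, then observe that the entire argument there goes through verbatim with the monoidal unit $U^{\cI}$ replaced by the flat $G$-module $A$ and $\boxtimes$ replaced by $\boxtimes_G$, because $A\boxtimes_G(-)$ preserves flat $G$-modules (inspecting generating cofibrations $f\boxtimes G$ of $\Top^{\cI}_G$, one gets $A\boxtimes_G(f\boxtimes G)\cong A\boxtimes f$, and $A$ flat as a $G$-module implies $A$ flat as an $\cI$-space by \cite[Proposition~3.15(ii)]{Sagave-S_diagram}, so this is a flat cofibration of $\cI$-spaces tensored up, hence a flat cofibration of $G$-modules). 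I would write the proof as: "This is analogous to the proof of \cite[Proposition~12.5]{Sagave-S_diagram} (or to Lemma~\ref{lem:gen-M-algebras-underlying-flat}); one reduces to a single cell attachment, applies the $n$-fold cell filtration, and uses that $A\boxtimes_G(-)$ preserves flat $G$-modules since $A$ is flat," expanding the flatness-preservation claim as above.
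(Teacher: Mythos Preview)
Your proposal follows essentially the same outline as the paper's proof: reduce by cell induction to a single generating cell attachment, then analyze the resulting pushout via the ($G$-module version of the) double cell filtration from \cite[Proposition~10.1]{Sagave-S_diagram}. The paper's proof, however, isolates one specific observation that you underemphasize: because the operad in question is the commutativity operad, the $\Sigma_k$-action on the object $U^{\mathbb D}_k(A)$ appearing in that filtration is \emph{trivial} (see \cite[Example~10.2]{Sagave-S_diagram}). This lets one pull $A$ out of the $\Sigma_k$-quotient entirely and reduce directly to the $G$-module version of \cite[Lemma~12.16]{Sagave-S_diagram}, rather than relying on freeness of the $\Sigma_n$-action on the other factor $Q_n$ as you do. Your freeness-via-positivity argument is not wrong, but the triviality observation is what cleanly separates the two factors and explains why the commutative case succeeds even though the commutativity operad fails the $\Sigma_k$-cofibrancy hypothesis of Lemma~\ref{lem:gen-M-algebras-underlying-flat}; your summary sentence ``analogous to Lemma~\ref{lem:gen-M-algebras-underlying-flat}'' blurs exactly this distinction.

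One minor correction: your justification that a flat $G$-module has flat underlying $\cI$-space should not cite \cite[Proposition~3.15(ii)]{Sagave-S_diagram} (which concerns cofibrant commutative $\cI$-space monoids). The correct argument is to inspect the generating cofibrations of the absolute flat model structure on $\Top^{\cI}_G$, as the paper does in the proof of Lemma~\ref{lem:boxtimes-G-square}.
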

\begin{proof}
By a cell induction argument, the claim reduces to the case where $A'$ is obtained from $A$ by attaching a generating cofibration for the positive flat model structure on $\Top^{\cI}_G[\cC]$. To analyze this pushout, we again use the $G$-module version of the double cell filtration provided by~\cite[Proposition 10.1]{Sagave-S_diagram}. The $\Sigma_k$-action on the object $U^{\mathbb D}_k(A)$ appearing in this proposition is trivial since we consider the commutativity operad here, see~\cite[Example 10.2]{Sagave-S_diagram}.  Hence the claim follows from the $G$-module version of~\cite[Lemma 12.16]{Sagave-S_diagram}.
\end{proof}

This lemma applies in particular to the commutative $G$-algebra $EG$ introduced in Definition~\ref{def:EG-BG}.
\begin{corollary}
If $G$ is a cofibrant commutative $\cI$-space monoid, then the underlying $G$-module of $EG$ is flat.
\end{corollary}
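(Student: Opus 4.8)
The plan is to derive the corollary from Lemma~\ref{lem:com-M-algebras-underlying-flat}. Recall from Definition~\ref{def:EG-BG} that $EG$ arises from a factorization of $B(U^{\cI},G,G)\to BG$ in the positive flat model structure on $\cC\Top^{\cI}$, and that the $G$-algebra structure on $EG$ is, by construction, the composite $G\to B(U^{\cI},G,G)\to EG$ of the inclusion of the last copy of $G$ with the acyclic cofibration coming from that factorization. Hence $B(U^{\cI},G,G)\to EG$ is a morphism of commutative $G$-algebras whose underlying map in $\cC\Top^{\cI}$ is a cofibration; since the model structure on $G/\cC\Top^{\cI}$ provided by Proposition~\ref{prop:lifted-model-str-on-SID} is the corresponding under-category model structure, this morphism is a cofibration in the positive flat model structure on $G/\cC\Top^{\cI}$. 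By Lemma~\ref{lem:com-M-algebras-underlying-flat} it therefore suffices to prove that the underlying $G$-module of $B(U^{\cI},G,G)$ is flat.

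For this I would argue exactly as in the proof of Lemma~\ref{lem:invariant-bar}(i), now in the setting of $\cI$-spaces. Since $G$ is a cofibrant commutative $\cI$-space monoid, its underlying $\cI$-space is flat and the unit $U^{\cI}\to G$ is a flat cofibration of $\cI$-spaces (the $\cI$-space counterpart of the properties of cofibrant commutative symmetric ring spectra used earlier; see \cite{Sagave-S_diagram}). View $B(U^{\cI},G,G)$ as the geometric realization of the simplicial $G$-module $[n]\mapsto G^{\boxtimes n}\boxtimes G$ with right $G$-action on the last smash factor. Each $G^{\boxtimes n}$ is a flat $\cI$-space by the pushout-product axiom, so every simplicial level $G^{\boxtimes n}\boxtimes G$ is a flat $G$-module, the functor $-\boxtimes G\colon \Top^{\cI}\to \Top^{\cI}_G$ being left Quillen for the absolute flat model structures; in particular the $0$-skeleton $U^{\cI}\boxtimes G\iso G$ is flat. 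The degeneracies of this simplicial $G$-module are obtained by applying $-\boxtimes G$ to the unit insertions built from $U^{\cI}\to G$, so the pushout-product axiom together with the flat-unit property shows that its latching maps are cofibrations of $G$-modules. Hence, via the skeletal filtration of the realization (cf.\ \cite[Corollary~2.4]{Lewis_when-cofibration}), each inclusion of the $(n-1)$-skeleton into the $n$-skeleton is a cofibration of $G$-modules, and so $B(U^{\cI},G,G)$ is a flat $G$-module, as required.

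I expect the only point that genuinely needs care to be the claim that a cofibrant commutative $\cI$-space monoid has a flat unit, i.e.\ that $U^{\cI}\to G$ is a flat cofibration of $\cI$-spaces, since this is precisely what identifies the latching maps of $B_{\bullet}(U^{\cI},G,G)$ as cofibrations of $G$-modules. This is the exact analogue for $\cI$-spaces of the statement about commutative symmetric ring spectra invoked in the proof of Lemma~\ref{lem:bar-monoidal-equivalence}, and it is part of the analysis of cofibrant commutative $\cI$-space monoids in \cite{Sagave-S_diagram}. Everything else is the same routine bookkeeping with skeletal filtrations of bar constructions already used in the proofs of Lemma~\ref{lem:invariant-bar} and Proposition~\ref{prop:B-alpha-Thom}.
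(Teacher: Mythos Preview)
Your proposal is correct and follows essentially the same approach as the paper: reduce to Lemma~\ref{lem:com-M-algebras-underlying-flat} applied to the cofibration $B(U^{\cI},G,G)\to EG$ in $G/\cC\Top^{\cI}$, and then verify flatness of the underlying $G$-module of $B(U^{\cI},G,G)$ via the skeletal filtration. The paper's proof is terser (it simply asserts the skeletal filtration step without spelling out the role of the flat unit), but your more detailed treatment of the latching maps and your explicit identification of the one nontrivial input are on point.
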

\begin{proof}
Inspecting the skeletal filtration of the commutative $G$-algebra $B(U^{\cI},G,G)$, we conclude that the underlying $G$-module is flat. Hence the claim follows from the previous lemma.  
\end{proof}

Now we can finally complete the proof of Theorem~~\ref{thm:Top^I_G-Quillen-equivalence}
by showing that the $(V,U)$-adjunction in Definition \ref{def:VU-adjunction} defines a Quillen equivalence. We also prove a variant of this result where we allow for actions by an operad in $\Top$.

\begin{proposition}\label{prop:ISG-ISBG-adj}
  Let $G$ be a grouplike and cofibrant commutative $\cI$-space monoid.
  \begin{enumerate}[(i)]  
\item The adjunction $V\colon \Top^{\cI}_G/EG \rightleftarrows
  \Top^{\cI}/BG \colon U$ is a Quillen equivalence with respect to the
  absolute and positive flat model structures.
\item If $\cD$ is an operad that satisfies the condition in Lemma~\ref{lem:gen-M-algebras-underlying-flat}, then the induced adjunction of $\cD$-algebras  $V\colon \Top^{\cI}_G[\cD]/EG \rightleftarrows
  \Top^{\cI}[\cD]/BG \colon U$ is a Quillen equivalence with respect to the positive flat model structures.
\end{enumerate}
\end{proposition}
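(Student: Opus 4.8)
The plan is to prove both statements simultaneously, treating (i) as the special case of (ii) where $\cD$ is the commutativity operad. First I would check the Quillen-adjunction part: the left adjoint $V$ is given by extension of scalars composed with base change, and since the latching-map/fine-model-structure description of cofibrations in the flat model structures is preserved by $-\boxtimes_G U^{\cI}$ and by pullback along the fibration $EG\to\mathrm{triv}_G BG$, one sees directly that $V$ preserves cofibrations and acyclic cofibrations; equivalently, $U$ preserves fibrations and acyclic fibrations, which also follows from right properness (Lemma~\ref{lem:U-preserves-I-equiv}). For the $\cD$-algebra version I would invoke Proposition~\ref{prop:lifted-model-str-on-SID} to know the positive flat model structure on $\Top^{\cI}_G[\cD]/EG$ exists, note that $V$ and $U$ lift to the categories of $\cD$-algebras (since $V$ is strong symmetric monoidal and $U$ lax symmetric monoidal by Lemma~\ref{lem:ISG-ISBG-adj-monoidal}), and observe that weak equivalences and fibrations in both $\cD$-algebra categories are created in the underlying categories, so the adjunction remains Quillen.

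The substance is showing the derived unit and counit are weak equivalences. For the counit, take a cofibrant object $\beta\colon Y\to BG$ in $\Top^{\cI}/BG$ (so $Y$ is a flat $\cI$-space, or a cofibrant $\cD$-algebra whose underlying $\cI$-space is flat by Lemma~\ref{lem:gen-M-algebras-underlying-flat}); the derived counit is $VU(\beta)^{\cof}\to VU(\beta)\to\beta$. Since $U$ preserves $\cI$-equivalences, it suffices to check $V(U(\beta)^{\cof})\to\beta$, i.e.\ that $U(\beta)^{\cof}\boxtimes_G U^{\cI}\to Y$ is an $\cI$-equivalence, where by construction $U(\beta)$ is the pullback $EG\times_{\mathrm{triv}_G BG}\mathrm{triv}_G Y$. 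The key input is Lemma~\ref{lem:boxtimes-G-square}: the square with corners $U(\beta)\boxtimes_G Y$-type pullbacks is homotopy cartesian when $G$ is grouplike and cofibrant, and combined with the fact that $EG\boxtimes_G U^{\cI}\cong BG$ and that the unit $U^{\cI}\to EG$ is an $\cI$-equivalence (noted after Definition~\ref{def:EG-BG}), this lets me identify $U(\beta)\boxtimes_G U^{\cI}$ with the homotopy pullback of $BG\leftarrow BG\leftarrow Y$, which is $Y$. For the unit, take a cofibrant object $\gamma\colon X\to EG$ in $\Top^{\cI}_G/EG$; I must show $\gamma\to UV(\gamma)$ is an $\cI$-equivalence, where $V(\gamma)\colon X\boxtimes_G U^{\cI}\to BG$ and $UV(\gamma)$ is the pullback $EG\times_{\mathrm{triv}_G BG}\mathrm{triv}_G(X\boxtimes_G U^{\cI})$. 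Again Lemma~\ref{lem:boxtimes-G-square} applied with the flat $G$-module $X$ (and using that $EG$ and $\mathrm{triv}_G BG = EG\boxtimes_G U^{\cI}$-via-$\mathrm{triv}_G$ fit into the relevant square) shows this pullback computes the homotopy pullback, which recovers $X$ since $X\to X\boxtimes_G U^{\cI}$ is the base change along the $\cI$-equivalence $G\to EG\to\mathrm{triv}_GBG$... more precisely I would run the same homotopy-pullback identification with roles of $EG$ and $X$ interchanged.

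For the $\cD$-algebra refinement, the only extra point is that the derived unit and counit are detected on underlying objects: since the forgetful functors from $\cD$-algebras create weak equivalences, and cofibrant $\cD$-algebras have flat underlying $\cI$-spaces (by the hypothesis on $\cD$ and Lemma~\ref{lem:gen-M-algebras-underlying-flat}), the homotopy-cartesianness arguments above apply verbatim to the underlying $\cI$-spaces, so the underlying maps of derived unit/counit are $\cI$-equivalences, hence so are the maps of $\cD$-algebras. I expect the main obstacle to be the careful bookkeeping in the counit argument: one must verify that the specific pullback defining $U(\beta)$, after applying $-\boxtimes_G U^{\cI}$, genuinely matches the homotopy pullback predicted by Lemma~\ref{lem:boxtimes-G-square}, which requires checking that the relevant $G$-modules ($EG$, and a suitable cofibrant replacement) are flat — here the Corollary following Lemma~\ref{lem:com-M-algebras-underlying-flat} (flatness of the underlying $G$-module of $EG$) is essential — and that the grouplike hypothesis is used in precisely the form supplied by \cite[Theorem~7.6]{May-classifying} through Lemma~\ref{lem:boxtimes-G-square}. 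Everything else is a formal consequence of \cite[Proposition~1.3.13]{Hovey_model}.
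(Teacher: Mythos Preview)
Your overall architecture is close to the paper's, and your unit argument is essentially correct: applying Lemma~\ref{lem:boxtimes-G-square} to the map $X\to EG$ of flat $G$-modules and $G\to U^{\cI}$ produces a homotopy cartesian square, and since $EG\boxtimes_G U^{\cI}\to BG$ is an $\cI$-equivalence (not an isomorphism, by the way) and $EG\to BG$ is a fibration, this identifies $X$ with the relevant homotopy pullback.

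The counit argument, however, has a real gap. From Lemma~\ref{lem:boxtimes-G-square} you obtain a homotopy cartesian square with $U(\beta)^{\cof}$ in the upper left and $U(\beta)^{\cof}\boxtimes_G U^{\cI}$ in the upper right, sitting over $EG\to EG\boxtimes_G U^{\cI}\simeq BG$. From the definition of $U$ you know a second homotopy cartesian square with $U(\beta)^{\cof}$ upper left and $Y$ upper right, again over $EG\to BG$. But two homotopy cartesian squares sharing three corners do \emph{not} have equivalent fourth corners in general: the homotopy pullback determines the upper-\emph{left} vertex, not the upper-right one. Your sentence ``this lets me identify $U(\beta)\boxtimes_G U^{\cI}$ with the homotopy pullback of $BG\leftarrow BG\leftarrow Y$'' is exactly the unjustified step. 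What is needed is the additional observation that, after applying $(-)_{h\cI}$, the map $EG_{h\cI}\to (EG\boxtimes_G U^{\cI})_{h\cI}$ is surjective (it is a levelwise quotient), so that every vertical homotopy fibre of the middle column lies over a point hit by the left column; then comparing vertical homotopy fibres and using the two known homotopy cartesian squares forces the remaining square to be homotopy cartesian. This is precisely the extra input the paper supplies, and it is what your ``careful bookkeeping'' remark should have been. The paper packages both directions at once by using the single-map criterion for Quillen equivalences rather than checking unit and counit separately, but the substantive point is the same.

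Two smaller corrections: part~(i) is not the special case $\cD=\cC$ of part~(ii), since (i) concerns modules with no operad action and also treats the \emph{absolute} flat model structure; and in the Quillen-adjunction step, $V$ involves post-composition (not pullback) along $EG\to\mathrm{triv}_GBG$.
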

\begin{proof}
 For (i) we first observe that the $(V,U)$-adjunction is composed of two Quillen adjunctions, hence is itself a Quillen adjunction. 
 Now suppose we are given a cofibrant object $X \to EG$ in $\Top^{\cI}_G/EG$, a fibrant object $Y \to BG$ in $\Top^{\cI}/BG$, and a map of $\cI$-spaces $\varphi\colon X\boxtimes_GU^{\cI}\to Y$ over $BG$. Then we must show that $\varphi$ is an $\cI$-equivalence if and only if its adjoint is. Consider the commutative diagram of $\cI$-spaces
 \[
 \xymatrix@-1pc{
 X \ar[r]^-{\cong} \ar[d]& X\boxtimes_GG \ar[r] \ar[d] &X\boxtimes_GU^{\cI} \ar[r]^-{\varphi} \ar[d]& Y \ar[d]\\
 EG \ar[r]^-{\cong} & EG\boxtimes_GG \ar[r]  &EG\boxtimes_GU^{\cI} \ar[r] & BG 
 }
 \]
 where the middle square is induced by the given map $X\to EG$ and the projection $G\to U^{\cI}$. The latter square is homotopy cartesian by Lemma~\ref{lem:boxtimes-G-square}. Notice that the map $EG\boxtimes_GU^{\cI} \to BG$ in the diagram is an $\cI$-equivalence since the composition
 \[
 B(U^{\cI},G,G)\boxtimes_GU^{\cI}\to EG\boxtimes_GU^{\cI} \to BG
 \]
 is an isomorphism and the first map is an $\cI$-equivalence (the left Quillen functor $(-)\boxtimes_GU^{\cI}$ preserves acyclic cofibrations). 
 Hence $\varphi$ is an $\cI$-equivalence if and only if the right hand square is homotopy cartesian. Furthermore, it follows from the definitions that the adjoint of $\varphi$ is an $\cI$-equivalence if and only if the outer square is homotopy cartesian. Now it is clear that if the right hand square is homotopy cartesian, then the outer square is also homotopy cartesian. In order to show the converse, we use that the functor 
 $(-)_{h\cI}$ detects and preserves homotopy cartesian squares by \cite[Corollary 11.4]{Sagave-S_diagram}. Passing to the associated diagram of homotopy colimits, the bottom horizontal map in the middle becomes surjective so that the result follows by comparing vertical homotopy fibers.

The Quillen adjunction part of (ii) follows from (i). For the
  Quillen equivalence part, we use that if $X\to EG$ is an cofibrant object in 
 $\Top^{\cI}_G[\cD]/EG$, then the underlying $G$-module of $X$ is flat by
  Lemma~\ref{lem:gen-M-algebras-underlying-flat}. Hence the argument
  for (i) applies.
\end{proof}   
  
 \appendix
\section{Rectification of loop maps} \label{app:loop-rectification}
\label{sect:loop-rectification}
Let $\cA\Top$ denote the category of topological monoids and let us for the rest of this section fix a topological monoid $M$ which we assume to be grouplike and well-based. We shall then define a ``loop functor'' with values in the corresponding over-category $\cA\Top/M$. Our main application of this construction  in the paper is for $M=BG_{h\cI}$ (in the notation of Section~\ref{subsec:space-level-T}) in which case it allows us to pass from loop space data to strictly associative $R$-algebra Thom spectra, cf.\ Proposition~\ref{prop:T-operad-version}.

\subsection{The classifying space \texorpdfstring{$B_1M$}{B1 M}}
In order to work within the setting of topological monoids (as opposed to a more relaxed notion of $A_{\infty}$ spaces), it is most elegant to apply the model $B_1M$ of the classifying space functor introduced by Fiedorowicz~\cite{Fiedorowicz_classifying} (which in turn is a variant of May's classifying space functor \cite{May-geometry}). We begin by reviewing the relevant details.  Recall that the Moore loop space $\Lambda(K)$ of a based space $K$ is the subspace of $\Map([0,\infty),K)\times [0,\infty)$ given by the pairs $(f,r)$ such that $f(t)=*$ (the base point) if $t=0$ or $t\geq r$.  The Moore loop space defines a functor from based spaces to topological monoids and the canonical inclusion of the standard loop space $\Omega (K)\to \Lambda (K)$ is an (unbased) deformation retract. We can refine the target category for $\Lambda$ by letting $\Top_{[0,\infty)}$ be the full subcategory of the over-category 
$\Top_*/[0,\infty)$ with objects $p\colon K\to [0,\infty)$ such that $p^{-1}(0)=\{*\}$. (This is the category denoted $\mathcal T_*[\mathbb R_+]$ in \cite{Fiedorowicz_classifying}). The structure map $\Lambda(K)\to[0,\infty)$ is the obvious projection.
Notice that $\Top_{[0,\infty)}$ inherits a symmetric monoidal structure from 
$\Top_*/[0,\infty)$ when we equip $[0,\infty)$ with the additive monoid structure. The purpose of introducing the category $\Top_{[0,\infty)}$ is to realize $\Lambda$ as a right adjoint in an adjunction
\[
\Xi\colon\Top_{[0,\infty)}\rightleftarrows \Top_*:\!\Lambda
\]
in which the left adjoint is the Moore suspension functor defined by
\[
\Xi(K,p)=K\times[0,\infty) /  \{(x,s)|\text{ $s=0$ or $s\geq p(x)$} \}.
\]
Because $(\Xi,\Lambda)$ form an adjoint functor pair, the composition $\Lambda\Xi$ defines a monad on $\Top_{[0,\infty)}$. We also have the symmetric monoidal adjunction 
\[
L\colon \Top_{[0,\infty)} \rightleftarrows \Top_* :\! R
\]
where $L$ is the forgetful functor and $R$ takes a based space $K$ to
\[
RK=\{(x,s)\in K\times [0,\infty)|\text{ $s>0$ or $s=0$ and $x=*$}\} 
\]
with structure map the projection onto $[0,\infty)$. It follows from \cite[Lemma~6.6]{Fiedorowicz_classifying} that there are natural homeomorphisms $R\Omega=\Lambda$ and $\Sigma L=\Xi$. This is the reason for working with $\Top_{[0,\infty)}$ as opposed to $\Top_*/[0,\infty)$. It is easy to check that $R$ takes NDR pairs to NDR pairs (see \cite{Steenrod_convenient}). This in turn implies that if $K$ is a well-based space, then also $RK$ is well-based. 


Now let $J$ denote the classical James construction that to a based space $K$ associates the free topological monoid $J(K)$. 
We may also view $J$ as a monad on $\Top_{[0,\infty)}$ by assigning to an object 
$p\colon K\to [0,\infty)$ the induced map of topological monoids $J(K)\to [0,\infty)$. Defined in this manner, $J(K,p)$ is the free monoid on $(K,p)$ which implies that we have a canonical map of monads $\lambda\colon J\to \Lambda\Xi$ on $\Top_{[0,\infty)}$. It is proved in \cite[Theorem~6.8]{Fiedorowicz_classifying} that this is a weak equivalence when applied to spaces that are well-based and path connected. 

The definition of Fiedorowicz's classifying space $B_1M$ is based on the monadic bar construction and we refer to \cite[Section~9]{May-geometry} for details. First we apply the functor $R$ to get a topological monoid $RM$ that is an algebra for the monad $J$ on 
$\Top_{[0,\infty)}$. Secondly, the adjoint of $\lambda\colon J\to \Lambda\Xi$  makes $\Xi$ a $J$-functor in the sense of \cite{May-geometry}. Putting all this together, $B_1M$ is defined to be the corresponding monadic bar construction 
\[
B_1M=B(\Xi,J,RM). 
\]
The fact that $RM$ is well-based  ensures that this is the realization of a good simplicial space in the sense that the degeneracy maps are $h$-cofibrations (it is clear that $J$ preserves $h$-cofibrations). 

We shall also consider the monadic bar construction $B(J,J,RM)$ which comes with a canonical weak equivalence of topological monoids 
\begin{equation}\label{eq:B(J,J,RM)-M}
B(J,J,RM)\xr{\sim} RM\xr{\sim} M.
\end{equation}
Let us use the notation $\lambda$ for the composite map of topological monoids 
\begin{equation}\label{eq:lamba-equivalence}
\lambda\colon B(J,J,RM)\xr{\sim} B(\Lambda\Xi, J,RM)\xr{\sim} \Lambda (B(\Xi,J,RM))=\Lambda (B_1M)
\end{equation}
where the first map is induced by the natural transformation with the same name and the second map is the canonical weak equivalence relating the geometric realization of the levelwise Moore loop space to the Moore loop space of the geometric realization, cf.\ \cite[Theorem~12.3]{May-geometry}. It remains to show that the first map is a weak equivalence as indicated.

\begin{proposition}\label{prop:lambda-equivalence}
The map $\lambda$ in \eqref{eq:lamba-equivalence} is a weak homotopy equivalence. 
\end{proposition}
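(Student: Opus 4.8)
The plan is to reduce Proposition~\ref{prop:lambda-equivalence} to the standard fact that for a grouplike well-based topological monoid the canonical map to the loop space of its classifying space is a weak homotopy equivalence. First I would dispose of the second map in~\eqref{eq:lamba-equivalence}. Since $\Lambda\Xi=\Lambda\circ\Xi$ and the $J$-functor structure on $\Lambda\Xi$ is $\Lambda$ applied to the one on $\Xi$, the simplicial space underlying $B(\Lambda\Xi,J,RM)$ is obtained by applying $\Lambda$ levelwise to the simplicial space $B_\bullet(\Xi,J,RM)$, whose realization is $B_1M$. This simplicial space is good because $RM$ is well-based and $J$ and $\Xi$ preserve $h$-cofibrations, and it is levelwise path connected because $\Xi$ takes values in path connected spaces; hence the canonical map $|\Lambda B_\bullet(\Xi,J,RM)|\to\Lambda|B_\bullet(\Xi,J,RM)|=\Lambda B_1M$ is a weak equivalence by \cite[Theorem~12.3]{May-geometry}. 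It therefore suffices to show that the composite $\lambda'\colon B(J,J,RM)\to\Lambda B_1M$ is a weak homotopy equivalence.

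The core step is to identify $\lambda'$. Using~\eqref{eq:B(J,J,RM)-M}, the augmentation gives a weak equivalence $B(J,J,RM)\xr{\sim}RM\xr{\sim}M$, and I claim that under this equivalence $\lambda'$ is homotopic to the canonical group-completion map $\gamma_M\colon M\to\Lambda B_1M\simeq\Omega B_1M$ adjoint to the standard map $\Xi(RM)\simeq\Sigma M\to B_1M$. To see this one passes to adjoints under $\Xi\dashv\Lambda$ and uses that $\Xi$ commutes with geometric realization, so that $\lambda'$ corresponds to the realization of the levelwise $J$-functor structure maps $\Xi J(J^kRM)\to\Xi(J^kRM)=B_k(\Xi,J,RM)$; one then matches these up with the canonical map out of $B_0(\Xi,J,RM)=\Xi(RM)$ by means of the extra degeneracy that trivializes $B(J,J,RM)\xr{\sim}RM$. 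This bookkeeping — verifying that $\lambda'$ is the expected comparison map rather than some other self-map of $M$ — is the main obstacle; the remaining input is soft.

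Granting that $\lambda'\simeq\gamma_M$, the proof concludes by invoking that $M$ is grouplike: the construction $B_1$ is weakly equivalent to the usual bar construction $B$ (see \cite{Fiedorowicz_classifying}), the universal fibration $B(*,M,M)\to BM$ is a quasifibration with fibre $M$ by \cite[Theorem~7.6]{May-classifying} since $M$ is grouplike, and $B(*,M,M)$ is contractible by \cite[Section~9]{May-geometry}; comparing homotopy fibres shows that $M\to\Omega BM$ is a weak equivalence, and transporting along $B_1M\simeq BM$ and $\Lambda B_1M\simeq\Omega B_1M$ we conclude that $\gamma_M$, hence $\lambda'$, hence $\lambda$, is a weak homotopy equivalence. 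Alternatively, one can avoid the classifying-space comparison: at each simplicial level $\lambda$ is the James map $JY\to\Lambda\Xi Y\simeq\Omega\Sigma Y$, which is always a group completion, so a bar spectral sequence argument, using that the realization $B(J,J,RM)\simeq M$ is already grouplike, shows that $\lambda'$ is an integral homology isomorphism inducing a bijection on $\pi_0$ and hence a weak equivalence of nilpotent spaces; this route essentially re-proves the group completion theorem in the situation at hand, and I expect the first route to be cleaner to write up.
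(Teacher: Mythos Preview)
Your outline is plausible, but the paper takes a different and more economical route that sidesteps exactly the step you flag as the main obstacle. Instead of identifying $\lambda'$ with the group-completion map $\gamma_M$ and then appealing to the standard $M\simeq\Omega BM$ argument, the paper argues as follows: both $B(J,J,RM)$ and $\Lambda(B_1M)$ are grouplike topological monoids (the first because it is weakly equivalent to $M$, the second because any Moore loop space is), so $\lambda$ is a weak equivalence if and only if $B\lambda$ is. After applying $B$ and commuting with realization, the question reduces to showing that at each simplicial level the map $BJ(J^kRM)\to B\Lambda\Xi(J^kRM)$ is a weak equivalence, and this is precisely \cite[Theorem~6.12]{Fiedorowicz_classifying} applied to the well-based object $J^kRM$.

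The comparison is instructive. Your approach tries to identify $\lambda$ explicitly as a known map and then invoke a known theorem about that map; the paper instead exploits that in the grouplike regime one can test weak equivalence after delooping, which linearizes the problem into a levelwise check against an off-the-shelf lemma of Fiedorowicz. Your identification of $\lambda'$ with $\gamma_M$ via adjoints and extra degeneracies can presumably be made precise, but you have not actually carried it out, and the paper's argument shows it is unnecessary. Your alternative bar-spectral-sequence route is morally the same idea as the paper's (compare after applying $B$), but the paper's phrasing via Fiedorowicz's theorem is cleaner than re-deriving a group-completion statement.
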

\begin{proof}
Notice first that our assumption that $M$ be grouplike implies that also $B(J,J,RM)$ is grouplike. The topological monoid $\Lambda(B_1M)$ is grouplike by definition. Hence it suffices to show that $\lambda$ induces a weak equivalence $B\lambda$ of bar constructions. The latter statement reduces to showing that the first map in \eqref{eq:lamba-equivalence} induces a weak equivalence of bar constructions in every simplicial degree. This essentially follows from \cite[Theorem~6.12]{Fiedorowicz_classifying} which states that $BJ(K,p)\to B\Lambda(\Xi(K,p))$ is a weak equivalence for every well-based object $(K,p)$ in $\Top_{[0,\infty)}$. 
\end{proof}

Combining the weak equivalences in \eqref{eq:B(J,J,RM)-M} and \eqref{eq:lamba-equivalence}, we get a chain of weak equivalences relating the topological monoids $M$ and $\Lambda(B_1M)$. This is one of the advantages of working with the model $B_1M$ of the classifying space.
Applying the ordinary (bar construction) classifying space functor to these equivalences and composing with the weak equivalence 
$B\Lambda(B_1 M)\xr{\sim} B_1M$ from \cite[Lemma~15.4]{May-classifying}, we get the chain of weak homotopy equivalences
\[
BM\xl{\sim} BB(J,J,RM)\xr{\sim} B_1M
\]
as stated in \cite[Theorem~7.3]{Fiedorowicz_classifying}.

\subsection{Rectification of loop maps} \label{subsec:rectified-loop}
Based on the preparations in the previous subsection, we now introduce a rectified loop functor
\[
\Omega'\colon \Top_*/B_1M\to \cA\Top/M;\qquad (g\colon K\to B_1M)\mapsto (\Omega'(g)\colon\Omega'_g(K)\to M)
\]  
that models the looping of a based map $g\colon K\to B_1M$ by a map of actual topological monoids $\Omega'(g)\colon \Omega_g'(K)\to M$. 
In detail, given a based map $g$ as above, we let $\Omega'_g(K)$ be the homotopy pullback of the diagram of topological monoids 
\[
\Lambda(K)\xr{\Lambda(g)} \Lambda(B_1 M) \xl{\lambda} B(J,J,RM)
\] 
and we let $\Omega'(g)$ be the composite map
\[
\Omega'(g)\colon \Omega'_g(K)\to B(J,J,RM)\to M.
\]
It follows from the discussion in the previous subsection that the composition 
\[
\cA\Top/M\xr{B_1} \Top_*/B_1 M\xr{\Omega'} \cA\Top/M
\]
defines a group completion functor on the full subcategory of well-based topological monoids over $M$. Restricted to objects (that is, homomorphisms) $f\colon N\to M$ with $N$ well-based and grouplike, we thus have a chain of natural weak equivalences of topological monoids
$\Omega'_{B_1f}(B_1 N)\simeq N$ over $M$. 

We shall need a further compatibility relation. Let $(B_1(-)\downarrow B_1M)$ be the comma category with objects $(N,g)$ given by a topological monoid $N$ and a based map $g\colon B_1N\to B_1 M$. A morphism $f\colon (N,g)\to (N',g')$ is a homomorphism $f\colon N\to N'$ such that $g=g'\circ Bf$. In the following proposition we view $\Omega'_g(B_1 N)$ and $N$ as functors from $(B_1(-)\downarrow B_1M)$ to 
$\cA\Top$ by composing with the appropriate forgetful functors.
\begin{proposition}\label{prop:Omega'-compatible}
There is a chain of natural maps of topological monoids relating $\Omega'_g(B_1 N)$ and $N$. These maps are weak equivalences when $N$ is well-based and grouplike. 
\end{proposition}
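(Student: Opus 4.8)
The plan is to assemble the chain directly from the ingredients already used to define $\Omega'$. By construction $\Omega'_g(B_1N)$ is the homotopy pullback, formed in $\cA\Top$, of the diagram
\[
\Lambda(B_1N)\xr{\Lambda(g)}\Lambda(B_1M)\xl{\lambda}B(J,J,RM).
\]
Since the right-hand leg $\lambda\colon B(J,J,RM)\to\Lambda(B_1M)$ is a weak equivalence by Proposition~\ref{prop:lambda-equivalence} (recall that $M$ is grouplike and well-based throughout the appendix), the projection
\[
\Omega'_g(B_1N)\longto\Lambda(B_1N)
\]
is a homomorphism of topological monoids and a weak homotopy equivalence, with \emph{no} hypothesis on $N$. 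First I would record this, using that the homotopy pullback in $\cA\Top$ is modelled by the strict pullback along the monoid path object $\Lambda(B_1M)^I\to\Lambda(B_1M)\times\Lambda(B_1M)$ (a Hurewicz fibration), so that its underlying space is the honest homotopy pullback of the underlying spaces. Naturality of this projection in $(N,g)\in(B_1(-)\downarrow B_1M)$ is immediate because only the left leg $\Lambda(g)$ of the cospan depends on $(N,g)$, and a morphism $f\colon(N,g)\to(N',g')$ satisfies $g=g'\circ B_1f$, hence induces a map of cospans.

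Next I would prepend the chain $N\xl{\sim}B(J,J,RN)\xr{\lambda}\Lambda(B_1N)$ obtained by applying \eqref{eq:B(J,J,RM)-M} and \eqref{eq:lamba-equivalence} with $N$ in place of $M$. This produces the zig-zag of homomorphisms of topological monoids
\[
\Omega'_g(B_1N)\longto\Lambda(B_1N)\xl{\lambda}B(J,J,RN)\longto N,
\]
which is natural in $(N,g)$ since every term apart from $\Omega'_g(B_1N)$ is obtained from the forgetful functor $(N,g)\mapsto N$ followed by constructions natural in $N$. When $N$ is well-based and grouplike, Proposition~\ref{prop:lambda-equivalence} applied to $N$ shows that $\lambda\colon B(J,J,RN)\to\Lambda(B_1N)$ is a weak equivalence, and \eqref{eq:B(J,J,RM)-M} shows that $B(J,J,RN)\to N$ is one; together with the first map this gives all the asserted weak equivalences.

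The only point that requires care is the interaction between the monoid structure and the homotopy pullback in the first step: one must know that the homotopy pullback in $\cA\Top$ can be taken so that the projection to $\Lambda(B_1N)$ is simultaneously a homomorphism and a weak equivalence detected on underlying spaces. This is standard — the forgetful functor $\cA\Top\to\Top$ creates fibrations and weak equivalences, every object is fibrant, and the monoid path object supplies the required factorization of the diagonal — so no essentially new argument is needed. The remainder is the bookkeeping of naturality over the comma category $(B_1(-)\downarrow B_1M)$ indicated above.
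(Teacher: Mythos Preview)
Your chain
\[
\Omega'_g(B_1N)\to\Lambda(B_1N)\xl{\lambda}B(J,J,RN)\to N
\]
is exactly the zig-zag the paper writes down (in the opposite orientation), and your justification for each step matches the paper's: the projection from the homotopy pullback is a weak equivalence because $\lambda$ for $M$ is, and the remaining two maps are weak equivalences when $N$ is well-based and grouplike by the $N$-version of Proposition~\ref{prop:lambda-equivalence} and \eqref{eq:B(J,J,RM)-M}. Your extra paragraph on why the homotopy pullback in $\cA\Top$ computes the underlying space-level one is a point the paper leaves implicit; it is correct and does no harm.
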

\begin{proof}
It follows from the definition that the homomorphisms
\[
N\xl{\sim} B(J,J,RN)\xr{} \Lambda(B_1 N)\xl{\sim} \Omega'_g(B_1 N)
\] 
satisfy the stated naturality  conditions. The second statement then follows from Proposition~\ref{prop:lambda-equivalence}.
\end{proof}

\begin{bibdiv}
\begin{biblist}

\bib{Ando-B-G-H-R_infinity-Thom}{article}{
      author={Ando, Matthew},
      author={Blumberg, Andrew~J.},
      author={Gepner, David},
      author={Hopkins, Michael~J.},
      author={Rezk, Charles},
       title={An {$\infty$}-categorical approach to {$R$}-line bundles,
  {$R$}-module {T}hom spectra, and twisted {$R$}-homology},
        date={2014},
        ISSN={1753-8416},
     journal={J. Topol.},
      volume={7},
      number={3},
       pages={869\ndash 893},
         url={http://dx.doi.org/10.1112/jtopol/jtt035},
}

\bib{Ando-B-G-H-R_units-Thom}{article}{
      author={Ando, Matthew},
      author={Blumberg, Andrew~J.},
      author={Gepner, David},
      author={Hopkins, Michael~J.},
      author={Rezk, Charles},
       title={Units of ring spectra, orientations and {T}hom spectra via rigid
  infinite loop space theory},
        date={2014},
        ISSN={1753-8416},
     journal={J. Topol.},
      volume={7},
      number={4},
       pages={1077\ndash 1117},
         url={http://dx.doi.org/10.1112/jtopol/jtu009},
}

\bib{Adams_Stable}{book}{
      author={Adams, J.~F.},
       title={Stable homotopy and generalised homology},
      series={Chicago Lectures in Mathematics},
   publisher={University of Chicago Press, Chicago, IL},
        date={1995},
        ISBN={0-226-00524-0},
        note={Reprint of the 1974 original},
}

\bib{Angeltveit-THH}{article}{
      author={Angeltveit, Vigleik},
       title={Topological {H}ochschild homology and cohomology of {$A_\infty$}
  ring spectra},
        date={2008},
        ISSN={1465-3060},
     journal={Geom. Topol.},
      volume={12},
      number={2},
       pages={987\ndash 1032},
         url={http://dx.doi.org/10.2140/gt.2008.12.987},
}

\bib{Basu_THH-of-K/p}{article}{
      author={Basu, Samik},
       title={Topological {H}ochschild homology of {$K/p$} as a {$K_p^\wedge$}
  module},
        date={2017},
        ISSN={1532-0073},
     journal={Homology Homotopy Appl.},
      volume={19},
      number={1},
       pages={253\ndash 280},
         url={http://dx.doi.org/10.4310/HHA.2017.v19.n1.a13},
}

\bib{Blumberg-C-S_THH-Thom}{article}{
      author={Blumberg, Andrew~J.},
      author={Cohen, Ralph~L.},
      author={Schlichtkrull, Christian},
       title={Topological {H}ochschild homology of {T}hom spectra and the free
  loop space},
        date={2010},
        ISSN={1465-3060},
     journal={Geom. Topol.},
      volume={14},
      number={2},
       pages={1165\ndash 1242},
         url={http://dx.doi.org/10.2140/gt.2010.14.1165},
}

\bib{Brun-D-S_equivariant}{misc}{
      author={Brun, Morten},
      author={Dundas, B.~I.},
      author={Stolz, Martin},
       title={Equivariant structure on smash powers},
        note={\arxivlink{1604.05939}},
}

\bib{Bousfield-Kan_homotopy-limits}{book}{
      author={Bousfield, A.~K.},
      author={Kan, D.~M.},
       title={Homotopy limits, completions and localizations},
      series={Lecture Notes in Mathematics, Vol. 304},
   publisher={Springer-Verlag, Berlin-New York},
        date={1972},
}

\bib{Bastera-M_Homology}{article}{
      author={Basterra, Maria},
      author={Mandell, Michael~A.},
       title={Homology and cohomology of {$E_\infty$} ring spectra},
        date={2005},
        ISSN={0025-5874},
     journal={Math. Z.},
      volume={249},
      number={4},
       pages={903\ndash 944},
         url={http://dx.doi.org/10.1007/s00209-004-0744-y},
}

\bib{Basu-S_THH-quotients}{misc}{
      author={Basu, Samik},
      author={Schlichtkrull, Christian},
       title={Topological {H}ochschild homology of quotients via generalized
  {T}hom spectra},
        note={In preparation},
}

\bib{EKMM}{book}{
      author={Elmendorf, A.~D.},
      author={Kriz, I.},
      author={Mandell, M.~A.},
      author={May, J.~P.},
       title={Rings, modules, and algebras in stable homotopy theory},
      series={Mathematical Surveys and Monographs},
   publisher={American Mathematical Society},
     address={Providence, RI},
        date={1997},
      volume={47},
        ISBN={0-8218-0638-6},
        note={With an appendix by M. Cole},
}

\bib{Fiedorowicz_classifying}{article}{
      author={Fiedorowicz, Z.},
       title={Classifying spaces of topological monoids and categories},
        date={1984},
        ISSN={0002-9327},
     journal={Amer. J. Math.},
      volume={106},
      number={2},
       pages={301\ndash 350},
         url={http://dx.doi.org/10.2307/2374307},
}

\bib{Goerss-Hopkins_moduli}{incollection}{
      author={Goerss, P.~G.},
      author={Hopkins, M.~J.},
       title={Moduli spaces of commutative ring spectra},
        date={2004},
   booktitle={Structured ring spectra},
      series={London Math. Soc. Lecture Note Ser.},
      volume={315},
   publisher={Cambridge Univ. Press, Cambridge},
       pages={151\ndash 200},
         url={http://dx.doi.org/10.1017/CBO9780511529955.009},
}

\bib{Hopkins-L_Brauer}{misc}{
      author={Hopkins, Michael~J.},
      author={Lurie, Jacob},
       title={On {B}rauer groups of {L}ubin-{T}ate spectra {I}.},
        date={2017},
        note={Preprint, available at
  \url{http://www.math.harvard.edu/~lurie/}},
}

\bib{Hovey_model}{book}{
      author={Hovey, Mark},
       title={Model categories},
      series={Mathematical Surveys and Monographs},
   publisher={American Mathematical Society},
     address={Providence, RI},
        date={1999},
      volume={63},
        ISBN={0-8218-1359-5},
}

\bib{HSS}{article}{
      author={Hovey, Mark},
      author={Shipley, Brooke},
      author={Smith, Jeff},
       title={Symmetric spectra},
        date={2000},
        ISSN={0894-0347},
     journal={J. Amer. Math. Soc.},
      volume={13},
      number={1},
       pages={149\ndash 208},
}

\bib{Lewis_when-cofibration}{article}{
      author={Lewis, L.~Gaunce, Jr.},
       title={When is the natural map {$X\rightarrow \Omega \Sigma X$} a
  cofibration?},
        date={1982},
        ISSN={0002-9947},
     journal={Trans. Amer. Math. Soc.},
      volume={273},
      number={1},
       pages={147\ndash 155},
         url={http://dx.doi.org/10.2307/1999197},
}

\bib{Lind_diagram}{article}{
      author={Lind, John~A.},
       title={Diagram spaces, diagram spectra and spectra of units},
        date={2013},
        ISSN={1472-2747},
     journal={Algebr. Geom. Topol.},
      volume={13},
      number={4},
       pages={1857\ndash 1935},
         url={http://dx.doi.org/10.2140/agt.2013.13.1857},
}

\bib{LMS}{book}{
      author={Lewis, L.~G., Jr.},
      author={May, J.~P.},
      author={Steinberger, M.},
      author={McClure, J.~E.},
       title={Equivariant stable homotopy theory},
      series={Lecture Notes in Mathematics},
   publisher={Springer-Verlag},
     address={Berlin},
        date={1986},
      volume={1213},
        ISBN={3-540-16820-6},
        note={With contributions by J. E. McClure},
}

\bib{May-geometry}{book}{
      author={May, J.~P.},
       title={The geometry of iterated loop spaces},
   publisher={Springer-Verlag, Berlin-New York},
        date={1972},
        note={Lectures Notes in Mathematics, Vol. 271},
}

\bib{May-classifying}{article}{
      author={May, J.~Peter},
       title={Classifying spaces and fibrations},
        date={1975},
        ISSN={0065-9266},
     journal={Mem. Amer. Math. Soc.},
      volume={1},
      number={1, 155},
       pages={xiii+98},
}

\bib{May-E-infty-ring-spaces}{book}{
      author={May, J.~Peter},
       title={{$E_{\infty }$} ring spaces and {$E_{\infty }$} ring spectra},
      series={Lecture Notes in Mathematics, Vol. 577},
   publisher={Springer-Verlag, Berlin-New York},
        date={1977},
        note={With contributions by Frank Quinn, Nigel Ray, and J{\o}rgen
  Tornehave},
}

\bib{MMSS}{article}{
      author={Mandell, M.~A.},
      author={May, J.~P.},
      author={Schwede, S.},
      author={Shipley, B.},
       title={Model categories of diagram spectra},
        date={2001},
        ISSN={0024-6115},
     journal={Proc. London Math. Soc. (3)},
      volume={82},
      number={2},
       pages={441\ndash 512},
}

\bib{Rognes-S-S_log-THH}{article}{
      author={Rognes, John},
      author={Sagave, Steffen},
      author={Schlichtkrull, Christian},
       title={Localization sequences for logarithmic topological {H}ochschild
  homology},
        date={2015},
        ISSN={0025-5831},
     journal={Math. Ann.},
      volume={363},
      number={3-4},
       pages={1349\ndash 1398},
         url={http://dx.doi.org/10.1007/s00208-015-1202-3},
}

\bib{Schlichtkrull_units}{article}{
      author={Schlichtkrull, Christian},
       title={Units of ring spectra and their traces in algebraic
  {$K$}-theory},
        date={2004},
        ISSN={1465-3060},
     journal={Geom. Topol.},
      volume={8},
       pages={645\ndash 673 (electronic)},
}

\bib{Schlichtkrull_Thom-symmetric}{article}{
      author={Schlichtkrull, Christian},
       title={Thom spectra that are symmetric spectra},
        date={2009},
     journal={Doc. Math.},
      volume={14},
       pages={699\ndash 748},
}

\bib{Schwede_SymSp}{misc}{
      author={Schwede, Stefan},
       title={Symmetric spectra},
        date={2012},
        note={Book project, available at the author's home page},
}

\bib{Shipley_THH}{article}{
      author={Shipley, Brooke},
       title={Symmetric spectra and topological {H}ochschild homology},
        date={2000},
        ISSN={0920-3036},
     journal={$K$-Theory},
      volume={19},
      number={2},
       pages={155\ndash 183},
}

\bib{Shipley_convenient}{incollection}{
      author={Shipley, Brooke},
       title={A convenient model category for commutative ring spectra},
        date={2004},
   booktitle={In: {H}omotopy {T}heory: {R}elations with {A}lgebraic {G}eometry,
  {G}roup {C}ohomology, and {A}lgebraic {$K$}-{T}heory, {C}ontemp. {M}ath.,
  vol. 346, pp. 473--483, {A}mer. {M}ath. {S}oc., {P}rovidence},
      series={Contemp. Math.},
      volume={346},
   publisher={Amer. Math. Soc.},
     address={Providence, RI},
       pages={473\ndash 483},
         url={http://dx.doi.org/10.1090/conm/346/06300},
}

\bib{Schwede_S-algebras}{article}{
      author={Schwede, Stefan},
      author={Shipley, Brooke~E.},
       title={Algebras and modules in monoidal model categories},
        date={2000},
        ISSN={0024-6115},
     journal={Proc. London Math. Soc. (3)},
      volume={80},
      number={2},
       pages={491\ndash 511},
         url={http://dx.doi.org/10.1112/S002461150001220X},
}

\bib{Sagave-S_diagram}{article}{
      author={Sagave, Steffen},
      author={Schlichtkrull, Christian},
       title={Diagram spaces and symmetric spectra},
        date={2012},
        ISSN={0001-8708},
     journal={Adv. Math.},
      volume={231},
      number={3-4},
       pages={2116\ndash 2193},
         url={http://dx.doi.org/10.1016/j.aim.2012.07.013},
}

\bib{Sagave-S_group-compl}{article}{
      author={Sagave, Steffen},
      author={Schlichtkrull, Christian},
       title={Group completion and units in \texorpdfstring{$\cI$}{I}-spaces},
        date={2013},
     journal={Algebr. Geom. Topol.},
      volume={13},
       pages={625\ndash 686},
}

\bib{Sagave-S_Thom-comparison}{misc}{
      author={Sagave, Steffen},
      author={Schlichtkrull, Christian},
       title={Topological {A}ndr\'{e}-{Q}uillen homology of generalized {T}hom
  spectra},
        note={In preparation},
}

\bib{Steenrod_convenient}{article}{
      author={Steenrod, N.~E.},
       title={A convenient category of topological spaces},
        date={1967},
        ISSN={0026-2285},
     journal={Michigan Math. J.},
      volume={14},
       pages={133\ndash 152},
}

\bib{Whitehead-Elements}{book}{
      author={Whitehead, George~W.},
       title={Elements of homotopy theory},
      series={Graduate Texts in Mathematics},
   publisher={Springer-Verlag, New York-Berlin},
        date={1978},
      volume={61},
        ISBN={0-387-90336-4},
}

\end{biblist}
\end{bibdiv}

\end{document}